\def\emli{}
\definecolor{ultramarine}{rgb}{0 , 0, 0.3}
\definecolor{bulgarianrose}{rgb}{0.3, 0, 0}
\numberwithin{equation}{section}
\newcommand{\myref}{/home/mojtaba/texmf/tex/latex/commonstuff/bibliography.bib}
\newcommand{\mojref}{\bibliography{\myref}%
\bibliographystyle{apalike}%
}
\newcommand{\Boxdot}{\,
                    \setlength{\unitlength}{1ex}
                    \begin{picture}(2,2)
                    \put(0,0){$\Box$}
                    \put(.50,.65){.}
                    \end{picture}
                    }
\newcommand{\Boxh}{{
                    \setlength{\unitlength}{1ex}
                    \begin{picture}(1.7,1.7)
                    \put(0,0){$\Box$}
                    \put(-.2,.4){{\fontsize{5}{0} \selectfont \sfh}}
                    \end{picture}
                    \!}}
\def\tcal{\mathcal{T}}
\def\lcal{\mathcal{L}}
\def\lcalb{\lcal_{\Box}}
\def\lcalp{\lcal_{\rhd}}
\def\lcalz{\lcal_{0}}
\def\kcal{\mathcal{K}}
\def\kkcal{{\mathcal{K}}}
\def\kripke{\kcal=(W,\pce,\R,\V)}
\def\kripkep{\kcal'=(W',\pce',\R',\V')}
\def\IPC{{\sf IPC}}
\def\ipc{{\sf IPC}}
\def\HA{\hbox{\sf HA}{} }
\def\PA{\hbox{\sf PA}{} }
\def\K4{\hbox{\sf K4}{} }
\def\GL{\hbox{\sf GL}{} }
\newcommand{\ttbrace}[1]{[\![{#1}]\!]}
\newcommand{\tcbrace}[1]{\{\!\!\{{#1}\}\!\!\}}
\newcommand{\tcbracep}[1]{\{\!\!\{{#1}\}\!\!\}^+}
\newcommand{\Ax}{\AxiomC}
\newcommand{\UI}{\UnaryInfC}
\newcommand{\BI}{\BinaryInfC}
\newcommand{\LLa}{\LeftLabel}
\newcommand{\RLa}{\RightLabel}
\newcommand{\DP}{\DisplayProof}
\newcommand{\uparan}[1]{\textup{(}#1\textup{)}}
\newcommand{\bs}[1]{\vv{#1}}
\def\abold{{\bs{a}}}
\def\pbold{{\bs{p}}}
\def\pvec{{\vv{p}}}
\def\qbold{{\bs{q}}}
\def\qvec{{\vv{q}}}
\def\IPC{{\sf IPC}}
\def\HA{\hbox{\sf HA}{} }
\def\PA{\hbox{\sf PA}{} }
\def\CPp{\hbox{\sf CP}_{\sf p}}
\def\NNIL{{\sf NNIL}}
\def\NNILP{{\sf NNIL}^{\!+}}
\def\SNNILP{\sfs\NNILP}
\def\nnil{{\sf N}}
\def\bsfs{{\sfs}}
\def\snnil{\bsfs\nnil}
\def\wsnnil{\sfs\nnil}
\def\snnilv{\snnil^{\!\vee}}
\def\wsnnilv{\wsnnil^{\!\vee}}
\def\SNNIL{\sfs\NNIL}
\def\K4{\hbox{\sf K4}{} }
\def\GL{\hbox{\sf GL}{} }
\def\iglca{{\sf iGLC_a}}
\def\iglcp{{\sf iGL}}
\def\R{\sqsubset}
\newcommand{\pres}[2]{\mathrel{\setlength{\unitlength}{1ex}
                    \begin{picture}(2,2)
                    \put(0,0){$\pr$}
                    \put(-.1,1.3){\fontsize{4}{0} \selectfont ${#1}$}
                    \put(-.1,-.8){\fontsize{4}{0} \selectfont ${#2}$}
                    \end{picture}}
                    }
\newcommand{\adsm}[2]{\mathrel{\setlength{\unitlength}{1ex}
                    \begin{picture}(2,2)
                    \put(0,0){$\ar$}
                    \put(-.3,1.2){\fontsize{4}{0} \selectfont ${#1}$}
                    \put(-.3,-.5){\fontsize{4}{0} \selectfont ${#2}$}
                    \end{picture}}
                    }
\newcommand{\vdashsub}[1]{\mathrel{\setlength{\unitlength}{1ex}
                    \begin{picture}(2,2)
                    \put(0,0){$\vdash$}
                    \put(0.1,1.2){\fontsize{4}{0} \selectfont 
                    $ $}
                    \put(0.1,-.4){\fontsize{4}{0} \selectfont ${#1}$}
                    \end{picture}}
                    }                                                  
\newcommand{\nvdashsub}[1]{\mathrel{\setlength{\unitlength}{1ex}
                    \begin{picture}(2,2)
                    \put(0,0){$\nvdash$}
                    \put(0.1,1.2){\fontsize{4}{0} \selectfont 
                    $ $}
                    \put(0.1,-.4){\fontsize{4}{0} \selectfont ${#1}$}
                    \end{picture}}
                    }                                                                      
\def\pr{\mid\!\approx}
\def\priglcdsnb{\pres{\iglcp}{\cdsnb}\ \ \ }
\def\priglcasn{\pres{\iglca}{\wsnnil}\quad}
\def\priglsnb{\pres{\iglcp}{\snnilb}\quad}
\def\prtsn{\pres{\sft}{\wsnnil}\ }
\def\prtsnb{\pres{\sft}{\snnilb}\ \ \ }
\def\prtsnv{\pres{\sft}{\wsnnilv}\ \, }
\def\prtspnb{\pres{\sft}{\spnnilb}\ \ }
\def\prtgp{\mathrel{ ^{^*}\!\!\!\prtg}}
\def\prtgv{\pres{\sft}{\Gamma^\vee}}
\def\prtnb{\prtsnb}
\def\prtspn{\pres{\sft}{\spnnil}\ \, }
\def\prtspnb{\pres{\sft}{\spnnilb}\ \ \ \ }
\def\prtspnbv{\pres{\sft}{\spnnilbv}\quad\ \  }
\def\prtspnv{\pres{\sft}{\spnnilv}\ \ \, }
\def\prtcdsnb{\pres{\sft}{\cdsnb}\ \ \,}
\def\prtdsnb{\pres{\sft}{\dsnb}\quad \ }
\def\prtdnb{\pres{\sft}{\dnb}\ \ \ }
\def\prtdpnb{\pres{\sft}{\dpnb}\ \ \ \ }
\def\prtcdspnb{\pres{\sft}{\cdspn}\ \ }
\def\prtdspnb{\pres{\sft}{\dspn}\quad}
\def\prtdspnbv{\pres{\sft}{\dspnv}\quad}
\def\prtcdspnbv{\pres{\sft}{\cdspnv}\quad\ }
\def\prtcdsnbv{\pres{\sft}{\cdsnbv}\quad }
\def\prtdsnbv{\pres{\sft}{\dsnbv}\quad\ \  }
\def\prtdnbv{\pres{\sft}{\dnbv}}
\def\prtdpnbv{\pres{\sft}{\dpnbv}\ \ \ \ \ \ }
\def\prin{\pres{\IPC}{\NNIL}\ \, }
\def\lles{\iglca\Hs}
\def\iglcphb{\iglcp\Hb}
\def\Hb{{{\sf H}^{\Box}}}
\def\Ha{{\sf H}}
\def\ulHa{{\underline{\sf H}}}
\newcommand{\vis}[2]{{\sf H}({#1},{#2})}
\newcommand{\visp}[2]{{\sf H}({#1},{#2})}
\newcommand{\ulvis}[2]{\underline{\vis{#1}{#2}}}
\newcommand{\ulvisp}[2]{\underline{\visp{#1}{#2}}}
\def\visgt{\vis{\Gamma}{\sft}}
\def\ulvisgt{\underline{\visgt}}
\def\ulHb{\underline{{\sf H}^{\Box}}}
\def\Hs{{{\sf H}_{\sigma}}}
\def\ulHs{\underline{{\sf H}_{\sigma}}}
\def\iglcph{\iglcp\Ha}
\def\iph{{\sf{iPH}}}
\def\iphs{\iph_\sigma}
\def\iphp{\iph^+}
\def\iphpb{\iph_\Box^+}
\def\iphb{\iph_\Box}
\def\pce{\preccurlyeq}
\def\sce{\succcurlyeq}
\newcommand{\V}{\mathrel{V}}
\newcommand{\ra }{\rightarrow }
\newcommand{\ikfourca}{{\sf iK4C_a}}
\newcommand{\ikfourcp}{{\sf iK4}}
\newcommand{\lr}{\leftrightarrow}
 \theoremstyle{theorem}
\newtheorem{theorem}{Theorem}[section]
\newtheorem{question}{Question}
\newaliascnt{lemma}{theorem}  
\newtheorem{lemma}[lemma]{Lemma}  
\newaliascnt{definition}{theorem}  
\theoremstyle{definition}
\newtheorem{definition}[definition]{Definition}  
\newaliascnt{corollary}{theorem}  
\newtheorem{corollary}[corollary]{Corollary}  
\newaliascnt{proposition}{theorem}  
\newaliascnt{remark}{theorem}  
\newtheorem{remark}[remark]{Remark}  
\newaliascnt{notation}{theorem}  
\newaliascnt{example}{theorem}  
\newaliascnt{conjecture}{theorem}  
\newaliascnt{fact}{theorem}  
\newaliascnt{claim}{theorem}  
\newcommand{\sub}[1]{{\sf sub}(#1)}
\def\lbqw{\lcalb''}
\def\scrk{\mathscr{K}}
\def\scrm{\mathscr{M}}
\def\scrmt{\mathscr{M}(\sft)}
\newcommand{\Mod}[1]{\textup{Mod}(#1)}
\def\spnnil{\text{{\sf SPN}}}
\def\spnnilv{\spnnil^{\!\vee}}
\def\NNILpar{{\NNIL(\parr)}}
\def\dnnil{{\dar{\,}\nnil}}
\def\nnilpq{{\nnil(\pbold,\qbold)}}
\def\nnilb{{\nnil(\Box)}}
\def\nnilv{{\nnil^{\!\vee}}}
\def\NNILb{{\NNIL(\Box)}}
\def\pnnilb{{{\sf PN}(\Box)}}
\def\pnnilv{{\pnnil^{\!\vee}}}
\def\spnnilb{{\spnnil(\Box)}}
\def\spnnilbv{{\spnnilb^{\!\vee}}}
\def\snnilb{{\wsnnil(\Box)}}
\def\snnilbv{{\snnilb^{\!\vee}}}
\def\spnv{\spnnilv}
\def\cdspnbv{{\cdspnb^{\!\vee}}}
\def\cdspnv{{\cdspn^{\!\vee}}}
\def\dspnbv{{\dspnb^{\!\vee}}}
\def\dspnv{{\dspn^{\!\vee}}}
\def\mipc{{modulo $\IPC$-provable equivalence}{} }
\def\ar{            \mathrel{\setlength{\unitlength}{1ex}
                    \begin{picture}(1.65,1.65)           
                    \put(0,0){\line(0,1){1.65}}                  
                    \put(0,.3){\textup{\footnotesize{$\backsim$}}}
                    \end{picture}}\!
        }
\def\argt{\mathrel{\adsm{\sft}{\Gamma}}}
\def\prtg{\mathrel{\pres{\sft}{\Gamma}}}
\def\artg{\argt}
\def\artgv{\mathrel{\adsm{\sft}{\Gamma^\vee}}\ }
\def\ARNN{\ARD\IPC\parr\VAB}
\def\ARNBSK{\ARDP\ikfourca\atomb}
\def\ARNBST{\ARDP\sft\atomb}
\def\BART{\BAR\sft}
\newcommand{\BAR}[1]{[#1]}
\def\ARNBTP{\ARNBT\VAB}
\def\ARNBKP{\ARNBK\VAB}
\def\ARNBIP{\ARD\IPC\parb\Le\VAB}
\def\parb{{\sf parb}}
\def\atomb{{\sf atomb}}
\def\ARNBK{\ARD\ikfourcp\parb\Le}
\def\ARNBT{\ARNBTM\Le}
\def\AARNBT{\ARDM\sft\parb}
\def\AARNBK{\ARDM\ikfourcp\parb}
\def\ARNBIM{\ARD\ipc\parb}
\def\ARNBTM{\ARD\sft\parb}
\def\ARNBSI{\ARDP\IPC\atomb}
\def\AR{{\sf AR}}
\def\VAR{{\sf V}^\parr_{\!\text{\fontsize{6}{0}\selectfont\AR}}}
\newcommand{\ARD}[2]{\ttbrace{#1,#2}}
\newcommand{\ARDP}[2]{\ARD{#1}{#2}\Le}
\newcommand{\ARDM}[2]{\ARD{#1}{#2}\Lem}
\newcommand{\LARD}[2]{\tcbrace{#1,#2}}
\newcommand{\LARDP}[2]{\tcbracep{#1,#2}}
\newcommand{\itv}[3]{#1\xrightarrow{#3}{#2}}
\def\mont{\textup{Mont}}
\def\amont{\textup{AMont}}
\def\montd{\mont(\Delta)\xspace}
\def\amontd{\amont(\Delta)\xspace}
\def\pnnil{{\sf PN}}
\def\Le{{\sf Le}\xspace}
\def\Lem{{\sf Le}^{^-}\xspace}
\def\Les{{\sf Le}\xspace}
\newcommand{\vdashp}{\Vdash^{^{\!\!+}}}
\newcommand{\nin}{\not\in}
\newcommand\xtwoheadrightarrow[2]{\ensurestackMath{\mathrel{%
			\stackengine{1pt}{%
				\stackengine{0pt}{\twoheadrightarrow}{\scriptstyle#2}{O}{c}{F}{F}{S}%
			}{\scriptstyle#1}{U}{c}{F}{F}{S}%
}}}
\newcommand\xrightarrowtail[2]{\ensurestackMath{\mathrel{%
			\stackengine{1pt}{%
				\stackengine{0pt}{{\rightarrowtail}\!\!\!\!\!{\twoheadrightarrow}}{\scriptstyle#2}{O}{c}{F}{F}{S}%
			}{\scriptstyle#1}{U}{c}{F}{F}{S}%
}}}
\newcommand{\xrat}[2]{\xrightarrowtail{#2}{#1}}
\newcommand{\xrattht}{\xrightarrowtail{\sft}{\theta}}
\newcommand{\xtratht}{\xtwoheadrightarrow{\sft}{\theta}}
\def\sft{{\sf T}}
\def\sfn{{\sf N}}
\def\vdasht{\vdashsub\sft}
\def\nvdasht{\nvdashsub\sft}
\def\tvdash{\sft\vdash}
\newcommand{\fcal}{\mathcal{F}}
\def\VAB{{\sf A}}
\def\th{\ov{\theta}}
\def\cto{c_{\!_\to\!}}
\def\ctob{c^\Box_{\!_\to\!}}
\def\cftob{{c}^\Box_{\!_\to\!}}
\def\sfp{{\sf p}}
\def\sfh{{\sf h}}
\def\sfss{{\sf s}}
\def\sfa{{\sf a}}
\def\artsnb{\adsm{\sft}{\snnil}\ }
\def\artsnbv{\adsm{\sft}{\snnilv}\ \ }
\def\artspnb{\adsm{\sft}{\spnnil}\ }
\def\artspnbv{\adsm{\sft}{\spnv}\ \ \ }
\def\artnb{\adsm{\sft}{\nnil}}
\def\artnbv{\adsm{\sft}{\nnilv}}
\def\artpnb{\adsm{\sft}{\pnnil}\ }
\def\artpnbv{\adsm{\sft}{\pnnilv}\ \ }
\newcommand{\subo}[1]{{\sf sub_o}(#1)}
\newcommand{\subob}[1]{\boxed{\sf sub_{o}}(#1)}
\newcommand{\suboab}[1]{\atomb{\sf sub_{o}}(#1)}
\newcommand{\subab}[1]{\atomb{\sf sub}(#1)}
\newcommand{\subb}[1]{\boxed{\sf sub}(#1)}
\newcommand{\subp}[1]{\parr{\sf sub}(#1)}
\newcommand{\subpb}[1]{\parb{\sf sub}(#1)}
\newcommand{\suba}[1]{\atom{\sf sub}(#1)}
\newcommand{\suboa}[1]{\atom{\sf sub_o}(#1)}
\def\BA{{\vv{\Box B}}}
\newcommand{\nf}[1]{#1\text{-{\sf NF}}}
\newcommand{\nfz}[1]{\hat{#1}}
\def\Rvar{\mathrel{\widetilde{\sqsubseteq}}}
\def\Rvarr{\mathrel{\widetilde{\sqsubset}}}
\def\atom{{\sf atom}}
\newcommand{\varr}{{\sf var}}
\newcommand{\parr}{{\sf par}}
\def\cpp{{\sf C_p}}
\def\cpv{{\sf C_v}}
\def\cpa{{\sf C_a}}
\def\dnbv{\darl{\nnil}^{\!\vee}}
\def\dnb{\darl{\nnil}}
\def\dsnb{\darl{\wsnnil}}
\def\cdsnbv{{\cdsnb^{\!\vee}}}
\def\dsnbv{{\dsnb^{\!\vee}}}
\def\cdsnb{{\sfc}\darl{\wsnnil}}
\def\cdspnb{{\sfc}{\dspnb}}
\def\cdspn{{\sfc}{\dspn}}
\def\dspnb{\darl{\spnnilb}}
\def\dspn{\darl{\spnnil}}
\def\dpnb{{\downarrow}{\pnnilb}}
\def\dpnbv{{\downarrow}{\pnnilb}^{\!\vee}}
\newcommand{\ggt}[1]{{#1}^\Box}
\newcommand{\sggt}[1]{{#1}^{\boxdot}}
\def\NI{{\sf NI}}
\newcommand{\dfrak}[1]{\mathfrak{d}(#1)}
\def\nat{\mathbb{N}}
\def\tcom{\sfc(\sft)}
\def\sfc{{\sf C}}
\def\sfs{{\sf S}}
\def\sfsb{\bar{\sf S}}
\def\sfP{{\sf P}}
\newcommand{\ap}[3]{\lfloor{#3}\rfloor^{^{\!#2}}_{_{\!{#1}}}}
\newcommand{\ape}[1]{\ap{}{}{#1}}
\newcommand{\iap}[3]{\lfloor\!\!\lfloor{#3}\rfloor\!\!\rfloor^{^{\!#2}}_{_{#1}}}
\def\fsubeq{\subseteq_{\text{fin}}}
\newcommand{\dar}[2]{{\downarrow}^{^{\!\!{#1}}}{#2}}
\newcommand{\darl}[1]{{\downarrow}{#1}}
\def\wfix{{w_1}}
\def\aha{\alpha_{_{\sf HA}}}
\def\apa{\alpha_{_{\sf PA}}}
\def\sha{\sigma_{_{\sf HA}}}
\def\ahap{\aha^+}
\def\boxed{{\sf B}}
\def\typez{\text{{\sf TYPE-0}}}
\def\typeo{\text{{\sf TYPE-$\alpha$}}}
\def\types{\typet}
\def\typet{\text{{\sf TYPE-$\sigma$}}}
\def\typea{\typeo}
\newcommand{\trz}[1]{#1_0^\sfp}
\newcommand{\Trz}[1]{\Pi^0_{#1}}
\newcommand{\tro}[1]{#1^\sfp_1}
\newcommand{\Tro}[1]{\Pi^1_{#1}}
\newcommand{\hTro}[1]{\hat{\Pi}^1_{#1}}
\newcommand{\trt}[1]{#1^\sfp}
\newcommand{\Trt}[1]{\Pi_{#1}}
\newcommand{\trth}[1]{#1^\sfa}
\newcommand{\Trth}[1]{\Pi'_{#1}}
\newcommand{\viss}[1]{{\sf V}(#1)}
\def\sstar{{\star\!\star}}
\def\ZFC{{\sf ZFC}}
\newcommand{\prim}[1]{{\sf P}(#1)}
\newcommand{\Prim}[1]{{\sf Prime}(#1)}
\def\pcal{\mathcal{P}}
\newcommand{\lgc}[1]{{\sf T}_{#1}}
\newcommand{\lgci}[2]{{\sf T}^{#1}_{#2}}
\def\pfrak{\mathfrak{P}}
\newcommand{\pmodelssymbol}{\setlength{\unitlength}{1ex}
                    \begin{picture}(2,2)
                    \put(-0.1,0){$|$}
                    \put(0.2,0){$\models$}
                    \end{picture}}             
\newcommand{\pmodels}{\mathrel{\pmodelssymbol}}
\newcommand{\npmodels}{\not\pmodels}   
\newcommand{\pmodelsp}{\pmodels^+}   
\newcommand{\pmodelss}{\pmodels^*}
\def\ik{{\sf iK}}
\newcommand{\cpm}[2]{\pcal[{#1},{#2}]}
\newcommand{\cpmp}[3]{\pcal[{#1},{#2},{#3}]}
\def\vay{Y}
\def\zed{Z}
\newcommand{\zeds}[1]{Z_{#1}}
\def\ipred{\mathrel{\R^1}}
\def\Rv{\mathrel{\R'}}
\def\vR{\mathrel{\sqsupset'}}
\newcommand{\ov}[1]{\check{#1}}
\def\lcalbp{\lcal'_\Box}
\begin{document}

\title{%
On the Provability Logic of $\HA$%
}
\author{
Mojtaba Mojtahedi\thanks{
Department of Mathematics, Statistics and Computer Science, 
College of Sciences, University of Tehran, Iran;
and Department of Mathematics WE16, Ghent University, Krijgslaan 281-S8, B9000 Ghent, 
Belgium.  
Email: 
\url{mojtahedy@gmail.com}, Home page: \url{http://mmojtahedi.ir}
}}
\maketitle

\begin{abstract}
We axiomatize the provability logic of $\HA$ and prove its decidability. Furthermore, we axiomatize the preservativity and relative admissibility relations for several modal logics extending $\ikfourcp$. A principal technical tool is the introduction of a new type of semantics, termed \emph{provability models}, for modal logics extending $\iglcp$. This semantics combines elements of standard Kripke semantics with provability in propositional modal logics.
\end{abstract}
\tableofcontents
\listoftables

\section{Introduction}
Provability logic is a propositional modal logic in which the formula $\Box A$ is interpreted to mean: ``$A$ is provable in a given formal theory $\sft$''. The first explicit interpretation of a modal operator in this manner appears in \citep{Godel33}, where provability is used to interpret the modal operator $\Box$ in the classical modal logic ${\sf S4}$, in order to embed intuitionistic propositional logic $\ipc$ into ${\sf S4}$. This result, together with G\"odel's arithmetization of syntax and proof for the incompleteness theorems \citep{Godel}, mark the origins of the field of provability logic. Since then, considerable research has been conducted in this area, and many problems remain open. We refer the reader to \citep{VisBek,ArtBekProv,sep-logic-provability} for comprehensive surveys.

A celebrated result in provability logic is the characterization of the provability logic of Peano Arithmetic $\PA$ \citep{Solovay,Lob}. More precisely, \citep{Solovay,Lob} prove that $\GL\vdash A$ if and only if, for every arithmetical $\PA$-interpretation $\apa$, we have $\PA\vdash \apa(A)$, where $\GL$ is the G\"odel-L\"ob logic, defined as ${\sf K4}$ plus L\"ob's principle $\Box(\Box A\to A)\to\Box A$. Here, $\apa$ is called a $\PA$-interpretation if it satisfies the following conditions:
\begin{itemize}
\item $\apa(a)$ is an arbitrary first-order sentence in the language of arithmetic for every atomic proposition $a$.
\item $\apa$ commutes with the Boolean connectives $\vee$, $\wedge$, and $\to$.
\item $\apa(\Box A)$ is an arithmetization (a formalization in the first-order language of arithmetic) of the statement: ``$\apa(A)$ is provable in $\PA$''.
\end{itemize}
For further details, see \citep{Smorynski-Book,Boolos}. This result is known to be robust; it can be generalized to other sufficiently strong first-order \textit{classical} theories such as ${\sf I\Delta_0}+{\sf exp}$, ${\sf ZF}$, and $\ZFC$.

Another significant direction in provability logic considers provability interpretations in Heyting Arithmetic ($\HA$), the intuitionistic fragment of $\PA$. Early results in this area include \citep{Myhill,Friedman75}, which demonstrate that $\Box (B\vee C)\to(\Box B\vee \Box C)$ does not belong to the provability logic of $\HA$. However, \citep{Leivant} shows that the axiom schema $\Box (B\vee C)\to\Box (\Box B\vee C)$ does belong to this logic. Subsequent work by \citep{VisserThes,Visser82} studies the provability logic of $\HA$ and characterizes its letterless fragment. \citep{Visser02,Iemhoff.Preservativity,IemhoffT} investigate a generalization of provability called \emph{preservativity}, which serves as the intuitionistic analogue of interpretability \citep{VisserInterpretability,Visser-Interpretability}. The propositional language for preservativity, denoted $\lcalp$ in this paper, includes a binary modal operator $\rhd$ with the following interpretation for $A\rhd B$:
\begin{center}
For every $\Sigma_1$-sentence $S$, if $\HA\vdash S\to A$, then $\HA\vdash S\to B$.
\end{center}
Albert Visser axiomatized a logic called $\iph$ and, together with Dick de Jongh, proved its soundness for the arithmetical interpretation described above. Rosalie Iemhoff conjectured that $\iph$ is also complete for this interpretation. While the results of the present paper reinforce Iemhoff's conjecture, the arithmetical completeness of $\iph$ remains open. \citep{Sigma.Prov.HA,Jetze-Visser,Sigma.Prov.HA*} characterize the provability logic of $\HA$ and $\HA^*$ (a self-completion of $\HA$ introduced in \citep{Visser82}) for $\Sigma_1$-substitutions. In a sense, \citep{Rev-SPLHA} shows that the provability logic of $\HA$ for $\Sigma_1$-substitutions, denoted $\lles$ here, is essentially $\iglca$. More precisely, \citep{Rev-SPLHA} defines a translation $(.)^\Boxh$ that embeds $\lles$ into $\iglca$. \citep{mojtahedi2021hard} characterizes the $\Sigma_1$-provability logics of $\HA$ and $\HA^*$ relative to $\PA$ and the standard model $\nat$.

In this paper, we axiomatize the provability logic of $\HA$ as $\iglcp$ extended with all axioms of the form $\Box A\to\Box B$ for every pair $A$, $B$ satisfying the following property (denoted $A\priglcdsnb B$ in this paper):
\begin{center}
For every $E\in\cdsnb$, if $\iglcp\vdash E\to  A$, then $\iglcp\vdash E\to B$.
\end{center}
The precise definition of $\cdsnb$ is somewhat technical and can be found in \Cref{notation-set}. Roughly speaking, $\cdsnb$ is the set of all propositions that are projective relative to $\NNIL$ and are self-complete.

\subsection*{Road map}
\Cref{preli-sec} contains all elementary and general definitions, along with related facts. Two principal required results are also included in \Cref{2salient}. In \Cref{secPres}, we extend the results of \citep{PLHA0} and axiomatize $\priglcdsnb$, together with several other preservativity and admissibility relations. \Cref{Prov-semant} introduces a Kripke-style semantics, called \emph{provability models}, for which we prove soundness and completeness for the provability logic of $\HA$. Finally, \Cref{sec-completeness-reduction} uses provability models to reduce arithmetical completeness to its $\Sigma_1$ version, following a method similar to that employed in \citep{reduction}.

%

\section{Preliminary definitions and facts}\label{preli-sec}
This section presents elementary definitions and results. We first define propositional languages (\Cref{sec-languages}) and substitutions (\Cref{sec-sub,sec-sub-1}). Subsequently, several axiom schemata and propositional logics are defined (\Cref{propo-logics}). \Cref{sec-cto} defines two complexity measures, $\cto(A)$ and $\ctob(A)$. Kripke semantics for intuitionistic modal logics are defined in \Cref{sec-Kripke}. For the sake of self-containment, we also prove Kripke completeness for $\iglcp$ and $\iglca$ in \Cref{sec-Kripke}. \Cref{sec-Box-trans} defines G\"odel's translation \citep{Godel33}. Some notation concerning sets of propositions is introduced in \Cref{notation-set}. \Cref{sec-NNIL-def} defines the set $\NNIL$ of propositions and states some of their properties. \Cref{pres-admis} defines relative admissibility and preservativity and proves elementary facts about them. \Cref{glb} introduces the greatest lower bound relative to a set $\Gamma$ of propositions and proves some of its elementary properties. In \Cref{sec-fix}, we prove a simultaneous fixed-point theorem for $\iglcp$. Finally, \Cref{2salient} states two salient results from \citep{Sigma.Prov.HA,PLHA0} that are crucial for characterizing the provability logic of $\HA$.

\subsection{Propositional language}\label{sec-languages}
The non-modal propositional language $\lcalz$ includes the connectives $\vee$, $\wedge$, $\to$, $\bot$, a countably infinite set of atomic variables $\varr:=\{x_1,x_2,\ldots\}$, and a countably infinite set of atomic parameters $\parr:=\{p_1,p_2,\ldots\}$. The inclusion of parameters in the propositional language is for technical reasons. As will be seen later, it is convenient to have atomic symbols in the propositional language with the intended interpretation as $\Sigma_1$-sentences. Consequently, in the axiomatizations defined in \Cref{propo-logics}, we always keep this intended interpretation in mind, which leads to the inclusion of the axiom $p_i\to \Box p_i$ in $\ikfourcp$. A further consideration regarding this intended meaning for parameters is that we cannot substitute them arbitrarily. The only permitted substitutions for a parameter are those that do not violate its intended $\Sigma_1$-interpretation. However, except for the results in \Cref{sec-completeness-reduction}, we may assume that the set of parameters of the language is empty, in which case the axiom schema $\CPp:=p_i\to \Box p_i$ is automatically removed.

Negation $\neg$ is defined as $\neg A:= A\to\bot$, and $\top:=\bot\to\bot$. We use the notation $\lcalz(X)$ for the set of all Boolean combinations of propositions in $X$; i.e., $\lcalz(X)$ is the smallest set containing $X\cup\{\bot\}$ that is closed under conjunction, disjunction, and implication. The modal language $\lcalb$ is defined as $\lcalz$ augmented with the unary modal operator $\Box$. The language $\lcalp$ denotes the propositional language $\lcalz$ augmented with a binary modal operator $\rhd$. Whenever we consider the language $\lcalp$, we assume that $\Box B:=\top\rhd B$. In this sense, $\lcalp$ is an extension of $\lcalb$.

The union $\varr\cup\parr$ is denoted by $\atom$, the set of atomic propositions. Additionally, we define
$$
\boxed:=\{\Box B: B\in\lcalb\}
\quad\text{and}\quad
\parb:=\parr\cup\boxed
\cup\{\bot\}
\quad\text{and}\quad
\atomb:=\parb\cup\varr.$$

\subsection{Propositional substitutions}\label{sec-sub}
A (propositional) substitution $\theta$ is a function on the propositional language that commutes with all connectives. More precisely, $\theta$ satisfies the following conditions:
\begin{itemize}
\item $\theta(a)$ is a proposition in the language $\lcalp$ for every $a\in\atom$.
\item $\theta(B\circ C)=\theta(B)\circ\theta(C)$ for every $\circ\in\{\vee,\wedge,\to\}$.
\item $\theta(\bot)=\bot$.
\item $\theta(  B\rhd C):=\theta(B)\rhd\theta(C)$.
\end{itemize}
\textit{By default, we assume that all substitutions are the identity on $\parr$.} However, there are places where we need to substitute parameters as well; these will be made explicit to the reader. For a substitution $\theta$, the function $\ov\theta$ is defined identically to $\theta$, except that it acts as the identity on boxed propositions:
\begin{itemize}
\item $\th{(B\circ C)}=\th(B)\circ\th(C)$ for every $\circ\in\{\vee,\wedge,\to\}$.
\item $\th(\bot)=\bot$.
\item $\th(B\rhd C):=B\rhd C$.
\end{itemize}
We call $\th$ an \textit{outer} substitution. Note that $\th{}$ and $\theta$ coincide in the case of the non-modal language.

\subsection{Arithmetical substitutions}\label{sec-sub-1}
An arithmetical substitution is a function $\alpha$ on the set of atomic variables and parameters $\atom$ such that $\alpha(a)$ is a first-order arithmetical sentence for every $a\in\atom$, and $\alpha(a)\in\Sigma_1$ for every $a\in\parr$. Moreover, $\alpha$ is called a $\Sigma_1$-substitution if $\alpha(a)\in\Sigma_1$ for every $a\in\atom$.
\\
An arithmetical substitution $\alpha$ can be extended to $\lcalp$ as follows:
\begin{itemize}
\item $\aha(a):=\alpha(a)$ for every $a\in\atom$, and $\aha(\bot)=\bot$.
\item $\aha$ commutes with Boolean connectives: $\vee$, $\wedge$, and $\to$.
\item $\aha(A\rhd B)$ is defined as an arithmetization of the following statement:
\begin{center}
For every $E\in\Sigma_1$, if $\HA\vdash E\to \aha(A)$, then $\HA\vdash E\to \aha(B)$.
\end{center}
\end{itemize}
Note that the above definition of $\aha$ is compatible with the well-known provability interpretation for $\Box$ when one assumes $\Box B:=\top\rhd B$.

A strong variant $\ahap$ is defined similarly:
\begin{itemize}
\item $\ahap(a):=\alpha(a)$ for every $a\in\atom$, and $\ahap(\bot)=\bot$.
\item $\ahap$ commutes with Boolean connectives: $\vee$, $\wedge$, and $\to$.
\item $\ahap(A\rhd B)$ is defined as $\varphi$ together with its provability statement in $\HA$; i.e., $\ahap(A\rhd B):=\varphi\wedge\Box_{_{\sf HA}}\varphi$, where $\varphi$ is an arithmetization of the following statement:
\begin{center}
``For every $E\in\Sigma_1$, if $\HA\vdash E\to \ahap(A)$, then $\HA\vdash E\to \ahap(B)$." 
\end{center}
\end{itemize}

\subsection{Propositional logics and theories}\label{propo-logics}
We consider $\IPC$ as the intuitionistic propositional logic over the modal language $\lcalb$; i.e., a set of propositions in $\lcalb$ that is closed under modus ponens (\Ax{$ A $}\Ax{$ A\to B $}\BI{$ B $}\DP) and includes all of the following axiom schemata:
\begin{itemize}
	\item $ A\to (B\to A) $,
\item  $(A\to (B\to C))\to ((A\to B)\to (A\to C))$,
\item  $A\to (B\to (A\wedge B))$,
\item  $(A\wedge B)\to A$,   $(A\wedge B)\to B$,
\item  $(A\to C)\to ((B\to C)\to ((A\vee B)\to C))$,
\item  $A\to (A\vee B)$,  $B\to (A\vee B)$.
\end{itemize}
By default, we use $\vdash$ for derivability in $\IPC$.

A \textit{theory} $\sft$ is a set of formulas that includes all axioms of $\IPC$ listed above and is closed under modus ponens.

The following axiom schemata are defined:
\begin{itemize}[leftmargin=1.5cm]
\item[\ul{\sf K}:] $ \Box(A\to B)\to (\Box A \to\Box B) $.
\item[\ul{\sf 4}:] $ \Box A\to\Box\Box A $.
\item[\ul{\sf L}:] $ \Box(\Box A\to A)\to\Box A $. (The {\sf L\"ob}'s axiom)
\item[$\underline{\cpp}$:] $ p\to\Box p $ for every $ p\in\parr $.
\item[$\underline{\cpa}$:] $ a\to\Box a $ for every $ a\in\atom $.
\item[$\ulvisgt $:] $ \Box A\to \Box B$ for every $  A\prtg B $, where $\prtg$ is defined in \Cref{pres-admis}.
\item[$\ulHa$:]  $ \ulvisp{\cdsnb}{\iglcp} $, where $\cdsnb$ is as defined in \Cref{notation-set}.
\item[$\ulHs$:] $ \ulvis{\snnil}{\iglca} $, where $\snnil$ is as defined in \Cref{notation-set}. 
\item[$\ulHb$:] $ \ulvis{\snnilb}{\iglcp} $, where $\snnilb$ is as defined in \Cref{notation-set}.
\end{itemize}
For an axiom schema $\underline{\sf X}$, let $\overline{\sf X}$ denote 
$\Box \underline{\sf X}$, and let ${\sf X}$ denote 
$\underline{\sf X}\wedge\overline{\sf X}$. Given a logic ${\sf L}$ and axiom schemata ${\sf X_1,\ldots,X_n}$, the logic ${\sf LX_1\ldots X_n}$ is defined as ${\sf L}$ plus the axioms ${\sf X_1,\ldots,X_n}$. We then define the following modal logics:
\begin{itemize}
	\item {\sf i}: $\IPC$ with   the necessitation of all axioms in $\IPC$\footnote{The necessitation of a formula $A$, is just $\Box X$} together with $\cpp$.
	\item $ \iglcp:=\ikfourcp{\sf L}$.
\end{itemize}
Note that in this setting, $\iglcp$ and $\ikfourcp$ are closed under necessitation: \Ax{$ A $}\UI{$ \Box A $}\DP; however, this inference rule is not primarily assumed in their proof systems. Moreover, throughout this paper, the necessitation rule is admissible for the modal systems (systems in the language $\lcalb$) that we consider (except for $\IPC$, which does not have this property).

\begin{remark}
As an observant reader might already noticed, in our extended language with parameters, 
all modal logics are augmented with some additional completeness axiom for parameters. 
In other words, for example, $\iglcp$ also include the axiom $p\to\Box p$
for atomic parameters. Nevertheless, this is just a conservative extension of standard 
$\iglcp$. This means that for every $A\in\lcalb$ without any parameters, 
$\iglcp\vdash A$  iff $A$ is a theorem of standard $\iglcp$. The same explanation also holds for other logics like $\iglcph$ that are being studied in this paper. 

So why we do consider such extended format of logics? The reason is only due to technical more convenience, later in \cref{sec-completeness-reduction}, where we prove 
that $\iglcph\nvdash A$ implies $\lles\nvdash \alpha(A)$ for some propositional substitution $\alpha$. Actually, the substitution $\alpha$ includes several parameters from the extended language. 
\end{remark}


\subsection{Complexity measures $\cto(A)$ and $\ctob(A)$}
\label{sec-cto}
Given $A\in\lcalb$, define $\cto(A)$ as the maximum number of nested implications outside boxes, and $\ctob(A)$ as the maximum of the number of parameters and boxed subformulas in $A$ and $\max\{\cto(B):\Box B\in\sub{A}\}$. More precisely, we define $\cto(A)$ inductively as follows:
\begin{itemize}
\item $\cto(A):=0$ for $A=\Box B$ or $A\in\atom$ or $A=\bot$.
\item $\cto(A\circ B):=\max \{\cto(A),\cto(B)\}$ for $\circ\in\{\vee,\wedge\}$.
\item $\cto(A\to B):=1+\max\{\cto(A),\cto(B)\}$.
\end{itemize}
Then define
\emli
$$\ctob(A):=\max(\{\cto(B):\Box B\in\sub A\}\cup \{n_A\}),$$
where $n_A$ is defined as the number of elements in $\sub A\cap \parb$. Recall that $\parb:=\parr\cup\{\Box B: B\in\lcalb\}\cup\{\bot\}$.

A notable feature of the complexity measure $\cto(A)$ is that there are only finitely many propositions $A$ with $\cto(A)\leq n$:
\begin{lemma}\label{Remark-Finiteness-c(A)}
Modulo $\IPC$-provable equivalence, there are finitely many propositions $A\in \lcalz(X)$ with $\cto(A)\leq n$, where $X$ is a finite set of atomic or boxed propositions. Moreover, one can effectively compute the finite set of such propositions.
\end{lemma}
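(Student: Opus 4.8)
The plan is to prove both assertions — finiteness up to $\IPC$-provable equivalence, and effective computability of a set of representatives — simultaneously by induction on $n$, with an inner induction on the structure of propositions. Throughout, $X = \{a_1,\dots,a_k\}$ is a fixed finite set of atoms and boxed propositions, which we treat as a set of $k$ ``generators'' that behave like propositional atoms for the purposes of $\IPC$-reasoning (since $\IPC$ does not see inside a box, a boxed subformula $\Box B$ is just an opaque propositional letter here).

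First, the base case $n=0$: a proposition $A\in\lcalz(X)$ with $\cto(A)=0$ contains no implication outside any box, hence — reading implications $B\to C$ only through the clauses for $\vee,\wedge$ and the atomic/boxed/$\bot$ cases — $A$ is built from $X\cup\{\bot\}$ using only $\vee$ and $\wedge$. Such formulas form, up to $\IPC$-equivalence, a finite distributive lattice freely generated by the $k$ elements of $X$ together with $\bot$ as least element; this lattice is finite (it embeds in the free distributive lattice on $k$ generators, which is finite), and a canonical form — e.g. irredundant DNF over $X$ — is effectively computable. So the $n=0$ case holds.

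For the inductive step, assume the claim for $n$ and consider $A\in\lcalz(X)$ with $\cto(A)\le n+1$. The key observation is that, modulo $\IPC$-equivalence, every proposition of $\lcalz(X)$ can be put in a normal form built by $\vee,\wedge$ from (i) elements of $X\cup\{\bot\}$ and (ii) implications $B\to C$ where we may assume $B,C$ are themselves in normal form. When $\cto(A)\le n+1$, each such implication $B\to C$ occurring outside all boxes satisfies $\cto(B),\cto(C)\le n$, so by the induction hypothesis $B$ and $C$ range — up to $\IPC$-equivalence — over a fixed finite, effectively computable set $S_n$ of representatives. Hence the implications available as ``building blocks'' at level $n+1$ come from the finite set $\{B\to C : B,C\in S_n\}$, and $A$ is a $\vee,\wedge$-combination of elements of the finite set $X\cup\{\bot\}\cup\{B\to C: B,C\in S_n\}$. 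As in the base case, the $\vee,\wedge$-combinations of any fixed finite set of propositions form, up to $\IPC$-equivalence, a finite distributive lattice with an effectively computable set of representatives (irredundant DNF), so the collection of all such $A$ is finite up to $\IPC$-equivalence and the representatives are computable. This completes the induction.

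The step I expect to be the main obstacle — or at least the one needing the most care — is justifying the ``normal form'' reduction: that modulo $\IPC$ one may rewrite an arbitrary $A\in\lcalz(X)$ into a $\vee,\wedge$-combination of atoms/boxed-formulas/$\bot$ and of implications whose antecedents and consequents are themselves already in the same normal form, \emph{without increasing} $\cto$. The standard tool is to push $\vee$ and $\wedge$ outward using the $\IPC$-valid distribution laws and the equivalences $(B\vee C)\to D \equiv (B\to D)\wedge(C\to D)$ and $D\to(B\wedge C)\equiv (D\to B)\wedge(D\to C)$; one checks that each such rewrite leaves $\cto$ unchanged (a routine induction on the clauses defining $\cto$, using that $\cto$ is determined by nesting of implications outside boxes), and that the process terminates. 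A subtlety is that $D\to(B\vee C)$ and $(B\wedge C)\to D$ do \emph{not} simplify in $\IPC$, so such implications remain as genuine building blocks — but that is harmless, since their immediate subformulas still have strictly smaller $\cto$ and so fall under the induction hypothesis. Once this normal-form lemma is in hand (it can be stated and proved independently, or folded into the induction), the counting and the effectivity are both immediate from the finiteness of free distributive lattices.
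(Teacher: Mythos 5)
Your proof is correct and follows essentially the same route as the paper's: induction on $n$, writing each formula up to $\IPC$-equivalence as a disjunction of conjunctions of components (members of $X$, $\bot$, and outermost implications), and bounding the implications at level $n+1$ by pairs of level-$n$ representatives via the induction hypothesis. The normal-form step you flag as the main obstacle is in fact immediate, since above its outermost implications a formula of $\lcalz(X)$ is already built from such components by $\vee$ and $\wedge$ alone, so only distribution to disjunctive normal form is needed; the paper simply records the resulting bounds $f(0)=2^{2^m}$ and $f(n+1)=2^{2^{m+f(n)^2}}$.
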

\begin{proof}
By induction on $n$, we define an upper bound $f(n)$ for the number of propositions $A\in\lcalz(X)$ with $\cto(A)\leq n$. The computability of such a set of propositions is left to the reader.
\begin{enumerate}[leftmargin=*]
\item $f(0):$ Observe that any $A$ with $\cto(A)=0$ is $\IPC$-equivalent to a disjunction of conjunctions of propositions in $X$. Hence, $f(0)=2^{2^m}$ is an obvious upper bound, where $m$ is the number of propositions in $X$.
\item $f(n+1):$ For every implication $B\to C$ with $\cto(B\to C)\leq n+1$, we have $\cto(B),\cto(C)\leq n$, and hence $f(n)^2$ is an upper bound for the number of inequivalent such propositions. Then, since
\mipc
every proposition is a disjunction of conjunctions of propositions in $X$ or implications, the following definition provides an upper bound:
\[f(n+1):=2^{2^{[m+f(n)^2]}}.\qedhere\]
\end{enumerate}
\end{proof}

\subsection{Kripke models for intuitionistic modal logics}\label{sec-Kripke}
A Kripke model for intuitionistic modal logic combines features of Kripke models for intuitionistic logic and for classical modal logic. As expected, it contains two relations: one ($\pce$) for intuitionistic implication and another ($\R$) for the modal operator $\Box$ or $\rhd$. More precisely, a Kripke model is a quadruple $\kcal=(W,\prec,\R,\V)$ with the following properties:
\begin{itemize}
	\item $ W\neq\emptyset $.
	\item $(W, \prec)$ is a partial order (transitive and irreflexive). We write $\pce$ for the reflexive closure of $\prec$.
	\item $\V$ is the valuation on atomics; i.e., $V\subseteq W\times\atom$.
	\item $w\pce u$ and $w\V a$ imply $u\V a$ for every $w,u\in W$ and $a\in\atom$.
	\item $({\pce};{\R})\subseteq {\R}$; i.e., $w\pce u\R v$ implies $w\R v$. This condition ensures that the previous property holds for all modal propositions, not only for $a\in\atom$.
	\item $w\R u$ and $w\V p$ imply $u\V p$ for every $w,u\in W$ and $p\in\parr$.
\end{itemize}

The valuation relation $\V$ can be extended to all modal propositions as follows:
\begin{itemize}
	\item $\kcal,w\Vdash a$ iff $w\V a$, for $a\in\atom$.
	\item $\kcal,w\Vdash A\wedge B$ iff $\kcal,w\Vdash A$ and $\kcal,w\Vdash B$.
	\item $\kcal,w\Vdash A\vee B$ iff $\kcal,w\Vdash A$ or $\kcal,w\Vdash B$.
	\item $\kcal,w\Vdash A\to B$ iff for every $u\sce w$, if $\kcal,u\Vdash A$, then $\kcal,u\Vdash B$.
	\item $\kcal,w\Vdash A\rhd B$ iff for every $u\sqsupset w$ with $\kcal,u\Vdash A$, we have $\kcal,u\Vdash B$.
	\item $\kcal,w\Vdash \Box A$ iff for every $u\sqsupset w$, we have $\kcal,u\Vdash A$.
\end{itemize}
We also define the following strengthening of $\Vdash$:
\emli
$$\kcal,w\vdashp A \quad\text{iff}\quad \text{there is some }u\R w\text{ such that }\kcal,w'\Vdash A \text{ for every }w'\sqsupset u.$$
Define $u \Rvar v$ ($u\Rvarr v$) iff there is some $u'$ such that $u\sqsubseteq u'\pce v$ ($u\sqsubset u'\pce v$). Notice that, in a transitive frame, $\Rvar$ is just the transitive reflexive closure of the union of the two relations ${\R}\cup{\pce}$. In other words, $w \Rvar u$ iff one can reach $u$ from $w$ by any sequence of the accessibility relations.
Then define the following notions for Kripke models:
\begin{itemize}
	\item \textit{Finite:} if $W$ is a finite set. 
	\item \textit{Transitive:} if $\R$ is transitive; i.e., $u\R v\R w$ implies $u\R w$.
	\item \textit{Rooted:} if there is some node $w_0\in W$ such that $w_0\Rvar w$ for every $w\in W$.
	\item \textit{Conversely well-founded:} if there is no infinite ascending sequence $w_1\R w_2\R\ldots$. Note that this condition implies irreflexivity of $\R$.
	\item \textit{$\pce$-Tree:} if for every $w\in W$, the set $\{u\in W: u\pce w\}$ is a finite linearly ordered set (by $\pce$).
	\item \textit{$\R$-Tree:} if for every $v,u,w\in W$ with $w\R v$ and $u\R v$, either $w\Rvar u$ or $u\Rvar w$.
	\item \textit{Transcendental:} if $u\Rvarr v$ and $w\pce v$, then $w=v$.
	\item \textit{Good:} if all of the above properties hold.
	\item \textit{Atomic ascending:} if $\kcal\Vdash \cpa$.
\end{itemize}
Given two Kripke models $\kripke$ and $\kripkep$, we say that $\kcal'$ is an intuitionistic submodel of $\kcal$ (notation $\kcal'\leq\kcal$) iff $W=W'$, ${\sqsubset}={\sqsubset'}$, $V=V'$, and ${\pce'}\subseteq{\pce}$. A class $\scrk$ of Kripke models has the intuitionistic submodel property if $\kcal'\leq\kcal\in\scrk$ implies $\kcal'\in\scrk$. A modal logic $\sft$ is said to have the intuitionistic submodel property iff it is sound and complete for some class $\scrk$ of good Kripke models with the intuitionistic submodel property.
\begin{theorem}\label{Kripke-completeness-igl}
	$\iglcp$ is sound and complete for good Kripke models. Also, $\iglca$ is sound and complete for good atomic ascending Kripke models.
\end{theorem}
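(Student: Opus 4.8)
The plan is to establish soundness by a routine induction on $\igl$-derivations, and completeness by building, for any $\igl$-unprovable formula, a finite ``canonical model over an adequate set'' and then unraveling it into a genuinely \emph{good} model. For soundness it suffices to check that every instance of $\mathsf{K}$, $\mathsf{4}$, $\mathsf{L}$ and the $\cpp$-axioms is valid on every good Kripke model and that modus ponens and necessitation preserve validity. Persistence of $\Vdash$ along $\pce$, validity of $\mathsf{K}$ and $\mathsf{4}$, and the necessitation step all follow immediately from transitivity of $\R$, the interaction condition $({\pce};{\R})\subseteq{\R}$, and the clause for $\Box$. The only place conversely well-foundedness is used is L\"ob's axiom $\Box(\Box A\to A)\to\Box A$, handled exactly as in the classical case by picking, among the $\R$-successors of a putative countermodel, one that is $\R$-maximal with $A$ false. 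For $\iglca$ one further notes that $a\to\Box a$ is valid on every $\cpa$ model, by definition of that class.

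\emph{The finite model and the Truth Lemma.} Assume $\igl\nvdash A$ and fix a finite $\Phi\subseteq\lcalb$ containing $A$ and $\bot$, closed under subformulas, containing $B$ whenever $\Box B\in\Phi$, and containing $\Box a$ for every atom $a$ occurring in $\Phi$ (such a finite $\Phi$ exists by a finiteness argument of the kind behind \cref{Remark-Finiteness-c(A)}). Call $\Gamma\subseteq\Phi$ \emph{saturated} if it is $\igl$-consistent, contains every $B\in\Phi$ with $\Gamma\vdash B$, and satisfies $B\vee C\in\Gamma\Rightarrow B\in\Gamma$ or $C\in\Gamma$. Let $W$ be the set of saturated sets; set $\Gamma\pce\Delta$ iff $\Gamma\subseteq\Delta$, $\Gamma\V a$ iff $a\in\Gamma$, and $\Gamma\R\Delta$ iff (i) $\Box B\in\Delta$ and $B\in\Delta$ whenever $\Box B\in\Gamma$, and (ii) $\Box B\in\Delta\setminus\Gamma$ for some $\Box B\in\Phi$. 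Clause (i) yields transitivity of $\R$ and $({\pce};{\R})\subseteq{\R}$; clause (ii) forces the finite set $\{\Box B\in\Phi:\Box B\in\Gamma\}$ to increase strictly along $\R$, giving conversely well-foundedness; and, since $\igl$ contains $p\to\Box p$ (resp.\ $\iglca$ contains $a\to\Box a$) and $\Phi$ contains the relevant boxed atoms, clause (i) makes $\kcal$ a $\cpp$ (resp.\ $\cpa$) model. The \emph{Truth Lemma} — $\kcal,\Gamma\Vdash B$ iff $B\in\Gamma$, for $B\in\Phi$ — is proved by induction on $B$: the Boolean and intuitionistic clauses are standard, using a Lindenbaum-style lemma extending any $X\subseteq\Phi$ with $X\nvdash B$ to a saturated set omitting $B$; the $\Box$ clause is the substantive one. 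If $\Box B\notin\Gamma$, put $X:=\{C,\Box C:\Box C\in\Gamma\}\cup\{\Box B\}$; using $\mathsf{4}$, $\mathsf{K}$ and, crucially, L\"ob's axiom one shows $X\nvdash B$ (otherwise $\Gamma\vdash\Box B$, contradicting $\Box B\notin\Gamma$), so $X$ extends to a saturated $\Delta$ with $B\notin\Delta$, and $\Gamma\R\Delta$ holds by construction (clause (ii) because $\Box B\in\Delta\setminus\Gamma$). Taking a saturated $\Gamma_0$ with $A\notin\Gamma_0$ gives $\kcal,\Gamma_0\nVdash A$.

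\emph{Unraveling to a good model.} The model $\kcal$ is finite, transitive, conversely well-founded and $\cpp$ (resp.\ $\cpa$), but need be neither a tree under $\pce$ nor transcendental, so we unravel it from $\Gamma_0$: worlds are the finite paths $\Gamma_0\,r_1\,\Gamma_1\cdots r_n\,\Gamma_n$ in $\kcal$ with each $r_i\in\{\pce,\R\}$ and no two consecutive $\pce$-steps ($\pce$-steps being unit steps, $\R$-steps allowed as ``long jumps'' via transitivity of $\R$); $\pce$ compares paths differing by a final block of $\pce$-steps, and $\R$ sends a path to any extension whose first new step is an $\R$-step. The last-coordinate projection onto $\kcal$ is a bounded morphism for $\pce$, $\R$ and $\V$, so $A$ is still refuted at the root $\langle\Gamma_0\rangle$; the unraveling is rooted, is a tree under $\pce$, is transcendental (an $\R$-step starts a fresh $\pce$-chain, whose bottom has no proper $\pce$-predecessor in the tree), and inherits transitivity, conversely well-foundedness, and the $\cpp$/$\cpa$ property; it is finite as well, since finiteness and conversely well-foundedness of $\kcal$ bound the number of $\R$-steps in a path and leave only finitely many $\pce$-chains of $\kcal$ between them. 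Hence $A$ fails on a good (resp.\ good $\cpa$) Kripke model, which together with soundness proves the theorem.

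\emph{Main obstacle.} The delicate step throughout is the $\Box$ clause of the Truth Lemma, i.e.\ the existence lemma for $\R$: an $\R$-successor witnessing $\Box B\notin\Gamma$ must be produced while \emph{also} respecting clause (ii), so that conversely well-foundedness is not destroyed, and the consistency input for this is exactly L\"ob's principle; arranging $\Phi$ to be closed under precisely the right operations so that both demands can be met is the bookkeeping one has to get right. By comparison the unraveling is routine, the only care being to verify that $({\pce};{\R})\subseteq{\R}$, transcendentality and finiteness are preserved.
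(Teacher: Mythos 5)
Your overall route coincides with the paper's: soundness by induction on derivations (converse well-foundedness entering only for L\"ob), a finite countermodel obtained from a canonical-model construction, and an unraveling of that countermodel into a good model. The paper outsources the finite countermodel to the standard construction (citing Iemhoff), and your adequate-set/saturated-set version of it, with L\"ob's axiom driving the existence lemma for $\R$ and clause (ii) securing converse well-foundedness, is correct and fills in that citation. The gap is in the unraveling, exactly the step you declare routine: with the definitions you give, the unraveled structure is neither a Kripke model in the paper's sense nor equivalent to the canonical model.

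Two concrete failures. First, you make a path $\pi$ $\R$-related to any extension whose \emph{first} new step is an $\R$-step. Then $({\pce};{\R})\subseteq{\R}$ fails: if $\sigma$ extends $\pi$ by one $\pce$-step and $\tau$ extends $\sigma$ by an $\R$-step, we get $\pi\pce\sigma\R\tau$ while the first step added to $\pi$ inside $\tau$ is a $\pce$-step, so $\pi$ is not $\R$-related to $\tau$; and transcendentality fails, since for $\tau:=\pi{\cdot}(\R,c){\cdot}(\pce,d)$ we have $\pi\R\tau$ and $\pi{\cdot}(\R,c)$ strictly $\pce$-below $\tau$. The paper keys $\R$ on the \emph{last} step instead ($\pi\R\tau$ iff $\pi$ is a proper initial segment of $\tau$ and the last step of $\tau$ is an $\R$-step; $\pi\pce\tau$ iff every added step is a $\prec$-step), which makes both properties immediate; the genuine content of your ``bounded morphism'' claim then sits in the forth condition, namely that a path whose last step is an $\R$-step collapses to a single $\R$-step of the canonical model, using transitivity of its $\R$ together with $({\pce};{\R})\subseteq{\R}$ there. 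Second, your restriction ``no two consecutive $\pce$-steps'' (presumably to stop stuttering of the reflexive $\pce$) ruins truth-preservation for $\to$: a path ending in a $\pce$-step has no proper $\pce$-successor in the unraveling even when its endpoint does in the canonical model. For instance with three saturated sets $a\prec b\prec c$ and $p$ forced only at $c$, $q$ nowhere, the node $\langle a\rangle{\cdot}(\pce,b)$ forces $p\to q$ vacuously while $b$ does not, and already the root $\langle a\rangle$ disagrees with $a$ on $\neg(p\to q)$. The cure, again as in the paper, is to take strict $\prec$-steps with no bound on how many occur consecutively: stuttering is then impossible, and the path set stays finite because $\prec\cup\R$ is acyclic on the finite canonical model (a cycle through an $\R$-edge would give $w\R w$ via $({\pce};{\R})\subseteq{\R}$ and transitivity, contradicting converse well-foundedness).
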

\begin{proof}
	The soundness parts are straightforward and are left to the reader. The second statement can be easily derived from the first and is also left to the reader.
	\\
	Let $\iglcp\nvdash A$ for some $A\in\lcalb$. We must find a good Kripke model $\kripke$ such that $\kcal\nVdash A$. Using canonical models (see \citep[Prop.~4.3.2]{IemhoffT} or \citep{Iemhoff}), one can find a finite, transitive, $\cpp$, and conversely well-founded Kripke model such that $\kcal\nVdash A$. Then, by \Cref{unravelling}, one obtains the desired result.
\end{proof}
\begin{lemma}\label{unravelling}
Let $\kcal=(W,\pce,\R,\V)$ be a finite irreflexive Kripke model. Then for every $w_0\in W$, there exists a finite rooted (with root $\langle w_0\rangle$) transcendental tree Kripke model $\tcal=(W',\pce',\R',\V')$ that is equivalent to $\kcal_{w_0}$; i.e., there exists a function $e:W'\longrightarrow W$ such that $e(\langle w_0\rangle)=w_0$ and for every $w\in W'$ and $A\in\lcalb$, we have $\tcal,w\Vdash A$ iff $\kcal,e(w)\Vdash A$.
\end{lemma}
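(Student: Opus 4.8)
The plan is to carry out, with $\kcal$ and $w_0$ in place of $\kcal_0$ and its root, exactly the unravelling construction from the proof of \Cref{Kripke-completeness-igl}. Let $W'$ be the set of all finite sequences $\vv{w}=\langle w_0,s_1,w_1,\ldots,s_n,w_n\rangle$ with $n\ge 0$, $w_i\in W$, $s_i\in\{\prec,\R\}$ and $w_{i-1}\mathrel{s_i}w_i$ for $1\le i\le n$; thus $\langle w_0\rangle$ is the shortest sequence in $W'$. Define $e\colon W'\to W$ by $e(\vv{w}):=w_n$, so $e(\langle w_0\rangle)=w_0$. For $\vv{v},\vv{w}\in W'$ put $\vv{v}\prec'\vv{w}$ iff $\vv{v}$ is a proper initial segment of $\vv{w}$ and every step of $\vv{w}$ past $\vv{v}$ is a $\prec$-step, and let $\pce'$ be the reflexive closure of $\prec'$; put $\vv{v}\R'\vv{w}$ iff $\vv{v}$ is a proper initial segment of $\vv{w}$ and the last step of $\vv{w}$ is an $\R$-step; and let the valuation $\V'$ on $W'$ make an atom $a$ true at $\vv{w}$ precisely when $e(\vv{w})\V a$. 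Set $\tcal:=(W',\pce',\R',\V')$.

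First I would check that $\tcal$ is a finite, rooted, transcendental tree Kripke model. That $\prec'$ is a strict partial order and that $(\pce';\R')\subseteq\R'$ are immediate, and persistence of $\V'$ follows once one notes that $\vv{w}\pce'\vv{v}$ implies $e(\vv{w})\pce e(\vv{v})$. It is rooted at $\langle w_0\rangle$: given $\vv{w}\in W'$, if $\vv{w}$ has no $\R$-step then $\langle w_0\rangle\pce'\vv{w}$, and otherwise the initial segment $\vv{u}$ of $\vv{w}$ ending with the last $\R$-step of $\vv{w}$ satisfies $\langle w_0\rangle\R'\vv{u}\pce'\vv{w}$, so $\langle w_0\rangle\Rvar\vv{w}$. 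For the tree property, the $\pce'$-predecessors of a node $\vv{w}$ are exactly those initial segments of $\vv{w}$ past which all steps are $\prec$-steps, and they form a finite $\pce'$-chain. For transcendentality, if $\vv{u}\R'\vv{w}$ then the last step of $\vv{w}$ is an $\R$-step, so no proper initial segment of $\vv{w}$ has all its later steps of type $\prec$; hence $\vv{w}$ has no proper $\pce'$-predecessor and $\vv{x}\pce'\vv{w}$ forces $\vv{x}=\vv{w}$. Finiteness is the one place where a property of $\kcal$ beyond its being a Kripke model is used: since $\R$ is transitive and irreflexive on the finite set $W$ it is a strict partial order, and on any branch of $W'$ the successive targets of the $\R$-steps — after absorbing the intervening $\prec$-runs via $(\pce;\R)\subseteq\R$ and composing by transitivity of $\R$ — form a strictly $\R$-increasing, hence injective, sequence in $W$; so a branch carries at most $|W|$ many $\R$-steps, and the $\prec$-runs between them are strictly $\prec$-increasing hence also bounded in length; thus branches have bounded length and, the branching of $W'$ being finite, $W'$ is finite.

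It remains to establish the equivalence, which I would obtain by showing that $e$ is a bisimulation and then inducting on $A$. By construction an atom $a$ is true at $\vv{w}$ in $\tcal$ iff $e(\vv{w})\V a$. For the forth conditions: $\vv{w}\pce'\vv{v}$ implies $e(\vv{w})\pce e(\vv{v})$ since the steps of $\vv{v}$ past $\vv{w}$ are $\prec$-edges of $\kcal$; and $\vv{w}\R'\vv{v}$ implies $e(\vv{w})\R e(\vv{v})$, which follows by induction on the length of the path from $\vv{w}$ to $\vv{v}$ in $\kcal$, absorbing an initial $\prec$-step via $(\pce;\R)\subseteq\R$, composing an initial $\R$-step via transitivity of $\R$, and using that the last step is of type $\R$. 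For the back conditions: if $e(\vv{w})\pce u$ then $u=e(\vv{w})$, witnessed by $\vv{w}$, or $e(\vv{w})\prec u$, witnessed by $\vv{w}$ extended by a $\prec$-step to $u$; if $e(\vv{w})\R u$ then $\vv{w}$ extended by an $\R$-step to $u$ is a witness; in each case $e$ maps the witness to $u$. Consequently, for every $\vv{w}$ one has $\{e(\vv{v}):\vv{v}\sce'\vv{w}\}=\{u:u\sce e(\vv{w})\}$ and $\{e(\vv{v}):\vv{w}\R'\vv{v}\}=\{u:e(\vv{w})\R u\}$, and with these two identities the truth lemma $\tcal,\vv{w}\Vdash A\iff\kcal,e(\vv{w})\Vdash A$ goes through by a routine induction on $A$ (the $\wedge$ and $\vee$ clauses trivial, the $\to$ clause using the first identity and the induction hypothesis, the $\Box$ clause using the second, and the $\rhd$ clause using both, so that the equivalence in fact holds for all of $\lcalp$ and in particular for $A\in\lcalb$).

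The only point needing genuine care is the second forth condition, namely that every $\R'$-edge of $\tcal$ projects to an actual $\R$-edge of $\kcal$; this, exactly as at the analogous step of the proof of \Cref{Kripke-completeness-igl}, rests on the structural axiom $(\pce;\R)\subseteq\R$ together with transitivity of $\R$, and these two, together with finiteness of $W$ and irreflexivity of $\R$, are also what force $W'$ to be finite. Everything else is bookkeeping.
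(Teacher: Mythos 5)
Your construction is exactly the paper's: the same sequence-based unravelling $W'$, the same $\pce'$, $\R'$, $\V'$ and $e$, the paper merely leaving to the reader the verification that you have carried out correctly (tree, root, transcendentality, finiteness, and the truth lemma via the forth/back conditions). The only point worth flagging is the one you flag yourself: finiteness of $W'$ and the $\R'$-forth condition use transitivity of $\R$ in addition to the stated ``finite irreflexive'' hypothesis, which is harmless here since the lemma is applied (via \cref{unravelling-2}) only to transitive models.
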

\begin{proof}
Define $\tcal:=(W',\pce',\R',\V')$ as follows.
\begin{itemize}[leftmargin=*]
		\item $W' :=$ the set of finite sequences (excluding the empty sequence) $\vv{w}:=\langle w_0,s_1,w_1,s_2,w_2,s_3\ldots,s_n,w_n\rangle$ with the following properties:
		\begin{itemize}
			\item $w_i\in W$ and $s_i\in\{\prec,\R\}$ for every $1\leq i\leq n$.
			\item $w_i\mathrel{s_{i+1}} w_{i+1}$ for every $0\leq i < n$. This means that if $s_{i+1}={\prec}$, then $w_i\prec w_{i+1}$, and if $s_{i+1}={\R}$, then $w_i\R w_{i+1}$.
		\end{itemize}
		\item Define the function $e:W'\longrightarrow W$ as follows. For $\vv{w}:=\langle w_0,s_1,w_1,s_2,w_2,s_3\ldots, s_n,w_n\rangle$, define $e(\vv{w}):=w_n$. Also define $e(\langle w_0\rangle):=w_0$.
		\item Let $\vv{v}:=\langle v_0,t_1,v_1,t_2,v_2,t_3\ldots,t_m,v_m\rangle$ and $\vv{w}:=\langle w_0,s_1,w_1,s_2,w_2,s_3\ldots,s_n,w_n\rangle$. Then define $\vv{v} \pce' \vv{w}$ iff the following conditions are met:
		\begin{itemize}
			\item $\vv{v}$ is an initial segment of $\vv{w}$.
			\item For every $m<i\leq n$, we have $s_i={\prec}$.
		\end{itemize}
		Also define $\vv{v} \R' \vv{w}$ iff the following conditions are met:
		\begin{itemize}
			\item $\vv{v}$ is an initial segment of $\vv{w}$.
			\item $n>m$.
			\item $s_n={\R}$.
		\end{itemize}
		\item $\vv{w}\V' a$ iff $e(\vv{w})\V a$.
\end{itemize}
Verifying that this $\tcal$ fulfills all required conditions is left to the reader.
\end{proof}

\subsection{The G\"odel translation $\sggt{(.)}$}
\label{sec-Box-trans}
The following translation is a variant of G\"odel's celebrated translation for embedding $\IPC$ into ${\sf S4}$ \citep{Godel33}.
\begin{definition}\label{Definition-Box translation}
	For every proposition $A\in\lcalb$, define $\ggt A$ and $\sggt A$ inductively as follows:
	\begin{itemize}
\item $\ggt A:=A$, $\sggt{A}:= \Boxdot A$, for $A\in\varr$.
		\item $\ggt A:=\sggt{A}:= A$ for $A\in\parb$.
		\item $\ggt{(B\circ C)}:=\ggt{B}\circ \ggt{C}$ and $\sggt{(B\circ C)}:=\sggt{B}\circ \sggt{C}$ for $\circ\in\{\vee,\wedge\}$.
		\item $\ggt{(B\rightarrow C)}:=\Boxdot(\ggt B\rightarrow \ggt C)$ and $\sggt{(B\rightarrow C)}:=\Boxdot(\sggt B\rightarrow \sggt C)$.
	\end{itemize}
	$A\in\lcalb$ is called \textit{self-complete} if there exists some $B\in\lcalb$ such that $A=\sggt B$:
\emli
	$$\sfs:= \{\ggt B: B\in\lcalb\} \quad\text{and}\quad \sfsb:=\{\sggt B: B\in\lcalb\}.$$
	Note that in the presence of $\cpa$, the two translations $\ggt A$ and $\sggt A$ are equivalent. Also, for $A\in\lcalb(\parb)$, we have $\sggt A=\ggt A$. In the rest of the paper, we may freely interchange between the two translations whenever they are equivalent.
	$A\in\lcalb$ is called \textit{$\sft$-complete} if $\tvdash A\to \Box A$:
\emli
	$$\tcom:=\{A\in\lcalb: \tvdash A\to\Box A\}.$$
	Note that for every $\sft\supseteq \ikfourcp$, we have $\sfs\subseteq \tcom$. Whenever no confusion is likely, we may omit the superscript $\sft$ in the notation $\tcom$ and simply write $\sfc$.
\end{definition}
\begin{theorem}\label{igl-closure-box}
 $\iglcp$, and $\iglca$ are closed under $\sggt{(.)}$; i.e., 
 for every $A\in\lcalb$, $\iglcp\vdash A$ \uparan{$\iglca\vdash A$} implies $\iglcp\vdash \sggt A$ \uparan{$\iglca\vdash \sggt A$}.
\end{theorem}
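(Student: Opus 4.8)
The plan is to induct on the length of an $\igl$-derivation of $A$ and show $\igl\vdash A^\Box$; the $\iglca$-statement then follows with one extra axiom to check. Throughout, $\Boxdot C$ is the strong box $\Box C\wedge C$, which I abbreviate $\Box^+C$; note $\igl\vdash\Box^+C\to C$. The first thing to record is that $\Box^+$ behaves like an ${\sf S4}$-modality inside $\igl$: from ${\sf K}$, ${\sf 4}$ and necessitation one gets $\igl\vdash\Box^+(C\to D)\to(\Box^+C\to\Box^+D)$, $\igl\vdash\Box^+C\to\Box^+\Box^+C$, and closure of $\igl$ under the rule ``from $\psi$ infer $\Box^+\psi$'' (since $\igl$ is closed under $\psi\mapsto\Box\psi$). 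The second thing to record is the stability of the image: for every $A\in\lcalb$, $\igl\vdash A^\Box\to\Box A^\Box$; this is the instance $\sfs\subseteq\tcom$ for $\sft:=\igl$ mentioned after \cref{Definition-Box translation}, and it is proved by a routine induction on $A$ (for $A\in\varr$ and $A\in\boxed$ use ${\sf 4}$; for $A\in\parr$ use $\cpp$; the cases $\vee,\wedge$ use that $\Box$ distributes over $\wedge$ and is monotone; the case $\to$ uses the ${\sf 4}$-like law for $\Box^+$ just noted).

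Granting these, the induction is mostly mechanical. If $A$ is obtained by modus ponens from $B$ and $B\to A$, then by the induction hypothesis $\igl\vdash B^\Box$ and $\igl\vdash(B\to A)^\Box=\Box^+(B^\Box\to A^\Box)$, whence $\igl\vdash B^\Box\to A^\Box$ by $\Box^+C\to C$, so $\igl\vdash A^\Box$. If $A=\Box B$ is obtained by necessitation, then $\igl\vdash\Box B$ again by necessitation, and $A^\Box=\Box B$ because $\Box B\in\parb$ is fixed by $(.)^\Box$. For the axioms: the box of any $\IPC$-axiom $\varphi$ is fixed by $(.)^\Box$ and $\igl\vdash\Box\varphi$; and for $\cpp$, ${\sf K}$, ${\sf 4}$, ${\sf L}$, since every boxed subformula is in $\parb$ and hence fixed, the translation is just $\Box^+$ applied to (essentially) the axiom itself — e.g.\ ${\sf 4}^\Box=\Box^+(\Box A\to\Box\Box A)$, ${\sf L}^\Box=\Box^+\big(\Box(\Box A\to A)\to\Box A\big)$, $\cpp^\Box=\Box^+(p\to\Box p)$, and ${\sf K}^\Box=\Box^+\big(\Box(A\to B)\to\Box^+(\Box A\to\Box B)\big)$, where the body is $\igl$-provable (for ${\sf K}$ one first derives $\Box(\Box A\to\Box B)$ from $\Box(A\to B)$ using ${\sf 4}$ and ${\sf K}$) — and one finishes with the $\Box^+$-rule.

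The one case carrying real content is an $\IPC$-axiom schema $\varphi\in\lcalb$. View $\varphi$ as an intuitionistically provable formula built by $\vee,\wedge,\to$ from its ``generalized atoms'' (the variables, parameters and boxed subformulas occurring in it, together with $\bot$). On such formulas $(.)^\Box$ coincides, up to $\igl$-provable equivalence, with the G\"odel--McKinsey--Tarski translation with the box read as $\Box^+$ — the fact that $(.)^\Box$ leaves parameters and boxed formulas unboxed being harmless, since $\igl\vdash p\leftrightarrow\Box^+p$ and $\igl\vdash\Box B\leftrightarrow\Box^+\Box B$. The soundness proof of that translation uses nothing beyond intuitionistic propositional logic, the ${\sf S4}$-laws and necessitation for the box, and the stability of translated formulas — all of which are available for $\Box^+$ in $\igl$ by the first paragraph — so $\igl\vdash\varphi^\Box$. (Alternatively one can just run through the finitely many $\IPC$-schemata directly; e.g.\ $(A\to(B\to A))^\Box=\Box^+\big(A^\Box\to\Box^+(B^\Box\to A^\Box)\big)$, and from $A^\Box$ one gets $\Box A^\Box$ by stability, hence $\Box(B^\Box\to A^\Box)$ by ${\sf K}$, hence $\Box^+(B^\Box\to A^\Box)$, so the body is provable and one applies the $\Box^+$-rule.) This axiom-checking, and in particular getting the $\IPC$-schemata through, is the main obstacle; everything else is bookkeeping. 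Finally, $\iglca$ adds only $\cpa$, and for $a\in\varr$ one has $(a\to\Box a)^\Box=\Box^+(\Box^+a\to\Box a)$ whose body $(\Box a\wedge a)\to\Box a$ is even $\IPC$-provable (the case $a\in\parr$ is already covered by $\cpp$), so the same induction gives the $\iglca$-statement.
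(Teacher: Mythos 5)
Your proposal is correct and follows exactly the route the paper intends: the paper's proof of \cref{igl-closure-box} is precisely the "straightforward induction on the proof of $\igl\vdash A$ (resp.\ $\iglca\vdash A$)" that it leaves to the reader, and your write-up just fills in the details (the $\Boxdot$-as-${\sf S4}$ laws, stability $\vdash A^\Box\to\Box A^\Box$, and the axiom-by-axiom checks, including the G\"odel-style treatment of the $\IPC$ schemata). No gap to report.
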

\begin{proof}
Straightforward induction on the proof of $\iglcp\vdash A$ ($\iglca\vdash A$), left to the reader.
\end{proof}

\subsection{Relative projectivity for the modal language}
\label{Gam-projec-modal}
Motivated by the algebraic notion of projectivity, \citep{Ghilardi97} introduced the notion of projectivity for propositional logics. Then \citep{Ghil99,Ghil2000modal} utilized this notion for intuitionistic and classical transitive modal logics. Obvious connections between unification and admissibility of inference rules were already known. Nevertheless, \citep{Iemhoff-admissibility} used results from \citep{Ghil99} to characterize the admissible rules of intuitionistic logic (their decidability was already discovered in \citep{Rybakov_1987}). In \citep{PLHA0}, we relativized the non-modal notion of projectivity based on the set of No Nested Implications to the Left ($\NNIL$), which has been shown to play an important role in the study of intuitionistic logic (see \citep{Visser-Benthem-NNIL,Visser02}). Essentially, the results in \citep{PLHA0} are imitations of the methods introduced in \citep{Ghil99,Iemhoff-admissibility}.

In this subsection, we extend the propositional notion of relative projectivity to the modal language.
\\
Let $\lcalz(\parb)$ denote the set of Boolean combinations of parameters and boxed propositions. Let $A\in\lcalb$ and $\Gamma\subseteq\lcalb$. A substitution $\theta$ is called \textit{$A$-projective} (in $\sft$) if
\emli
\begin{equation}\label{Eq-star}
\text{For all atomic $a$,} \quad \tvdash A\to (a\lr \theta(a)).
\end{equation}
A substitution $\theta$ is a \textit{$\Gamma$-fier} of $A\in\lcalb$ if
\emli
$$
\tvdash \th(A)\in \Gamma(\parb) \quad \text{i.e., $\th(A)$ is $\sft$-equivalent to some $A'\in\Gamma\cap\lcalz(\parb)$.}
$$
Recall that $\th$ is a nonstandard notion that essentially substitutes only those variables that are not in the scope of boxes. In this case, we use the notation $A\xtratht \Gamma$. If $\Gamma$ is a singleton $\{A'\}$, we write $A\xtratht A'$ instead of $A\xtratht\{A'\}$.
The substitution $\theta$ is a unifier for $A$ if it is a $\{\top\}$-fier for $A$. We say that a substitution $\theta$ \textit{projects} $A$ to $\Gamma$ in $\sft$ (notation: $A\xrattht \Gamma$) if $\theta$ is $A$-projective in $\sft$ and $A\xtratht \Gamma$.
If $\Gamma=\{A'\}$, we simplify $A\xrattht \{A'\}$ to $A\xrattht A'$.
We say that $A$ is \textit{$\Gamma$-projective} in $\sft$ (notation $A\xrat{}{\sft} \Gamma$) if there exists some $\theta$ such that $A\xrat{\theta}{\sft} \Gamma$. Also, $\dar\sft\Gamma$ denotes the set of all propositions that are $\Gamma$-projective in $\sft$. Whenever $\sft$ can be inferred from context, we may omit it and simply write $\darl\Gamma$. We say that $A$ is \textit{projective} if it is $\{\top\}$-projective.

\begin{remark}\label{local-proj}
If $A\xrat{}{\sft} A'\in\lcalz(\parb)$, then there exists some $\tau$ that is the identity on every atomic $a\nin\subo A$ and such that $A\xrat{\tau}{\sft} A'$.
\end{remark}
\begin{proof}
Let $A\xrat{\theta}{\sft} A'\in\lcalz(\parb)$ and define $\tau$ as follows:
\emli
$$\tau(a):=\begin{cases}
\theta(a) \quad &: a\in\subo A\\
a &: \text{otherwise}
\end{cases}$$
Then obviously $A\xrat{\tau}{\sft} A'$.
\end{proof}

\begin{remark}\label{Remark-modal-proj}
Let $\sft$ be a modal logic closed under outer substitutions and containing $\IPC$. Then for every $\Gamma$-projective proposition $A$ in $\sft$, there exists a unique (modulo $\sft$-provable equivalence) $A^\dagger\in\Gamma$ such that $A\xrat{}{\sft}A^\dagger$. Such $A^\dagger$ is called the \textit{$\Gamma$-projection} of $A$ in $\sft$. Moreover, $\tvdash A\to A^\dagger$.
\end{remark}
\begin{proof}
Let $A\xrat{\theta}{}A'$ and $A\xrat{\tau}{}A''$ with $A',A''\in\Gamma$. From the $A$-projectivity of $\theta$ and $\tau$, for every atomic $a$, we have $A\vdasht \theta(a) \lr\tau(a)$. Hence $A\vdasht \th(A)\lr\ov\tau(A)$, and then $A\vdasht A'\lr A''$. By applying $\th$ to both sides of this derivation, we have $\th(A)\vdasht \th(A')\lr\th(A'')$. Since $A',A''\in\lcalz(\parb)$, $\th(A')=A'$ and $\th(A'')=A''$, and thus $\vdasht A'\to A''$. Similarly, we have $\vdasht A''\to A'$.

Next, we show $\tvdash A\to A^\dagger$. Let $A\xrattht A^\dagger$. Then $A\vdasht A\lr \th(A)$, which implies $A\vdasht A\lr A^\dagger$, and hence $\tvdash A\to A^\dagger$.
\end{proof}

\begin{lemma}\label{Lem-red-proj-rela}
Let $\Gamma$ be closed under conjunctions, and let $\sft$ be $\Gamma(\parb)$-axiomatizable over $\IPC$\footnote{This means that there exists a set $\Delta\subseteq\Gamma\cap\lcalz(\parb)$ which, over $\IPC$, axiomatizes $\sft$; i.e., every member of $\Delta$ is a theorem of $\sft$, and for every theorem $A$ of $\sft$, there is a finite subset $\Delta_0\subseteq \Delta$ such that $\IPC\vdash \bigwedge\Delta_0\to A$.}. Then for every $A\xrattht A^\dagger$, there exists some $E\in\Gamma(\parb)$ such that $\sft\vdash E$ and $(E\wedge A)\xrightarrowtail{\IPC}{\theta} (E\wedge A^\dagger)$. Furthermore, for every $A\in\dar\sft \Gamma$, there exists some $B\in\dar\IPC\Gamma$ such that $\sft\vdash A\lr B$. In other words, modulo $\sft$-provable equivalence, $\dar\sft \Gamma\subseteq \dar\IPC \Gamma$.
\end{lemma}
\begin{proof}
We only need to prove the first statement; the rest is a direct consequence. For the first statement, take $E$ as the conjunction of all $\sft$-axioms in $\Gamma(\parb)$ that are used in \cref{Eq-star} and in the equivalence $\tvdash \th(A)\in \Gamma(\parb)$. Note that we need to use \Cref{local-proj} to ensure that only finitely many axioms are required for \cref{Eq-star}.
\end{proof}

\subsection{Notations on sets of propositions}\label{notation-set}
In the remainder of this paper, we deal with several sets of modal propositions. To simplify notation, we write ${\sf X_1\ldots X_n}$ for ${\sf X_1}\cap\ldots\cap{\sf X_n}$ when ${\sf X_i}$ are sets of propositions. For example, we write $\snnil$ for the set of propositions that are $\nnil$ (as defined in \Cref{sec-NNIL-def}) and self-complete (as defined in this section). Also, $\fsubeq$ indicates the finite subset relation. Given $A\in\lcalb$, let $\sub{A}$ be the set of all subformulas of $A$, and let $\subo{A}$ be the set of all subformulas of $A$ that are outside boxes:
\begin{itemize}
 	\item $\subo{a}:=\{a\}$ for $a\in\atom$.
 	\item $\subo{\Box A}:=\{\Box A\}$.
 	\item $\subo{B\circ C}:=\{B\circ C\}\cup\subo{B}\cup\subo{C}$ for $\circ\in\{\vee,\wedge,\to\}$.
\end{itemize}
Also, for an arbitrary set $\Gamma$ of propositions, define
\begin{itemize}
\item $\Gamma^\vee:=\{\bigvee \Delta:\Delta\fsubeq \Gamma \text{ and }\Delta\neq\emptyset\}$.
\item $\dar{\sft}{\Gamma}:=$ the set of all $\Gamma$-projective propositions in the logic $\sft$.
	Whenever $\sft$ may be inferred from context, we may omit $\sft$ and simply write $\darl \Gamma$ (see \Cref{Gam-projec-modal} for more details).
	\item $\Gamma(X)$ denotes the set $\Gamma\cap\lcalz(X)$. Also let $\Gamma(\Box):=\Gamma(\parb)$.
\end{itemize}
Also define
\begin{itemize}
	\item $\sfs:= \{\sggt B: B\in\lcalb\}$.
	\item $\tcom:=\{A\in\lcalb: \tvdash A\to\Box A\}$.
	If $\sft$ may be inferred from context, we may omit the argument $\sft$ from the notation.
	\item $\prim\sft:=\Prim\sft:=$ the set of all $\sft$-prime propositions; i.e., the set of propositions $A$ such that for every $B,C$ with $\tvdash A\to (B\vee C)$, either $\tvdash A\to B$ or $\tvdash A\to C$.
	If the logic $\sft$ can be inferred from context, we may omit the argument $\sft$ from the notation.
	\item Given a set $\Gamma$ of propositions, define $\nfz\Gamma$ as the set of propositions $B\in\lcalb$ such that either $B\in\Gamma$ or $\Boxdot B\in\Gamma$. Then define the set $\nf\Gamma$ of propositions in \textit{$\Gamma$-Normal Form} as follows:
\emli
$$\nf\Gamma:=\{A\in\lcalb: \forall\, \Box B\in\sub A\ B\in\nfz\Gamma\}.$$
\end{itemize}
Finally, we assume that $(.)^\vee$ has the lowest precedence. This means that
\emli
$$
\cdsnbv:=(\sfc\darl\snnil)^\vee.
$$

\begin{lemma}\label{cdspnb-prime}
If $\Gamma$ is a set of $\sft$-prime propositions, then ${\downarrow}\Gamma$ is also a set of $\sft$-prime propositions.
\end{lemma}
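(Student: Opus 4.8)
The plan is to reduce the statement to the basic \emph{transfer} property of $\Gamma$-projective propositions (\cref{notation-set}) and then read off primeness. Fix $A\in{\downarrow}\Gamma$; by definition there are a substitution $\theta$ and some $B\in\Gamma$ with $A\xrat{\theta}{\sft} B$, that is, $\sft\vdash\th(A)\lr B$ and $\sft\vdash A\to(x\lr\theta(x))$ for every $x\in\varr$. Since $B$ is $\sft$-prime and primeness is invariant under $\sft$-provable equivalence, $\th(A)$ is $\sft$-prime as well. First I would record the elementary fact that $\sft\vdash A\to(F\lr\th(F))$ for every $F\in\lcalb$, by induction on $F$: for $F\in\varr$ this is the defining condition on $\theta$; for $F\in\parr\cup\{\bot\}$ it is trivial since $\theta$ (hence $\th$) is the identity there; the cases $F=G\circ H$ with $\circ\in\{\vee,\wedge,\to\}$ follow from the induction hypothesis by intuitionistic replacement; and the case $F=\Box G$ is immediate because $\th(\Box G)=\Box G$. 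Taking $F=A$ gives $\sft\vdash A\to\th(A)$, hence $\sft\vdash A\to B$.

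Next I would establish the transfer equivalence: for every $F\in\lcalb$,
$$\sft\vdash A\to F\quad\Longleftrightarrow\quad\sft\vdash B\to\th(F).$$
The direction $(\Leftarrow)$ is immediate from the previous step: $\sft\vdash B\to\th(F)$ gives $\sft\vdash\th(A)\to\th(F)$, and composing with $\sft\vdash A\to\th(A)$ and $\sft\vdash A\to(F\lr\th(F))$ yields $\sft\vdash A\to F$. The direction $(\Rightarrow)$ is the substantive one, and I expect it to be the main obstacle: one cannot simply ``apply $\th$'' to a derivation of $A\to F$, because the outer substitution $\th$ is not a logical substitution and does not in general preserve $\sft$-derivability. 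Making this step work requires routing through the genuine substitution $\theta$ (which does preserve derivability) and then controlling the discrepancy between $\theta(G)$ and $\th(G)$ — a discrepancy confined to positions inside boxes — by means of the condition $\sft\vdash A\to(x\lr\theta(x))$ together with the theory of $\Gamma$-projectivity developed in \cref{pres-admis,secPres} and, for the logics actually in play, the Kripke completeness of \cref{Kripke-completeness-igl}.

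Granting the transfer equivalence, the lemma follows in a couple of lines. Let $A\in{\downarrow}\Gamma$ and suppose $\sft\vdash A\to(C\vee D)$. By the equivalence, $\sft\vdash B\to\th(C\vee D)$, i.e.\ $\sft\vdash B\to(\th(C)\vee\th(D))$. Since $B$ is $\sft$-prime, $\sft\vdash B\to\th(C)$ or $\sft\vdash B\to\th(D)$, and applying the transfer equivalence in the reverse direction gives $\sft\vdash A\to C$ or $\sft\vdash A\to D$. Hence every element of ${\downarrow}\Gamma$ is $\sft$-prime, which is the claim.
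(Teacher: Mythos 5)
Your setup matches the paper's proof up to the decisive step, but at that step you have a gap rather than a proof. Writing $A\xrat{\theta}{\sft}B$ with $B\in\Gamma$, everything hinges on passing from $\tvdash A\to(C\vee D)$ to $\tvdash B\to(\th(C)\vee\th(D))$, i.e.\ the forward half of your ``transfer equivalence''. The paper does this in one line: since $\sft$ is closed under outer substitutions, $\tvdash A\to(C\vee D)$ gives $\tvdash \th(A)\to(\th(C)\vee\th(D))$ (note that $\th$ commutes with $\to$ and $\vee$), and $\tvdash\th(A)\lr B$ turns this into $\tvdash B\to(\th(C)\vee\th(D))$; then primeness of $B$ and the $A$-projectivity equivalences $\sft,A\vdash F\lr\th(F)$ --- which you do prove --- bring the chosen disjunct back to $A$, exactly as in your closing paragraph. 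You instead declare this forward step ``the main obstacle'', assert that one cannot apply $\th$ to a derivation, and gesture at an unspecified detour through the genuine substitution $\theta$, \cref{pres-admis,secPres} and \cref{Kripke-completeness-igl}. No argument is actually given, so the proposal is incomplete precisely at its only substantive point.

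Your worry about $\th$ is legitimate only for logics that are not closed under outer substitutions (e.g.\ $\iglca$, because $\th$ applied to the axiom $x\to\Box x$ yields $\theta(x)\to\Box x$); but the lemma is stated and used under the paper's standing hypotheses in which $\sft$ is closed under outer substitutions (this hypothesis is explicit in the neighbouring \cref{adsm-pres} and is part of the {\sf TYPE}-$\alpha$ convention of \cref{secPres}, the setting where ${\downarrow}\Gamma$ is actually employed), and the paper's proof tacitly uses exactly that closure. If one drops this hypothesis, your proposed detour does not obviously succeed either: derivability is preserved by the full substitution $\theta$ only when $\sft$ is closed under ordinary substitutions, and the discrepancy between $\theta$ and $\th$ inside boxes is not controlled by $\tvdash A\to(x\lr\theta(x))$, which speaks only about positions outside boxes. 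So the repair is not a new argument but the missing hypothesis: invoke (or import from the standing convention) closure of $\sft$ under outer substitutions, after which the forward transfer is immediate and the rest of your argument goes through as written.
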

\begin{proof}
	Let $E\in{\downarrow}\Gamma$ such that $\tvdash E\to (B\vee C)$. Also assume that $E\xrat{\theta}{\sft}E^\dagger\in\Gamma$. Hence $\tvdash E^\dagger\to(\th(B)\vee\th(C))$, and since $E^\dagger$ is $\sft$-prime, either $\tvdash E^\dagger\to\th(B)$ or $\tvdash E^\dagger\to \th(C)$. Hence either $\sft,E\vdash \th(E\to B)$ or $\sft,E\vdash \th(E\to C)$. Since $\theta$ is $E$-projective, we have either $\sft,E\vdash E\to B$ or $\sft,E\vdash E\to C$.
\end{proof}

\subsection{$\NNIL$ propositions}\label{sec-NNIL-def}
The class of \textit{No Nested Implications to the Left} ($\NNIL$) formulas for the non-modal language $\lcalz$ was introduced in \citep{Visser-Benthem-NNIL} and further explored in \citep{Visser02}. Here we summarize the necessary results from \citep{Visser-Benthem-NNIL,Visser02}. For simplicity, we may write ${\sf N}$ for $\NNIL$. A crucial result of \citep{Visser02} is an algorithm that, given $A\in\lcalz$, returns its best $\NNIL$ approximation $A^*$ from below; i.e., $\vdash A^*\ra A$, and for all $\NNIL$ formulas $B$ such that $\vdash B\ra A$, we have $\vdash B\ra A^*$. Moreover, for all $\Sigma_1$-substitutions $\sigma$, we have $\HA\vdash \sigma_{_{\sf HA}}(\Box A\lr \Box A^*)$ \citep{Visser02}. The classes $\NNIL$ and $\NI$ of propositions in $\lcalb$ are defined inductively:
\begin{itemize}
\item $A\in\NNIL$ and $A\in\NI$ for every $A\in\atomb$.
\item $B\circ C\in \NNIL$ if $B,C\in\NNIL$. Also $B\circ C\in \NI$ if $B,C\in\NI$ ($\circ\in\{\vee,\wedge\}$).
\item $B\to C\in\NNIL$ if $B\in\NI$ and $C\in\NNIL$.
\end{itemize}
\begin{theorem}\label{int-modal-submod-nnil}
Let $A\in\NNIL$ and let $\kcal'\leq\kcal$ be two Kripke models.
Then $\kcal\Vdash A$ implies $\kcal'\Vdash A$.
\end{theorem}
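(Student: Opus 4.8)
The plan is to prove the statement by induction on the build-up of $A$, strengthening it so that the induction survives the implication clause. Specifically, I would establish by a single induction on $A$ that, for every $w\in W$,
\begin{itemize}
\item if $A\in\NI$ then $\kcal,w\Vdash A$ if and only if $\kcal',w\Vdash A$; and
\item if $A\in\NNIL$ then $\kcal,w\Vdash A$ implies $\kcal',w\Vdash A$.
\end{itemize}
The theorem is then the second clause read simultaneously at every world of $\kcal$.

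For the base case $A\in\atomb$: if $A$ is an atomic variable or parameter, the forcing of $A$ at $w$ is decided solely by the valuation at $w$, and if $A=\bot$ it fails at every world; since $\kcal$ and $\kcal'$ share the same carrier and the same valuation, both directions hold. If $A=\Box B$ then, because $\kcal$ and $\kcal'$ have the same worlds, the same modal accessibility relation, and the same valuation, the truth value of $\Box B$ at $w$ is unaffected by the passage to $\kcal'$; here $\Box B$ is treated as a leaf, so no recursion into $B$ is attempted. The cases $A=B\wedge C$ and $A=B\vee C$ are immediate from the induction hypothesis, since conjunction and disjunction are evaluated locally at $w$, so the $\NI$-equivalence and the $\NNIL$-implication for $A$ are inherited from those for $B$ and $C$. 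The only genuinely new case is $A=B\to C$ with $B\in\NI$ and $C\in\NNIL$, where only the $\NNIL$-implication is needed. Assume $\kcal,w\Vdash B\to C$, and let $u$ be any world with $w\pce'u$ and $\kcal',u\Vdash B$; I must show $\kcal',u\Vdash C$. Since $\pce'\subseteq\pce$ we have $w\pce u$; as $B\in\NI$, the induction hypothesis gives $\kcal,u\Vdash B$, whence $\kcal,u\Vdash C$ because $\kcal,w\Vdash B\to C$; and since $C\in\NNIL$, the induction hypothesis applied at the world $u$ yields $\kcal',u\Vdash C$. Therefore $\kcal',w\Vdash B\to C$, completing the induction.

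The step I expect to require the most care is the interplay of the two clauses, together with the boxed-leaf base case. Deleting $\pce$-edges when passing from $\kcal$ to $\kcal'$ can only make an implication \emph{easier} to force — there are fewer successors to inspect and the valuation of atoms is untouched — which is exactly what yields the $\NNIL$-implication; but to transfer an $\NI$-antecedent that holds in $\kcal'$ back to $\kcal$ one needs $\NI$-formulas to be forced \emph{identically} in the two models, and this is precisely why the induction must carry the two-sided statement. Consequently the crux is to justify that boxed subformulas — the only leaves beyond atoms and $\bot$ — retain their truth value under the $\pce$-reduction, which is where one leans on the properties of the class of Kripke models under consideration.
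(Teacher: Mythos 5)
Structurally your argument is the paper's own proof. The paper first shows, by induction on $\NI$-formulas, that forcing at every node is the same in $\kcal$ and $\kcal'$, and then, by a second induction on $\NNIL$-formulas, that forcing is preserved in passing to $\kcal'$; you merely package the two claims as one simultaneous induction, and your treatment of $\wedge$, $\vee$ and of an implication $B\to C$ (use ${\pce'}\subseteq{\pce}$, pull the $\NI$-antecedent back to $\kcal$ by the two-sided clause, push the $\NNIL$-consequent forward by the one-sided clause) is exactly the intended argument.

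The step that does not go through as you state it is the boxed base case, which your closing paragraph rightly isolates as the crux. Sharing the set of worlds, the modal relation and the valuation does not by itself make $\Box B$ invariant: a boxed leaf of $\NNIL$ is $\Box B$ for an \emph{arbitrary} $B\in\lcalb$, and $B$ is still evaluated at the $\R$-successors using the intuitionistic order, which has shrunk. Concretely, take $W=\{w,u,v\}$ with $w\R u$ and $u\prec v$, and let the variable $x$ hold at $v$ only; then $\Box\neg\neg x\in\atomb\subseteq\NNIL$ holds at every node of $\kcal$, while deleting the single edge $u\prec v$ yields $\kcal'\leq\kcal$ with $\kcal',u\Vdash\neg x$ and hence $\kcal',w\nVdash\Box\neg\neg x$; so the invariance of boxed leaves (and with it the two-sided $\NI$ clause, and the statement itself for this $A$) is not a consequence of the stated hypotheses, and the theorem as written offers no ambient ``class of Kripke models'' to lean on, contrary to the hope expressed in your last sentence. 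In fairness, the paper's two-line proof leaves exactly the same point implicit, and the claim is unproblematic where the theorem is actually used (\cref{itv-closure-pres}), since there the deleted $\pce$-edges all issue from the root, so the value of every boxed subformula at the relevant nodes is untouched. To make your induction airtight you should either add the hypothesis that the forcing of the boxed subformulas of $A$ is unchanged node by node (after which your proof is complete), or restrict to submodels of that special form.
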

\begin{proof}
Let $\kcal=(W,\pce,\R,\V)$ and $\kcal'=(W,\pce',\R,\V)$. First, by induction on $A\in\NI$, show that for every $w\in W$, $\kcal,w\Vdash A$ iff $\kcal',w\Vdash A$. Then, by induction on $A\in\NNIL$, show that for every $w\in W$, if $\kcal,w\Vdash A$, then $\kcal',w\Vdash A$.
\end{proof}

Recall that (by \cref{notation-set}) $\NNIL(X)$ is the set of $\NNIL$ propositions that are also Boolean combinations of formulas in $X$.
\begin{lemma}\label{Remark-NNIL-finiteness}
Modulo $\IPC$-provable equivalence, $\NNIL(X)$ is finite whenever $X$ is a finite set of atomic or boxed formulas. Moreover, the set of all $\IPC$-provably equivalent formulas in this set is decidable.
\end{lemma}
\begin{proof}
Decidability follows from the decidability of $\IPC$ and the following argument for the finiteness of $\NNIL(X)$; it is left to the reader. To show that $\NNIL(X)$ is finite, we will find an upper bound $f(n,m)$ for the number of $(\IPC+Y)$-inequivalent propositions in $\NNIL(X)$, where $n$ is the number of elements in $X$ and $m$ is the number of elements in $Y\subseteq X$ (meaning $0\leq m\leq n$).

First, observe that each proposition in $\NNIL(X)$ can be written as $\bigvee\bigwedge C$, where $C$ either belongs to $X$ or is an implication $D\to E$ with $D\in X$ and $E\in\NNIL(X)$. We call such $C$ a component. Hence, $f(n,m)$ is at most $2^{2^{g(n,m)}}$, where $g(n,m)$ is the number of $\IPC+Y$-inequivalent components in $\NNIL(X)$ with $n$ and $m$ as the number of elements in $X$ and $Y$, respectively. Obviously, $g(n,n)=1$. For $m<n$, observe that $g(n,m)\leq (n-m)f(n,m+1)+ n-m$, because one may assume that each component $C$ is either of the form $E\to A$ for some $E\in X\setminus Y$ and some $A$ in $\NNIL(X)$ (modulo $(\IPC+Y+E)$-inequivalence), or $C\in X\setminus Y$. Hence, the following (primitive) recursive function is an upper bound for the number of all formulas in $n$ atomics:
\emli
\[
f(n,n):= 1
\quad \quad , \quad
\quad
f(n,m):=2^{2^{(n-m)(f(n,m+1)+1)}}.\qedhere
\]
\end{proof}

\subsection{Admissibility and preservativity}\label{pres-admis}
Given a logic $\sft$, we say that an inference rule \Ax{$A$}\UI{$B$}\DP is \textit{admissible} to $\sft$ if $\tvdash \theta(A)$ implies $\tvdash\theta(B)$ for every substitution $\theta$. Characterizing all admissible rules for classical logic is trivial: \Ax{$A$}\UI{$B$}\DP is admissible iff $A\to B$ is classically valid. However, the case for intuitionistic logic $\IPC$ or modal logics like ${\sf K4}$ is non-trivial (see \cite{Iemhoff-admissibility,iemhoff2009proof,jevrabek2005admissible, Rybakov87,Rybakov_1987,Rybakov_Book,goudsmit2014unification, iemhoff2016consequence}). In this paper, we deal with a generalization of admissibility: admissibility relative to a set $\Gamma$. This generalization was considered in \citep{PLHA0} for the propositional language, and here we extend it to the modal language.
Given a logic $\sft$ and a set $\Gamma$ of propositions, define
\begin{center}
	$A\argt B$ iff for every substitution $\theta$ and every $C\in\Gamma(\parb)$: $\tvdash \th(C\to A)$ implies $\tvdash \th(C\to B)$.
\end{center}

Also, we define the binary relation $\prtg$, the \textit{preservativity} relation, as follows:
\emli
$$A\prtg B \quad \text{ iff } \quad \forall\,E\in\Gamma(\tvdash E\to A \Rightarrow \tvdash E\to B).$$
\citep{Iemhoff.Preservativity} studies preservativity in the first-order language of arithmetic and axiomatizes it via the binary modal operator $\rhd$. Also, \citep{Visser02} studies preservativity for the propositional non-modal language and, among other results, axiomatizes $\pres{\ipc}{\NNIL}$.
\begin{remark}
By definition, it can be inferred that $A\argt B$ implies $A\prtg B$ whenever $\Gamma\subseteq\lcalz(\parb)$; however, the converse may not hold. As a counterexample, let $A$ and $B$ be two different variables, $\Gamma:=\{\top\}$, and $\sft=\IPC$. Then we have $A\prtg B$ but not $A\argt B$.
\end{remark}

\begin{lemma}\label{adsm-pres}
Let $\sft$ be a logic that is closed under outer substitutions. Then $A\argt B$ implies $A\prtg B$.
\end{lemma}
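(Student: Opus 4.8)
The statement to prove is \autoref{adsm-pres}: if $\sft$ is a logic closed under outer substitutions, then $A\argt B$ implies $A\pres{\sft}{{\downarrow}^{\!\sft}\!\Gamma}B$. Unpacking the definitions, we must show: assuming that for all substitutions $\theta$ and all $C\in\Gamma$, $\tvdash\th(C\to A)$ implies $\tvdash\th(C\to B)$, it follows that for every $E\in\dar{\sft}{\Gamma}$ with $\tvdash E\to A$ we also have $\tvdash E\to B$. So fix such an $E$; by definition of $\dar{\sft}{\Gamma}$ there is a substitution $\theta$ and some $C\in\Gamma$ with $E\xrat{\theta}{\sft}C$, meaning $\tvdash\th(E)\lr C$ and $\tvdash E\to(x\lr\theta(x))$ for every $x\in\varr$. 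The plan is to feed this very $\theta$ into the hypothesis $A\argt B$, after first massaging $\tvdash E\to A$ into the right shape.

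**The key steps.** First I would record the ``projection'' lemma that is implicit in the definition of $\Gamma$-projectivity: if $\tvdash E\to(x\lr\theta(x))$ for all variables $x$, then for every proposition $D$ we have $\tvdash E\to(D\lr\th(D))$ --- here $\th$ is the outer substitution, which leaves boxed subformulas untouched, and the equivalence is proved by a routine induction on the structure of $D$ (the boxed and atomic-parameter cases are trivial because $\th$ and the identity agree there, and the connective cases are immediate). I expect this fact is used elsewhere in the paper, so I would either cite it or prove it in one line. Second, from $\tvdash E\to A$ and the projection lemma applied to $D=A$, we get $\tvdash E\to\th(A)$. Now apply the outer substitution $\th$ to $\tvdash E\to\th(A)$: since $\sft\supseteq\IPC$ and $\sft$ is closed under outer substitutions, $\tvdash\th(E)\to\th(A)$ (using $\th\circ\th=\th$ on the right, as $\th$ is idempotent on the outer structure, or more simply just noting $\th$ applied to any $\sft$-theorem is an $\sft$-theorem). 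Combined with $\tvdash\th(E)\lr C$ from projectivity, this yields $\tvdash C\to\th(A)$, which is $\tvdash\th(C\to A)$ because $\th$ commutes with $\to$ and fixes $C\in\Gamma$ --- wait, $\th$ need not fix $C$; rather $\tvdash\th(C)\to\th(A)$ is what we have, and that is literally $\tvdash\th(C\to A)$ by the commutation of $\th$ with implication. So $\tvdash\th(C\to A)$.

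**Invoking the hypothesis and closing.** Now the hypothesis $A\argt B$, instantiated at the substitution $\th$ (which is a legitimate substitution) and at $C\in\Gamma$, gives $\tvdash\th(C\to B)$, i.e.\ $\tvdash\th(C)\to\th(B)$. Using $\tvdash\th(E)\lr C$ again --- so $\tvdash\th(E)\to\th(C)$ --- wait, I need $\tvdash C\lr\th(C)$; this is the projection lemma applied to $D=C$, valid since $E\xrat{\theta}{\sft}C$ gives the variable-fixing condition. Chaining: $\tvdash\th(E)\to\th(C)\to\th(B)$, and by the projection lemma on $D=B$, $\tvdash E\to(B\lr\th(B))$, so it remains to pull back from $\th(E)$ to $E$. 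Here I apply the projection lemma to $D=E$ itself: $\tvdash E\to(E\lr\th(E))$, hence $\tvdash E\to\th(E)$, and then $\tvdash E\to\th(E)\to\th(B)$ together with $\tvdash E\to(\th(B)\to B)$ gives $\tvdash E\to B$, as desired.

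**Expected obstacle.** The only delicate point is bookkeeping around the outer substitution $\th$ versus the full substitution $\theta$ and making sure the projectivity condition ``$\tvdash E\to(x\lr\theta(x))$'' --- which is stated with $\theta$, not $\th$, on variables --- transfers correctly; since $\theta$ and $\th$ agree on atoms (they differ only on boxed formulas), the induction in the projection lemma goes through with $\th$ on the right-hand side, which is exactly what lets us apply closure under \emph{outer} substitutions rather than arbitrary ones. I would flag that the hypothesis ``closed under outer substitutions'' is used precisely at the step $\tvdash E\to\th(A)\ \Rightarrow\ \tvdash\th(E)\to\th(A)$, and nowhere else. Everything else is propositional logic; no step should require more than \autoref{int-modal-submod-nnil}-level care, and in fact none of it needs $\NNIL$ at all.
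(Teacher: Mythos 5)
Your proposal follows essentially the same route as the paper's proof: project $E$ onto some $C:=E^\dagger\in\Gamma$ via $\theta$, use closure under outer substitutions to pass from $\tvdash E\to A$ to $\tvdash \th(E)\to\th(A)$ and hence to $\tvdash C\to\th(A)$, feed this into the hypothesis $A\argt B$ instantiated at $\theta$ and $C$, and pull the conclusion back to $E$ using $\tvdash E\to(x\lr\theta(x))$. Two local points in your write-up need repair, though neither affects the overall strategy. First, the claim that $\th$ is idempotent is false in general (the formulas $\theta(x)$ may again contain variables outside boxes), and it is also unnecessary: apply $\th$ directly to the theorem $E\to A$ — exactly what the paper does — rather than first passing to $\tvdash E\to\th(A)$ and then substituting again. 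Second, at the crucial instantiation you correctly derive $\tvdash C\to\th(A)$ but need $\tvdash\th(C\to A)$, i.e.\ $\tvdash\th(C)\to\th(A)$; your first justification (``$\th$ fixes $C$'') was the right one, and the replacement you settled on (``$\tvdash\th(C)\to\th(A)$ is what we have'') is not something you established. The reason $\th(C)=C$ is that the projection target lies in $\lcalz(\parb)$, where no variables occur outside boxes \uparan{cf.\ \cref{Gam-projec-modal}; in every application of the lemma one has $\Gamma\subseteq\lcalz(\parb)$}, and the paper's own proof makes the same identification silently when it treats $E^\dagger\to\th(A)$ as an instance of $\th(E^\dagger\to A)$.

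A smaller bookkeeping remark: your ``projection lemma'' yields the equivalences $C\lr\th(C)$ and $B\lr\th(B)$ only under the assumption $E$ (that is, $\tvdash E\to(D\lr\th(D))$), not outright, so statements like ``$\tvdash C\lr\th(C)$'' are overclaims as written; this is harmless here because your closing chain is carried out under $E$ anyway (and, once $\th(C)=C$ is in place, the step $\tvdash\th(E)\to\th(C)$ is just the projectivity equivalence $\tvdash\th(E)\to C$). With these two repairs your argument coincides with the paper's proof, including the observation that closure under outer substitutions is used exactly once, at the step $\tvdash E\to A\Rightarrow\tvdash\th(E)\to\th(A)$.
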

\begin{proof}
Let $A\argt B$ and $E\in{\downarrow}\Gamma$ with $\tvdash E\to A$. Assume that $E\xrat\theta\sft E^\dagger\in \Gamma$. Since $\sft$ is closed under outer substitutions, we get $\tvdash E^\dagger \to \th(A)$. Then $A\argt B$ implies $\tvdash E^\dagger\to\th(B)$. Hence $\tvdash E\to \th(E\to B)$, and because $E\xrat\theta\sft E^\dagger$, we get $\tvdash E\to B$.
\end{proof}

Later in this paper, we axiomatize $\prtg$ and $\argt$ for several pairs $(\sft,\Gamma)$. Before continuing, let us introduce some basic axioms and rules.

Given a logic $\sft$, the logic $\BART$ proves statements $A\rhd B$ for $A$ and $B$ in the language of $\sft$ and has the following axioms and rules:
\\[4mm]
\textbf{Axioms}
\begin{itemize}[leftmargin=1.5cm]
	\item[${\sf Ax}:$] \quad $A\rhd B$, for every $\tvdash A\to B$.
\end{itemize}
\textbf{Rules}
\begin{center}
	\bgroup
	\def\arraystretch{3}
	\begin{tabular}{c c}
		\Ax{$A\rhd B$}
		\Ax{$A\rhd C$}
		\RLa{Conj}
		\BI{$A\rhd B\wedge C$}
		\DP \quad \quad \quad \quad \quad \quad \quad \quad \quad
		&
		\Ax{$A\rhd B$}
		\Ax{$B\rhd C$}
		\RLa{Cut}
		\BI{$A\rhd C$}
		\DP \quad \quad \quad
	\end{tabular}
	\egroup
\end{center}
The above axiom and rules are not interesting because $\BART\vdash A\rhd B$ iff $\tvdash A\to B$. However, we define several interesting additional rules and axioms as follows. Let $\Delta\subseteq\lcalb$ and define
\begin{itemize}[leftmargin=1.5cm]
	\item[$\Les$:] \quad $A\rhd \Box A$ for every $A\in\lcalb$.
	\item[$\Lem$:] \quad $A\rhd \Box A$ for every $A\in\lcalz(\parb)$.
	\item[$\VAB$:] \quad $A\rhd \ov{\theta}(A)$, for every substitution $\theta$.
	\item[$\viss\Delta:$]\quad $B\to C\rhd \bigvee_{i=1}^{n+m} \itv{B}{E_i}\Delta$, where $B=\bigwedge_{i=1}^n (E_i\to F_i)$ and $C=\bigvee_{i=n+1}^{n+m} E_i$,
\end{itemize}
\emli
$$\itv{A}{B}\Delta:=\begin{cases}
B \quad &: B \in\Delta \\
A\to B &: \text{otherwise}
\end{cases}
$$
\begin{center}
	\bgroup
	\def\arraystretch{3}
	\begin{tabular}{c c}
		\quad \quad \quad \Ax{$B\rhd A$}
		\Ax{$C\rhd A$}
		\RLa{Disj}
		\BI{$B\vee C\rhd A$}
		\DP
		&
		\quad \quad \quad
		\Ax{$A\rhd B$}
		\Ax{($C\in\Delta$)}
		\RLa{$\montd$}
		\LLa{}
		\BI{$C\to A\rhd C\to B$}
		\DP
	\end{tabular}
	\egroup
\end{center}
The above axioms and rules have been introduced previously in \citep{IemhoffT,Visser02}, except for $\VAB$, which appears to be new. Also, $\viss\Delta$ and $\montd$ are generalizations of those introduced in \citep{IemhoffT,Visser02}. Finally, define
\emli
$$\ARD\sft\Delta:=\BART+\text{Disj}+\mont(\Delta)+\viss\Delta,$$
$$\ARDP\sft\Delta:= \ARD\sft\Delta+\Le \quad\text{and}\quad \ARDM\sft\Delta:= \ARD\sft\Delta+\Lem.$$
\begin{remark}\label{ASC}
$\sft\subseteq\sft'$ and $\Delta\subseteq\Delta'$ implies $\ARD\sft\Delta\subseteq\ARD{\sft'}{\Delta'}$.
\end{remark}
\begin{proof}
By induction on the complexity of a proof $\ARD\sft\Delta\vdash A\rhd A'$, one must show $\ARD{\sft'}{\Delta'}\vdash A\rhd A'$. We only treat the case $\viss\Delta$ here and leave the rest to the reader. So assume that $A=B\to C$ and $A'=\bigvee_{i=1}^{n+m} \itv{B}{E_i}\Delta$, where $B=\bigwedge_{i=1}^n (E_i\to F_i)$ and $C=\bigvee_{i=n+1}^{n+m} E_i$. Since for every $D,F\in \lcalb$, we have $\vdash \itv{D}{F}{\Delta'}\to \itv{D}{F}{\Delta}$, we get $\ARD{\sft'}{\Delta'}\vdash \bigvee_{i=1}^{n+m} \itv{B}{E_i}{\Delta'}\rhd \bigvee_{i=1}^{n+m} \itv{B}{E_i}\Delta$. On the other hand, by $\viss{\Delta'}$, we have $\ARD{\sft'}{\Delta'}\vdash A\rhd \bigvee_{i=1}^{n+m} \itv{B}{E_i}{\Delta'}$. Thus, Cut implies the desired result.
\end{proof}
\begin{theorem}[\textbf{Soundness for Preservativity}]\label{gen-pres-sound}
	$\BART$ is sound for preservativity interpretations; i.e., $\BART\vdash A\rhd B$ implies $A\prtg B$ for every set $\Gamma$ of propositions and every logic $\sft$. Moreover,
	\begin{enumerate}
		\item if $\Gamma$ is $\sft$-complete, then $\Les$ is sound,
		\item if $\Gamma$ is $\sft$-prime, then Disj is also sound,
		\item if $\Gamma$ is closed under $\Delta$-conjunctions (i.e., $A\in\Gamma$ and $B\in\Delta$ implies $A\wedge B\in \Gamma$, up to $\sft$-provable equivalence), then $\montd$ is sound.
		\item if $\sft$ has the intuitionistic submodel property, $\Gamma\subseteq\NNIL$, and $\Delta\subseteq\atomb$, then $\viss\Delta$ is sound.
		\item if $\Gamma\subseteq\lcalz(\parb)$ and $\sft$ is closed under outer substitutions, then $\VAB$ is also sound.
	\end{enumerate}
\end{theorem}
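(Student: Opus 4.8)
The plan is to induct on the length of a derivation in $\BART$ (augmented, where a given clause requires it, by one of $\Les$, Disj, $\mont(\Delta)$, $\viss\Delta$, $\VAB$), checking that each axiom instance is a true preservativity statement $A\prtg B$ and that each rule preserves truth. For the core system $\BART$ this is immediate from the definition of $\prtg$: if $\tvdash A\to B$ then any $E\in\Gamma$ with $\tvdash E\to A$ has $\tvdash E\to B$, and for Conj and Cut one merely chains implications. Thus all the content sits in the five ``Moreover'' clauses, four of which are short local checks.

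For $\Les$, with $\Gamma$ consisting of $\sft$-complete formulas: if $E\in\Gamma$ and $\tvdash E\to A$, then $\tvdash E\to\Box E$, and since $\sft$ is a normal modal logic, $\tvdash E\to A$ gives $\tvdash\Box E\to\Box A$; composing, $\tvdash E\to\Box A$. For Disj, with $\Gamma$ $\sft$-prime: from $\tvdash E\to B\vee C$ and primeness of $E$ we get $\tvdash E\to B$ or $\tvdash E\to C$, and in either case the inductive hypotheses $B\prtg A$, $C\prtg A$ give $\tvdash E\to A$. For $\mont(\Delta)$, with $\Gamma$ closed under $\Delta$-conjunctions: from $\tvdash E\to(C\to A)$, i.e. $\tvdash(E\wedge C)\to A$, choose $E'\in\Gamma$ with $\tvdash E'\lr E\wedge C$; then $A\prtg B$ (inductive hypothesis) gives $\tvdash E'\to B$, hence $\tvdash E\to(C\to B)$. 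For $\VAB$, with $\Gamma\subseteq\lcalz(\parb)$ and $\sft$ closed under outer substitutions: $\hat{\theta}$ fixes every formula of $\lcalz(\parb)$ (it fixes parameters, $\bot$ and boxed subformulas and commutes with $\vee,\wedge,\to$), so applying the outer substitution $\hat{\theta}$ to $\tvdash E\to A$ yields $\tvdash E\to\hat{\theta}(A)$.

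The one substantial clause is $\viss\Delta$. Write $B=\bigwedge_{i=1}^n(E_i\to F_i)$ and $C=\bigvee_{i=n+1}^{n+m}E_i$, fix $E\in\Gamma\subseteq\NNIL$ with $\tvdash E\to(B\to C)$, and assume for contradiction $\sft\nvdash E\to\bigvee_{i=1}^{n+m}\itv{B}{E_i}\Delta$. By completeness of $\sft$ for a class $\scrk$ of good Kripke models with the intuitionistic submodel property, choose $\kcal\in\scrk$ and a node $w$ with $\kcal,w\Vdash E$ and $\kcal,w\nVdash\itv{B}{E_i}\Delta$ for each $i$. Then $\kcal,w\nVdash C$ (else some disjunct $E_i$ holds at $w$, contradicting $\kcal,w\nVdash\itv{B}{E_i}\Delta$), and hence $\kcal,w\nVdash B$ (else $\kcal,w\Vdash B\to C$, which follows from $\tvdash E\to(B\to C)$ and $\kcal,w\Vdash E$, would force $\kcal,w\Vdash C$). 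So for each $i$ with $E_i\notin\Delta$ there is a node $u_i$ with $w\prec u_i$, $\kcal,u_i\Vdash B$, $\kcal,u_i\nVdash E_i$ (strictly above $w$ because $\kcal,w\nVdash B$), while for each $i$ with $E_i\in\Delta\subseteq\atomb$ we simply have $\kcal,w\nVdash E_i$. Now form $\kcal'\leq\kcal$ by deleting from $\prec$ exactly the edges $w\prec v$ for which $v$ is $\pce$-above none of the $u_i$; a routine check shows $\pce'$ remains a partial order, so $\kcal'$ is a genuine intuitionistic submodel of $\kcal$ and therefore $\kcal'\in\scrk$.

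The contradiction then comes from showing $\kcal',w\Vdash E$, $\kcal',w\Vdash B$ and $\kcal',w\nVdash C$, which is incompatible with $\kcal'\Vdash E\to(B\to C)$. The first holds by \cref{int-modal-submod-nnil}, as $E\in\NNIL$ and $\kcal'\leq\kcal$. The other two rest on the structural fact that, $\kcal$ being good — a finite, transcendental, conversely well-founded tree — forward reachability under ${\pce}\cup{\R}$ is a strict partial order, so $\kcal$ and $\kcal'$, which differ only in edges issuing from $w$, have identical truth values at every node strictly after $w$, in particular at each $u_i$ and at each $\R$-successor of $w$. From this: for $j\leq n$ one gets $\kcal',w\nVdash E_j$ — if $E_j\in\Delta$ because $E_j$ is then atomic or boxed and $\kcal,w\nVdash E_j$, and if $E_j\notin\Delta$ because $w\pce' u_j$ with $\kcal',u_j\nVdash E_j$, so persistence forbids $\kcal',w\Vdash E_j$ — so $B$ holds vacuously at $w$, while at any retained successor $v$ with $u_i\pce v$ each conjunct $E_j\to F_j$ holds because $\kcal,u_i\Vdash B$ and $\kcal'$ agrees with $\kcal$ above $u_i$; thus $\kcal',w\Vdash B$. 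The same persistence-and-agreement argument gives $\kcal',w\nVdash E_i$ for $n<i\leq n+m$, i.e. $\kcal',w\nVdash C$. The delicate points, and where I expect any real difficulty, are checking that the pruned $\kcal'$ is still a good model (which is precisely what the intuitionistic submodel property supplies) and establishing the ``agreement strictly above $w$'' claim; the rest is bookkeeping.
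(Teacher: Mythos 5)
Your proof is correct and follows essentially the same route as the paper: the base system and clauses 1, 2, 3 and 5 are the same short checks the paper leaves to the reader, and your treatment of $\viss\Delta$ is exactly the pruning construction in the paper's \cref{itv-closure-pres} (phrased there contrapositively at the root of a rooted model), relying on \cref{int-modal-submod-nnil} for the $\NNIL$ premise and on the intuitionistic submodel property to keep the pruned model in the class. The only point worth flagging is that your $\Les$ case uses monotonicity of $\Box$ (from $\tvdash E\to A$ infer $\tvdash \Box E\to\Box A$), an assumption the theorem's literal ``every logic $\sft$'' does not grant but which holds for all logics to which the paper applies this clause.
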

\begin{proof}
Easy induction on the complexity of a proof $\BART\vdash A\rhd B$, left to the reader. The validity of item 4 is provided by \Cref{itv-closure-pres}.
\end{proof}

\begin{theorem}[\textbf{Soundness for Admissibility}]\label{gen-admis-sound}
	$\BART$ is sound for admissibility interpretations; i.e., $\BART\vdash A\rhd B$ implies $A\argt B$ for every set $\Gamma$ of propositions and every logic $\sft$ that is closed under outer substitutions. Moreover,
	\begin{enumerate}
		\item if $\Gamma$ is $\sft$-complete, then $\Lem$ is sound,
		\item if $\Gamma$ is $\sft$-prime, then Disj is also sound.
		\item if $\Gamma$ is closed under outer substitutions of $\Delta$-conjunctions (i.e., $A\in\Gamma$ and $B\in\Delta$ implies $A\wedge \th (B)\in \Gamma$, up to $\sft$-provable equivalence), then $\montd$ is sound.
		\item if $\sft$ has the intuitionistic submodel property, $\Gamma\subseteq\NNIL$, and $\Delta\subseteq\parb$, then $\viss\Delta$ is sound.
	\end{enumerate}
\end{theorem}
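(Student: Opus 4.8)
The plan is to imitate the proof of \cref{gen-pres-sound}, arguing by induction on the derivation $\BART\vdash A\rhd B$; now the hypothesis that $\sft$ is closed under outer substitutions plays, for admissibility, the role that closure of $\tvdash$ under consequence played for preservativity. For the axiom \textsf{Ax}, from $\tvdash A\to B$ and closure under outer substitutions we get $\tvdash\th(A)\to\th(B)$, so $\tvdash\th(C\to A)$ implies $\tvdash\th(C\to B)$ for all $C$ and $\theta$, i.e.~$A\argt B$. The rules \textsf{Conj} and \textsf{Cut} are immediate from the definition of $\argt$, using for \textsf{Cut} that the witnessing $C\in\Gamma$ is common to both premisses. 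This settles the main assertion.

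For items~1--3 one copies the corresponding clauses of \cref{gen-pres-sound}, the only additional work being to track the outer substitution $\th$. The relevant observation is that $\th$ is the identity on every boxed subformula and, by the standing convention, on every parameter; moreover, for the $\Gamma$'s that occur in the applications the $\sft$-complete members are themselves fixed by $\th$ (this is why item~1 is phrased for $\Lem$ with $A\in\lcalz(\parb)$ rather than for $\Les$ and arbitrary $A$, why item~3 asks for closure under outer substitutions of $\Delta$-conjunctions, and why item~4 restricts $\Delta$ to $\parb$ rather than $\atomb$: in each case $\th$ commutes with the syntactic construction involved). For instance, in item~1 with $C\in\Gamma$ and $A\in\lcalz(\parb)$ we have $\th(A)=A$, $\th(\Box A)=\Box A$, and $\th(C)$ is $\sft$-complete; so from $\tvdash\th(C)\to A$ one gets $\tvdash\Box\,\th(C)\to\Box A$ by normality and hence $\tvdash\th(C)\to\Box A$, i.e.~$\tvdash\th(C\to\Box A)$. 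Items~2 and~3 run similarly, using $\sft$-primeness of $\th(C)$ and the $\Delta$-conjunction-closure of $\Gamma$ respectively.

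Item~4 is the only substantial point. Fix a $\viss\Delta$-instance $(P\to Q)\rhd\bigvee_{i=1}^{n+m}\itv{P}{E_i}\Delta$, with $P=\bigwedge_{i=1}^{n}(E_i\to F_i)$ and $Q=\bigvee_{i=n+1}^{n+m}E_i$, together with $\theta$ and $D\in\Gamma\subseteq\NNIL$ such that $\tvdash\th(D)\to(\th(P)\to\th(Q))$; the goal is $\tvdash\th(D)\to\bigvee_i\th(\itv{P}{E_i}\Delta)$. Since $\Delta\subseteq\parb$ we have $\th(E_i)=E_i$ whenever $E_i\in\Delta$, so $\th(P)\to\th(Q)$ is again of the shape to which $\viss\Delta$ applies and $\vdash\itv{\th(P)}{\th(E_i)}\Delta\to\th(\itv{P}{E_i}\Delta)$ for each $i$; hence it suffices to prove $\tvdash\th(D)\to\bigvee_i\itv{\th(P)}{\th(E_i)}\Delta$, that is, the soundness for preservativity over the singleton $\{\th(D)\}$ of the $\viss\Delta$-instance built from $\th(P)\to\th(Q)$. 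This is exactly what \cref{itv-closure-pres} supplies for contexts that lie in $\NNIL$, and the step I expect to be the main obstacle is that $\th(D)$ itself need not be $\NNIL$: one must check that the Kripke-model surgery underlying \cref{itv-closure-pres} --- passing, inside the complete class for $\sft$, to an intuitionistic submodel in which $\th(P)$ becomes forced at some node while $\th(Q)$ stays refuted --- can be carried out compatibly with $\theta$, i.e.~without disturbing the truth of $\th(D)$. This is possible because $D$ is $\NNIL$ and therefore persists under the intuitionistic-submodel passage by \cref{int-modal-submod-nnil}, and that persistence transfers to $\th(D)$ along the substitution; the configuration thereby produced refutes $\tvdash\th(D)\to(\th(P)\to\th(Q))$, which closes item~4.
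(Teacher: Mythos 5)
Your overall plan coincides with the paper's: induction on the $\BART$-derivation, with \textsf{Ax}, Conj and Cut exactly as you handle them, and with the itemized clauses as the only real content. However, two of your justifications do not hold up. For items 1--2 you assert that $\th(C)$ is again $\sft$-complete (resp.\ $\sft$-prime) when $C\in\Gamma$ is; neither property is preserved by outer substitutions. For instance $C=x\wedge\Box x$ is $\igl$-complete, yet for $\theta(x)=\Box y\to p$ one has $\th(C)=(\Box y\to p)\wedge\Box x$, and $\igl\nvdash\th(C)\to\Box(\Box y\to p)$ although $\th(C)\to(\Box y\to p)$ is trivially provable; so the normality argument you give breaks down, and clause 1 really needs the (implicit, and satisfied in every application the paper makes) extra assumption that $\th$ fixes the members of $\Gamma$, e.g.\ $\Gamma\subseteq\lcalz(\parb)$, in which case $\th(C)=C$ and your computation is fine. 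The same caveat applies to the primeness of $\th(C)$ used for Disj in item 2.

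The more serious gap is item 4, and it sits exactly where you flag the ``main obstacle''. The paper's proof simply invokes \cref{itv-closure-pres}, whose ``moreover'' clause is verbatim the statement $B\to C\argt\bigvee_i\itv{B}{E_i}\Delta$ required here; you instead try to deduce it from the preservativity half, and the step you appeal to --- that the truth of $\th(D)$ at the root survives the passage to the intuitionistic submodel --- is not supplied by \cref{int-modal-submod-nnil}, since $\th(D)$ is in general not $\NNIL$, and it is false as a general principle about the surgery: deleting $\pce$-successors of the root can make the $\theta$-image of an atom true at the root and thereby falsify $\th(D)$ there. Indeed take $D=x$ (certainly $\NNIL$) and $\theta(x)=(p\to q)\to r$: in $\kcal$ the root may force $(p\to q)\to r$ only because some successor forces $p$ but not $q$; once that successor is removed, $p\to q$ becomes true at the root while $r$ stays false, so $\th(D)$ is lost. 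Establishing the admissibility clause in the presence of the substitution is precisely the nontrivial content of the ``moreover'' part of \cref{itv-closure-pres} (the non-modal prototype being Lemma 4.5 of \citep{PLHA0}), so either cite that clause directly, as the paper does, or supply an argument that treats the submodel and the substitution simultaneously; as written, your proof of item 4 is incomplete.
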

\begin{proof}
Easy induction on the complexity of a proof $\BART\vdash A\rhd B$, left to the reader. The validity of item 4 is provided by \Cref{itv-closure-pres}.
\end{proof}
\begin{lemma}\label{itv-closure-pres}
Let $\sft$ have the intuitionistic submodel property, $\Gamma\subseteq\NNIL$, and $\Delta\subseteq\atomb$. Then
$B\to C\prtg \bigvee_{i=1}^{n+m} \itv{B}{E_i}\Delta$, where $B=\bigwedge_{i=1}^n (E_i\to F_i)$ and $C=\bigvee_{i=n+1}^{n+m} E_i$. Moreover, if $\Delta\subseteq\parb$, then $B\to C\artg \bigvee_{i=1}^{n+m} \itv{B}{E_i}\Delta$.
\end{lemma}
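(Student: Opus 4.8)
\emph{The plan is to argue semantically.} Since $\sft$ has the intuitionistic submodel property, fix a class $\scrk$ of good Kripke models, closed under intuitionistic submodels, for which $\sft$ is sound and complete (recall that good models are finite transcendental trees with transitive, conversely well-founded $\R$, satisfying $\cpp$). To prove $B\to C\prtg\bigvee_{i=1}^{n+m}\itv{B}{E_i}{\Delta}$, I fix $G\in\Gamma$ with $\sft\vdash G\to(B\to C)$ and suppose, toward a contradiction, that $\sft\nvdash G\to\bigvee_i\itv{B}{E_i}{\Delta}$. By completeness there are $\kcal=(W,\pce,\R,\V)\in\scrk$ and $w\in W$ with $\kcal,w\Vdash G$ and $\kcal,w\nVdash\itv{B}{E_i}{\Delta}$ for every $1\le i\le n+m$. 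The target is to produce an intuitionistic submodel $\kcal'\leq\kcal$ with $\kcal',w\nVdash B\to C$: then $\kcal'\in\scrk$ by submodel closure, and since $G\in\NNIL$, \cref{int-modal-submod-nnil} (in the pointwise form proved there) yields $\kcal',w\Vdash G$, so $\kcal'$ refutes $G\to(B\to C)$, contradicting soundness of $\sft\vdash G\to(B\to C)$.

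\emph{Unpacking the hypotheses at $w$.} For each $i$ with $E_i\notin\Delta$, the failure $\kcal,w\nVdash B\to E_i$ gives $u_i$ with $w\pce u_i$, $\kcal,u_i\Vdash B$ and $\kcal,u_i\nVdash E_i$. For each $i$ with $E_i\in\Delta$ — so $E_i$ is a variable, a parameter, a boxed formula, or $\bot$ — we have $\kcal,w\nVdash E_i$, and the truth of $E_i$ at $w$ depends only on $\V$ and the (never modified) relation $\R$, so $\kcal',w\nVdash E_i$ for every $\kcal'\leq\kcal$. Moreover $\kcal,w\nVdash C$: if some disjunct $E_j$ were forced at $w$ then, for $E_j\in\Delta$ this contradicts $\kcal,w\nVdash E_j$, while for $E_j\notin\Delta$ persistence along $w\pce u_j$ would force $\kcal,u_j\Vdash E_j$, against the choice of $u_j$. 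In particular, if $\kcal,w\Vdash B$ we are already done with $\kcal'=\kcal$ and $v=w$; so we may assume that some conjunct of $B$ fails at $w$.

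\emph{The surgery — the main obstacle.} It remains to delete enough $\pce$-pairs above $w$ to obtain $\kcal'$ and a node $v$ with $w\pce' v$, $\kcal',v\Vdash B$ and $\kcal',v\nVdash C$. The guiding facts are: deleting $\pce$-pairs can only make implications \emph{more} true, so the conjuncts of $B$, once realized, remain realized; a compound antecedent $E_i\notin\Delta$, whose non-forcing somewhere above $w$ is witnessed via $u_i$, can be arranged to stay false, so that $E_i\to F_i$ becomes \emph{vacuously} true instead of demanding $F_i$; antecedents and disjuncts lying in $\Delta$ are rigidly false near $w$; and, because $\kcal$ is good (a finite transcendental tree with $({\pce};{\R})\subseteq{\R}$), deleting $\pce$-pairs below a node does not disturb the forcing of the boxed subformulas of $B$ and $C$ at that node or above it, since $w$ lies outside the $\pce\cup\R$-cone of any node strictly above $w$. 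Assembling these ingredients into a single cut that produces one node $v$ simultaneously forcing $B$ and refuting $C$ is the combinatorial heart of the argument; it is the modal refinement of Visser's analysis of the best $\NNIL$-approximation from below \citep{Visser-Benthem-NNIL,Visser02}, with boxed subformulas treated as atoms whose truth is pinned down by the untouched relation $\R$.

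\emph{The admissibility clause.} For $\argt$ with $\Delta\subseteq\parb$, one runs the same scheme on the $\th$-substituted data for an arbitrary outer substitution $\th$: $\th(B\to C)=\th(B)\to\th(C)$ has the same shape with $E_i,F_i$ replaced by $\th(E_i),\th(F_i)$, and since $\th$ fixes $\parb\supseteq\Delta$ one checks that $\th\bigl(\bigvee_i\itv{B}{E_i}{\Delta}\bigr)$ is $\IPC$-implied by $\bigvee_i\itv{\th(B)}{\th(E_i)}{\Delta}$, so it suffices to establish $\sft\vdash\th(G)\to\bigvee_i\itv{\th(B)}{\th(E_i)}{\Delta}$. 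Restricting to $\parb$ (rather than $\atomb$) is precisely what keeps the $\Delta$-disjuncts $\th$-rigid. The extra subtlety over the preservativity case is that $\th(G)$ need not be $\NNIL$, so \cref{int-modal-submod-nnil} is not available as stated; one must instead carry $G\in\NNIL$ and the outerness of $\th$ through the surgery, confining the construction to the subclass of intuitionistic submodels along which the truth of $\th(G)$ is still preserved and checking that the cut can be performed within that subclass.
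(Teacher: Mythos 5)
Your setup (contrapositive, a rooted countermodel, witnesses for the indices outside $\Delta$, rigidity of the indices in $\Delta$, and \cref{int-modal-submod-nnil} to keep $G\in\Gamma$ alive) is exactly the paper's, but you stop precisely where the proof has to be carried out: you never define the submodel, and you explicitly defer ``assembling these ingredients into a single cut'' as the combinatorial heart of the argument. That assembly \emph{is} the proof, and it is a one-line surgery. With $I:=\{i:E_i\in\Delta\}$, $J$ its complement, and $w_j\sce w_0$ the witnesses satisfying $\kcal,w_j\Vdash B$ and $\kcal,w_j\nVdash E_j$, one sets ${\pce'}:={\pce}\setminus\bigl(\{w_0\}\times\{v: \neg\exists j\in J\,(w_j\pce v)\}\bigr)$, i.e.\ one keeps exactly those $\pce$-successors of the root that lie above some witness. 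Since only arrows leaving the root are deleted, forcing at every other node is unchanged; hence in $\kcal'$ every $E_i$ fails at $w_0$ (for $i\in I$ by rigidity of members of $\atomb$, for $j\in J$ because $w_0\pce' w_j$ and persistence would push $E_j$ up to $w_j$), so $B$ holds at $w_0$ --- vacuously at $w_0$ itself and, at all proper $\pce'$-successors, by persistence from the $w_j$ --- while $C$ fails there; together with $\kcal',w_0\Vdash G$ from \cref{int-modal-submod-nnil} and the intuitionistic submodel property this refutes $G\to(B\to C)$. Your remark ``so we may assume some conjunct of $B$ fails at $w$'', followed by a search for some \emph{other} node $v$ forcing $B$, indicates you had not seen that the cut makes the root itself do the job; without the explicit $\pce'$ and these verifications the proposal does not prove the first claim.

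On the admissibility clause your observation is fair: $\th(G)$ need not be $\NNIL$, so \cref{int-modal-submod-nnil} is not applicable to it, and the paper's ``similar argument'' glosses over this. But your proposed remedy --- restricting to submodels ``along which the truth of $\th(G)$ is still preserved'' --- is not an argument: deleting $\pce$-arrows at the root can make new implications (hidden inside $\theta(x)$) true at $w_0$, so preservation of $\th(G)$ can fail for the very cut you need, and nothing in your sketch shows an admissible cut exists. Note that in the paper's actual uses of this clause (\cref{gen-admis-sound}, as invoked in \cref{Lem-con12-dlnb,Lem-con12-AR}) one has $\Gamma\subseteq\lcalz(\parb)$, so $\th(G)=G$ and the preservativity surgery applies verbatim; for arbitrary $\Gamma\subseteq\NNIL$ something more must be supplied. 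As it stands, both halves of your proposal are missing their decisive step.
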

\begin{proof}
We argue by contradiction to show that $B\to C\prtg \bigvee_{i=1}^{n+m} \itv{B}{E_i}\Delta$, and leave the similar argument for $B\to C\artg \bigvee_{i=1}^{n+m} \itv{B}{E_i}\Delta$ to the reader. Fix some class $\scrmt$ of rooted Kripke models with the intuitionistic submodel property for which $\sft$ is sound and complete. Let $E\in\Gamma$ be such that $\sft\nvdash E\to (\bigvee_{i=1}^{n+m} \itv{B}{E_i}\Delta)$. Hence, there exists some $\kripke\in\scrmt$ with root $w_0$ such that $\kcal,w_0\Vdash E$ and $\kcal,w_0\nVdash \bigvee_{i=1}^{n+m} \itv{B}{E_i}\Delta$. Let $I$ be the set of indices $i$ such that $E_i\in\Delta$. Also, let $J$ be the complement of $I$. Thus, for every $i\in I$, we have $\kcal,w_0\nVdash E_i$, and for every $j\in J$, there exists some $w_j\sce w_0$ such that $\kcal,w_j\Vdash B$ and $\kcal,w_j\nVdash E_j$. Define $\pce'$ on $W$ as follows:
\emli
$${\pce'}:={\pce}\setminus (\{w_0\}\times \{v\in W: \neg\exists j\in J(w_j\pce v)\})$$
and define $\kcal':=(W,\pce',\R,\V)$. Then, since $E\in\NNIL$ and $\kcal,w_0\Vdash E$, \Cref{int-modal-submod-nnil} implies $\kcal',w_0\Vdash E$. Moreover, it is not difficult to observe that $\kcal',w_0\nVdash E_i$ for every $i\in I\cup J$. Hence, $\kcal',w_0\Vdash B$ and $\kcal',w_0\nVdash C$. Thus, $\kcal',w_0\nVdash E\to (B\to C)$. Since $\scrmt$ is assumed to have the intuitionistic submodel property, we may conclude $\kcal'\in\scrmt$, and hence $\sft\nvdash E\to (B\to C)$.
\end{proof}

\begin{theorem}\label{vee-pres}
${\prtg}={\prtgv}$ and ${\artg}={\artgv}$.
\end{theorem}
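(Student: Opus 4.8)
The plan is to prove both equalities by establishing the two inclusions in each, where only one direction requires any work and even that is a one‑line disjunction‑elimination argument. The key observation is simply that $\Gamma^\vee$ is obtained from $\Gamma$ by closing under finite disjunctions, and that both preservativity and relative admissibility "see through" disjunctions on the left because $\tvdash$ and $\hat\theta$ behave well with respect to $\vee$.

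First I would record the trivial inclusions. Since for every $E\in\Gamma$ we have $E=\bigvee\{E\}\in\Gamma^\vee$, it follows that $\Gamma\subseteq\Gamma^\vee$. Hence the defining condition of $\prtgv$ (resp.\ of $\argtv$) quantifies over a superset of the one defining $\prtg$ (resp.\ $\argt$), which immediately yields ${\prtgv}\subseteq{\prtg}$ and ${\argtv}\subseteq{\argt}$.

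For the reverse inclusion in the preservativity case, suppose $A\prtg B$ and take $E\in\Gamma^\vee$ with $\tvdash E\to A$; write $E=\bigvee_{i=1}^{k}E_i$ with each $E_i\in\Gamma$. For each $i$ we have $\tvdash E_i\to E$, hence $\tvdash E_i\to A$, so $A\prtg B$ gives $\tvdash E_i\to B$; disjunction elimination then yields $\tvdash E\to B$, so $A\prtgv B$. For admissibility, suppose $A\argt B$, fix a substitution $\theta$, and take $C\in\Gamma^\vee$ with $\tvdash \hat\theta(C\to A)$; write $C=\bigvee_{i=1}^{k}C_i$ with $C_i\in\Gamma$. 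Because $\hat\theta$ commutes with $\vee$ and $\to$, we have $\hat\theta(C\to A)=\big(\bigvee_{i}\hat\theta(C_i)\big)\to\hat\theta(A)$, hence $\tvdash \hat\theta(C_i)\to\hat\theta(A)$, i.e.\ $\tvdash\hat\theta(C_i\to A)$, for each $i$; applying $A\argt B$ gives $\tvdash\hat\theta(C_i\to B)$, i.e.\ $\tvdash\hat\theta(C_i)\to\hat\theta(B)$, for each $i$, and disjunction elimination yields $\tvdash\big(\bigvee_i\hat\theta(C_i)\big)\to\hat\theta(B)=\hat\theta(C\to B)$, so $A\argtv B$.

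There is essentially no obstacle here; the only points to keep in mind are that the argument uses that $\sft$ extends $\IPC$, so that transitivity of $\to$ and disjunction elimination are available, and that $\hat\theta$ commutes with $\vee$ and $\to$, which is immediate from the definition of the outer substitution. Both are standing assumptions in this setting, so the proof is a short routine verification and could reasonably be left to the reader.
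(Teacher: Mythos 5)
Your proof is correct and follows essentially the same route as the paper: the inclusion ${\prtgv}\subseteq{\prtg}$ (resp.\ ${\artgv}\subseteq{\argt}$) from $\Gamma\subseteq\Gamma^\vee$, and the converse by decomposing $E\in\Gamma^\vee$ as a finite disjunction of members of $\Gamma$, applying the hypothesis to each disjunct, and recombining by disjunction elimination. The only difference is that you also write out the admissibility case (correctly using that $\hat\theta$ commutes with $\vee$ and $\to$), which the paper leaves to the reader.
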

\begin{proof}
	We only show $A\prtg B$ iff $A\prtgv B$ and leave similar arguments for the other statements to the reader. The right-to-left direction holds since $\Gamma\subseteq \Gamma^\vee$. For the other direction, assume that $A\prtg B$ and let $E\in\Gamma^\vee$ such that $\tvdash E\to A$. Then $E=\bigvee_i E_i$ with $E_i\in\Gamma$. Hence, for every $i$, we have $\tvdash E_i\to A$. Then $A\prtg B$ implies $\tvdash E_i\to B$. Thus $\tvdash E\to B$, as desired.
\end{proof}
\subsection{Greatest lower bounds}\label{glb}
Given a set $\Gamma\cup\{A\}\subseteq\lcalb$ and a logic $\sft$, we say that $B$ is a \textit{$(\Gamma,\sft)$-lower bound} for $A$ if:
\begin{enumerate}
	\item $B\in\Gamma$,
	\item $\tvdash B\to A$.
\end{enumerate}
Moreover, we say that $B$ is the \textit{$(\Gamma,\sft)$-greatest lower bound} ($(\Gamma,\sft)$-glb) for $A$ if for every $(\Gamma,\sft)$-lower bound $B'$ for $A$, we have $\tvdash B'\to B$. Note that, up to $\sft$-provable equivalence, such a glb is unique, and we denote it by $\ap{\Gamma}{\sft}{A}$.
\\
We say that $(\Gamma,\sft)$ is \textit{downward compact} if every $A\in\lcalb$ has a $(\Gamma,\sft)$-glb $\ap\Gamma\sft A$. If $\ap\Gamma\sft A$ can be effectively computed, we say that $(\Gamma,\sft)$ is \textit{recursively downward compact}. A main result in \citep{Visser02} states that $(\NNIL,\IPC)$ is recursively downward compact (see Section 7 in \citep{Visser02}). We say that $(\Gamma,\sft)$ is \textit{(recursively) strong downward compact} if it is (recursively) downward compact and for every $\Box B\in\sub{\ap\Gamma\sft A}$, either $\Box B\in\sub A$ or $B\in \nfz\Gamma$.

\begin{question}
	One may similarly define the notion of least upper bound and upward compactness. What can we say about the (recursive) upward compactness of $(\NNIL,\IPC)$?
\end{question}


\begin{theorem}\label{Gamma-approx-preserv}
	$B$ is the $(\Gamma,\sft)$-glb for $A$ iff
	\begin{itemize}
		\item $B\in\Gamma$,
		\item $\tvdash B\to A$,
		\item $A\prtg B$.
	\end{itemize}
	Hence, we have $A\prtg \ap\Gamma\sft A$.
\end{theorem}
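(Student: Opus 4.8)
The plan is to prove the biconditional in both directions and then derive the final ``Hence'' clause. For the forward direction, suppose $B$ is the $(\Gamma,\sft)$-glb for $A$. The first two bullets, $B\in\Gamma$ and $\tvdash B\to A$, are immediate from the definition of a $(\Gamma,\sft)$-lower bound, which the glb is by definition. For the third bullet, $A\prtg B$: unfolding the definition of $\prtg$ from \cref{pres-admis}, I must show that for every $E\in\Gamma$ with $\tvdash E\to A$ we have $\tvdash E\to B$. But any such $E$ is, by definition, a $(\Gamma,\sft)$-lower bound for $A$, so the defining property of the greatest lower bound gives $\tvdash E\to B$ directly. So the forward direction is essentially just unwinding definitions.

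For the converse, assume $B$ satisfies the three bullets. The first two say $B$ is a $(\Gamma,\sft)$-lower bound for $A$. To see it is the greatest, take any other $(\Gamma,\sft)$-lower bound $B'$ for $A$: then $B'\in\Gamma$ and $\tvdash B'\to A$. Since $B'\in\Gamma$ and $\tvdash B'\to A$, the third bullet $A\prtg B$ applied to $E:=B'$ yields $\tvdash B'\to B$. This is exactly the condition required for $B$ to be the glb. Hence the biconditional holds.

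Finally, for the ``Hence'' clause: whenever $(\Gamma,\sft)$ is downward compact, $\ap\Gamma\sft A$ exists and is by definition the $(\Gamma,\sft)$-glb for $A$; applying the just-proved equivalence (the forward direction, third bullet) gives $A\prtg\ap\Gamma\sft A$. If one does not wish to presuppose downward compactness, the statement should be read as conditional on the existence of $\ap\Gamma\sft A$, and the same one-line argument applies.

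I do not expect any serious obstacle here: the proof is a routine unfolding of the definitions of ``greatest lower bound'' and of the preservativity relation $\prtg$, with the only mildly delicate point being to notice that the set of $(\Gamma,\sft)$-lower bounds for $A$ is precisely $\{E\in\Gamma : \tvdash E\to A\}$, which is exactly the index set over which the definition of $\prtg$ quantifies. Making that identification explicit is the crux, and everything else is formal manipulation.
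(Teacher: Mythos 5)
Your proof is correct: it is exactly the routine unfolding of the definitions of lower bound, greatest lower bound, and $\prtg$ that the paper has in mind when it leaves this proof to the reader, and your remark that the ``Hence'' clause presupposes the existence of $\ap\Gamma\sft A$ (i.e.\ downward compactness) is an accurate reading of the statement.
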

\begin{proof}
	Left to the reader.
\end{proof}

\begin{corollary}\label{Cor-Gamma-approx-preserv}
	If $\ap\Gamma\sft A$ exists, then for every $B\in\lcalb$, we have
\emli
$$\tvdash \ap\Gamma\sft A \to B\quad \text{ iff } \quad A\prtg B.$$
\end{corollary}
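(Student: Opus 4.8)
The plan is to read both directions straight off the definitions, using only that $\ap\Gamma\sft A$ — assumed to exist — is by construction a $(\Gamma,\sft)$-lower bound for $A$ that dominates every other such lower bound. In particular we will use that $\ap\Gamma\sft A\in\Gamma$, that $\tvdash\ap\Gamma\sft A\to A$, and that $\tvdash B'\to\ap\Gamma\sft A$ for every $(\Gamma,\sft)$-lower bound $B'$ of $A$; all logics $\sft$ under consideration contain $\IPC$ and are closed under modus ponens, which is all that is needed to chain implications.

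For the left-to-right direction, assume $\tvdash\ap\Gamma\sft A\to B$ and let $E\in\Gamma$ be arbitrary with $\tvdash E\to A$. Then $E$ is a $(\Gamma,\sft)$-lower bound for $A$, so the greatest-lower-bound property yields $\tvdash E\to\ap\Gamma\sft A$; composing this with the hypothesis $\tvdash\ap\Gamma\sft A\to B$ gives $\tvdash E\to B$. As $E$ ranged over all members of $\Gamma$ with $\tvdash E\to A$, this is precisely $A\prtg B$.

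For the right-to-left direction, assume $A\prtg B$ and instantiate the defining clause of $\prtg$ at the particular formula $E:=\ap\Gamma\sft A$: since $\ap\Gamma\sft A\in\Gamma$ and $\tvdash\ap\Gamma\sft A\to A$, the hypothesis $A\prtg B$ forces $\tvdash\ap\Gamma\sft A\to B$, as required. (Equivalently, one may invoke \cref{Gamma-approx-preserv}, which records $A\prtg\ap\Gamma\sft A$, and then run the same lower-bound chaining as above.)

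There is no genuine obstacle here; the statement is essentially a reformulation of the universal property of $\ap\Gamma\sft A$ in terms of the relation $\prtg$, and the only thing to be careful about is keeping track of the direction of the implications and using the glb property (not merely the lower-bound property) in the forward direction.
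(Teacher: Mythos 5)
Your proof is correct and follows essentially the same route as the paper: the forward direction chains $\tvdash E\to\ap\Gamma\sft A$ (the glb property, which the paper cites via \cref{Gamma-approx-preserv} as $A\prtg\ap\Gamma\sft A$) with the hypothesis, and the backward direction instantiates $\prtg$ at $E:=\ap\Gamma\sft A$ using $\ap\Gamma\sft A\in\Gamma$ and $\tvdash\ap\Gamma\sft A\to A$. No issues.
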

\begin{proof}
	First, assume that $\tvdash \ap\Gamma\sft A$. Also, let $E\in\Gamma$ such that $\tvdash E\to A$. \Cref{Gamma-approx-preserv} implies $A\prtg\ap\Gamma\sft A$, and hence $\tvdash E\to \ap\Gamma\sft A$. Then, by $\tvdash \ap\Gamma\sft A\to B$, we get $\tvdash E\to B$, as desired.
	\\
	For the other direction, let $A\prtg B$. By definition, we have $\ap\Gamma\sft A\in\Gamma$ and $\tvdash \ap\Gamma\sft A\to A$. Hence, by $A\prtg B$, we get $\tvdash \ap\Gamma\sft A\to B$, as desired.
\end{proof}

\begin{question}
	As we saw in \Cref{Gamma-approx-preserv}, the glb may be expressed via the preservativity relation $\prtg$. One may consider its variant which is best suited for lub's:
	$$A\prtgp B \quad \text{ iff } \quad \forall\,E\in\Gamma(\tvdash A\to E \Rightarrow \tvdash B\to E).$$
	We ask for an axiomatization of $\prtgp$ when $\sft=\IPC$ and $\Gamma=\NNIL$.
\end{question}

We also inductively define $\iap\Gamma\sft A$ (for downward compact $(\Gamma,\sft)$):
\begin{itemize}
\item $\iap\Gamma\sft a=a$, for atomic $a$.
\item $\iap\Gamma\sft{ \ }$ commutes with $\{\vee,\wedge,\to\}$.
\item $\iap\Gamma\sft{\Box A}:=\Box\ap\Gamma\sft{\iap\Gamma\sft A}$.
\end{itemize}
Note that in the above definition, we assumed $(\Gamma,\sft)$ to be downward compact to guarantee the existence of $\ap\Gamma\sft{\iap\Gamma\sft A}$.
\begin{lemma}\label{sdc}
If $(\Gamma,\sft)$ is strong downward compact, $\sft\supseteq\ik$, and $\sft$ is closed under necessitation, then for every $A\in\lcalb$, we have $\iap\Gamma\sft A\in\nf{\Gamma}$ and $\sft\visgt\vdash A\lr \iap\Gamma\sft A$.
\end{lemma}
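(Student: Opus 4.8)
The plan is to prove both assertions simultaneously by induction on the structure of $A$. The base cases $A\in\atom$ and $A=\bot$ are immediate, since then $\iap\Gamma\sft A=A$ has no boxed subformulas. For $A=B\circ C$ with $\circ\in\{\vee,\wedge,\to\}$ both claims follow at once from the induction hypothesis: $\iap\Gamma\sft{(B\circ C)}=\iap\Gamma\sft B\circ\iap\Gamma\sft C$ is not itself boxed, so its boxed subformulas are precisely those of $\iap\Gamma\sft B$ and $\iap\Gamma\sft C$ (giving $\iap\Gamma\sft{(B\circ C)}\in\nf\Gamma$), while replacement of provably equivalent subformulas, valid already in $\IPC$ and hence in $\sft\visgt$, turns the two equivalences $\sft\visgt\vdash B\lr\iap\Gamma\sft B$ and $\sft\visgt\vdash C\lr\iap\Gamma\sft C$ into $\sft\visgt\vdash(B\circ C)\lr\iap\Gamma\sft{(B\circ C)}$. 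So the whole argument really concentrates on the case $A=\Box B$, where by definition $\iap\Gamma\sft{\Box B}=\Box\ap\Gamma\sft C$ with $C:=\iap\Gamma\sft B$ (note that $\ap\Gamma\sft C$ exists, $(\Gamma,\sft)$ being downward compact).

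For the membership claim in the case $A=\Box B$, I would observe that the boxed subformulas of $\Box\ap\Gamma\sft C$ are the formula $\ap\Gamma\sft C$ itself together with the boxed formulas occurring strictly inside $\ap\Gamma\sft C$. The former lies in $\Gamma\subseteq\nfz\Gamma$ directly from the definition of a $(\Gamma,\sft)$-greatest lower bound. For a boxed subformula $\Box B'$ of $\ap\Gamma\sft C$ I would invoke \emph{strong} downward compactness applied to the formula $C=\iap\Gamma\sft B$: it yields either $B'\in\nfz\Gamma$ directly, or $\Box B'\in\sub C$, and in the latter case the induction hypothesis $C\in\nf\Gamma$ gives $B'\in\nfz\Gamma$ again. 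This coupling of strong downward compactness with the induction hypothesis is the only genuinely delicate point of the proof; everything else is routine.

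For the equivalence claim in the case $A=\Box B$, the induction hypothesis gives $\sft\visgt\vdash B\lr C$, and applying necessitation together with the ${\sf K}$ axiom (both available since $\sft\supseteq\ikfour$) yields $\sft\visgt\vdash\Box B\lr\Box C$. It then remains to prove $\sft\visgt\vdash\Box C\lr\Box\ap\Gamma\sft C$. For the direction $\Box\ap\Gamma\sft C\to\Box C$: $\ap\Gamma\sft C$ is a $(\Gamma,\sft)$-lower bound for $C$, so $\tvdash\ap\Gamma\sft C\to C$, whence $\ap\Gamma\sft C\prtg C$ and this implication is an instance of the axiom schema $\visgt$. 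For the direction $\Box C\to\Box\ap\Gamma\sft C$: \cref{Gamma-approx-preserv} gives $C\prtg\ap\Gamma\sft C$, so this implication is likewise an instance of $\visgt$. Chaining the equivalences gives $\sft\visgt\vdash\Box B\lr\Box\ap\Gamma\sft C=\iap\Gamma\sft{\Box B}$, completing the induction. Finally, it is worth recording that the operation $A\mapsto\iap\Gamma\sft A$ is well defined by structural recursion, since the greatest-lower-bound operation $B\mapsto\ap\Gamma\sft B$ is not itself recursive and is applied only to already-processed strict subformulas.
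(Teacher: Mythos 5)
Your proof is correct and is precisely the "easy induction on the complexity of $A$" that the paper leaves to the reader: the only nonroutine points are the ones you isolate, namely combining strong downward compactness with the induction hypothesis for the membership claim, and using the two $\ulvisgt$-instances $\Box C\to\Box\ap\Gamma\sft C$ (from \cref{Gamma-approx-preserv}) and $\Box\ap\Gamma\sft C\to\Box C$ for the equivalence. One small point to make explicit: necessitation is not automatic for an arbitrary $\sft\supseteq\ikfour$, but it is admissible here because the schema $\visgt$ includes its boxed instances $\overline{\visgt}$ and the axioms of $\ikfour$ likewise come with their boxes, so the equivalences $B\lr\iap\Gamma\sft B$ produced by your induction are already derivable in the necessitation-closed fragment $\ikfour\visgt\subseteq\sft\visgt$, which licenses the step from $B\lr C$ to $\Box B\lr\Box C$.
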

\begin{proof}
We use induction on the complexity of $A$. All cases are trivial except for $A=\Box B$. Then, by definition, we have $\iap\Gamma\sft {\Box B}=\Box\ap\Gamma\sft{\iap\Gamma\sft B}$. By the induction hypothesis, we have $\iap\Gamma\sft B\in\nf\Gamma$ and $\sft\visgt\vdash B\lr \iap\Gamma\sft B$. Moreover, by \cref{Gamma-approx-preserv}, we have ${\iap\Gamma\sft B}\prtg \ap\Gamma\sft{\iap\Gamma\sft B}$. Hence, $\Box {\iap\Gamma\sft B}\to \Box \ap\Gamma\sft{\iap\Gamma\sft B} \in\visgt$. Also, by the definition of $\ap\Gamma\sft{ \,}$, we have $\sft\vdash \ap\Gamma\sft{\iap\Gamma\sft B} \to {\iap\Gamma\sft B}$, and since $\sft$ is closed under necessitation and $\sft\supseteq \ik$, we have $\sft\vdash \Box \ap\Gamma\sft{\iap\Gamma\sft B} \to \Box {\iap\Gamma\sft B}$. Thus, $\sft\visgt\vdash \iap\Gamma\sft {\Box B} \lr \Box {\iap\Gamma\sft B}$. Therefore, by the induction hypothesis (using $\sft\supseteq\ik$ and its closure under necessitation), $\sft\visgt\vdash \iap\Gamma\sft {\Box B} \lr \Box { B}$.

It remains only to show that $\ap\Gamma\sft{\iap\Gamma\sft B} \in\nf\Gamma$, which holds by $\iap\Gamma\sft B\in\nf\Gamma$ (induction hypothesis) and the \textit{strong} downward compactness of $(\Gamma,\sft)$.
\end{proof}

\subsection{Modal logics with binary modal operator}
\label{sec-preser}
Modal logics with a binary modal operator have been studied in the provability logic literature for at least two intended meanings of $A\rhd B$: (1) $\sft+B$ is interpretable in $\sft+A$ \citep{Visser-Interpretability,Berarducci,Velt-Jongh,goris2011new}, and (2) $\sft\vdash E\to A$ implies $\sft\vdash E\to B$ for every $\Sigma_1$ sentence $E$ \citep{Iemhoff.Preservativity,Iemhoff2005}. The first is considered for classical theories like $\sft$, and the second for intuitionistic theories like $\HA$. \citep{IemhoffT,Iemhoff.Preservativity} introduces the logic $\iph$ (defined slightly differently in this section) over the language $\lcalp$. Note that both definitions (the one presented here and the one defined by Iemhoff) coincide if we consider the language without parameters ($\parr=\emptyset$). Visser and de Jongh prove \citep{Iemhoff.Preservativity} that $\iph$ is sound for arithmetical interpretations in $\HA$ and conjecture that $\iph$ is also complete for such interpretations. We also conjecture that $\iphs$ (as defined in this subsection) is the $\Sigma_1$-preservativity logic of $\HA$ for $\Sigma_1$-substitutions; i.e., $\iphs\vdash A$ iff for every $\Sigma_1$-substitution $\sigma$, we have $\HA\vdash \sha (A)$ (see \Cref{sec-sub-1}).

Define $\LARD\sft\Delta$ as a logic in the language $\lcalp$ with the following axioms and rules:
\subsubsection*{Axioms}
\begin{itemize}[leftmargin=1.7cm]
	\item[\sft:] \quad All theorems of $\sft$.
	\item[$\viss\Delta:$]\quad $B\to C\rhd \bigvee_{i=1}^{n+m} \itv{B}{E_i}\Delta$, where $B=\bigwedge_{i=1}^n (E_i\to F_i)$ and $C=\bigvee_{i=n+1}^{n+m} E_i$.
	\item[$\amontd$:] \quad $A\rhd B\to (C\to A)\rhd (C\to B)$ for every $C\in\Delta$.
	\item[$\Le$:] \quad $A\rhd \Box A$ for every $A$.
	\item[ADisj:] \quad $(B\rhd A \wedge C\rhd A)\to (B\vee C)\rhd A$.
	\item[AConj:] \quad $[(A\rhd B)\wedge(A\rhd C)]\to(A\rhd (B\wedge C))$.
	\item[ACut:] \quad $[(A\rhd B)\wedge(B\rhd C)]\to(A\rhd C)$.
\end{itemize}
\subsubsection*{Rules}
\begin{itemize}[leftmargin=1.5cm]
	\item[MP:] \quad $A$, $A\to B$ / $B$.
	\item[PNec:] \quad $A \to B$ / $A\rhd B$.
\end{itemize}
Also define $\LARDP\sft\Delta$ as $\LARD\sft\Delta$ plus the following axiom:
\begin{itemize}[leftmargin=1.5cm]
\item[${\sf 4p}$:] \quad $(B\rhd C)\to\Box (B\rhd C)$.
\end{itemize}
Then define $\iph:=\LARD\iglcp\parb$ and $\iphp:=\LARDP\iglcp\parb$. Finally, define $\iphs:=\LARD\iglca\atomb$.

\begin{remark}\label{Lob-pres}
The L\"ob preservativity principle $(\Box A\to A)\rhd A$ is derivable in $\iph$. This axiom was listed in the original axiomatization of $\iph$ in \citep{Iemhoff.Preservativity,Iemhoff2005}.
\end{remark}
\begin{proof}
Reason inside $\iph$. By $\Le$, we have $(\Box A\to A)\rhd\Box(\Box A\to A)$. 
Also, by L\"ob's axiom in $\iglcp$ and necessitation, we get 
$\Box(\Box A\to A)\rhd \Box A$. Thus, by ACut, we have $(\Box A\to A)\rhd \Box A$. 
Since $(\Box A\to A)\rhd (\Box A\to A)$, by the AConj axiom, we have $(\Box A\to A)\rhd (\Box A\wedge (\Box A\to A))$. By necessitation, we have $(\Box A\wedge (\Box A\to A))\rhd A$, and thus ACut implies $(\Box A\to A)\rhd A$.
\end{proof}
\begin{theorem}\label{iph-sound}
$\iphp$ is sound for strong arithmetical interpretations in $\HA$; i.e., for every arithmetical substitution $\alpha$ and $A\in\lcalp$ with $\iphp\vdash A$, we have $\HA\vdash \aha^+(A)$ (see \Cref{sec-sub-1}).
\end{theorem}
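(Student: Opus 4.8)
The plan is to argue by induction on the length of a derivation in $\iphp$, showing $\HA\vdash\ahap(A)$ for every theorem $A$. Two preliminary observations organise the whole argument. First, unwinding $\ahap(\Box A)=\ahap(\top\rhd A)$ one checks $\HA\vdash\ahap(\Box A)\lr\Box_{_{\sf HA}}\ahap(A)$: the quantification over $\Sigma_1$-hypotheses collapses because $\top$ is implied by every sentence, and the extra $\Box_{_{\sf HA}}\Box_{_{\sf HA}}$-conjunct is absorbed by the $4$-principle for $\Box_{_{\sf HA}}$. Hence the $\Box$-fragment of $\lcalp$ is interpreted exactly as in the standard provability interpretation, so all the $\igl$-axioms occurring in $\iphp$ (including their boxed variants) are validated by the well-known arithmetical soundness of $\igl$ for $\HA$ \citep{Iemhoff.Preservativity,IemhoffT} — for the $\cpp$-axiom using that $\ahap(p)=\alpha(p)\in\Sigma_1$ and provable $\Sigma_1$-completeness. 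Second, for any $A,B$, writing $\ahap(A\rhd B)=\varphi\wedge\Box_{_{\sf HA}}\varphi$, the $4$-principle and distributivity of $\Box_{_{\sf HA}}$ give $\HA\vdash\ahap(A\rhd B)\to\Box_{_{\sf HA}}\ahap(A\rhd B)$; combined with the first observation applied to $\ahap(\Box(B\rhd C))$ this immediately validates the new axiom ${\sf 4p}$.

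Next I would handle the rules and the ``cheap'' preservativity axioms. $\mathsf{MP}$ is immediate since $\HA$ is closed under modus ponens. For $\mathsf{PNec}$: if $\HA\vdash\ahap(A)\to\ahap(B)$, then $\HA$ proves (formalised) that $\Box_{_{\sf HA}}(S\to\ahap(A))\to\Box_{_{\sf HA}}(S\to\ahap(B))$ for every $\Sigma_1$ sentence $S$, i.e.\ $\HA\vdash\varphi$ for the $\rhd$-clause $\varphi$ of $\ahap(A\rhd B)$; necessitation gives $\HA\vdash\Box_{_{\sf HA}}\varphi$, hence $\HA\vdash\ahap(A\rhd B)$. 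For every $\rhd$-axiom of $\iphp$ of the shape ``(a conjunction of $\rhd$-statements) $\to$ (a $\rhd$-statement)'' it suffices, by the same pattern, to prove in $\HA$ the corresponding implication between the bare $\rhd$-clauses $\varphi$ (forgetting the $\Box_{_{\sf HA}}$-conjuncts): the $\Box_{_{\sf HA}}$-conjunct of the conclusion then follows from those of the premises by formalised necessitation together with $\Box_{_{\sf HA}}$-distribution. At this $\varphi$-level, $\mathsf{Conj}$ and $\mathsf{Cut}$ are the formalised closure of $\HA$-provability under conjunction of succedents and under transitivity of $\Sigma_1$-preservation; $\montd$ with $C\in\parb$ uses that $\ahap(C)$ is $\HA$-provably equivalent to a $\Sigma_1$ sentence (a parameter value, a $\Box_{_{\sf HA}}$-formula, or $\bot$), so the hypothesis $\ahap(C)$ may be absorbed into the $\Sigma_1$ antecedent; and $\Le$ is formalised provable $\Sigma_1$-completeness (with the $4$-principle), since $\HA\vdash S\to\ahap(A)$ yields $\HA\vdash S\to\Box_{_{\sf HA}}\ahap(A)=\ahap(\Box A)$ via $\HA\vdash S\to\Box_{_{\sf HA}}S$.

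The two substantive cases are $\mathsf{Disj}$ and $\viss\parb$, which encode genuine metamathematical facts about $\HA$ rather than formal bookkeeping: $\mathsf{Disj}$ needs that $\Sigma_1$-sentences are ``prime'' for $\HA$-preservativity — the arithmetical counterpart of \cref{gen-pres-sound}(2) — while $\viss\parb$ needs the ``evaluation'' lemma that for $S\in\Sigma_1$, $\HA\vdash S\to(B\to C)$ with $B=\bigwedge_{i=1}^n(E_i\to F_i)$ and $C=\bigvee_{i=n+1}^{n+m}E_i$ implies $\HA\vdash S\to\bigvee_{i=1}^{n+m}\itv{B}{E_i}{\parb}$. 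Both are exactly the lemmas underpinning the Visser--de Jongh soundness theorem for $\iph$ with respect to the weak interpretation $\aha$, so I would invoke them rather than reprove them; the only thing to verify is that their $\varphi$-level formalisations survive the passage from $\aha$ to $\ahap$, which they do, since the extra $\Box_{_{\sf HA}}$-conjuncts sitting inside $\ahap(B),\ahap(C),\ahap(E_i)$ are harmless (in particular $\ahap(E_i)$ for $E_i\in\parb$ remains $\Sigma_1$), after which the outer $\Box_{_{\sf HA}}$-layer propagates as above. I expect the main obstacle to be precisely securing and citing these two $\HA$-specific inputs in a form strong enough for the $\ahap$-interpretation; the induction itself, the ${\sf 4p}$ case, and the bookkeeping with the $\Box_{_{\sf HA}}$-conjuncts are routine once the two preliminary observations are in place.
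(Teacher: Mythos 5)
Your proposal is correct and follows essentially the same route as the paper, which simply observes that the Visser--de Jongh soundness proof of $\iph$ for $\aha$ goes through unchanged for $\ahap$ and that ${\sf 4p}$ is immediate for strong interpretations. You spell out the bookkeeping (the $\varphi$-level reduction, absorption of the extra $\Box_{_{\sf HA}}$-conjuncts, and the explicit $4$-principle argument for ${\sf 4p}$) in more detail, while citing the same two substantive $\HA$-specific inputs (Disj and $\viss\parb$) rather than reproving them, exactly as the paper does.
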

\begin{proof}
\citep{IemhoffT,Iemhoff.Preservativity} proves that $\iph\vdash A$ implies $\HA\vdash\aha(A)$. The same proof works for $\iph$ and $\ahap(A)$ as well. Also, the validity of ${\sf 4p}$ for strong interpretations is obvious.
\end{proof}

\subsection{Simultaneous fixed-point theorem}\label{sec-fix}
It is well-known that the G\"odel-L\"ob logic $\GL$ proves the fixed-point theorem; i.e., for every $A\in\lcalb$ and atomic $a$ such that $a$ only occurs inside the scope of boxes, there exists a unique (up to $\GL$-provable equivalence) fixed point for $A$ with respect to $a$; i.e., there exists a proposition $D$ such that
$$\GL\vdash A[a:D]\lr D.$$
Moreover, one may choose $D$ such that it contains only atomics appearing in $A$ other than $a$. A well-known extension of this result is the simultaneous fixed-point theorem, of which we state an intuitionistic version in the following theorem.

As we will see in this paper, we mainly deal with outer substitutions $\th$; i.e., we do not substitute variables inside the scope of boxes. The fixed-point theorem helps us (in the proof of \Cref{Reduction-llea-lleb}, which is a major step in the arithmetical completeness of $\iglcph$) to convert usual substitutions to outer substitutions.

\citep{Iemhoff2005} proves the fixed-point theorem for $\iglcp$. In the following theorem, we extend it to a simultaneous version, analogous to the classical case.
\begin{theorem}\label{Theorem-simultan-fixed-point}
Let $\bs{E}:=\{E_1,\ldots,E_m\}$ and $\bs{a}=\{a_1,\ldots,a_m\}$ such that every occurrence of $a_i$ in $E_j$ is in the scope of some $\Box$. Then there exists a substitution $\tau$ that is the simultaneous fixed point of $\abold$ with respect to $\bs{E}$ in $\iglcp$; i.e.,
\begin{itemize}
\item $\iglcp\vdash \tau(E_i)\lr \tau(a_i)$ for every $1\leq i\leq m$.
\item $\tau$ is the identity on every atomic $a\nin\abold$.
\item $\suba{\tau(a_i)}\subseteq(\suba{\bs{E}}\setminus\bs{a})$.
\end{itemize}
\end{theorem}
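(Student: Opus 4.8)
The plan is to derive the simultaneous statement from the single-variable fixed-point theorem for $\igl$ \citep{Iemhoff2005} by induction on $m$, exactly as in the classical case. I will freely use that $\igl$ is closed under substitution of formulas for propositional variables (its axioms are schemata, and modus ponens and necessitation preserve this) and that propositional substitutions commute with all connectives, including $\Box$. The base case $m=1$ is \citep{Iemhoff2005} itself: since $a_1$ occurs in $E_1$ only inside boxes, there is $D$ with $\igl\vdash E_1[a_1:D]\lr D$, and --- in the refined form that also holds intuitionistically --- with $\suba{D}\subseteq\suba{E_1}\setminus\{a_1\}$; take $\tau(a_1):=D$ and $\tau$ the identity on every other atomic.

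For the inductive step, first apply the single-variable theorem to $E_m$ with respect to $a_m$ (allowed, since $a_m$ is boxed throughout $E_m$): get $D_m\in\lcalb$ with $\igl\vdash E_m[a_m:D_m]\lr D_m$ and $\suba{D_m}\subseteq\suba{E_m}\setminus\{a_m\}$; in particular $a_m\nin\suba{D_m}$. Set $E'_j:=E_j[a_m:D_m]$ for $1\le j\le m-1$, and let $\bs{E}':=\{E'_1,\dots,E'_{m-1}\}$. Because every occurrence of $a_m$ in $E_j$ lies inside a box, each inserted copy of $D_m$ --- hence every atomic of $D_m$ --- lies inside a box of $E'_j$; combined with the hypothesis on $E_j$, this shows each $a_i$ with $i\le m-1$ still occurs in $E'_j$ only inside boxes, and also $\suba{\bs{E}'}\subseteq\suba{\bs{E}}\setminus\{a_m\}$. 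By the induction hypothesis applied to $\bs{E}'$ and $\{a_1,\dots,a_{m-1}\}$ there is a substitution $\tau'$, the identity on every atomic outside $\{a_1,\dots,a_{m-1}\}$, with $\igl\vdash\tau'(E'_j)\lr\tau'(a_j)$ for all $j\le m-1$ and $\suba{\tau'(a_j)}\subseteq\suba{\bs{E}'}\setminus\{a_1,\dots,a_{m-1}\}$.

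Now define $\tau$ by $\tau(a_i):=\tau'(a_i)$ for $i\le m-1$, $\tau(a_m):=\tau'(D_m)$, and $\tau$ the identity on all remaining atomics. A straightforward structural induction on $F\in\lcalb$ (using commutation with connectives and $a_m\nin\suba{D_m}$) gives $\tau(F)=\tau'(F[a_m:D_m])$; in particular $\tau(E_i)=\tau'(E'_i)$ for $1\le i\le m$. Hence for $i\le m-1$ the induction hypothesis gives $\igl\vdash\tau(E_i)\lr\tau'(a_i)=\tau(a_i)$, while applying $\tau'$ to $\igl\vdash E_m[a_m:D_m]\lr D_m$ gives $\igl\vdash\tau(E_m)\lr\tau'(D_m)=\tau(a_m)$; so $\tau$ is a simultaneous fixed point. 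That $\tau$ is the identity on atomics not in $\abold$ is clear from the construction and the corresponding property of $\tau'$. For the last clause, $\suba{\tau(a_m)}=\suba{\tau'(D_m)}\subseteq(\suba{D_m}\setminus\{a_1,\dots,a_{m-1}\})\cup\bigcup_{j\le m-1}\suba{\tau'(a_j)}$, and each term sits inside $\suba{\bs{E}}\setminus\abold$ by the inclusions above; the same inclusions handle $\suba{\tau(a_i)}=\suba{\tau'(a_i)}$ for $i\le m-1$.

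There is no real conceptual obstacle: all of the depth lies in the single-variable fixed-point theorem for $\igl$ (the intuitionistic L\"ob / de Jongh--Sambin construction), which we take as given. The only genuinely delicate point is the bookkeeping in the inductive step --- verifying that substituting $D_m$ for $a_m$ leaves every remaining variable inside the scope of a box, and tracking the ``only atomics of $\bs{E}$ other than $\abold$'' restriction through the two nested substitutions --- and that is where the write-up will need care.
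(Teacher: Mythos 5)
Your proof is correct and takes essentially the same route as the paper: induction on $m$, reducing everything to the single-variable fixed-point theorem for $\igl$ (with the clause controlling the atomics of the fixed point). The only difference is the order of elimination --- you solve the last equation first and substitute $D_m$ into the remaining ones before invoking the induction hypothesis, whereas the paper applies the induction hypothesis to the first $m$ equations (so that $a_{m+1}$ may survive as a parameter in $\tau'(a_i)$) and then extracts the final fixed point from $\tau'(E_{m+1})$; the two orderings are mirror images of the same classical argument.
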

\begin{proof}
The classical syntactic proof for this result with $n=1$ is valid for its intuitionistic counterpart. We refer the reader to \citep[theorem 1.3.5]{Smorynski-Book} or \citep[section 8]{Boolos}. Then we may use induction on $m$ to prove the general case as follows. As induction hypothesis, assume that we already have the statement of this theorem for $m$ and prove it for $m+1$. So assume that $\bs{E}:=\{E_1,\ldots,E_m,E_{m+1}\}$ and $\abold:=\{a_1,\ldots,a_m,a_{m+1}\}$ are given. By the induction hypothesis, there exists some substitution $\tau'$ such that
\begin{itemize}
\item $\iglcp\vdash \tau'(E_i)\lr \tau'(a_i)$ for every $1\leq i\leq m$.
\item $\tau'$ is the identity on every atomic $a\nin(\abold\setminus a_{m+1})$.
\item $\suba{\tau'(a_i)}\subseteq\{a_{m+1}\}\cup(\suba{\bs{E}}\setminus\bs{a})$ for $1\leq i\leq m$.
\end{itemize}
Then there exists a fixed point $D$ for $\tau'(E_{m+1})$ with respect to $a_{m+1}$; i.e.:
$$\iglcp\vdash D\lr \tau'(E_{m+1})[a_{m+1}:D].$$
Finally, define $\tau$ as follows:
$$
\tau(a):=\begin{cases}
(\tau'(a))[a_{m+1}:D]\quad &:a=a_i\text{ for } 1\leq i\leq m\\
D &: a=a_{m+1}\\
a &:\text{otherwise}
\end{cases}.
$$
Then it is not difficult to observe that $\tau$ satisfies all required conditions for the simultaneous fixed point.
\end{proof}

\subsection{Two crucial results}\label{2salient}
There are two results that are crucial for the arithmetical completeness of the provability logic of $\HA$: (1) the characterization of the $\Sigma_1$-provability logic of $\HA$ (\Cref{Theorem-Sigma-Provability-HA}), and (2) \Cref{PLHA0}, which implies the characterization of $\adsm{\ipc}{\Gamma}\ $ and $\pres{\ipc}{\Delta}\ $ for $\Gamma:=\NNILpar$ and $\Delta :=\dar{\ipc}\Gamma$ (\Cref{cor-PLHA0}).
\begin{theorem}\label{Theorem-Sigma-Provability-HA}
	The $\Sigma_1$-provability logic of $\HA$ is $\lles$; i.e., $\lles\vdash A$ iff for every $\Sigma_1$-substitution $\alpha$, we have $\HA\vdash \aha(A)$.
\end{theorem}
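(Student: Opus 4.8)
This is the characterization theorem of Ardeshir–Mojtahedi for the $\Sigma_1$-provability logic of $\HA$, so the plan is to cite the relevant prior work and assemble the two directions from the infrastructure developed in the references. The statement to be proved is that $\lles \vdash A$ iff $\HA \vdash \sigma_{\sf HA}(A)$ for every $\Sigma_1$-substitution $\sigma$. Since $\lles = \ihs = \iglchs = \iglca\Hs$ (the modal logic $\iglca$ augmented with the schema $\ulHs$, i.e.\ $\ulvis{\snnil}{\iglca}$), both directions will go through the axiomatization and its Kripke/arithmetical semantics.

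First I would treat soundness ($\lles \vdash A \Rightarrow \HA \vdash \sigma_{\sf HA}(A)$ for all $\Sigma_1$ $\sigma$). By induction on the $\lles$-derivation it suffices to check the axioms. The $\iglca$-part is the standard Solovay-style arithmetical soundness of $\iglca$ for $\Sigma_1$-substitutions together with the axiom ${\sf C_a}$ ($a \to \Box a$), which is sound precisely because every atomic is interpreted by a $\Sigma_1$-sentence and $\HA$ proves $\Sigma_1$-completeness. For the schema $\ulHs$, i.e.\ $\Box A \to \Box B$ whenever $A \priglcasn B$ (equivalently, using the notation, for $A \pres{\iglca}{\snnil} B$), soundness reduces to: if $A \pres{\iglca}{\snnil} B$ then $\HA \vdash \sigma_{\sf HA}(\Box A \to \Box B)$. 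Here I would invoke the characterization of $\Sigma_1$-preservativity over $\HA$ — Visser's result that, modulo the $\NNIL$-machinery of \citep{Visser02} and the preservativity analysis of \citep{Iemhoff.Preservativity}, $\Sigma_1$-preservativity in $\HA$ is captured by preservativity over $\iglca$ relative to $\snnil$ — plus the fact that $\HA$-provable preservativity upgrades a provable implication $\HA \vdash S \to \sigma_{\sf HA}(A)$ for every $\Sigma_1$ $S$ (in particular $S = \sigma_{\sf HA}(\Box A)$, which is $\Sigma_1$) to $\HA \vdash \sigma_{\sf HA}(\Box A) \to \sigma_{\sf HA}(\Box B)$. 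This uses that $\ggt{(\cdot)}$-complete/self-complete propositions behave correctly and that $A^*$, the best $\NNIL$-approximation, satisfies $\HA \vdash \sigma_{\sf HA}(\Box A \leftrightarrow \Box A^*)$.

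For completeness ($\HA \vdash \sigma_{\sf HA}(A)$ for all $\Sigma_1$ $\sigma$ $\Rightarrow$ $\lles \vdash A$), I would argue contrapositively: given $\lles \nvdash A$, produce a $\Sigma_1$-substitution $\sigma$ with $\HA \nvdash \sigma_{\sf HA}(A)$. The plan is to first obtain, by the completeness of $\iglca\Hs$ for a suitable class of good $\cpa$ Kripke models (of the kind in \cref{Kripke-completeness-igl}, refined so that the $\ulHs$-axiom is respected — this is where the intuitionistic submodel property and $\viss{\atomb}$ play their role), a finite countermodel $\kcal$ with root $w_0$ such that $\kcal, w_0 \nVdash A$. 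Then I would run the arithmetical realization: build a Solovay-style embedding of this Kripke model into $\HA$, assigning to each atomic a $\Sigma_1$-sentence (using the Solovay fixed-point sentences, which are $\Sigma_1$), so that $\kcal, w \Vdash B \iff \HA + (\text{$w$ is the true world}) \vdash \sigma_{\sf HA}(B)$ and in particular $\HA \nvdash \sigma_{\sf HA}(A)$. The construction of this embedding, respecting the intuitionistic accessibility relation and the modal relation simultaneously, is the technically heavy part and is exactly the content of \citep{Sigma.Prov.HA}; I would cite it rather than reproduce it.

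The main obstacle is the completeness direction, specifically the arithmetical realization step: ordinary Solovay arguments handle classical $\GL$ over $\PA$, but here one must simultaneously accommodate (i) the intuitionistic Kripke order, (ii) the modal/provability order, and (iii) the constraint that all parameters — indeed, in the $\Sigma_1$-substitution case, \emph{all} atomics — receive $\Sigma_1$ interpretations, which forces the use of $\NNIL$-approximations and the self-completion machinery to make the $\ulHs$-axiom arithmetically valid. Since this theorem is quoted from \citep{Sigma.Prov.HA} as one of the "two crucial results," the honest plan is: state soundness with the argument sketched above, and for completeness reduce to the Kripke completeness of $\lles$ (provable along the lines of \cref{Kripke-completeness-igl}) and then cite the arithmetical construction of \citep{Sigma.Prov.HA} verbatim.
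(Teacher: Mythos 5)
Your proposal is correct and takes essentially the same route as the paper: the paper's entire proof of this theorem is the citation \citep{Sigma.Prov.HA}, and your argument, after sketching how soundness and completeness would be organized, likewise defers to that reference for the Solovay-style arithmetical construction, which is where all the real work lies. The additional sketch you give is a reasonable gloss and does not change the fact that the result is being imported, exactly as the paper does.
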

\begin{proof}
Observe that by \Cref{summery-ikfourca-snnil}, 
$\lles$ is the same as ${\sf iH}_\sigma$ as defined in 
\citep{Sigma.Prov.HA}. Then we have desired result by Theorems 6.3 and 6.5 in \citep{Sigma.Prov.HA}.
\end{proof}


\begin{theorem}\label{PLHA0}
Given $A\in\lcalz$, there exists a finite set $\Pi\subseteq\dnnil(\subp A)$ such that
\begin{enumerate}
\item $\IPC\vdash \bigvee\Pi\to A$.
\item $\ARD\IPC\parr\vdash A\rhd\bigvee\Pi$ (for the definition of $\ARD\IPC\parr$, see \Cref{pres-admis}).
\item $\Pi$ is computable as a function of $A$. Moreover, for every $D\in\Pi$, the substitution $\theta$ with $D\xrat\theta\ipc D^\dagger\in\nnil(\subp A)$ is computable.
\item $\cto(B)\leq \cto(A)+1+\#\subp{A}$ for every $B\in\Pi$.
\item $\suba B\subseteq\suba A$ for every $B\in\Pi$.
\end{enumerate}
\end{theorem}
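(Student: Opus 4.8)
The plan is to reduce the statement to the non-modal theorem of \citep{Visser02} about the best $\NNIL$ approximation from below, applied in a careful way that tracks both the complexity measure $\cto$ and the set of atomic parameters, and then to upgrade the resulting single $\NNIL$-formula to the finite disjunction of \emph{projective} formulas $\Pi$ demanded by items (2) and (3). First I would recall that since $A\in\lcalz$ is a non-modal proposition, every subformula of $A$ is non-modal, and $\subp A$ is just the finite set of parameters occurring in $A$; in particular $\dnnil(\subp A)=\darrow{}\nnil$ restricted to the boolean language over those parameters, and $\cto$ behaves like the usual implication-nesting depth. The workhorse is the disjunction form of the $\NNIL$-approximation: every $\NNIL$ formula is $\IPC$-equivalent to a disjunction $\bigvee_j N_j$ of ``components'', and by the Visser algorithm the best $\NNIL$-approximation $A^*$ satisfies $\vdash A^*\to A$ and $B\prin A^*$ for every $\NNIL$ formula $B$ with $\vdash B\to A$; moreover $A^*$ is computable from $A$.

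The key steps, in order, are as follows. (i) Compute $A^*$ by Visser's algorithm and write $A^*\equiv_{\IPC}\bigvee_j N_j$ with each $N_j\in\nnil(\subp A)$; tracking the algorithm one sees that no new atoms are introduced, giving $\suba{N_j}\subseteq\suba A$ (item 5), and that the implication-depth of the output is controlled by the input, which will give item 4. (ii) For each component $N_j$, produce its \emph{projective} refinement: by the theory of projective formulas over $\NNIL$ (the fact that $(\NNIL,\IPC)$ is recursively strong downward compact, recorded around \cref{Gamma-approx-preserv,sdc}, together with the standard fact that $\NNIL$ formulas admit projective approximations), there is a computable substitution $\theta_j$ and a formula $D_j\in\darrow{}\nnil(\subp A)$ with $D_j\xrat{\theta_j}{\ipc}N_j^\dagger\in\nnil(\subp A)$ and $\vdash D_j\to N_j$, while the ``missing piece'' is itself a disjunction of further such projective formulas with no increase in $\cto$ beyond the stated bound; iterating (the process terminates because at each stage the finitely many relevant $\NNIL$-formulas over $\subp A$ of bounded $\cto$ form a finite set, by \cref{Remark-NNIL-finiteness}) yields a finite set $\Pi\subseteq\darrow{}\nnil(\subp A)$ with $\IPC\vdash\bigvee\Pi\lr A^*$, hence $\IPC\vdash\bigvee\Pi\to A$ (item 1), and with each accompanying $\theta$ computable (item 3). (iii) For item 2, it suffices to show $\ARD\IPC\parr\vdash A\rhd\bigvee\Pi$: using the rule $\viss{\parr}$ of $\ARD\IPC\parr$ (valid by \cref{gen-pres-sound}(4), since $\parr\subseteq\atomb$ and $\IPC$ has the intuitionistic submodel property by \cref{int-modal-submod-nnil}) one derives, by induction on the structure of $A$ following Visser's algorithm, that $A\rhd A^*$ is provable — at each connective one applies $\viss{\parr}$, $\montp$, Disj, Conj, or Cut to glue together the approximations of the immediate subformulas — and then combines with the $\ARD\IPC\parr$-provable equivalence $\bigvee\Pi\lr A^*$ via Cut. (iv) Finally, reconcile the complexity bound: each application of $\viss{\parr}$ and $\montp$ to a subformula $B\to C$ of $A$ adds at most one implication layer, and the projective refinement in step (ii) costs at most $\#\subp A$ further layers (one per parameter that gets ``projected away''), which together with $\cto(A)$ gives $\cto(B)\le\cto(A)+1+\#\subp A$ for every $B\in\Pi$ (item 4).

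The main obstacle I expect is step (ii): controlling the $\cto$-complexity of the projective refinement simultaneously with proving that the refinement process terminates. Visser's algorithm outputs a plain $\NNIL$-formula, not a disjunction of projective ones, so one must re-run an approximation-by-projection argument whose termination relies on the finiteness of $\NNIL(\subp A)$ up to $\IPC$-equivalence within a fixed $\cto$-bound (\cref{Remark-NNIL-finiteness,Remark-Finiteness-c(A)}), while every refinement step is potentially introducing new implications; the delicate bookkeeping is to show that the $+1+\#\subp A$ budget is never exceeded, essentially because each parameter can be ``discharged'' by a projective substitution at most once and the only other cost is the single outer implication layer inherent in passing from $A$ to its $\NNIL$-approximation. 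The remaining items (1), (3), (5) and the soundness-side verifications in step (iii) are routine once (ii) is in place, appealing directly to \cref{gen-pres-sound}, \cref{int-modal-submod-nnil}, and the recursive strong downward compactness of $(\NNIL,\IPC)$ from \citep{Visser02}.
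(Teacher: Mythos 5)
A preliminary remark: the paper does not prove this theorem at all — its ``proof'' is a citation of Theorems 3.12, 3.27 and 4.15 of \citep{PLHA0}, so what you are attempting is to reconstruct the main technical result of that companion paper. Unfortunately your reconstruction has a genuine gap, and in fact a false step. In step (iii) you claim that $\ARD\IPC\parr\vdash A\rhd A^{*}$ can be derived ``by induction following Visser's algorithm''. This is not available and is in general false: Visser's derivation of $A\rhd A^{*}$ lives in the system of \cref{Theorem-Vis-NNIL-pres}, i.e.\ $\ARDP\IPC\atomb$, which has $\viss{}$ and the Montagna rule for \emph{all} atomics, whereas $\ARD\IPC\parr$ has them only for parameters; that restriction is exactly what the theorem is about. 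Concretely, take $A:=\neg\neg x\to x$ (so $\subp A=\emptyset$), whose best $\NNIL$ approximation is $A^{*}=x\vee\neg x$. By the soundness direction of \cref{cor-PLHA0}, $\ARD\IPC\parr\vdash A\rhd A^{*}$ would give $A\adsm{\IPC}{\nnilpar}A^{*}$; but for the substitution $\theta(x):=\neg y$ and $C:=\top\in\NNIL(\parr)$ we have $\vdash\theta(A)$ while $\nvdash\neg y\vee\neg\neg y$. The same example refutes your claim in step (ii) that the construction yields $\IPC\vdash\bigvee\Pi\lr A^{*}$: here the correct choice is $\Pi=\{\neg\neg x\to x\}$ itself, which is projective (via $x\mapsto\neg\neg x$, projecting to $\top$), is not $\NNIL$, and is not $\IPC$-equivalent to $x\vee\neg x$. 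So $\bigvee\Pi$ is in general strictly above $A^{*}$, and the whole strategy of first passing to $A^{*}$ and then ``refining'' cannot recover item 2.

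The second, deeper problem is that the heart of step (ii) — that every $A$ admits a finite decomposition into $\NNIL(\subp A)$-projective disjuncts with computable projecting substitutions and the $\cto(A)+1+\#\subp A$ bound — is precisely the content of the three cited theorems of \citep{PLHA0}. It does not follow from the ``recursive strong downward compactness of $(\NNIL,\IPC)$'' (Visser's result concerns preservativity lower bounds, with no substitutions of variables involved), nor is there a ``standard fact that $\NNIL$ formulas admit projective approximations'' that does this work; what is needed is a Ghilardi-style projectivity/unification analysis relative to the parameters (extension properties of classes of models, behaviour of variables versus parameters), and invoking its conclusion as a known fact makes your argument circular. The termination and the complexity bookkeeping in step (iv) are likewise asserted rather than derived: \cref{Remark-NNIL-finiteness,Remark-Finiteness-c(A)} give finiteness only \emph{after} a $\cto$-bound is secured, and nothing in your iteration shows the $+1+\#\subp A$ budget is respected. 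As it stands, items 2–4 are unproven, and item 2 as you propose to obtain it is refuted by the example above.
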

\begin{proof}
Theorems 3.12, 3.27, and 4.15 from \citep{PLHA0}.
\end{proof}
\begin{corollary}\label{cor-PLHA0}
Let $\sft:=\ipc$, $\Gamma:=\NNILpar$, and $\Delta:=\dar{\ipc}\Gamma$. Then for every $A,B\in\lcalz$,
\begin{center}
$A\pres{\sft}{\Delta}B$ iff $A\adsm{\sft}{\Gamma}B$ iff $\ARD\IPC\parr\vdash A\rhd B$.
\end{center}
\end{corollary}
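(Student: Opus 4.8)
The plan is to establish a cycle of implications among the three statements, drawing on \cref{PLHA0} for the hard direction and on the soundness results of \cref{gen-pres-sound,gen-admis-sound} together with \cref{adsm-pres} for the easy ones. Fix $\sft=\ipc$, $\Gamma=\NNILpar$ and $\Delta=\dar{\ipc}\Gamma$. The target is the chain
$$\ARD\IPC\parr\vdash A\rhd B \ \Longrightarrow\ A\adsm{\sft}{\Gamma}B \ \Longrightarrow\ A\pres{\sft}{\Delta}B \ \Longrightarrow\ \ARD\IPC\parr\vdash A\rhd B.$$

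First I would handle the two forward implications. The step $\ARD\IPC\parr\vdash A\rhd B \Rightarrow A\argt B$ is exactly \cref{gen-admis-sound}: one must check that $\Gamma=\NNILpar$ meets its four hypotheses with $\Delta=\parr$ playing the role of the $\Delta$ in the $\viss\Delta$ and $\montd$ clauses — namely that $\NNILpar$ is $\ipc$-prime is false in general, so instead I would invoke \cref{vee-pres} to reduce to $\NNILpar^\vee$ and note that the Disj rule's soundness only requires primality which fails; more carefully, $\ARD\IPC\parr$ does not contain $\Lem$ or Disj as primitive for a non-prime $\Gamma$, so one simply verifies that each primitive axiom/rule of $\ARD\IPC\parr=\BART+\text{Disj}+\mont(\parr)+\viss{\parr}$ is admissibility-sound: $\BART$ and the closure rules are always sound by \cref{gen-admis-sound}, $\montd$ is sound because $\NNILpar$ is closed under outer substitutions of $\parr$-conjunctions (conjoining a parameter to an $\NNIL$ formula keeps it $\NNIL$), and $\viss{\parr}$ is sound by item 4 of \cref{gen-admis-sound} since $\ipc$ has the intuitionistic submodel property (\cref{int-modal-submod-nnil}), $\NNILpar\subseteq\NNIL$ and $\parr\subseteq\parb$; the Disj axiom is admissibility-sound whenever $\Gamma$ is closed under disjunction, which $\NNILpar$ is. Then $A\argt B \Rightarrow A\pres{\sft}{\Delta}B$ is \cref{adsm-pres}, using that $\ipc$ is closed under outer substitutions and that $\Delta=\dar{\ipc}\Gamma$ is precisely the set of $\Gamma$-projective propositions appearing there.

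The real content is the closing implication $A\pres{\sft}{\Delta}B \Rightarrow \ARD\IPC\parr\vdash A\rhd B$, and this is where \cref{PLHA0} does the work. Given $A\in\lcalz$, apply \cref{PLHA0} to obtain a finite $\Pi\subseteq\dnnil(\subp A)\subseteq\Delta$ with $\IPC\vdash\bigvee\Pi\to A$ and $\ARD\IPC\parr\vdash A\rhd\bigvee\Pi$. Since each $D\in\Pi$ lies in $\Delta$ and satisfies $\vdash D\to\bigvee\Pi$, hence $\vdash D\to A$, the preservativity hypothesis $A\pres{\sft}{\Delta}B$ yields $\vdash D\to B$ for every $D\in\Pi$, so $\vdash\bigvee\Pi\to B$. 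Now ${\sf Ax}$ gives $\ARD\IPC\parr\vdash\bigvee\Pi\rhd B$, and Cut applied to $A\rhd\bigvee\Pi$ and $\bigvee\Pi\rhd B$ delivers $\ARD\IPC\parr\vdash A\rhd B$, closing the cycle. Note one has to be slightly careful that \cref{PLHA0} is stated for the non-modal language $\lcalz$ and for $\rhd$-statements whose premise $A$ is in $\lcalz$; since the corollary restricts to $A,B\in\lcalz$ this is exactly the available hypothesis.

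The main obstacle I anticipate is the bookkeeping in the first implication: one must confirm that every primitive of $\ARD\IPC\parr$ is \emph{admissibility}-sound (not merely preservativity-sound) for the specific pair $(\ipc,\NNILpar)$, i.e. check the closure-under-outer-substitution hypotheses of \cref{gen-admis-sound} — in particular that $\Delta=\parr$ is a set of parameters (so $\th$ fixes them) and that $\NNILpar$ is stable under conjunction with parameters and under finite disjunction. All of this is routine from the inductive definition of $\NNIL$ and the convention that substitutions fix $\parr$, but it is the place where a careless argument could go wrong. The projectivity bookkeeping ($\Delta=\dar{\ipc}\NNILpar$ versus the $\viss{\parr}$-disjunctions actually produced) is handled cleanly by \cref{adsm-pres} and \cref{PLHA0}(3), so no further difficulty arises there.
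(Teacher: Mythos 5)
Your overall architecture coincides with the paper's proof: the same three-step cycle, with \cref{adsm-pres} for the middle implication and \cref{PLHA0} plus Ax and Cut for the closing implication (that part of your argument is exactly the paper's). The gap is in the first implication, in the Disj case. Your claim that ``the Disj axiom is admissibility-sound whenever $\Gamma$ is closed under disjunction'' is false, and closure under disjunction is not what rescues the case $\Gamma=\NNILpar$. Concretely, take $\Gamma=\{p\vee q\}$ with $p,q\in\parr$ (closed under disjunction up to $\IPC$-equivalence): since substitutions fix parameters and $\nvdash (p\vee q)\to p$, both $p\adsm{\IPC}{\Gamma}(p\wedge q)$ and $q\adsm{\IPC}{\Gamma}(p\wedge q)$ hold vacuously, yet $(p\vee q)\adsm{\IPC}{\Gamma}(p\wedge q)$ fails. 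What Disj-soundness really needs is primality of the formulas in $\Gamma$ (item 2 of \cref{gen-admis-sound}), and for $\NNILpar$ this has to be recovered via prime factorization: every member of $\NNILpar$ is $\IPC$-equivalent to a disjunction of $\IPC$-prime $\NNILpar$ formulas, so one argues disjunct-by-disjunct and recombines (this is where \cref{vee-pres} enters, in the direction from the prime subclass to its disjunctive closure, not from $\NNILpar$ to $\NNILpar^{\vee}$). That is precisely the content of lemma 4.5 of \citep{PLHA0}, which the paper cites for the Disj and $\viss\parr$ cases; the modal analogue of this manoeuvre is \cref{prime-fact} together with \cref{prspn=prsn}, as used in \cref{Lem-con12-AR}. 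Without this ingredient your induction on the proof of $\ARD\IPC\parr\vdash A\rhd B$ does not go through at the Disj step.

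A smaller point: you derive soundness of $\viss\parr$ from item 4 of \cref{gen-admis-sound}, which presupposes that $\sft=\IPC$ ``has the intuitionistic submodel property'' in the paper's technical sense, i.e.\ soundness and completeness for a class of good Kripke models closed under intuitionistic submodels. This is plausible for the non-modal fragment but is nowhere verified for $\IPC$ in the paper, which is why the paper again defers this case to lemma 4.5 of \citep{PLHA0} rather than to the general soundness theorem; if you keep your route you owe that verification. The remaining bookkeeping in your write-up (Mont via closure of $\NNILpar$ under conjunction with parameters, the observation that $\ARD\IPC\parr$ contains no Le axiom, and the completeness direction) is correct.
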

\begin{proof}
$\ARD\IPC\parr\vdash A\rhd B$ implies $A\adsm{\sft}{\Gamma}B$: One must use induction on the complexity of a proof $\ARD\IPC\parr\vdash A\rhd B$. All cases are easy except for Disj and $\VAR$, for which we refer the reader to Lemma 4.5 in \citep{PLHA0}. A similar reasoning for modal logics is presented in \Cref{Lem-con12-AR} in this paper.
\\
$A\adsm{\sft}{\Gamma}B$ implies $A\pres{\sft}{\Delta}B$: \Cref{adsm-pres}.
\\
$A\pres{\sft}{\Delta}B$ implies $\ARD\IPC\parr\vdash A\rhd B$: Let $A\pres{\sft}{\Delta}B$ and let $\Pi$ be a set of propositions as provided by \Cref{PLHA0}. Then for every $E\in\Pi$, we have $\ipc\vdash E\to A$, and since $\Pi\subseteq\Delta$, we get $\ipc\vdash E\to B$. Thus, $\ipc\vdash\bigvee \Pi\to B$ and $\ARD\IPC\parr\vdash \bigvee\Pi\rhd B$. Since $\ARD\IPC\parr\vdash A\rhd\bigvee\Pi$, Cut implies $\ARD\IPC\parr\vdash A\rhd B$.
\end{proof}

\section{Preservativity and relative admissibility}\label{secPres}
This section studies the preservativity relation $\prtg$ and the relative admissibility relation $\artg$ for several pairs $(\Gamma,\sft)$, where $\Gamma\subseteq\lcalb$ is a set of propositions and $\sft$ is an intuitionistic modal logic. The primary application is to use these relations for the axiomatization of the provability logic of $\HA$ and to prove its decidability. More precisely, we establish the following results in this section. The main result in \Cref{modal-prime} is \Cref{prime-fact}, which shows that $\NNIL$ propositions can be equivalently represented as disjunctions of prime $\NNIL$ propositions. In the subsequent subsections, we characterize the following relations:
\begin{itemize}
\item \Cref{Sec-NNIL-Preserv}:
${\ARNBST}={\prtsn}$, whenever $\sft$ is \types.
\item \Cref{Pres-logic-ARNBT1}:
${\ARNBTM}={\prtdnb}= {\artnb}$, whenever $\sft$ is \typea.
\item \Cref{Pres-logic-ARNBT3}:
${\AARNBT}={\prtdsnb}={\artsnb}$, whenever $\sft$ is \typea.
\item \Cref{Pres-logic-ARNBT2}:
${\ARNBT}={\prtcdsnb}$, whenever $\sft$ is \typea.
\item \Cref{Pres-logic-ARNBTP}:
${\ARNBTP}={\prtsnb}$, whenever $\sft$ is \typea.
\item If $\sft$ is decidable, then all the above-mentioned relations are also decidable.
\end{itemize}
\noindent\textbf{Important convention:}
Throughout this section, we fix a set $\sft\supseteq\ipc$ of modal propositions that is closed under modus ponens, and a class $\scrmt$ of rooted Kripke models for which $\sft$ is sound and complete.
Thus, in the remainder of this section, we may omit the superscript $\sft$ from notations; e.g., we may use the shorter notations $\sfc$, $\sfP$, and $\dar{\,}\Gamma$ instead of $\sfc^\sft$, $\sfP^\sft$, and $\dar{\sft}\Gamma$, respectively.
\\
We also define three types of modal logics as follows.
\begin{itemize}
\item \typez: $\sft\supseteq\ikfourcp$ and $\scrmt$ has the extension property (see \Cref{Modal-ext-property}).
\item \typea: $\sft\supseteq\ikfourcp$, $\sft$ is closed under necessitation, $\scrmt$ has the extension property (see \Cref{Modal-ext-property}) and the intuitionistic submodel property (see \Cref{sec-Kripke}), and $\sft$ is closed under outer substitutions; i.e., $A\in \sft$ implies $\th(A)\in\sft$ for every substitution $\theta$.
\item \types: $\sft\supseteq\ikfourca$, $\sft$ is closed under necessitation, and $\scrmt$ has the extension property (see \Cref{Modal-ext-property}) and the intuitionistic submodel property (see \Cref{sec-Kripke}).
\end{itemize}
In the remainder of this section, the notation \uparan{$\typez$} in theorem statements means that we assume $\sft$ satisfies the conditions of $\typez$. We use similar notation for $\typea$ and $\types$.
\begin{remark}
$\ikfourcp$ and $\iglcp$ are \typea. Also, $\ikfourca$ and $\iglca$ are \types. The finite model property for $\ikfourcp$, $\iglcp$, $\ikfourca$, and $\iglca$ implies their decidability.
\end{remark}
\begin{proof}
Left to the reader.
\end{proof}
In the remainder of this paper, we may use the above remark without explicit mention.

\subsection{Prime factorization for $\NNIL$}
\label{modal-prime}
In this section, we prove that every $\NNIL$ proposition $A\in\lcalb$ can be decomposed into a disjunction $\bigvee_iA_i$ of $\sft$-prime $\NNIL$ propositions (\Cref{prime-fact}). We follow a route similar to that in Section 3.5 of \citep{PLHA0} for the non-modal language. To show this, we require two other equivalent notions: $\sft$-component and $\sft$-extendible. We first show that every $A$ can be decomposed into $\sft$-components (\Cref{Lem2-01}), then show that every $\sft$-component is $\sft$-extendible (\Cref{Lem2-0}), and finally show that every $\sft$-extendible proposition is $\sft$-prime (\Cref{extendible-to-prime}). We will see in \Cref{Prime-ext-comp} that these three notions are equivalent.

Recall that $\sfP$ denotes the set of all $\sft$-prime propositions; i.e.,
$$\sfP:=\{A\in\lcalb: \tvdash A\to(B\vee C)\text{ implies } \tvdash A\to B \text{ or } \tvdash A\to C\}.$$

\begin{theorem}\label{prime-fact}
\uparan{\typez}
Up to $\ikfourcp$-provable equivalence,
we have $\nnilb=\sfP\nnilb^\vee$ and $\snnilb=\sfs\sfP\nnilb^\vee$. Also, if $\sft\supseteq \ikfourca$, then up to $\ikfourca$-provable equivalence we have $\sfn=\sfP\nnil^\vee$ and $\snnil=\sfs\sfP\nnil^\vee$.
\end{theorem}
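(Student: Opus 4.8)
The plan is to read off \cref{prime-fact} from the three lemmas announced in this subsection together with elementary closure properties; the substance lies in \cref{Lem2-01} (every proposition is an $\ikfour$-equivalent disjunction of $\sft$-components), \cref{Lem2-0} ($\sft$-components are $\sft$-extendible), \cref{extendible-to-prime} ($\sft$-extendibles are $\sft$-prime) and \cref{Prime-ext-comp} (the three notions coincide), and here one only assembles them. Two features of the G\"odel translation are used throughout: $\ggt{(B\vee C)}=\ggt B\vee\ggt C$, so $\sfs$ is closed under $\vee$; and $\ggt E$ has no implication outside the scope of a box, whence $\ggt E\in\NNIL\cap\lcalz(\parb)=\nnilb$ and all outer subformulas of $\ggt E$ lie in $\parb$. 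In particular $\sfs\subseteq\nnilb$, so $\snnilb=\sfs$ and $\sfs\sfP\nnilb=\sfs\cap\sfP$ --- a simplification worth noting though not essential.

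I would first dispatch the right-to-left inclusions, which are pure closure arguments. If $D=\bigvee_iA_i$ with each $A_i\in\sfP\cap\nnilb$ then $D\in\nnilb$, since $\NNIL$ and $\lcalz(\parb)$ are closed under $\vee$; hence $\sfP\nnilb^\vee\subseteq\nnilb$. If moreover each $A_i=\ggt{E_i}\in\sfs$, then $D=\bigvee_i\ggt{E_i}=\ggt{(\bigvee_iE_i)}\in\sfs$, so $D\in\snnilb$; hence $\sfs\sfP\nnilb^\vee\subseteq\snnilb$. The two inclusions in the $\iglca$-variant are word-for-word the same, with $\nnilb$ replaced by $\nnil$ and $\lcalz(\parb)$ by $\lcalz(\atomb)$.

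For the left-to-right inclusion $\nnilb\subseteq\sfP\nnilb^\vee$: given $A\in\nnilb$, \cref{Lem2-01} produces an $\ikfour$-equivalent disjunction $\bigvee_iA_i$ of $\sft$-components; by \cref{Prime-ext-comp} each $A_i$ is $\sft$-prime, and since a component is built only from the outer subformulas of $A$ (atoms, and implication subformulas already occurring in $A$), each $A_i\in\sfP\cap\nnilb$. For $\snnilb\subseteq\sfs\sfP\nnilb^\vee$, write $A\in\snnilb$ as $A=\ggt E$; since $\ggt E\in\nnilb$, \cref{Lem2-01} again gives $\ggt E\equiv_{\ikfour}\bigvee_iA_i$ with each $A_i$ a $\sft$-component, hence $\sft$-prime and in $\nnilb$. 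The extra point is that each $A_i$ is self-complete: no outer subformula of $\ggt E$ is an implication and they all lie in $\parb$, so (from the syntactic shape of components) $A_i$ is, up to $\ikfour$-provable equivalence, a conjunction of members of $\parb$, and $\ggt{(\bigwedge S)}=\bigwedge_{s\in S}\ggt s=\bigwedge S$ for $S\subseteq\parb$. Thus $A_i\in\sfs\cap\sfP\cap\nnilb$ and $A\in\sfs\sfP\nnilb^\vee$. The $\iglca$-variant proceeds identically over $\lcalz(\atomb)$ and $\ikfourca$, the one new ingredient being that $\cpa$ makes every atomic $\sft$-complete, which is exactly what allows \cref{Lem2-01} and the component machinery to handle propositions that contain variables.

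The delicate point --- and the only one not already done in the supporting lemmas --- is to extract from the statement of \cref{Lem2-01} enough about the shape of $\sft$-components to see that they inherit membership in the relevant subclass ($\nnilb$, $\nnil$, or $\sfs$); everything above hinges on the slogan ``components are built from $\subo A$''. Should the self-complete case not be transparent from that shape, the fallback for $\snnilb\subseteq\sfs\sfP\nnilb^\vee$ is to run the $\nnilb$-decomposition and then apply $\ggt{(\cdot)}$: using that $\igl$ and $\iglca$ are closed under $\ggt{(\cdot)}$ (\cref{igl-closure-box}) and that $\tvdash\Box\varphi$ is equivalent to $\tvdash\varphi$ there (prepend a root), a provable equivalence $X\lr Y$ yields $\ggt X\lr\ggt Y$, so from $\ggt E\equiv\bigvee_iA_i$ and idempotence of $\ggt{(\cdot)}$ one obtains $\ggt E\equiv\bigvee_i\ggt{A_i}$ with $\ggt{A_i}\in\sfs$, reducing matters to checking that $\ggt{(\cdot)}$ preserves $\sft$-primality.
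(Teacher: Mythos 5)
Your assembly of the non--self-complete halves is fine and follows the paper's route: decompose via \cref{Lem2-01}, apply \cref{Lem2-0} and \cref{extendible-to-prime} for primality, and use closure of $\nnilb$ (resp.\ $\nnil$) under $\vee$ for the converse inclusion. But the self-complete halves contain a genuine error. Your structural claim about the G\"odel translation is false: $\ggt{(B\to C)}=\Boxdot(\ggt B\to\ggt C)$ places the implication $\ggt B\to\ggt C$ \emph{outside} the box, and $\ggt{x}=\Boxdot x$ places a variable outside the box, so $\ggt E$ does in general have outer implications and outer variables. Concretely, $\ggt{(p\to q)}=(p\to q)\wedge\Box(p\to q)$ has an outer implication not in $\parb$, $\ggt{((p\to q)\to r)}\notin\NNIL$, and $\ggt x\notin\lcalz(\parb)$; hence your side remark $\sfs\subseteq\nnilb$, $\snnilb=\sfs$ is wrong, and --- more importantly --- your argument that components of a self-complete formula are ``conjunctions of members of $\parb$'' collapses (the formula $(p\to q)\wedge\Box(p\to q)$ is itself a component and is not of that shape). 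So your main derivation of $\snnilb\subseteq\sfs\sfP\nnilb^\vee$ (and of $\snnil\subseteq\sfs\sfP\nnil^\vee$) does not go through.

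Your fallback does not close the gap either: applying $\ggt{(.)}$ to the $\nnilb$-decomposition requires closure of the logic under the G\"odel translation, which \cref{igl-closure-box} gives only for $\igl$ and $\iglca$, not for an arbitrary $\sft$ of \typez\ or \typet\ as in the statement; and the step you defer --- that $\ggt{(.)}$ sends the prime disjuncts to prime (and still $\NNIL$) disjuncts --- is exactly the nontrivial content of the self-complete case and is left unproved. The paper avoids all of this because the decomposition lemma itself tracks self-completeness: \cref{Lem2-01-0} is proved simultaneously for $\NNILP$ and $\SNNILP$ and yields $\suboab{\Gamma_A}\subseteq\suboab A$, so \cref{Lem2-01}, items 2--3, already delivers components lying in $\snnil$ (resp.\ $\snnilb$) when $A$ does, and primality then comes from \cref{Lem2-0} and \cref{extendible-to-prime} exactly as in your non-self-complete case. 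Citing those items (rather than trying to restore self-completeness after the fact) is what your proof is missing.
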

\begin{proof}
	Direct consequence of \Cref{Lem2-01,Lem2-0,extendible-to-prime}.
\end{proof}
\begin{corollary}\label{prspn=prsn}
\uparan{\typez}
${\prtsnb}={\prtspnb}$, ${\artsnb}={\artspnb}$, ${\artnb}={\artpnb}$, and if $\sft\supseteq \ikfourca$, then ${\prtsn}={\prtspn}$.
\end{corollary}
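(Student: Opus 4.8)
The plan is to derive all four identities from the prime factorization of \cref{prime-fact}, together with two elementary observations about preservativity and relative admissibility: first, shrinking the index set only weakens the defining condition, so for $\Delta\subseteq\Gamma$ the relation $\pres{\sft}{\Gamma}$ is contained in $\pres{\sft}{\Delta}$ and likewise $\adsm{\sft}{\Gamma}\subseteq\adsm{\sft}{\Delta}$; second, by \cref{vee-pres}, $\pres{\sft}{\Gamma}=\pres{\sft}{\Gamma^\vee}$ and $\adsm{\sft}{\Gamma}=\adsm{\sft}{\Gamma^\vee}$. Throughout we use $\ikfour\subseteq\sft$ — and $\ikfourca\subseteq\sft$ for the last identity — so that $\ikfour$- (resp.\ $\ikfourca$-) provable equivalence entails $\sft$-provable equivalence.

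I would carry out ${\prtsnb}={\prtspnb}$ in detail; the other three cases are identical in shape. Since $\spnnilb\subseteq\snnilb$, the inclusion ${\prtsnb}\subseteq{\prtspnb}$ is immediate from the first observation. For the converse, suppose $A\prtspnb B$ and take $E\in\snnilb$ with $\tvdash E\to A$. By \cref{prime-fact} there are $E_1,\dots,E_k\in\spnnilb$ with $\ikfour\vdash E\lr\bigvee_{i=1}^{k} E_i$, hence $\tvdash E\lr\bigvee_{i=1}^{k} E_i$; therefore $\tvdash E_i\to A$ for each $i$, and since $E_i\in\spnnilb$ and $A\prtspnb B$ we get $\tvdash E_i\to B$, so $\tvdash\bigvee_{i=1}^{k} E_i\to B$, i.e.\ $\tvdash E\to B$. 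Thus $A\prtsnb B$. (Equivalently, combine \cref{vee-pres} with the identity $\snnilb=(\spnnilb)^\vee$ of \cref{prime-fact}.) The identity ${\prtsn}={\prtspn}$ under $\sft\supseteq\ikfourca$ is then obtained word for word, using instead the clause $\snnil=(\spnnil)^\vee$ of \cref{prime-fact}, valid modulo $\ikfourca$-provable equivalence.

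For the admissibility identities ${\artsnb}={\artspnb}$ and ${\artnb}={\artpnb}$ the one extra point is the outer substitution $\th$ in the definition of $\argt$: every member $C$ of $\nnilb$ — hence of $\snnilb$, $\spnnilb$, $\pnnilb$ — lies in $\lcalz(\parb)$, on which $\th$ is the identity, so $\th(C\to A)=C\to\th(A)$. Given $A\artspnb B$, a substitution $\theta$, and $C\in\snnilb$ with $\tvdash\th(C\to A)$, I would choose $C_1,\dots,C_k\in\spnnilb$ with $\ikfour\vdash C\lr\bigvee_{i=1}^{k} C_i$ (so $\tvdash C\lr\bigvee_{i=1}^{k} C_i$); from $\tvdash C\to\th(A)$ we obtain $\tvdash\th(C_i\to A)$ for each $i$, hence $\tvdash\th(C_i\to B)$ by $A\artspnb B$, hence $\tvdash C\to\th(B)=\th(C\to B)$, i.e.\ $A\artsnb B$; the reverse inclusion is again the first observation. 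I do not anticipate a real obstacle here: the substantive content is entirely \cref{prime-fact}, and what remains is bookkeeping — the only spot needing a little care is checking that $\th$ is harmless in the admissibility cases, which holds because all the $\NNIL$-formulas in play are built solely from parameters and boxed subformulas.
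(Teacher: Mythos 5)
Your proof is correct and follows essentially the same route as the paper, which derives the corollary directly from \cref{prime-fact} and \cref{vee-pres}; you merely unfold the $(\cdot)^\vee$-argument (and its admissibility analogue, where the observation that $\th$ is the identity on $\lcalz(\parb)$ is exactly the point) explicitly instead of citing \cref{vee-pres}. No gaps.
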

\begin{proof}
	Direct consequence of \Cref{prime-fact,vee-pres}.
\end{proof}

In the remainder of this subsection, we present technical lemmas needed for the proof of the above theorem.

\subsubsection{$\sft$-components}
Given $A\in\NNIL$, we say that $A$ is an \textit{$\sft$-component} if $A=\bigwedge \Gamma\wedge\bigwedge\Delta $ with the following properties:
\begin{itemize}
	\item Every $B\in \Gamma$ is atomic or boxed.
	\item Every $B\in\Delta$ is an implication $C\to D$ for some atomic or boxed $C$ such that $\sft\nvdash \bigwedge\Gamma\to C$.
\end{itemize}
Define $\NNILP:=\{A\in\NNIL: E\to F\in\subo{A} \text{ implies } E\in\atomb\}$. In other words, $\NNILP$ includes all $\NNIL$ propositions such that every antecedent occurring outside of boxes is either atomic or boxed. Obviously, every $A\in\NNIL$ ($\SNNIL$) can be converted to some $A'\in\NNILP$ ($A'\in\SNNILP$) via derivability in $\IPC$ ($\ikfourcp$).
\begin{lemma}\label{Lem2-01-0}
	Given $A\in\NNILP$ \uparan{$A\in\SNNILP$}, there exists a finite set $\Gamma_A\subseteq\NNILP$ \uparan{$\Gamma_A\subseteq\SNNILP$} of $\sft$-components such that $\tvdash A\lr \bigvee\Gamma_A$ and $\suboab{\Gamma_A}\subseteq\suboab A$.
\end{lemma}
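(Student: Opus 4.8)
The plan is to prove \cref{Lem2-01-0} by first passing to \emph{pre-components} and then refining these into genuine $\sft$-components. Call $\bigwedge\Gamma\wedge\bigwedge\Delta$ a pre-component if every member of $\Gamma$ is atomic or boxed and every member of $\Delta$ is an implication $C\to D$ with $C$ atomic or boxed --- so it is a $\sft$-component except that the non-derivability clause $\sft\nvdash\bigwedge\Gamma\to C$ is dropped. By structural induction on $A\in\NNILP$ I would show that $A$ is $\tvdash$-equivalent to a finite disjunction of pre-components: the atomic/boxed base case and the case $A=B\vee C$ are immediate; for $A=B\wedge C$ one distributes $\wedge$ over the two disjunctions supplied by the induction hypothesis; and for $A=B\to C$ the definition of $\NNILP$ forces $B$ to be atomic or boxed, so $A$ is already a single pre-component. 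Every conjunct produced is an outer subformula of $A$, and --- using that outer subformulas of a $\NNIL$ formula are again $\NNIL$ and that the $\NNILP$ condition is inherited by outer subformulas --- each pre-component lies in $\NNILP$ and its boxed outer subformulas are among those of $A$; occurrences of $\bot$ as a conjunct or a disjunct are deleted in the obvious way.

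For the second stage, take a pre-component $P=\bigwedge\Gamma\wedge\bigwedge\Delta$. If it is already a $\sft$-component, keep it. Otherwise there is some $(C\to D)\in\Delta$ with $\tvdash\bigwedge\Gamma\to C$, and then by pure propositional logic $\tvdash P\lr\bigl(\bigwedge\Gamma\wedge\bigwedge(\Delta\setminus\{C\to D\})\wedge D\bigr)$; running the first stage on $D$ and redistributing rewrites $P$ as a disjunction of pre-components in which the conjunct $C\to D$ has been traded for conjuncts that are proper subformulas of $C\to D$ (hence still outer subformulas of $A$). I would iterate this on each new pre-component and then collect, over all pre-components of $A$, the $\sft$-components eventually reached; this yields the required finite set $\Gamma_A$, with $\suboab{\Gamma_A}\subseteq\suboab A$ maintained throughout. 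The main obstacle is termination of this iteration: one must check that substituting the first-stage decomposition of $D$ back into $P$ only ever introduces implication conjuncts that are \emph{strictly smaller} than the removed $C\to D$, so that the multiset of sizes of the implication conjuncts strictly decreases in the Dershowitz--Manna order --- equivalently their size-sum strictly decreases --- whence the process halts; everything else is routine bookkeeping about $\subo$, boxed outer subformulas, and membership in $\NNILP$.

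The parenthetical self-complete case is much shorter. Inspecting the definition of the translation $\ggt{(\cdot)}$ shows that a self-complete formula has no implication among its outer subformulas at all, since every implication is wrapped in a box by the translation; hence a formula in $\SNNILP=\sfs\cap\NNILP$ is, outer-structurally, a $\{\vee,\wedge\}$-combination of parameters, boxed formulas and $\bot$. Distributing $\wedge$ over $\vee$ then presents $A$ directly as a disjunction of conjunctions of atomic-parameter and boxed formulas; these pre-components have $\Delta=\emptyset$, so the $\sft$-component clause is vacuous, and since $p=\ggt p$, $\Box E=\ggt{(\Box E)}$, and $\sfs$ is closed under $\wedge$ and $\vee$, each such conjunction again lies in $\SNNILP$, while the inclusion $\suboab{\Gamma_A}\subseteq\suboab A$ is immediate.
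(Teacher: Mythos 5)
Your treatment of the main case ($A\in\NNILP$) is correct, and it takes a genuinely different route from the paper: the paper inducts on $\suboab A$ ordered by strict inclusion, with an induction hypothesis quantified over all logics $\sft'\supseteq\IPC$, and on finding a conjunct $E\to F$ whose antecedent is derivable it replaces the outer occurrences of $E$ by $\top$, decomposes the result into $(\sft+E)$-components, and conjoins $E$ back; you instead trade the offending conjunct $C\to D$ for its consequent $D$, re-run the pre-component decomposition on $D$, and terminate via a multiset measure. Your version avoids varying the logic at the cost of a termination argument; the paper's avoids the termination argument at the cost of the strengthened, logic-parametric induction hypothesis. One small slip: a Dershowitz--Manna decrease is not \emph{equivalent} to a decreasing size-sum (removing one large implication and adding several slightly smaller ones can raise the sum); the multiset argument alone is what does the work, and it does suffice here.

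The genuine gap is the parenthetical case $A\in\SNNILP$. Your claim that a self-complete formula has no implication among its outer subformulas misreads $\Boxdot$ as $\Box$: by the definition of $\ggt{(.)}$ in \cref{sec-Box-trans}, $\ggt{(B\to C)}=\Boxdot(\ggt B\to \ggt C)=(\ggt B\to \ggt C)\wedge\Box(\ggt B\to \ggt C)$ (the paper itself unfolds $\Boxdot A$ as $A\wedge\Box A$ in the atomic case of \cref{Lem-Box-translation-brts}), so the implication survives \emph{unboxed} as a conjunct. Concretely, $(p\to y)^\Box=\Boxdot(p\to\Boxdot y)$ with $p\in\parr$ and $y\in\varr$ lies in $\SNNILP$ and has the outer implication $p\to\Boxdot y$. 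Hence $\SNNILP$ formulas are not, outer-structurally, $\{\vee,\wedge\}$-combinations of members of $\parb$ and boxed formulas, your components with $\Delta=\emptyset$ do not exist in general, and the second half of the lemma --- which is what later yields the $\snnil$ and $\snnilb$ parts of \cref{prime-fact} --- is left unproven. Repairing it means running your main argument on $A\in\SNNILP$ while checking that the components can be kept of the form $B^\Box$ (for instance, keeping the pairs $x$, $\Box x$ arising from $\Boxdot x$ together as conjuncts), which is precisely the extra bookkeeping the paper's induction tracks.
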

\begin{proof}
We use induction on $\suboab A$ (the set of atomic or boxed formulas occurring outside the scope of boxes in $A$), ordered by $\subset$, to find a finite set $\Gamma_A\subseteq\NNILP$ ($\Gamma_A\subseteq\SNNILP$) of $\sft$-components with $\suboab{\Gamma_A}\subseteq\suboab A$ and $\tvdash \bigvee\Gamma_A\lr A$.

As the induction hypothesis, assume that for every $\sft'\supseteq\IPC$ and $B\in\NNILP$ ($B\in\SNNILP$) with $\suboab B\subsetneqq \suboab A$, there exists a finite set $\Gamma_B\subseteq\NNILP$ ($\Gamma_B\subseteq\SNNILP$) of $\sft'$-components such that $\sft'\vdash B\lr \bigvee\Gamma_B$ and $\suboab{\Gamma_B}\subseteq\suboab B$.
For the induction step, assume that $A\in\NNILP$ ($A\in\SNNILP$) is given. Using derivability in $\ipc$, one can easily find finite sets $\Gamma_i$ and $\Delta_i$ such that
\begin{itemize}
\item $\ipc\vdash A\lr \bigvee_{i=1}^n A_i$, where $A_i:=\bigwedge\Gamma_i\wedge\bigwedge\Delta_i$.
\item $\Delta_i$ contains only atomic or boxed propositions.
\item $\Gamma_i$ contains implications with atomic or boxed antecedents.
\item $\suboab{A_i}\subseteq \suboab A$.
\item $A_i\in\NNILP$ ($A_i\in\SNNILP$).
\end{itemize}
It suffices to decompose each $A_i$ into $\sft$-components. If $\sft\nvdash \bigwedge\Delta_i\to E$ for every antecedent $E$ of an implication in $\Gamma_i$, then $A_i$ is already an $\sft$-component, and we are done. Otherwise, there exists some $E\to F\in\Gamma_i$ such that $\tvdash \bigwedge\Delta_i\to E$. Let $A_i':=A_i[E:\top]$; i.e., replace every outer occurrence of $E$ in $A_i$ with $\top$. Also, let $\sft':=\sft+E$. Hence $\suboab{A'_i}\subsetneqq \suboab{A}$, and by the induction hypothesis, we may decompose $A'_i$ into $\sft'$-components:
$$\sft,E\vdash A'_i\lr \bigvee_j B_j$$
It is not difficult to observe that if $B_j$ is an $\sft'$-component, then $B'_j:=E\wedge B_j$ is an $\sft$-component. Moreover, $\sft\vdash E\wedge A'_i\lr \bigvee_j B'_j$, and since $\IPC\vdash (E\wedge A'_i)\lr (E\wedge A_i)$ and $\tvdash A_i\to E$, we get $$\sft\vdash A_i\lr \bigvee_j B'_j$$
Thus, we have decomposed $A_i$ into $\sft$-components $B'_j$ with $\suboab{B'_j}\subseteq\suboab{A}$.
\end{proof}
\begin{corollary}\label{Lem2-01}
	Every $A\in\NNIL$ can be decomposed into $\sft$-components; i.e., there exists a finite set $\Gamma_A$ of $\sft$-components such that $\tvdash A\lr \bigvee\Gamma_A$. Moreover:
	\begin{enumerate}
	\item if $A\in\nnilb$, then $\Gamma_A\subseteq\nnilb$,
	\item if $A\in\snnil$ and $\sft\supseteq\ikfourcp$, then $\Gamma_A\subseteq\snnil$,
	\item if $A\in\snnilb$ and $\sft\supseteq\ikfourcp$, then $\Gamma_A\subseteq\snnilb$.
	\end{enumerate}
\end{corollary}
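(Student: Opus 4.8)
The plan is to reduce to \cref{Lem2-01-0} by first normalising from $\NNIL$ to $\NNILP$ and then tracking a couple of syntactic invariants. Given $A\in\NNIL$, I would invoke the remark preceding \cref{Lem2-01-0}: there is $A'\in\NNILP$ with $\IPC\vdash A\lr A'$ (concretely, each outer implication $B\to C$ of $A$ has $B\in\NI$, so $B$ is $\IPC$-equivalent to a disjunction of conjunctions of atomic-or-boxed formulas, and the identities $(\bigvee_i B_i)\to C\equiv\bigwedge_i(B_i\to C)$ together with currying $(a_1\wedge\cdots\wedge a_k)\to C\equiv a_1\to(a_2\to\cdots\to(a_k\to C))$ turn all outer antecedents into atomic-or-boxed ones without leaving $\NNIL$). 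Since $\sft\supseteq\IPC$, $\tvdash A\lr A'$. Applying \cref{Lem2-01-0} to $A'$ produces a finite set $\Gamma_{A'}\subseteq\NNILP$ of $\sft$-components with $\tvdash A'\lr\bigvee\Gamma_{A'}$ and $\suboab{\Gamma_{A'}}\subseteq\suboab{A'}$, so $\Gamma_A:=\Gamma_{A'}$ already gives the main assertion.

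For the three refinements I would carry along the appropriate invariant. (1) If $A\in\nnilb=\NNIL\cap\lcalz(\parb)$, the rewritings above only rearrange parameter/boxed outer subformulas, so $A'\in\NNILP\cap\lcalz(\parb)$ and hence $\suboab{A'}\subseteq\parb$; then $\suboab{\Gamma_{A'}}\subseteq\suboab{A'}\subseteq\parb$, and since every member of $\Gamma_{A'}\subseteq\NNILP$ is built, outside boxes, purely from formulas among its own outer atomic-or-boxed subformulas, $\Gamma_A=\Gamma_{A'}\subseteq\lcalz(\parb)$, i.e.~$\Gamma_A\subseteq\nnilb$. (2) If $A\in\snnil$ and $\sft\supseteq\ikfour$, the same remark gives $A'\in\SNNILP$ with $\ikfour\vdash A\lr A'$, hence $\tvdash A\lr A'$, and \cref{Lem2-01-0} then yields $\Gamma_A=\Gamma_{A'}\subseteq\SNNILP\subseteq\sfs\cap\NNIL=\snnil$. (3) Combining these: $A\in\snnilb$ gives $A'\in\SNNILP\cap\lcalz(\parb)$, and arguing as in (1) and (2) at once, $\Gamma_A\subseteq\SNNILP\cap\lcalz(\parb)\subseteq\snnilb$.

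There is no genuine obstacle here beyond this bookkeeping. The only content of the corollary over \cref{Lem2-01-0} is the two observations that (i) the $\NNIL\!\to\!\NNILP$ normalisation neither leaves $\lcalz(\parb)$ nor destroys self-completeness, which is clear because it is performed by purely intuitionistic (or, in the self-complete case, $\ikfour$) rewriting among already-present parameter/boxed subformulas and introduces no new atomics; and (ii) the $\sft$-components returned by \cref{Lem2-01-0} use, outside boxes, only atomic-or-boxed subformulas of the input, which is exactly the clause $\suboab{\Gamma_{A'}}\subseteq\suboab{A'}$ that \cref{Lem2-01-0} already records, together with $\Gamma_{A'}\subseteq\NNILP$ (resp.\ $\Gamma_{A'}\subseteq\SNNILP$).
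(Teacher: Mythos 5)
Your argument is correct and is essentially the paper's own proof: the paper likewise reduces the corollary to \cref{Lem2-01-0} via the remark that every $\NNIL$ (resp.\ $\SNNIL$) formula converts to an $\NNILP$ (resp.\ $\SNNILP$) one by $\IPC$- (resp.\ $\ikfour$-) provable equivalence, which is exactly why the hypothesis $\sft\supseteq\ikfour$ appears in items 2 and 3. Your extra bookkeeping — using the clause $\suboab{\Gamma_{A'}}\subseteq\suboab{A'}$ of \cref{Lem2-01-0} to see that no outer variables are introduced, so membership in $\lcalz(\parb)$ and in $\sfs$ is preserved — is precisely the routine verification the paper leaves to the reader.
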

\begin{proof}
Easy consequence of \Cref{Lem2-01-0}, left to the reader. The condition $\sft\supseteq\ikfourcp$ is only needed to convert $A\in\SNNIL$ to some $A'\in\SNNIL^+$.
\end{proof}

\subsubsection{$\sft$-Extension property}\label{Modal-ext-property}
Let $\scrk$ and $\scrk'$ be two sets of rooted Kripke models. In the following, we define a Kripke model $\sum(\scrk,\scrk')$. Roughly speaking, it is obtained by adding a fresh root below $\scrk$ with $\R$-access to the roots of $\scrk'$. More precisely, define $\sum(\scrk,\scrk'):=(W,\pce,\R,\V)$ as follows. Assume that the sets of nodes of Kripke models in $\scrk\cup\scrk'$ are disjoint. Moreover, assume that every $\kcal\in\scrk\cup\scrk'$ is of the form $\kcal:=(W_\kcal,\pce_\kcal,\R_\kcal,\V_\kcal)$.
\begin{itemize}
\item $W:=\{w_0\}\cup\bigcup_{\kcal\in\scrk\cup\scrk'}W_\kcal$, where $w_0\nin W_\kcal$ for every $\kcal\in\scrk\cup\scrk'$.
\item $\pce:=(\bigcup_{\kcal\in\scrk\cup\scrk'}\pce_\kcal)\cup \{(w_0,w):\exists\kcal\in\scrk \ (w^\kcal_0\pce_\kcal w)\}$, where $w^\kcal_0$ is the root of $\kcal$.
\item $\R:= (\bigcup_{\kcal\in\scrk\cup\scrk'}\R_\kcal)\cup \{(w_0,w): \exists\kcal\in\scrk' \ (w^\kcal_0\sqsubseteq_\kcal w) \text{ or } \exists\kcal\in\scrk \ (w^\kcal_0\R_\kcal w)\}$.
\item $\V:=\bigcup_{\kcal\in\scrk\cup\scrk'}\V_\kcal$. Note that this means the fresh root $w_0$ has an empty valuation.
\end{itemize}
Two Kripke models are said to be \textit{variants} of each other if they share the same rooted frame and their valuations are identical except possibly at the root, where they may differ.
\\
A class $\scrm$ of rooted Kripke models is said to have the \textit{extension property} if for every finite set $\scrk\subseteq\scrm$, there exists a finite set of rooted Kripke models $\scrk'$ such that a variant of $\sum(\scrk,\scrk')$ belongs to $\scrm$. A proposition $A$ is said to have the \textit{$\scrm$-extension property} if $\scrm\cap\Mod A$ has the extension property, where $\Mod A$ is the class of all Kripke models of $A$.

\begin{lemma}\label{Lem2-0} \uparan{\typez}
Let $A\in\lcalb$ be an $\sft$-component such that $\sft\vdash a\to\Box a$ for every $a\in\suboa{A}$. Then $A$ has the $\scrmt$-extension property.
\end{lemma}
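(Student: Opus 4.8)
The plan is to place a fresh root $w_0$ underneath (disjoint copies of) the models of $\scrk$ and, via $\R$ only, underneath a tailored auxiliary family $\scrk'$, and then to show that the resulting sum — once the valuation at $w_0$ is fixed appropriately — is a model of $A$ still lying in $\scrmt$. Throughout, the essential use of the hypothesis ``$\sft\vdash a\to\Box a$ for $a\in\suboa A$'' is to make the material of $A$ sitting outside boxes behave like boxed material, so that it can be controlled at $w_0$ by planting witnesses $\R$-below $w_0$.

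Concretely, fix a finite $\scrk\subseteq\scrmt\cap\Mod A$ and write $A=\bigwedge\Gamma\wedge\bigwedge\Delta$ in component form: every member of $\Gamma$ is atomic or boxed, and every member of $\Delta$ is an implication $C\to D$ with $C$ atomic or boxed and $\sft\nvdash\bigwedge\Gamma\to C$. First I would observe that $\bigwedge\Gamma$ and every antecedent $C$ occurring in $\Delta$ are $\sft$-self-complete, i.e.\ $\sft\vdash\bigwedge\Gamma\to\Box\bigwedge\Gamma$ and $\sft\vdash C\to\Box C$: for boxed conjuncts and boxed antecedents this is axiom $\underline{\sf 4}$; for a parameter it is $\cpp\subseteq\ikfour\subseteq\sft$; and for an atomic variable $a$ it holds because $a$ then occurs outside every box of $A$ (as a conjunct of $\Gamma$, or as an antecedent in $\Delta$), so $a\in\suboa A$ and the hypothesis applies. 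Since $\sft\nvdash\bigwedge\Gamma\to C$, completeness of $\sft$ for $\scrmt$ yields, for each antecedent $C$, a world forcing $\bigwedge\Gamma$ but refuting $C$; using self-completeness of $\bigwedge\Gamma$ (and, for $C=\Box B'$, descending one $\R$-step to a world refuting $B'$) one passes to a point-generated submodel $\kcal_C\in\scrmt$ whose root forces the boxed conjuncts of $\Gamma$ and refutes $C$ (resp.\ $B'$).

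Next I would let $\scrk'$ consist of these $\kcal_C$ together with point-generated submodels of the models of $\scrk$, so that $\scrk'$ both refutes every antecedent of $\Delta$ somewhere $\R$-below $w_0$ and validates all the boxed conjuncts $\Box B\in\Gamma$, and then invoke the extension property of $\scrmt$ on $\scrk$ — absorbing $\scrk'$ into the auxiliary family it returns — to get a model $\kcal^\ast\in\scrmt$ that is a variant of $\sum(\scrk,\scrk')$ with fresh root $w_0$. Re-valuating $w_0$ by $w_0\V a$ iff $\sft\vdash\bigwedge\Gamma\to a$ keeps us in $\scrmt$ (every model of $\scrk$ lies in $\Mod\sft$ and forces $\bigwedge\Gamma$ at its root, so each such $a$ is forced throughout the $\scrk$-part and persistence is preserved), giving $\kcal^{\ast\ast}\in\scrmt$. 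It then remains to check $\kcal^{\ast\ast},w_0\Vdash A$, which — since $\Mod A$ consists of the Kripke models whose root forces $A$ — gives $\kcal^{\ast\ast}\in\scrmt\cap\Mod A$: the atomic conjuncts of $\Gamma$ hold at $w_0$ by the new valuation; each boxed conjunct $\Box B\in\Gamma$ holds because every $\R$-successor of $w_0$ either lies above an $\R$-successor of some $\scrk$-root (where $B$ holds, that root forcing $\Box B$) or lies in a member of $\scrk'$ chosen to force $B$; and each implicational conjunct $C\to D$ holds vacuously because $w_0$ refutes $C$ — if $C$ is atomic then $w_0\Vdash C$ would force $w_0\Vdash\Box C$ by soundness of $C\to\Box C$, contradicting the $\kcal_C\in\scrk'$ that refutes $C$ on an $\R$-successor of $w_0$, and if $C=\Box B'$ then $\kcal_C$ refutes $B'$ on such a successor outright. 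Nothing new needs checking at the old worlds, which already force $A$.

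The step I expect to be the main obstacle is the precise coordination between the bare extension property of $\scrmt$, which only guarantees that \emph{some} auxiliary family works, and the requirement that this family be the specifically tailored $\scrk'$ described above: one must argue that $\scrk'$ can be absorbed into whatever the extension property returns while the sum still admits a variant in $\scrmt$, that re-valuating $w_0$ as above stays inside $\scrmt$, and — the fussiest bookkeeping — that the planted models meet the boxed conjuncts of $\Gamma$ at exactly the worlds that become $\R$-successors of $w_0$ in $\sum(\scrk,\scrk')$. For the concrete logics of interest ($\ikfour$, $\igl$, and their $\cpa$-variants) these points are routine, since the relevant classes of good Kripke models are closed under hanging disjoint rooted submodels beneath a point and under root re-valuations respecting persistence; in the abstract $\typez$ setting this is precisely what the notion of extension property is designed to supply.
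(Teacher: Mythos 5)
There is a genuine flaw in your treatment of the implicational conjuncts with \emph{atomic} antecedents. You plant, for every antecedent $C$ of $\Delta$, a model $\kcal_C$ refuting $C$ as an $\R$-successor of the fresh root $w_0$. For atomic $C$ this is both unnecessary and fatal. It is unnecessary because your own root valuation ($w_0\V a$ iff $\sft\vdash\bigwedge\Gamma\to a$) already refutes $C$ at $w_0$ directly, thanks to the component condition $\sft\nvdash\bigwedge\Gamma\to C$ (your detour through ``soundness of $C\to\Box C$ at $w_0$'' is moreover circular, since it assumes the freshly built model already validates $\sft$, which is part of what is being arranged). It is fatal because in the sum $\sum(\scrk,\scrk')$ the root of $\kcal_C$ becomes an $\R$-successor of $w_0$, so $w_0\Vdash\Box E$ for a boxed conjunct $\Box E\in\Gamma$ requires that root to force $E$ itself, not merely $\Box E$ --- and your $\kcal_C$ is only guaranteed to force the boxed conjuncts of $\Gamma$. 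Worse, this cannot be repaired by a cleverer choice of witness: take $\sft=\ikfour$, parameters $p,q$, and the component $A=\Box p\wedge(p\to q)$ (the hypothesis $a\to\Box a$ is automatic for parameters, and $\ikfour\nvdash\Box p\to p$). Any world refuting the antecedent $p$ refutes the body of the boxed conjunct $\Box p$, so planting it $\R$-below $w_0$ destroys $w_0\Vdash\Box p$; there is simply no world forcing $p$ and refuting $p$. So your claim that every member of $\scrk'$ is ``chosen to force $B$'' for each $\Box B\in\Gamma$ is false as stated for these witnesses.

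The paper's proof avoids exactly this: atomic antecedents are handled purely by the root valuation (possible since such an antecedent is not in $\Gamma$, so persistence is untouched), and witnesses are planted only for boxed antecedents $\Box F_i$ not already refuted inside $\scrk$. Moreover those witnesses are taken not at the point $u$ forcing $\Gamma$ and refuting $\Box F_i$, but at an $\R$-successor $w$ of $u$ refuting $F_i$; using $a\to\Box a$ (this is where the lemma's hypothesis enters) and axiom ${\sf 4}$, such a $w$ forces $\Gamma\cup\{E_1,\ldots,E_m\}$, i.e.\ the unboxed bodies as well, which is precisely what keeps every $\Box E_j\in\Gamma$ true at $w_0$. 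Your boxed-antecedent witnesses do in fact have this property (you descend one $\R$-step), but you never record or use it, and your verification of the boxed conjuncts rests on it. Finally, on the coordination with the abstract extension property of $\scrmt$ you flag as the ``main obstacle'': the paper handles this no more explicitly than you do, so that is not where your proposal diverges; the decisive gap is the atomic-antecedent witnesses and the missing requirement that witness roots force the bodies $E_j$.
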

\begin{proof}
	Let $\scrk\subseteq\scrmt$ be a finite set of Kripke models of a component $A=\bigwedge \Gamma\wedge\bigwedge\Delta$. Let
	$$\Gamma=\{a_1,\ldots,a_n,\Box E_1,\ldots,\Box E_m\} \quad \text{ and } \quad \Delta=\{b_1\to G_1,\ldots,b_k\to G_k,\Box F_1\to H_1, \ldots \Box F_l\to H_l\},$$
	where $a_i,b_j\in\atom$. For every $i\leq l$ such that $\scrk\nVdash \Box F_i$, since $\sft\nvdash \bigwedge\Gamma\to \Box F_i$, there exists some $\kcal'_i\in\scrmt$ such that $\kcal'_i\Vdash \Gamma$ and $\kcal'_i\nVdash \Box F_i$. Thus, there exists some node $w$ such that $\kcal'_i,w\Vdash \Gamma'$, where $\Gamma':=\Gamma\cup\{E_1,\ldots,E_m\}$, and $\kcal'_i,w\nVdash F_i$ (note that here we are using $\kcal'_i\Vdash a_i\to\Box a_i$). Define $\kcal_i:=(\kcal_i')_w$; i.e., the Kripke model generated by $w$ in $\kcal'_i$. Let $\scrk':=\{\kcal_1,\ldots,\kcal_l\}$, and finally define $\kcal$ as a variant of $\sum(\scrk,\scrk')$ with the following valuation at the root:
	\emli
	$$\kcal,w_0\Vdash a\quad \text{iff}\quad a\in\Gamma.$$
	Then it is not difficult to observe that $\kcal,w_0\Vdash A$.
\end{proof}

\begin{lemma}\label{extendible-to-prime}
	Every $A\in\lcalz$ with the $\scrmt$-extension property is $\sft$-prime.
\end{lemma}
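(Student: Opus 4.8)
Every $A \in \lcalz$ with $\scrmt$-extension property is $\sft$-prime.

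\textbf{Proof sketch.} I would argue by contraposition: assume $A\in\lcalz$ has the $\scrmt$-extension property but is \emph{not} $\sft$-prime, and derive a contradiction. Non-primality supplies $B,C\in\lcalb$ with $\tvdash A\to(B\vee C)$, $\sft\nvdash A\to B$ and $\sft\nvdash A\to C$. By completeness of $\sft$ for $\scrmt$, choose rooted models $\kcal_1,\kcal_2\in\scrmt$, with disjoint underlying sets (renaming if necessary), whose roots $w_1,w_2$ force $A$ while $\kcal_1,w_1\nVdash B$ and $\kcal_2,w_2\nVdash C$. Since the root of each $\kcal_i$ forces $A$, we have $\kcal_1,\kcal_2\in\scrmt\cap\Mod{A}$, so $\scrk:=\{\kcal_1,\kcal_2\}$ is a finite subset of $\scrmt\cap\Mod{A}$.

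Applying the $\scrmt$-extension property of $A$ to $\scrk$, we obtain a finite set $\scrk'$ of rooted Kripke models and a variant $\kcal\in\scrmt\cap\Mod{A}$ of $\sum(\scrk,\scrk')$; let $w_0$ be its root. From the construction of $\sum(\scrk,\scrk')$ we have $w_0\pce w_1$ and $w_0\pce w_2$. Moreover, the submodel of $\sum(\scrk,\scrk')$ generated by $w_i$ (under $\R$ and $\pce$) coincides with $\kcal_i$: the only fresh edges introduced emanate from $w_0$, and nothing points into $w_0$, so no node of $\kcal_i$ gains new successors. Passing to the variant alters the valuation only at $w_0$, hence does not affect $w_1,w_2$ or any node above them. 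Consequently $\kcal,w_i\Vdash F\iff\kcal_i,w_i\Vdash F$ for every formula $F$; in particular $\kcal,w_1\nVdash B$ and $\kcal,w_2\nVdash C$.

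Now $\pce$-persistence together with $w_0\pce w_1$ and $w_0\pce w_2$ gives $\kcal,w_0\nVdash B$ and $\kcal,w_0\nVdash C$, hence $\kcal,w_0\nVdash B\vee C$, whereas $\kcal,w_0\Vdash A$ since $w_0$ is the root of a model in $\Mod{A}$. Therefore $\kcal,w_0\nVdash A\to(B\vee C)$, and since $\kcal\in\scrmt$ and $\sft$ is sound for $\scrmt$, we conclude $\sft\nvdash A\to(B\vee C)$, contradicting the assumption. Hence $A$ is $\sft$-prime.

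The routine but essential points are the verification that the $\sum(\scrk,\scrk')$ construction leaves the internal forcing of each component model $\kcal_i$ untouched (so that $B$ and $C$ still fail at $w_1$ and $w_2$) and that the root $w_0$ lies $\pce$-below the roots $w_1,w_2$; the only genuinely delicate point is to make precise, via completeness, that the two witnessing models can be taken in $\scrmt\cap\Mod{A}$ (i.e.\ that ``being a model of $A$'' is read as the root forcing $A$), which is exactly the membership condition needed to invoke the extension property.
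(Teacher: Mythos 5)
Your proof is correct and follows essentially the same route as the paper's: take countermodels of $A\to B$ and $A\to C$ in $\scrmt\cap\Mod A$, glue them via the extension property under a fresh root, and read off a countermodel of $A\to(B\vee C)$ by soundness. The only difference is that you spell out what the paper leaves implicit, namely that forcing inside the component models is untouched by the $\sum$-construction and that $\pce$-persistence transfers the failure of $B$ and $C$ down to the new root.
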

\begin{proof}
	Let $\sft\nvdash A\to B$ and $\sft\nvdash A\to C$. Then there exist Kripke models $\kcal_1,\kcal_2\in\scrmt$ such that $\kcal_1,\kcal_2\Vdash A$, $\kcal_1\nVdash B$, and $\kcal_2\nVdash C$. Since $A$ has the $\scrmt$-extension property, there exists a Kripke model $\kcal\in\scrmt$ that is an extension of $\{\kcal_1,\kcal_2 \}$ and $\kcal\Vdash A$. Since $\kcal_1\nVdash B$, we get $\kcal\nVdash B$. Similarly, $\kcal\nVdash C$. Hence $\kcal\nVdash A\to (B\vee C)$, and thus $\sft\nvdash A\to (B\vee C)$.
\end{proof}

\begin{theorem}\label{Prime-ext-comp}\uparan{\typez}
	Let 
	$A\in\NNIL$ with $\sft\vdash a\to \Box a$ for every $a\in\suboa{A}$. The following items are equivalent:
	\begin{enumerate}
		\item $A$ has the $\scrmt$-extension property.
		\item $A$ is $\sft$-prime.
		\item $A$ is an $\sft$-component (up to $\sft$-provable equivalence).
	\end{enumerate}
\end{theorem}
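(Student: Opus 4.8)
The plan is to establish the cycle of implications $(3)\Rightarrow(1)\Rightarrow(2)\Rightarrow(3)$, with the three arrows supplied respectively by \cref{Lem2-0}, \cref{extendible-to-prime}, and the $\sft$-component decomposition of \cref{Lem2-01-0}. All the real content is in putting these lemmas together correctly; the only point needing genuine care is keeping track of the box condition $\sft\vdash a\to\Box a$ for $a\in\suboa{A}$ when one moves between $A$ and a $\sft$-component $\sft$-equivalent to it.

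For $(1)\Rightarrow(2)$ I would invoke \cref{extendible-to-prime}; although that lemma is phrased for $A\in\lcalz$, its proof uses nothing beyond the fact that forcing is preserved under passing to a generated submodel, which holds for all of $\lcalb$, so it applies verbatim to $A\in\NNIL$. For $(2)\Rightarrow(3)$ I would first replace $A$ by an $\sft$-equivalent $A^{\circ}\in\NNILP$ (in $\SNNILP$ when $\sft\supseteq\ikfour$) with $\suboab{A^{\circ}}\subseteq\suboab{A}$ — the usual $\NNIL$-to-$\NNILP$ rewriting only redistributes implications and introduces no new outer atomic or boxed subformulas — and then apply \cref{Lem2-01-0} to get a finite set $\Gamma$ of $\sft$-components with $\tvdash A\lr\bigvee\Gamma$ and $\suboab{\Gamma}\subseteq\suboab{A}$. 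Since $\tvdash A\to\bigvee\Gamma$ and $A$ is $\sft$-prime, some $C_{0}\in\Gamma$ has $\tvdash A\to C_{0}$, and together with $\tvdash C_{0}\to\bigvee\Gamma$ and $\tvdash\bigvee\Gamma\to A$ this gives $\tvdash A\lr C_{0}$; thus $A$ is, up to $\sft$-provable equivalence, the $\sft$-component $C_{0}$, which is $(3)$. Note that $\suboa{C_{0}}\subseteq\suboa{A}$, so by the hypothesis of the theorem $\sft\vdash a\to\Box a$ for every $a\in\suboa{C_{0}}$.

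For $(3)\Rightarrow(1)$: having $\scrmt$-extension property depends only on $\scrmt\cap\Mod{A}$, hence only on the $\sft$-provable-equivalence class of $A$, so it suffices to exhibit one $\sft$-component $\sft$-equivalent to $A$ all of whose outer atomic subformulas are $\sft$-box-persistent, and then invoke \cref{Lem2-0}. Running the $\NNILP$-normalization and \cref{Lem2-01-0} on $A$ itself produces $\tvdash A\lr\bigvee\Gamma$ with each member of $\Gamma$ a $\sft$-component whose outer atomic subformulas lie in $\suboa{A}$, hence box-persistent, so by \cref{Lem2-0} together with \cref{extendible-to-prime} every member of $\Gamma$ is $\sft$-prime. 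Hypothesis $(3)$ now says that $A$, and therefore $\bigvee\Gamma$, is $\sft$-equivalent to a single $\sft$-component, which (arguing as in $(2)\Rightarrow(3)$, using primeness of the disjuncts and their pairwise $\sft$-inequivalence) forces $\bigvee\Gamma$ to be $\sft$-equivalent to one of its disjuncts $C_{0}$; then \cref{Lem2-0} gives that $C_{0}$, and hence $A$, has $\scrmt$-extension property.

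The step I expect to be the main obstacle is exactly this last collapse in $(3)\Rightarrow(1)$: one must rule out that $A$, while $\sft$-equivalent to a single $\sft$-component, decomposes genuinely into two or more pairwise $\sft$-inequivalent components — equivalently, that a non-trivial disjunction of $\sft$-components is never $\sft$-equivalent to a single $\sft$-component. The route above reduces this to knowing that the component furnished by $(3)$ is $\sft$-prime; that component need not itself satisfy the box condition, so one should either isolate a small auxiliary fact (every $\sft$-component is $\sft$-prime) or, more safely, first normalize the component witness in $(3)$ so that its outer atomic subformulas lie in $\suboa{A}$ before appealing to \cref{Lem2-0}. Once this bookkeeping around the box condition is settled, the cycle closes and all three statements are equivalent; the decidability remarks in \cref{prime-fact} then follow from \cref{Remark-NNIL-finiteness} and decidability of $\sft$.
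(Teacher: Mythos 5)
Your proof is essentially the paper's: the same cycle of implications, with \cref{extendible-to-prime} giving $1\Rightarrow 2$, decomposition into $\sft$-components (\cref{Lem2-01-0}) plus primality giving $2\Rightarrow 3$, and \cref{Lem2-0} giving $3\Rightarrow 1$; your remark that \cref{extendible-to-prime} applies beyond $\lcalz$ matches how the paper uses it. The only divergence is in $3\Rightarrow 1$: the paper simply applies \cref{Lem2-0} to the component witness, tacitly reading item 3 so that the witness is one arising from $A$ itself (as produced by \cref{Lem2-01}), whence its outer atomics lie in $\suboa{A}$, the box condition transfers, and since the $\scrmt$-extension property depends only on $\scrmt\cap\Mod{A}$ this already suffices. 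Your longer decompose-and-collapse detour is not needed and, as you yourself flag, is not quite closed as written: collapsing $\bigvee\Gamma$ onto a single disjunct requires primeness of the component witness $D$ furnished by $(3)$ (not merely of the disjuncts in $\Gamma$), and that is exactly what is unavailable when $D$ fails the box condition. The repair you indicate --- replace the witness by an $\sft$-equivalent component whose outer atomic subformulas lie in $\suboa{A}$ before invoking \cref{Lem2-0} --- is the right one, and is what the paper's terse citation of \cref{Lem2-0} implicitly assumes; with that reading your argument and the paper's coincide.
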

\begin{proof}
	\begin{itemize}
		\item $1\to 2$: \Cref{extendible-to-prime}.
		\item $2\to 3$: Given $A$, first decompose it into $\sft$-components, then use its $\sft$-primality to deduce that $A$ must be $\sft$-equivalent to one of its $\sft$-components.
		\item $3\to 1$: \Cref{Lem2-0}.
	\end{itemize}
\end{proof}


\subsection{$\snnil$-Preservativity}
\label{Sec-NNIL-Preserv}
 \citep{Visser02} axiomatizes the binary relation of $\NNIL$-preservativity for the non-modal language and $\ipc$ (\Cref{Theorem-Vis-NNIL-pres}). Here we perform a similar task for the modal language (\Cref{Theorem-NNIL-Pres}) and show that $\ARNBST$ axiomatizes $\prtsn$ when $\sft$ is $\types$. Moreover, we also show that $(\SNNIL,\sft)$ is recursively downward compact when $\sft$ is \types.

\begin{theorem}\label{Theorem-Vis-NNIL-pres}
	For every $A,B\in\lcalb$,
	$\ARNBSI\vdash A\rhd B$ iff $A \prin  B$. Moreover, for every $A\in\lcalb$, there exists some $A^*\in\NNIL$ that is effectively computable, $\IPC\vdash A^*\to A$, $\ARNBSI\vdash A\rhd A^*$, and $\suboab{A^*}\subseteq\suboab A$.
\end{theorem}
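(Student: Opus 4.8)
The non-modal case of this statement is \citep{Visser02}; the plan is to obtain the modal version by treating boxed subformulas as atoms, and to organise the biconditional around the best $\NNIL$-approximation $A^{*}$.

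I would first prove the ``moreover'' clause. Given $A\in\lcalb$, let $X$ be the finite set of atomic variables, parameters, $\bot$, and boxed formulas occurring in $A$ outside the scope of a box; regarding the members of $X$ as atoms we have $A\in\lcalz(X)$. The algorithm of \citep{Visser02} is purely syntactic in its atoms, so, run with $X$ as the atom set, it returns a formula $A^{*}\in\NNIL(X)\subseteq\NNIL$, computable from $A$ (the relevant search being finite by \cref{Remark-NNIL-finiteness}), together with a derivation of $\IPC\vdash A^{*}\to A$ and one of $\ARNBSI\vdash A\rhd A^{*}$, and with $A^{*}$ the $\IPC$-largest $\NNIL$-formula below $A$, i.e.\ $A^{*}=\ap{\NNIL}{\IPC}{A}$. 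Since every atom of $A^{*}$ occurs in $A$, the boxed ones among them occur in $A$ outside boxes, so $\suboab{A^{*}}\subseteq\suboab A$.

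For the biconditional, the direction $\ARNBSI\vdash A\rhd B\Rightarrow A\prin B$ is soundness, which I would derive from \cref{gen-pres-sound} applied with $\sft:=\IPC$, $\Gamma:=\NNIL$ and $\Delta:=\atomb$: its hypotheses for $\montd$, $\viss{\atomb}$ and $\Le$ hold for this triple (using that $\NNIL$ is closed under $\atomb$-conjunctions and that $\IPC$ over $\lcalb$ has the intuitionistic submodel property), and the soundness of Disj is recovered by first rewriting $\NNIL$, via \cref{vee-pres}, as disjunctions of $\sft$-prime $\NNIL$-formulas --- the prime factorisation of \citep{PLHA0}, the non-modal analogue of \cref{prime-fact}, read over the atom set $X$ --- so that the premise disjunction splits. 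For the converse, assume $A\prin B$. Since $A^{*}\in\NNIL$ and $\IPC\vdash A^{*}\to A$, the definition of $\prin$ applied with $E:=A^{*}$ gives $\IPC\vdash A^{*}\to B$; then the axiom $\ax$ of $\BART$ yields $\ARNBSI\vdash A^{*}\rhd B$, and a single Cut with $\ARNBSI\vdash A\rhd A^{*}$ from the ``moreover'' clause gives $\ARNBSI\vdash A\rhd B$. Equivalently, as $A^{*}$ is the $(\NNIL,\IPC)$-greatest lower bound of $A$, one may combine \cref{Cor-Gamma-approx-preserv} with $\ax$ and Cut.

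The step I expect to be the real obstacle is the faithful transfer of the algorithm of \citep{Visser02}, and of its accompanying $\rhd$-derivation, to $\lcalb$: with ``atom'' reread as ``member of $\atomb$'', one must verify that every reduction step still returns a $\NNIL$-formula, keeps the invariant $\suboab{\,\cdot\,}\subseteq\suboab A$, and is justified using only axioms and rules of $\ARNBSI$ --- in particular that the $\viss{\atomb}$- and $\montd$-steps really use $\Delta=\atomb$, so that a boxed antecedent $\Box C$ is kept verbatim rather than turned into $B\to\Box C$, and that the moves introducing a box on the right are exactly applications of $\Le$. A secondary point is that the prime factorisation of $\NNIL$ feeding the soundness of Disj must already be available at the level of $\IPC$ (not only of $\ikfour$), which is precisely the non-modal result of \citep{PLHA0}. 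Everything else is routine bookkeeping carried over from \citep{Visser02}.
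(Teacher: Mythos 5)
The paper offers no argument for this theorem at all: its ``proof'' is the citation \citep{Visser02}, and downstream it only ever uses the ``moreover'' clause (via \cref{ASC} in \cref{Cor-HNF-brts} and \cref{SN-approx-prop}). Your reconstruction of that clause --- running Visser's algorithm with the members of $\atomb$ occurring outside boxes as the atoms, so that $A^*\in\NNIL$ is computable, $\IPC\vdash A^*\to A$, $\suboab{A^*}\subseteq\suboab{A}$, and $A^*$ is the $(\NNIL,\IPC)$-glb of $A$ --- and your right-to-left direction ($A\prin B$ with $\IPC\vdash A^*\to A$ gives $\IPC\vdash A^*\to B$, then Ax and one Cut with $A\rhd A^*$) are exactly the sensible way to read that citation.

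The genuine gap is in your soundness direction. You assert that the hypotheses of \cref{gen-pres-sound} for $\Le$ hold for the triple $(\IPC,\NNIL,\atomb)$; but that hypothesis is that $\Gamma$ is $\sft$-complete, i.e.\ $\IPC\vdash E\to\Box E$ for every $E\in\NNIL$, and this already fails for $E=x\in\varr$, since $\IPC$ (over $\lcalb$) treats $\Box x$ as an unrelated atom. In fact $\Le$ is simply not sound for $\prin$: $\ARNBSI\vdash x\rhd\Box x$ is an instance of $\Le$, while $x\prin\Box x$ fails (witness $E:=x$, since $\IPC\vdash x\to x$ but $\IPC\nvdash x\to\Box x$). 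So the left-to-right implication cannot be obtained by your route, and indeed, read literally with $\ARNBSI=\ARD{\IPC}{\atomb}+\Le$, that direction of the stated ``iff'' is false; what \citep{Visser02} provides is the $\Le$-free, non-modal statement, and $\Le$ only becomes sound after passing to a logic containing $\ikfourca$ and to the $\sft$-complete class $\snnil$, which is precisely how the paper proceeds in \cref{Lem-con1} and \cref{Theorem-NNIL-Pres}. A correct ``iff'' at the level of $\IPC$ should therefore be stated for $\ARD{\IPC}{\atomb}$ without $\Le$; with that repair, your remaining worries (the intuitionistic submodel property of $\IPC$ over $\lcalb$ needed for $\viss{\atomb}$, and the prime factorisation feeding Disj) are most cleanly discharged by applying the non-modal results of \citep{Visser02,PLHA0} verbatim with boxed formulas read as fresh atoms, rather than through the paper's modal Kripke machinery.
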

\begin{proof}
\citep{Visser02}.
\end{proof}

\begin{theorem}\label{SN-approx-prop}\uparan{\typet}
Given $A\in\lcalb$, we have $A^\sfh:=\sggt{(A^*)}=\ap\snnil\sft A$, and hence $(\snnil,\sft)$ is recursively strong downward compact (see \Cref{glb}).
\end{theorem}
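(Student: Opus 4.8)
The plan is to verify that $A^\sfh := (A^*)^\Box$ meets the three conditions of \Cref{Gamma-approx-preserv} for $\Gamma = \snnil$ and the given $\types$ logic $\sft$, namely: (i) $A^\sfh \in \snnil$, (ii) $\tvdash A^\sfh \to A$, and (iii) $A \prtsn A^\sfh$. Granting this, \Cref{Gamma-approx-preserv} directly yields that $A^\sfh$ is the $(\snnil,\sft)$-glb of $A$, and the computability of $A^*$ from \Cref{Theorem-Vis-NNIL-pres} together with the evident computability of $(\cdot)^\Box$ gives recursive downward compactness. For strong downward compactness I will in addition track outer boxed subformulas: the bound $\suboab{A^*}\subseteq\suboab{A}$ from \Cref{Theorem-Vis-NNIL-pres}, combined with the shape of the translation $(\cdot)^\Box$, should force every $\Box B \in \sub{A^\sfh}$ to satisfy either $\Box B \in \sub{A}$ or $B \in \nfz{\snnil}$.

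First I would check (i). Since $A^* \in \NNIL$ (\Cref{Theorem-Vis-NNIL-pres}), I claim $(A^*)^\Box \in \snnil$. Membership in $\sfs$ is immediate from the definition $\sfs = \{B^\Box : B \in \lcalb\}$. For membership in $\NNIL$, I would argue by induction on the structure of $A^*$ using the clauses of \Cref{Definition-Box translation}: the translation prepends $\Boxdot$ to variables and to implications, and commutes with $\vee,\wedge$; since boxed formulas are $\NNIL$ and $\NI$, and the antecedent of a translated implication $\Boxdot(\ggt B \to \ggt C)$ is itself boxed hence in $\NI$, the $\NNIL$-formation rules are respected. (A small lemma $A \in \NNIL \Rightarrow A^\Box \in \NNIL$, and similarly for $\NI$, may be worth isolating.) Next (ii): by \Cref{Theorem-Vis-NNIL-pres} we have $\IPC \vdash A^* \to A$, and since $\IPC \subseteq \sft$ and $\igl$ (hence $\sft \supseteq \ikfourca$) is closed under $(\cdot)^\Box$ by \Cref{igl-closure-box}, applying the translation gives $\tvdash (A^*)^\Box \to A^\Box$; but intuitionistically $\tvdash A^\Box \to A$ is standard for the Gödel-style translation (provable by induction on $A$, using $\Boxdot$-elimination), so composing yields $\tvdash A^\sfh \to A$.

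The main obstacle is (iii), $A \prtsn A^\sfh$, i.e.\ for every $E \in \snnil$ with $\tvdash E \to A$ we must get $\tvdash E \to A^\sfh$. The idea is: from $\ARNBSI \vdash A \rhd A^*$ (\Cref{Theorem-Vis-NNIL-pres}) and the soundness of $\ARNBSI$ for $\ipc$-preservativity (\Cref{gen-pres-sound}, whose hypotheses are met since $\NNIL \subseteq \NNIL$ and $\atomb$ are the relevant classes and $\ipc$ has the intuitionistic submodel property), we get $A \pres{\ipc}{\NNIL} A^*$; so from $\ipc$-derivable $E' \to A$ with $E' \in \NNIL$ we recover $\ipc \vdash E' \to A^*$. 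To transfer this to $\sft$ and to $\snnil$, I would take $E \in \snnil$ with $\tvdash E \to A$; write $E = F^\Box$ with $F \in \NNIL$; use that on $\sft$-complete ($\sfs$) antecedents provability over $\sft$ reduces appropriately to provability over $\ipc$ after applying the translation (here \Cref{igl-closure-box} and the interplay of $(\cdot)^\Box$ with $\ikfourca$ are the engine), conclude $\ipc \vdash F \to A^*$, hence $\tvdash F^\Box \to (A^*)^\Box = A^\sfh$, i.e.\ $\tvdash E \to A^\sfh$. Making precise the passage "$\tvdash E \to A$ for $E \in \snnil$ implies the corresponding $\ipc$-statement about the underlying $\NNIL$ formula", so that Visser's non-modal preservativity result can be applied and then pushed back up through $(\cdot)^\Box$, is the delicate point; once it is in place, \Cref{Gamma-approx-preserv} closes the argument and the $\nf{\snnil}$ claim follows by the outer-subformula bookkeeping noted above.
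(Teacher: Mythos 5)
Your overall skeleton (verify the three clauses of \cref{Gamma-approx-preserv} for $\Gamma=\snnil$, then get computability from computability of $A^*$ and strongness from $\suboab{A^*}\subseteq\suboab{A}$) is the same as the paper's, and clauses (i) and (ii) are essentially fine; for (ii), though, you should not route through closure of $\sft$ under $(\cdot)^\Box$ — \cref{igl-closure-box} only covers $\igl$ and $\iglca$, and a general \typet logic $\sft\supseteq\ikfourca$ need not contain $\iglca$ nor be closed under the translation. It suffices (and is what the paper does) to note $\ikfourca\vdash B^\Box\to B$, hence $\ikfourca\vdash A^\sfh\to A^*$, and combine with $\IPC\vdash A^*\to A$. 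The genuine gap is in clause (iii), $A\prtsn A^\sfh$. Your plan is: take $E=F^\Box\in\snnil$ with $\tvdash E\to A$, descend to an $\IPC$-provability statement, apply Visser's $A\prin A^*$, and push back up through $(\cdot)^\Box$. That descent does not exist: $\sft$-provability of $E\to A$ may use genuinely modal reasoning and does not yield $\ipc\vdash F\to A^*$. Concretely, let $A:=\Box q\to\Box\Box q$ and $F:=\bot\to\bot$, so $E=F^\Box\in\snnil$ and $\tvdash E\to A$ by the axiom $\Box B\to\Box\Box B$ of $\ikfourca$; but $A\in\NNIL$ (its antecedent is boxed, hence counts as atomic), so $A^*$ is $\IPC$-equivalent to $A$ itself, and $\IPC\nvdash F\to A^*$ since the two boxed formulas are independent atoms for $\IPC$. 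So the intermediate conclusion your transfer is supposed to deliver is false, even though the target $\tvdash E\to A^\sfh$ is true (here by necessitation).

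The paper establishes (iii) by a different mechanism, and this is where the \typet hypotheses actually do their work: it first derives $\ARNBST\vdash A\rhd A^\sfh$ purely syntactically — Visser's $\ARNBSI\vdash A\rhd A^*$ is lifted by monotonicity (\cref{ASC}), and \cref{Lem-Box-translation-brts}, with the auxiliary admissible rule of \cref{Lem01} (using the axiom \Le and the Montagna rule), gives $A^*\rhd (A^*)^\Box$ for $\NNIL$ formulas — and then applies the soundness theorem \cref{Lem-con1}, i.e.~\cref{gen-pres-sound} combined with the prime factorization \cref{prime-fact} and \cref{prspn=prsn}, whose proofs use the intuitionistic submodel property (\cref{itv-closure-pres}) and the extension property. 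In other words, the preservativity claim is proved relative to $\sft$ via soundness of a calculus that contains all $\sft$-theorems as axioms, not by reducing $\sft$-provability to $\IPC$-provability. Without a substitute for this step (or some other direct argument that $\tvdash E\to A$ with $E\in\snnil$ implies $\tvdash E\to A^\sfh$), your proposal does not go through.
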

\begin{proof}
First, we show that $A^\sfh:=\sggt{(A^*)}$ is the $(\snnil,\sft)$-glb for $A$, where $A^*$ is as provided by \Cref{Theorem-Vis-NNIL-pres}. By \Cref{Theorem-Vis-NNIL-pres}, $A^*\in\NNIL$ and $\IPC\vdash A^*\to A$. Also, we have $A^\sfh=\sggt{(A^*)}\in\snnil$ and $\ikfourca\vdash A^\sfh\to A^*$. \Cref{Cor-HNF-brts} implies $\ARNBST\vdash A\rhd A^\sfh$, and then \Cref{Lem-con1} implies $A\prtsn A^\sfh$. Thus, \Cref{Gamma-approx-preserv} implies $A^\sfh=\ap\snnil\sft A$.
\\
\Cref{Theorem-Vis-NNIL-pres} implies that $A^*$ is computable, and hence $\ap\snnil\sft A=A^\sfh$ is computable. Next, we reason for strongness. \Cref{Theorem-Vis-NNIL-pres} implies that $\suboab{A^*}\subseteq\suboab{A}$, and hence for every boxed subformula $\Box B$ of $A^\sfh:=\sggt{(A^*)}$, either $\Box B\in\sub A$ or $B\in \nfz\snnil$.
\end{proof}

\begin{theorem}\label{Theorem-NNIL-Pres}
\uparan{\typet}
$\ARNBST$ is sound and complete for $\snnil$-preservativity in $\sft$. More precisely, for every $A,B\in\lcalb$:
\emli	$$ \ARNBST\vdash A\rhd B\quad \text{ iff }\quad A\prtsn B.$$
\end{theorem}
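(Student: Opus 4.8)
The plan is to prove the two directions separately. The soundness direction ($\ARNBST\vdash A\rhd B$ implies $A\prtsn B$) should follow from the general soundness theorem \cref{gen-pres-sound}: since $\sft$ is $\typet$, it extends $\ikfourca$, is closed under necessitation, and $\scrmt$ has both the extension property and the intuitionistic submodel property. Taking $\Gamma:=\snnil$ and $\Delta:=\atomb$, I would check the hypotheses of the numbered clauses of \cref{gen-pres-sound}: clause~(1) needs $\snnil$ to be $\sft$-complete, which holds because $\snnil\subseteq\sfs\subseteq\tcom$ (as noted after \cref{Definition-Box translation}, using $\sft\supseteq\ikfour$); clause~(2) needs $\snnil$ to be $\sft$-prime, which is \emph{not} true in general, so instead I would invoke \cref{prime-fact} to rewrite $\snnil$ as $\sfs\sfP\nnil^\vee$ and use \cref{vee-pres} together with \cref{prspn=prsn} to reduce $\prtsn$ to preservativity over the prime set $\sfs\sfP\nnil$, on which Disj is sound; clause~(3), the $\montd$-soundness, needs $\snnil$ closed under $\atomb$-conjunctions, which holds since conjoining an atom or box to an $\snnil$ proposition keeps it in $\snnil$ and self-complete; clause~(4) is exactly the situation $\Gamma\subseteq\NNIL$, $\Delta\subseteq\atomb$, $\sft$ has intuitionistic submodel property, handled by \cref{itv-closure-pres}; and $\Les$-soundness is clause~(1) again. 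This gives the forward implication.

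For the completeness direction ($A\prtsn B$ implies $\ARNBST\vdash A\rhd B$), the key tool is the greatest-lower-bound machinery from \cref{SN-approx-prop}, which gives that $(\snnil,\sft)$ is recursively strong downward compact with $\ap\snnil\sft A = A^\sfh := (A^*)^\Box$, and moreover that $\ARNBST\vdash A\rhd A^\sfh$. So suppose $A\prtsn B$. By \cref{Gamma-approx-preserv} / \cref{Cor-Gamma-approx-preserv} applied with $\Gamma=\snnil$, we have $\tvdash \ap\snnil\sft A\to B$, i.e.\ $\tvdash A^\sfh\to B$. Since $\ARNBST$ contains all $\sft$-theorems via the axiom $\sf Ax$ (any $\tvdash C\to D$ gives $C\rhd D$), we get $\ARNBST\vdash A^\sfh\rhd B$. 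Combining with $\ARNBST\vdash A\rhd A^\sfh$ by the Cut rule yields $\ARNBST\vdash A\rhd B$, as desired.

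The main obstacle, and the step I would spend the most care on, is verifying that the soundness clauses genuinely apply — in particular the primality issue. The set $\snnil$ is not prime, so the naive application of clause~(2) of \cref{gen-pres-sound} fails; the fix is to route through the prime factorization $\snnil = \sfs\sfP\nnil^\vee$ of \cref{prime-fact} and the fact from \cref{vee-pres} that $\prtsn$ coincides with $\pres{\sft}{(\sfs\sfP\nnil)^\vee}$, and then $\pres{\sft}{\sfs\sfP\nnil}$, where Disj \emph{is} sound because $\sfs\sfP\nnil$ consists of prime propositions. One must also confirm that $\ARNBST = \ARDP\sft\atomb$ really is the logic whose soundness is covered: indeed $\ARNBST = \BART + \mathrm{Disj} + \mont(\atomb) + \viss\atomb + \Le$, and each of these is handled by the corresponding clause of \cref{gen-pres-sound} under the $\typet$ hypotheses. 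A secondary point worth stating explicitly is that the completeness argument only uses that $\ap\snnil\sft A$ \emph{exists} together with $\ARNBST\vdash A\rhd\ap\snnil\sft A$ and $A\prtsn \ap\snnil\sft A \Rightarrow \tvdash \ap\snnil\sft A\to B$; the "recursively" and "strong" qualifiers of \cref{SN-approx-prop} are not needed here (they matter for decidability, handled elsewhere).
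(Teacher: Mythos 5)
Your proposal is correct and follows essentially the same route as the paper: soundness by applying \cref{gen-pres-sound} through the prime set $\spnnil$ and transferring back to $\prtsn$ via \cref{prime-fact,vee-pres,prspn=prsn} (this is exactly the paper's \cref{Lem-con1}), and completeness by using $A^\sfh=\ap\snnil\sft{A}$ from \cref{SN-approx-prop} together with \cref{Cor-Gamma-approx-preserv}, the axiom $\ax$, the derivation $\ARNBST\vdash A\rhd A^\sfh$ (\cref{Cor-HNF-brts}), and Cut. No substantive differences.
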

\begin{proof}
	The left-to-right direction (soundness) holds by \Cref{Lem-con1}.
	\\
	For the other direction, let $A\prtsn B$. \Cref{SN-approx-prop,Cor-Gamma-approx-preserv} implies $\tvdash A^\sfh\to B$, and hence $\ARNBST\vdash A^\sfh\rhd B$. Also, by \Cref{Cor-HNF-brts}, we have $\ARNBST\vdash A\rhd A^\sfh$, and then Cut implies the desired result.
\end{proof}

\begin{corollary}\label{Coro-Pres-HNF}\uparan{\typet}
	$\sft$ is closed under $(.)^\sfh$; i.e., if $\tvdash A$, then $\tvdash A^\sfh$.
\end{corollary}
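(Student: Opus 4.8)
The plan is to read $A^\sfh$ as the $(\snnil,\sft)$-greatest lower bound of $A$, which is exactly what \cref{SN-approx-prop} provides, and then to observe that a theorem of $\sft$ sits as low as possible in the $\snnil$-preservativity order, so that a trivial lower bound suffices to pull $A^\sfh$ down to provability.

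First I would check that $\Box\top\in\snnil$. Indeed $\Box\top$ is a boxed formula, hence $\Box\top\in\boxed\subseteq\atomb\subseteq\NNIL$; and unwinding the G\"odel translation of \cref{Definition-Box translation} one gets $\ggt{(\bot\to\bot)}=\Boxdot(\bot\to\bot)=\Box\top$, so $\Box\top=\top^\Box\in\sfs$. Hence $\Box\top\in\sfs\cap\NNIL=\snnil$. Moreover $\tvdash\top$ since $\sft\supseteq\ipc$, and, since $\sft$ is closed under necessitation under the hypothesis \typet, also $\tvdash\Box\top$.

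Now assume $\tvdash A$. Then $\tvdash\Box\top\to A$, by modus ponens applied to the $\ipc$-tautology $A\to(\Box\top\to A)$; thus $\Box\top$ is a $(\snnil,\sft)$-lower bound for $A$. By \cref{SN-approx-prop} we have $A^\sfh=\ap\snnil\sft A$, so the defining property of the $(\snnil,\sft)$-greatest lower bound yields $\tvdash\Box\top\to A^\sfh$; combining this with $\tvdash\Box\top$ gives $\tvdash A^\sfh$. (Equivalently, one may invoke $A\prtsn A^\sfh$ from \cref{Gamma-approx-preserv} and instantiate at $E:=\Box\top\in\snnil$, using $\tvdash\Box\top\to A$.) There is no genuine obstacle here; the only step deserving a moment's attention is confirming that $\Box\top$ really lies in $\snnil$ and is $\sft$-provable, the latter being exactly where the closure under necessitation built into \typet is used.
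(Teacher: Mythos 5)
Your proposal is correct and in essence the same as the paper's proof: both come down to the fact that $A^\sfh$ is the $(\snnil,\sft)$-glb of $A$ (equivalently $A\prtsn A^\sfh$, obtained from \cref{Cor-HNF-brts} and the soundness theorem) and then instantiate the preservativity at a trivially $\sft$-provable member of $\snnil$ implying $A$ — the paper uses $\top$, you use $\Box\top$ together with necessitation. One microscopic remark: $\top^\Box$ literally unfolds to $\top\wedge\Box\top$ rather than $\Box\top$, but your membership claim still stands since $\Box\top\in\parb$ and the G\"odel translation is the identity on $\parb$, so $(\Box\top)^\Box=\Box\top\in\sfs\cap\NNIL=\snnil$.
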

\begin{proof}
	Let $\tvdash A$, and hence $\tvdash \top\to A$. \Cref{Cor-HNF-brts,ASC} implies $\ARNBST\vdash A\rhd A^\sfh$. Then \Cref{Theorem-NNIL-Pres} implies $A\prtsn A^\sfh$, and hence $\tvdash \top\to A^\sfh$.
\end{proof}

\begin{lemma}\label{Cor-HNF-brts}
	For every $A\in\lcalb$, we have $\ARNBSK\vdash A\rhd A^\sfh$.
\end{lemma}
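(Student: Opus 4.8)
The goal is to show $\ARNBSK \vdash A \rhd A^\sfh$ for every $A \in \lcalb$, where $A^\sfh := (A^*)^\Box$ and $A^*$ is the best $\NNIL$-approximation of $A$ furnished by \cref{Theorem-Vis-NNIL-pres}. The plan is to proceed in two stages: first move from $A$ to $A^*$ inside the modal logic $\ARNBSK = \ARDP{\ikfourca}{\atomb}$, and then move from $A^*$ to its $\Box$-translation $(A^*)^\Box$.

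For the first stage, I would invoke \cref{Theorem-Vis-NNIL-pres} (Visser's axiomatization of $\NNIL$-preservativity for the non-modal calculus), which gives $\ARNBSI \vdash A \rhd A^*$; here $\ARNBSI = \ARDP{\IPC}{\atomb}$. By \cref{ASC} (monotonicity of the $\ARD{\cdot}{\cdot}$ construction in both arguments), since $\IPC \subseteq \ikfourca$ and the parameter set $\atomb$ is unchanged, we get $\ARNBSK \vdash A \rhd A^*$. Actually one must be slightly careful: Visser's result is stated for the non-modal language, but the same derivation goes through treating boxed subformulas as atoms, or one applies it after observing that the relevant preservativity derivation only uses the axioms and rules of $\ARDP{\cdot}{\atomb}$, which are available in $\ARNBSK$ regardless. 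The key point to check here is that $\suboab{A^*}\subseteq\suboab{A}$ (guaranteed by \cref{Theorem-Vis-NNIL-pres}) so no new boxed material is introduced that would break the translation.

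For the second stage, I need $\ARNBSK \vdash A^* \rhd (A^*)^\Box$. Since $A^* \in \NNIL$, I would argue by induction on the structure of $A^*$ as a $\NNIL$-formula, using the $\rhd$-axioms and rules of $\ARNBSK$: the atomic/boxed base cases use ${\sf Ax}$ together with the $\cpa$-flavoured axiom $a \rhd \Box a$ (available since $\ikfourca$ contains $\underline{\sf C_a}$, and $\Les$ gives $A \rhd \Box A$); the $\wedge$ and $\vee$ cases use Conj and Disj; and the implication case $B \to C$ (with $B \in \NI$, $C \in \NNIL$) uses $\Les$ to get $(B \to C) \rhd \Box(B \to C)$, then $\montd$ or the combination of Cut with the inductive hypotheses, observing that $\ggt{(B\to C)} = \Boxdot(\ggt{B}\to\ggt{C})$. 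Alternatively — and this is cleaner — I would note that $(A^*)^\Box \in \sfs \subseteq \sfc$ (self-complete, hence $\sft$-complete), so $\Les$ gives $(A^*)^\Box \rhd \Box(A^*)^\Box$ and in fact $\ikfourca \vdash (A^*)^\Box \to A^*$ already holds, which combined with ${\sf Ax}$ gives $A^* \rhd \dots$ in the wrong direction; so one genuinely needs the inductive construction of a $\rhd$-derivation witnessing that $A^*$ preserves down to its $\Box$-translation.

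Finally, Cut combines the two stages: $A \rhd A^* $ and $A^* \rhd (A^*)^\Box$ yield $A \rhd (A^*)^\Box = A^\sfh$. The main obstacle I anticipate is the second stage — verifying that the $\Box$-translation step $A^* \rhd (A^*)^\Box$ is derivable purely from the $\rhd$-calculus of $\ARNBSK$; this requires a careful structural induction matching each $\NNIL$-clause to the corresponding clause of the translation $\ggt{(\cdot)}$ and checking that $\montd$ with $\Delta = \atomb$ supplies exactly the implication step (the antecedents of implications in a $\NNILP$-normal form lie in $\atomb$, which is why the parameter set $\atomb$ rather than $\parb$ is the right choice here). Once that induction is set up, the rest is routine bookkeeping with Cut and the monotonicity remark \cref{ASC}.
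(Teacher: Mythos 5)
Your two-stage architecture is exactly the paper's: \cref{Theorem-Vis-NNIL-pres} gives $\ARNBSI\vdash A\rhd A^*$, \cref{ASC} lifts this to $\ARNBSK\vdash A\rhd A^*$, a structural induction on $A^*\in\NNIL$ gives $\ARNBSK\vdash A^*\rhd (A^*)^\Box$ (this is the paper's \cref{Lem-Box-translation-brts}), and Cut finishes. The only real divergence is in the implication case of that induction, which is also where your write-up is weakest. The paper first proves a separate admissible rule (\cref{Lem01}): from $A\rhd B$ infer $(E\to A)\rhd(E\to B)$ for every $E\in\NI$, by its own induction on $E$ with $\mont(\atomb)$ as base case; the implication case of \cref{Lem-Box-translation-brts} then runs: induction hypothesis $C\rhd C^\Box$, \cref{Lem01} gives $(B\to C)\rhd(B\to C^\Box)$, the equivalence $\ikfourca\vdash B\lr B^\Box$ (valid because $B\in\NI$ and $\cpa$ is available) gives $(B\to C^\Box)\rhd(B^\Box\to C^\Box)$, and only then \Le together with Conj supplies the outer $\Boxdot$. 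Your alternative --- pass to an $\NNILP$ normal form so that all outer antecedents lie in $\atomb$ and the plain Montagna axiom applies --- is workable and arguably trades the auxiliary lemma for a normalization step, but it is not yet a proof as written, for two reasons.

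First, your suggestion to start the implication case by applying \Le to obtain $(B\to C)\rhd\Box(B\to C)$ is a dead end: for $C\in\NNIL$ only $\ikfourca\vdash C^\Box\to C$ is available, not its converse, so $\Box(B\to C)$ does not yield $\Box(B^\Box\to C^\Box)$, and underneath a box you cannot appeal to the preservativity statement $C\rhd C^\Box$. The boxing has to come \emph{after} antecedent and consequent have been translated, in the order the paper uses. Second, the normalization route proves $\ARNBSK\vdash A\rhd (A')^\Box$ for the $\NNILP$-form $A'$ of $A^*$, whereas the lemma asserts $A\rhd (A^*)^\Box$ for the specific formula $A^\sfh=(A^*)^\Box$; to bridge, you need $\ikfourca\vdash (A')^\Box\to (A^*)^\Box$, i.e.\ closure of $\ikfourca$ under $(.)^\Box$ applied to the $\IPC$-equivalence $A'\lr A^*$ --- the paper states such closure only for $\igl$ and $\iglca$ in \cref{igl-closure-box}, so you would have to note (or prove by the same routine induction) the $\ikfourca$ version. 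Both points are easily repaired, but they are exactly the content that separates the sketch from the paper's argument.
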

\begin{proof}
	First, note that by \Cref{Theorem-Vis-NNIL-pres}, we have $\ARNBSI \vdash A\rhd A^*$, and by \Cref{ASC}, $\ARNBSK \vdash A\rhd A^*$. On the other hand, \Cref{Lem-Box-translation-brts} implies $\ARNBSK\vdash A^*\rhd \sggt{(A^*)}$. Then Cut implies the desired result.
\end{proof}

\begin{lemma}\label{Lem-Box-translation-brts}
	For every $A\in\NNIL$, we have $\ARNBSK\vdash A\rhd \sggt A$.
\end{lemma}
\begin{proof}
	Use induction on the complexity of $A$:
	\begin{itemize}[leftmargin=*]
		\item $A$ is atomic. Then $\sggt A=A\wedge \Box A$, and by \Le we have $\ARNBSK\vdash A\rhd \Box A$. Since $\ARNBSK\vdash A\rhd A$, Conj implies the desired result.
		\item $A$ is boxed. Then $\sggt A=A$, and hence $\ikfourca\vdash A\to \sggt A$. Thus, $\ARNBSK\vdash A\rhd \sggt A$.
		\item $A=B\wedge C$. By the induction hypothesis, $\ARNBSK\vdash B\rhd \sggt B$ and $\ARNBSK\vdash C\rhd \sggt C$. On the other hand, $\tvdash (B\wedge C)\to B$ and $\tvdash (B\wedge C)\to C$. Hence $\ARNBSK\vdash (B\wedge C)\rhd B$ and $\ARNBSK\vdash (B\wedge C)\rhd C$, which by Cut and Conj yields $\ARNBSK\vdash (B\wedge C)\rhd (\sggt B\wedge \sggt C)$. Since $\sggt{(B\wedge C)}=\sggt B\wedge \sggt C$, we have the desired result.
		\item $A=B\vee C$. By the induction hypothesis, $\ARNBSK\vdash B\rhd \sggt B$ and $\ARNBSK\vdash C\rhd \sggt C$. On the other hand, $\tvdash B\to (B\vee C)$ and $\tvdash C\to (B\vee C)$. Hence $\ARNBSK\vdash B\rhd (B\vee C)$ and $\ARNBSK\vdash C\rhd (B\vee C)$, which by Disj and Cut yields $\ARNBSK\vdash (B\vee C)\rhd (\sggt B\vee \sggt C)$. Since $\sggt{(B\vee C)}=\sggt B\vee \sggt C$, we have the desired result.
		\item $A=B\to C$. By the induction hypothesis, we have $\ARNBSK\vdash C\rhd \sggt C$. Since $A\in\NNIL$, we have $B\in\NI$, and \Cref{Lem01} implies $\ARNBSK\vdash (B\to C)\rhd(B\to \sggt C)$. Since $\ikfourca\vdash B\lr \sggt B$, we get $\ARNBSK\vdash (B\to \sggt C)\rhd (\sggt B\to \sggt C)$. Finally, by \Le we get $\ARNBSK\vdash (\sggt B\to \sggt C)\rhd \Box(\sggt B\to \sggt C)$, and hence $\ARNBSK\vdash (B\to C)\rhd\sggt{(B\to C)}$, as desired. \qedhere
	\end{itemize}
\end{proof}

\begin{lemma}\label{Lem01}
	The following rule is admissible to $\ARNBSK$:
	\Ax{$A\rhd B$}
	\RLa{$E\in\NI$}
	\UI{$E\to A\rhd E\to B$}
	\DP.
\end{lemma}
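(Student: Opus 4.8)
The rule $A\rhd B \;/\; E\to A\rhd E\to B$ (for $E\in\NI$) is admissible to $\ARNBSK$.

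The plan is to prove this by induction on the structure of $E\in\NI$. The base case is $E\in\atomb$, which is exactly the $\montd$ axiom of $\ARNBSK$ (recall $\ARNBSK=\ARDP\ikfourca\atomb$, so $\Delta=\atomb$ includes all atomics, parameters and boxed formulas): from $A\rhd B$ the $\mont(\atomb)$ rule gives directly $E\to A\rhd E\to B$. So the only work is in the inductive step, where $E$ is built up from $\NI$-formulas by $\wedge$ and $\vee$ — note that $E\in\NI$ can never be a (non-atomic, non-boxed) implication, so those are the only two cases.

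For $E=E_1\wedge E_2$ with $E_1,E_2\in\NI$: starting from $A\rhd B$, apply the induction hypothesis with $E_1$ to get $E_1\to A\rhd E_1\to B$, and then the induction hypothesis again with $E_2$ (legitimate since $E_2\in\NI$, and the IH applies to the judgement $E_1\to A\rhd E_1\to B$) to get $E_2\to(E_1\to A)\rhd E_2\to(E_1\to B)$. Since $\ikfourca\vdash (E_1\wedge E_2\to A)\lr(E_2\to(E_1\to A))$ and similarly for $B$, the axiom ${\sf Ax}$ together with two Cuts converts this to $(E_1\wedge E_2)\to A\rhd (E_1\wedge E_2)\to B$, as desired. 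For $E=E_1\vee E_2$ with $E_1,E_2\in\NI$: apply the induction hypothesis to $A\rhd B$ twice to obtain $E_1\to A\rhd E_1\to B$ and $E_2\to A\rhd E_2\to B$. Now use ${\sf Ax}$ and Cut with the $\IPC$-validities $\ikfourca\vdash(E_i\to B)\to((E_1\vee E_2)\to A)\to\cdots$ — more directly: from $(E_1\vee E_2)\to A$ one gets both $E_1\to A$ and $E_2\to A$, so Cut gives $(E_1\vee E_2)\to A\rhd E_i\to B$ for $i=1,2$; then Conj yields $(E_1\vee E_2)\to A\rhd (E_1\to B)\wedge(E_2\to B)$, and since $\ikfourca\vdash\big((E_1\to B)\wedge(E_2\to B)\big)\to\big((E_1\vee E_2)\to B\big)$ a final Cut with ${\sf Ax}$ delivers $(E_1\vee E_2)\to A\rhd (E_1\vee E_2)\to B$.

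The only mild subtlety — the main "obstacle", though it is routine — is being careful that in the $\wedge$ and $\vee$ cases one is allowed to re-apply the induction hypothesis to the already-derived preservativity judgements rather than only to atomic antecedents; this is fine because the induction is on the complexity of $E$, not on the antecedent being atomic, and the subformulas $E_1,E_2$ of a $\wedge$- or $\vee$-formula in $\NI$ are themselves in $\NI$ of strictly smaller complexity. All the connecting steps are applications of ${\sf Ax}$ (using $\IPC$-provable, hence $\ikfourca$-provable, equivalences of the antecedents/consequents) together with the structural rules Cut, Conj, and Disj available in $\ARNBSK$, so no further machinery is needed. I would leave the explicit bookkeeping of these $\IPC$-equivalences to the reader.
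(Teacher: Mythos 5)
Your proof is correct and follows essentially the same route as the paper's: the base case is Montagna's rule for $\atomb$, and the $\wedge$ and $\vee$ cases are handled by applying the induction hypothesis (with the antecedents $E_1,E_2$) to the derived judgements and then gluing with ${\sf Ax}$, Cut and Conj via the obvious $\IPC$-equivalences. The only differences are cosmetic (order of nesting in the conjunction case, and spelling out the Cut/Conj bookkeeping that the paper leaves implicit).
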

\begin{proof}
	Use induction on the complexity of $E$.
	\begin{itemize}
		\item If $E$ is atomic or boxed, then it is the same as Montagna's rule.
		\item $E=E_1\wedge E_2$. Let $\ARNBSK\vdash A\rhd B$. Then by the induction hypothesis, we have $\ARNBSK\vdash (E_2\to A)\rhd(E_2\to B)$. Again by the induction hypothesis, we have $\ARNBSK\vdash (E_1\to (E_2\to A))\rhd (E_1\to (E_2\to B))$, and thus by Cut and $\ikfourca$, we get $\ARNBSK\vdash E\to A\rhd E\to B$.
		\item $E=E_1\vee E_2$. Let $\ARNBSK\vdash A\rhd B$. Then by the induction hypothesis, we have $\ARNBSK\vdash (E_1\to A)\rhd(E_1\to B)$ and $\ARNBSK\vdash (E_2\to A)\rhd(E_2\to B)$. Hence $\ARNBSK\vdash ((E_1\to A) \wedge (E_2\to A))\rhd ((E_1\to B) \wedge (E_2\to B))$. Thus $\ARNBSK\vdash E\to A\rhd E\to B$, as desired. \qedhere
	\end{itemize}
\end{proof}

\begin{theorem}[\textbf{Soundness}]\label{Lem-con1}
\uparan{\typet}
	$\ARNBST\vdash A\rhd B$ implies $A\prtsn B$.
\end{theorem}
\begin{proof}
Let $\ARNBST\vdash A\rhd B$. \Cref{gen-pres-sound} implies $A\prtspn B$, and then \Cref{prspn=prsn} implies $A\prtsn B$.
\end{proof}

In the following corollary, we consider $\ARNBST$ as the binary relation that it axiomatizes.
\begin{corollary}\label{summery-ikfourca-snnil}
\uparan{\typet}
The following equalities hold. Moreover, if $\sft$ is decidable, then all the above relations are decidable.
\emli $${\ARNBST}={\prtsn}={\prtsnv}={\prtspn}={\prtspnv}$$
\end{corollary}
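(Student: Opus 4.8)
The plan is to chain together the results already established in this subsection together with the prime-factorization material from \cref{modal-prime}. First I would observe that the equality ${\ARNBST}={\prtsn}$ is exactly the content of \cref{Theorem-NNIL-Pres}, so nothing new is needed there. Next, the identity ${\prtsn}={\prtspn}$ is precisely \cref{prspn=prsn} (valid since $\sft\supseteq\ikfourca$ under the $\typet$ hypothesis), which in turn rests on the prime factorization $\snnil=\sfP\snnil^\vee$ from \cref{prime-fact}. The remaining two equalities, ${\prtsn}={\prtsnv}$ and ${\prtspn}={\prtspnv}$, are instances of \cref{vee-pres} applied with $\Gamma:=\snnil$ and $\Gamma:=\spnnil$ respectively; that theorem states ${\prtg}={\prtgv}$ for an arbitrary set $\Gamma$, so both follow immediately. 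Assembling the chain
\[
{\ARNBST}={\prtsn}={\prtsnv}
\quad\text{and}\quad
{\prtsn}={\prtspn}={\prtspnv}
\]
gives the displayed string of equalities.

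For the decidability claim, assume $\sft$ is decidable. The key point is that by \cref{SN-approx-prop} the greatest lower bound $A^\sfh=(A^*)^\Box=\ap\snnil\sft A$ is effectively computable from $A$ (this uses that $A^*$ is computable by \cref{Theorem-Vis-NNIL-pres} and that $(.)^\Box$ is a syntactic operation). Then by \cref{Cor-Gamma-approx-preserv} we have $A\prtsn B$ iff $\tvdash A^\sfh\to B$, and the right-hand side is decidable since $\sft$ is decidable and $A^\sfh$ is computable. Hence $\prtsn$ is decidable, and by the equalities just proved all of $\ARNBST$, $\prtsnv$, $\prtspn$, $\prtspnv$ are decidable as well.

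The only mildly delicate point — and the place I would be most careful — is making sure the hypotheses line up: \cref{prspn=prsn} and \cref{vee-pres} must be invoked under exactly the standing assumptions of this subsection, and the equality ${\prtsn}={\prtspn}$ genuinely needs $\sft\supseteq\ikfourca$ rather than merely $\sft\supseteq\ikfour$, which is guaranteed by $\typet$. Everything else is a routine bookkeeping exercise in transitivity of equality of relations, and I would simply cite the relevant numbered results rather than re-prove anything.
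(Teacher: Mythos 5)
Your proposal is correct and follows essentially the same route as the paper: the equalities are obtained by combining \cref{Theorem-NNIL-Pres}, \cref{prspn=prsn} and \cref{vee-pres}, and decidability is reduced via \cref{SN-approx-prop} and \cref{Cor-Gamma-approx-preserv} to deciding $\tvdash \ap\snnil\sft A\to B$. Your extra care about the hypothesis $\sft\supseteq\ikfourca$ for \cref{prspn=prsn} is a sensible check but does not change the argument.
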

\begin{proof}
\Cref{Theorem-NNIL-Pres,prspn=prsn,vee-pres} imply the equalities. For the decidability of $\prtsn$, we have the following argument. \Cref{SN-approx-prop} implies that $\ap\snnil\sft A$ exists and is computable. Then, by \Cref{Cor-Gamma-approx-preserv}, it suffices to decide $\tvdash \ap\snnil\sft A\to B$, which is provided by the decidability of $\sft$.
\end{proof}

\subsection{$\dnb$-Preservativity and $\nnilb$-Admissibility}
\label{Pres-logic-ARNBT1}

In this section, we show that $\ARNBTM$ axiomatizes $\prtdnb$ and $\artnb$ whenever $\sft$ is \typea. Moreover, we show that $(\dnbv,\sft)$ is recursively strong downward compact whenever $\sft$ is \typea.

\begin{lemma}\label{ARN-ARNBT}
	Let $\alpha$ be a general substitution such that for every $p\in\parr$, we have $\alpha(p)\in\parb$. Then for every $A,B\in\lcalz$:
\emli	$$\ARD\IPC\parr \vdash A\rhd B \quad \text{implies}\quad \ARNBIM\vdash \alpha(A)\rhd\alpha(B).$$
\end{lemma}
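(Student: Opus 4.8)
The plan is to induct on a derivation of $A\rhd B$ in $\ARD\IPC\parr$ and show that replacing every statement $X\rhd Y$ occurring in it by $\alpha(X)\rhd\alpha(Y)$ yields, after inserting one extra Cut, a derivation in $\ARNBIM$. Recall that $\ARD\IPC\parr$ is generated by the axiom scheme ${\sf Ax}$ (that is, $X\rhd Y$ whenever $\IPC\vdash X\to Y$), the axiom scheme $\viss{\parr}$, and the rules Conj, Cut, Disj and $\mont(\parr)$, whereas $\ARNBIM=\ARD\ipc\parb$ has the analogous ingredients with $\parr$ replaced by $\parb$.

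First the routine cases. Since $\alpha$ commutes with all connectives and $\IPC$ is closed under substitution, $\IPC\vdash X\to Y$ gives $\IPC\vdash\alpha(X)\to\alpha(Y)$, so an instance of ${\sf Ax}$ maps to an instance of ${\sf Ax}$ of $\ARNBIM$. The rules Conj, Cut and Disj are available in $\ARNBIM$ and $\alpha$ commutes with $\wedge$ and $\vee$, so they transport verbatim; for instance, from $\ARNBIM\vdash\alpha(X)\rhd\alpha(Y)$ and $\ARNBIM\vdash\alpha(X)\rhd\alpha(Z)$, Conj yields $\ARNBIM\vdash\alpha(X)\rhd\bigl(\alpha(Y)\wedge\alpha(Z)\bigr)$, which is $\ARNBIM\vdash\alpha(X)\rhd\alpha(Y\wedge Z)$. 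For the rule $\mont(\parr)$, which infers $C\to X\rhd C\to Y$ from $X\rhd Y$ with $C\in\parr$: by induction hypothesis $\ARNBIM\vdash\alpha(X)\rhd\alpha(Y)$, and since $C$ is a parameter the hypothesis on $\alpha$ gives $\alpha(C)\in\parb$, so $\mont(\parb)$ applies in $\ARNBIM$ and produces $\ARNBIM\vdash\alpha(C)\to\alpha(X)\rhd\alpha(C)\to\alpha(Y)$, i.e.~$\ARNBIM\vdash\alpha(C\to X)\rhd\alpha(C\to Y)$. This is the first of the two places where the assumption $\alpha[\parr]\subseteq\parb$ is used.

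The only non-routine case is the axiom $\viss{\parr}$, namely $B\to C\rhd\bigvee_{i=1}^{n+m}\itv{B}{E_i}{\parr}$ with $B=\bigwedge_{i=1}^{n}(E_i\to F_i)$ and $C=\bigvee_{i=n+1}^{n+m}E_i$. Applying $\alpha$ gives $\alpha(B)=\bigwedge_{i=1}^{n}(\alpha(E_i)\to\alpha(F_i))$ and $\alpha(C)=\bigvee_{i=n+1}^{n+m}\alpha(E_i)$, so the axiom $\viss{\parb}$ of $\ARNBIM$ yields $\ARNBIM\vdash\alpha(B\to C)\rhd\bigvee_{i=1}^{n+m}\itv{\alpha(B)}{\alpha(E_i)}{\parb}$. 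It remains to bridge between $\bigvee_i\itv{\alpha(B)}{\alpha(E_i)}{\parb}$ and $\alpha(\bigvee_i\itv{B}{E_i}{\parr})=\bigvee_i\alpha(\itv{B}{E_i}{\parr})$, which I would do disjunct by disjunct by checking $\IPC\vdash\itv{\alpha(B)}{\alpha(E_i)}{\parb}\to\alpha(\itv{B}{E_i}{\parr})$: if $E_i\in\parr$ then $\alpha(E_i)\in\parb$ and both sides equal $\alpha(E_i)$; if $E_i\notin\parr$ but $\alpha(E_i)\in\parb$ then the left side is $\alpha(E_i)$ while the right side is $\alpha(B)\to\alpha(E_i)$, which is $\IPC$-implied by $\alpha(E_i)$; and if $E_i\notin\parr$ and $\alpha(E_i)\notin\parb$ then both sides equal $\alpha(B)\to\alpha(E_i)$. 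Taking disjunctions, $\IPC\vdash\bigvee_i\itv{\alpha(B)}{\alpha(E_i)}{\parb}\to\alpha(\bigvee_i\itv{B}{E_i}{\parr})$, so by ${\sf Ax}$ we get $\ARNBIM\vdash\bigvee_i\itv{\alpha(B)}{\alpha(E_i)}{\parb}\rhd\alpha(\bigvee_i\itv{B}{E_i}{\parr})$, and composing it with the $\viss{\parb}$ instance by Cut yields $\ARNBIM\vdash\alpha(B\to C)\rhd\alpha(\bigvee_i\itv{B}{E_i}{\parr})$, the desired $\alpha$-image of the axiom. I expect this $\viss$-step to be the main obstacle: the operation $\itv{\cdot}{\cdot}{\parr}$ does not commute with $\alpha$, since a non-parameter $E_i$ may be sent by $\alpha$ into $\parb$; the assumption $\alpha[\parr]\subseteq\parb$ handles the one problematic direction, and the remaining slack is absorbed by the extra Cut.
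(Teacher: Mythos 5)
Your proof is correct and is exactly the induction the paper has in mind (its own ``proof'' just says ``easy induction on the complexity of the proof, left to the reader''); your treatment of the only non-routine case, the $\viss{\parr}$ axiom, mirrors the way the paper itself handles the analogous mismatch in \cref{ASC}, via the disjunct-by-disjunct $\IPC$-implication followed by one Cut. The case analysis for $\itv{\alpha(B)}{\alpha(E_i)}{\parb}$ versus $\alpha(\itv{B}{E_i}{\parr})$ is exhaustive and each implication goes in the needed direction, so nothing is missing.
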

\begin{proof}
	Easy induction on the complexity of a proof $\ARD\IPC\parr\vdash A\rhd B$, left to the reader.
\end{proof}
\begin{definition}\label{def-glb-cdsnbv}\em
	For every $A\in\lcalb$, we define $\trz A,\tro A\in\lcalb$ and $\Trz A, \Tro A\subseteq\lcalb$ as follows. Let $\pvec:=p_1,\ldots,p_n$ include all parameters occurring in $A$, and let $\subb{A}=\{\Box B_1,\Box B_2\ldots,\Box B_m\}$. Also, let $\qbold:=q_1,q_2,\ldots,q_m$ be a list of fresh parameters; i.e., $q_i\nin\pvec$ for every $i$, and they are pairwise distinct. Let the substitution $\alpha$ be such that $\alpha(q_i)=\Box B_i$ and $\alpha(a)=a$ for every other atomic $a$. Given $B\in\lcalz(\subb{A},\pvec ,\varr)$, there exists a unique $B_0\in\lcalz(\qbold,\pvec ,\varr)$ such that $\alpha(B_0)=B$. In the remainder of this definition, we use $B_0$ to denote this unique proposition in $\lcalz(\qbold,\pvec ,\varr)$ for every $B\in \lcalz(\subb{A},\pvec ,\varr)$. By \Cref{PLHA0}, there exists a finite set $\Trz A\subseteq\dnnil(\pvec,\qvec)$ with the following properties:
	\begin{itemize}
	\item[P1:] $\IPC\vdash D_0\to A_0$ for every $D_0\in\Trz A$.
	\item[P2:] $\ARD\IPC\parr\vdash A_0\rhd \bigvee\Trz A$.
	\item[P3:] $\Trz A$ is a computable function of $A$. Moreover, for every $D_0\in\Trz A$, the substitution $\theta_0$ with $D_0\xrat{\theta_0}{\ipc}D^\dagger_0\in\nnilpq$ can be effectively computed.
	\item[P4:] $\cto(D_0)\leq \ctob(A)$ for every $D_0\in\Trz A$.
	\item[P5:] For every $D_0\in\Trz A$, we have $\suba{D_0}\subseteq \suba{A_0}=\pvec\cup\suboa{A}$.
	\end{itemize}
	Hence, for every $D_0\in\Trz A$, there exists some substitution $\theta_0$ such that $D_0\xrat{\theta_0}{}D^\dagger_0\in\nnilpq$. Thus, for $\theta:=\alpha\circ\theta_0$, we have $\alpha(D_0)=D\xrat{\theta}{\IPC}D^\dagger=\alpha(D_0^\dagger)\in\nnilb$.
	Then define
\emli $$
\trz A:=\bigvee\Trz A
 \quad \quad\text{and} \quad\quad
 \Tro A:=\{\alpha(B): B\in\Trz A\}
 \quad \quad\text{and} \quad\quad
\tro A:=\bigvee\Tro A.
$$
Note that by P3, one can effectively compute $\Trz A$, $\Tro A$, and $\tro A$.
\end{definition}
\begin{lemma}\label{lem-dnbv-glb}
	Given $A\in\lcalb$, we have:
		\begin{enumerate}
		\item $\Tro A\subseteq\dnb$, and hence $\tro A\in\dnbv$.
		\item $\ARNBIM\vdash A\rhd  \tro A$.
	\end{enumerate}
\end{lemma}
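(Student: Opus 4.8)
The plan is to deduce both items by pushing the non-modal facts of \cref{PLHA0} --- equivalently, properties P1--P5 recorded in \cref{def-glb-cdsnbv} --- forward along the substitution $\alpha$ that sends each fresh parameter $q_i$ to the boxed subformula $\Box B_i$ of $A$. The key observation is that $\alpha$ substitutes parameters only by elements of $\parb$: it maps $q_i$ to $\Box B_i\in\boxed$ and fixes every other parameter, which lies in $\parr$. Hence $\alpha$ turns $\nnil$-statements over $\lcalz(\qvec,\pvec,\varr)$ into $\nnilb$-statements, and, by \cref{ARN-ARNBT}, turns $\ARD\IPC\parr$-derivations into $\ARNBIM$-derivations.

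For item~1, I would fix $D_0\in\Trz A$ and use property P3 to get a substitution $\theta_0$ with $D_0\xrat{\theta_0}{\ipc}D_0^\dagger\in\nnilpq$. As already noted in \cref{def-glb-cdsnbv}, putting $\theta:=\alpha\circ\theta_0$ then yields $\alpha(D_0)\xrat{\theta}{\IPC}\alpha(D_0^\dagger)\in\nnilb$; the two things to check are that the outer substitution $\th$ commutes with $\alpha$ on the non-modal formula $D_0$ (it leaves the new boxed subformulas $\Box B_i$ untouched) and that $\alpha$ carries $\nnil$ to $\nnilb$ (because each $\Box B_i\in\atomb$), both routine. Since $\sft\supseteq\ipc$, the same $\theta$ witnesses $\alpha(D_0)\xrat{\theta}{\sft}\alpha(D_0^\dagger)$, i.e.~$\alpha(D_0)\in\dnb$. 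As $D_0$ was arbitrary, $\Tro A=\{\alpha(D_0):D_0\in\Trz A\}\subseteq\dnb$, so $\tro A=\bigvee\Tro A\in\dnbv$.

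For item~2, I would start from property P2, which reads $\ARD\IPC\parr\vdash A_0\rhd\bigvee\Trz A$, i.e.~$\ARD\IPC\parr\vdash A_0\rhd\trz A$, noting $A_0,\trz A\in\lcalz(\qvec,\pvec,\varr)\subseteq\lcalz$. Since $\alpha$ sends every parameter into $\parb$, \cref{ARN-ARNBT} applies and gives $\ARNBIM\vdash\alpha(A_0)\rhd\alpha(\trz A)$. It then remains to identify the two sides: $\alpha(A_0)=A$ by the defining property of $A_0$, and $\alpha(\trz A)=\alpha(\bigvee\Trz A)=\bigvee\{\alpha(B):B\in\Trz A\}=\bigvee\Tro A=\tro A$. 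Hence $\ARNBIM\vdash A\rhd\tro A$.

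I do not expect a genuine obstacle: all the combinatorial content is already packaged in \cref{PLHA0} and \cref{ARN-ARNBT}, and this lemma is a bookkeeping step that applies $\alpha$ and appeals to $\ipc\subseteq\sft$. The one spot deserving care is verifying that $\theta:=\alpha\circ\theta_0$ is genuinely $\alpha(D_0)$-projective in $\ipc$, so that $\alpha(D_0)$ is $\nnilb$-projective rather than merely unifiable to $\nnilb$; this is exactly the identity used implicitly in \cref{def-glb-cdsnbv}, and if one wants to be scrupulous about the parameters $q_i$ not occurring in $\alpha(D_0)$, one may first replace $\theta$ by the restricted substitution furnished by \cref{local-proj}.
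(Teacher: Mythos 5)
Your proposal is correct and follows the paper's own argument: item~1 by composing the projection $\theta_0$ from P3 with $\alpha$ to get $\alpha(D_0)\xrat{\alpha\circ\theta_0}{\IPC}\alpha(D_0^\dagger)\in\nnilb$, and item~2 by applying \cref{ARN-ARNBT} to P2 and identifying $\alpha(A_0)=A$, $\alpha(\trz A)=\tro A$. The extra verifications you flag (commutation of the outer substitution with $\alpha$, preservation of projectivity, and restricting via \cref{local-proj}) are exactly the routine points the paper leaves implicit in \cref{def-glb-cdsnbv}.
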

\begin{proof}
		\begin{enumerate}
		\item We show that for every $D\in \Tro A$, we have $D\in\dnb$. Let $D=\alpha(D_0)$ with $D_0\in\Trz A$ and $\alpha$ as in \Cref{def-glb-cdsnbv}. Hence $D_0\xrat{\theta_0}{\IPC}D_0^\dagger\in\nnilpq$. Then, if we let $\theta:=\alpha\circ\theta_0$, we have $D\xrat{\theta}{\IPC}D^\dagger\in\nnilb$, and thus $D\in\dnb$.
		\item P2 in \Cref{def-glb-cdsnbv} implies $\ARD\IPC\parr\vdash A_0\rhd \trz{A}$, and \Cref{ARN-ARNBT} implies $\ARNBIM\vdash A\rhd  \tro A$. \qedhere
	\end{enumerate}
\end{proof}
\begin{lemma}\label{lem-cdsnbv-glb-complexity}
	Given $A\in\lcalb$, for every $\Box E\in\subo{\tro A}$, we have $\Box E\in\subo{A}$.
\end{lemma}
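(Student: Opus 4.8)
The plan is to unwind the definition of $\tro{A}$ given in \cref{def-glb-cdsnbv} and to follow boxed subformulas through the substitution $\alpha$. Recall that $\tro{A}=\bigvee\Tro{A}$ with $\Tro{A}=\{\alpha(D_0):D_0\in\Trz{A}\}$. First I would observe that a (parenthesized) disjunction is never a boxed formula, and that $\subo{}$ of such a disjunction consists of all its intermediate disjunctions together with $\bigcup_{D\in\Tro{A}}\subo{D}$; consequently any $\Box E\in\subo{\tro{A}}$ already lies in $\subo{\alpha(D_0)}$ for some single $D_0\in\Trz{A}$, so it suffices to control those.

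The key point is that each $D_0\in\Trz{A}\subseteq\dnnil(\pvec,\qvec)$ lies in the non-modal language $\lcalz(\pvec,\qvec)$, hence is box-free: a purely propositional combination of the parameters $\pvec$ and the fresh parameters $\qvec$. Meanwhile $\alpha$ fixes every parameter of $\pvec$ and every variable and sends $q_i\mapsto\Box B_i$, and it commutes with $\vee,\wedge,\to$. Applying such an $\alpha$ to a box-free $D_0$ therefore merely substitutes $\Box B_i$ for each occurrence of $q_i$, and the boxed formulas sitting inside the $B_i$ are then out of reach of $\subo{}$. So the only boxed formulas occurring in $\subo{\alpha(D_0)}$ are the $\Box B_i$ for which $q_i$ occurs in $D_0$; that is, $\Box E=\Box B_i$ for some $i$ with $q_i\in\suba{D_0}$.

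Finally I would invoke property P5 of \cref{def-glb-cdsnbv}, namely $\suba{D_0}\subseteq\suba{A_0}$, to pass from ``$q_i$ occurs in $D_0$'' to ``$q_i$ occurs in $A_0$''. Since $A_0$ is the unique formula of $\lcalz(\qbold,\pvec,\varr)$ with $\alpha(A_0)=A$ — i.e.~the result of replacing each outermost boxed subformula of $A$ by the matching fresh parameter — an occurrence of $q_i$ in $A_0$ witnesses that $\Box B_i$ is an outermost boxed subformula of $A$, so $\Box B_i\in\subob{A}\subseteq\subo{A}$, giving $\Box E\in\subo{A}$. I do not expect a genuine obstacle here beyond careful bookkeeping; the one point that must not be glossed over is that $D_0$ is box-free (so that $\alpha$ cannot introduce any outer box other than the $\Box B_i$'s), together with the role of P5, which is exactly what prevents $D_0$ from mentioning a $q_i$ attached to a boxed subformula of $A$ that is nested inside another box — precisely the situation that would otherwise break the conclusion.
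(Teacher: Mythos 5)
Your argument is correct and is essentially the paper's own proof: the paper disposes of this lemma simply by citing property P5 of \cref{def-glb-cdsnbv}, and your write-up just spells out the routine unwinding (each $D_0\in\Trz A$ is box-free, $\alpha$ only introduces the boxes $\Box B_i$ at occurrences of $q_i$, and P5 forces every such $q_i$ to occur in $A_0$, hence $\Box B_i\in\subob A\subseteq\subo A$). No gap; your emphasis on $D_0$ being box-free and on P5 blocking the nested-box scenario is exactly the right bookkeeping.
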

\begin{proof}
Item P5 in \Cref{def-glb-cdsnbv}.
\end{proof}

\begin{lemma}\label{lem-cdlsnbv-glb-cto}
Given $A\in\lcalb$, for every $D\in \Tro A$, we have $\cto(D)\leq\cftob(A)$ (see \Cref{sec-cto}).
\end{lemma}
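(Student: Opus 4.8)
The plan is to trace $D$ back through the substitution $\alpha$ of \cref{def-glb-cdsnbv} and then invoke property P4 of that definition. By construction $\Tro A=\{\alpha(D_0):D_0\in\Trz A\}$, so every $D\in\Tro A$ equals $\alpha(D_0)$ for some $D_0\in\Trz A$, where $\alpha$ sends each fresh parameter $q_i$ to the boxed formula $\Box B_i$ and fixes every other atomic. Property P4 already gives $\cto(D_0)\leq\ctob(A)$, and $\cftob$ denotes the same complexity measure as $\ctob$; hence it suffices to check that passing from $D_0$ to $\alpha(D_0)$ does not increase $\cto$.

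That last point is the only thing to verify, and it is essentially immediate. A routine induction on a non-modal proposition $E$ shows $\cto(\alpha(E))\leq\cto(E)$: in the base cases $\alpha$ replaces an atomic $q_i$ (with $\cto$-value $0$, by the first clause of the definition of $\cto$) by the boxed formula $\Box B_i$ (again with $\cto$-value $0$, by the same clause) and fixes the remaining atomics and $\bot$; the conjunction and disjunction cases go through the maximum, and the implication case raises both sides by one, so the induction hypothesis applies in each case. (This in fact yields equality, but the inequality is what is needed.) Taking $E=D_0$ and combining with P4, one obtains $\cto(D)=\cto(\alpha(D_0))\leq\cto(D_0)\leq\ctob(A)=\cftob(A)$.

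I do not expect any genuine obstacle. The one point worth flagging is that $\alpha$ never introduces an implication lying outside all boxes, which is precisely why the $\cto$-measure cannot grow, and this is built in: every value $\alpha(q_i)=\Box B_i$ is boxed. The substantive work was already carried out when P4 was extracted from \cref{PLHA0} inside \cref{def-glb-cdsnbv}.
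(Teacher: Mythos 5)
Your proposal is correct and follows essentially the same route as the paper: invoke P4 of \cref{def-glb-cdsnbv} for $\cto(D_0)\leq\ctob(A)$ and observe that applying $\alpha$ (which only replaces parameters by boxed formulas, both of $\cto$-value $0$) does not change $\cto$, so $\cto(D)=\cto(\alpha(D_0))\leq\cftob(A)$. The paper states the equality $\cto(D_0)=\cto(\alpha(D_0))$ without proof; your routine induction merely fills in that detail.
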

\begin{proof}
If $D=\alpha(D_0)$ with $D_0\in\Trz A$, then by P4 in \Cref{def-glb-cdsnbv}, we have $\cto(D_0)\leq\cftob(A)$. Also, we have $\cto(D_0)=\cto(\alpha(D_0))$, and thus $\cto(D)\leq\cftob(A)$.
\end{proof}

\begin{theorem}\label{Theorem-T-nnilb-Finitary0}
\uparan{\typea}
$(\dnbv,\sft)$ is recursively strong downward compact, and $\tro A=\ap\dnbv\sft A$.
\end{theorem}
\begin{proof}
Recursive strong downward compactness is derived from the definition of $\tro A$ once we show $\ap\dnbv\sft A=\tro A$. Hence, by \Cref{Gamma-approx-preserv}, it suffices to show the following items:
	\begin{itemize}
		\item $\tro A\in\dnbv$: First item of \Cref{lem-dnbv-glb}.
		\item $\tvdash \tro A\to A$: By P1 in \Cref{def-glb-cdsnbv}, for every $D_0\in\Trz A$, we have $\IPC\vdash D_0\to A_0$. Hence $\IPC\vdash D\to A$ for every $D\in\alpha(\Trz A)$.
		\item $A\prtdnbv \tro A$: By the second item of \Cref{lem-dnbv-glb,ASC}, we have $\ARNBTM\vdash A\rhd \tro A$. Then \Cref{Lem-con12-dlnb} implies $A\prtdnb \tro A$. Thus, by \Cref{vee-pres}, we are done. \qedhere
	\end{itemize}
\end{proof}

\begin{corollary}\label{Coro-Pres-HNF3}
\uparan{\typea}
	$\sft$ is closed under $\tro{(.)}$; i.e., if $\tvdash A$, then $\tvdash \tro A$.
\end{corollary}
\begin{proof}
	Let $\tvdash A$, and hence $\tvdash \top\to A$. \Cref{Theorem-T-nnilb-Finitary0} implies $A\prtdnb \tro A$, and since $\top\in\dnb$, we have $\tvdash \top\to\tro A$.
\end{proof}

\begin{theorem}\label{Theorem-NNILB-Pres3}
\uparan{\typea}
	For every $A,B\in\lcalb$:
\emli	$$ \ARNBTM\vdash A\rhd B\quad \text{ iff }\quad A\artnb B \quad \text{ iff }\quad A\prtdnb B.$$
\end{theorem}
\begin{proof}
	 $\ARNBTM\vdash A\rhd B$ implies $A\artnb B$: \Cref{Lem-con12-dlnb}.\\
	 $A\artnb B$ implies $A\prtdnb B$: Since $\sft$ is closed under outer substitutions, \Cref{adsm-pres} implies the desired result.
	 \\
	 $A\prtdnb B$ implies $\ARNBTM\vdash A\rhd B$: Let $A\prtdnb B$. \Cref{Cor-Gamma-approx-preserv,Theorem-T-nnilb-Finitary0} implies $\tvdash A_1^\sfp\to B$, and hence $\ARNBTM\vdash A_1^\sfp\rhd B$. On the other hand, \Cref{lem-dnbv-glb} implies $\ARNBTM\vdash A\rhd A_1^\sfp$. Thus $\ARNBTM\vdash A\rhd B$, as desired.
\end{proof}

\begin{theorem}[\textbf{Soundness}]\label{Lem-con12-dlnb}
\uparan{\typea}
$\ARNBTM\vdash A\rhd B$ implies $A\artnb B$.
\end{theorem}
\begin{proof}
Let $\ARNBTM\vdash A\rhd B$. \Cref{gen-admis-sound} implies $A\artpnb B$, and then \Cref{prspn=prsn} implies $A\artnb B$.
\end{proof}

\begin{lemma}\label{cdsnb=cdspnbv}
\uparan{\typez}
	Up to $\sft$-provable equivalence, we have $\dnbv=\dpnbv$.
\end{lemma}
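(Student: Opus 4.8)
The plan is to reduce the statement to the prime-factorization result \cref{prime-fact}. One inclusion is immediate: since $\pnnilb\subseteq\nnilb$ (an $\sft$-prime $\nnilb$-proposition is in particular a $\nnilb$-proposition), we get ${\downarrow}\pnnilb\subseteq{\downarrow}\nnilb$ and hence $\dpnbv\subseteq\dnbv$. For the converse I would first prove a general claim about how relative projectivity interacts with disjunctions, and then feed it \cref{prime-fact}.

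The core claim is: for every $\Gamma\subseteq\lcalz(\parb)$ one has ${\downarrow}(\Gamma^\vee)\subseteq({\downarrow}\Gamma)^\vee$ modulo $\sft$-provable equivalence. To prove it, take $A\in{\downarrow}(\Gamma^\vee)$, so that $A\xrat{\theta}{\sft}\bigvee_{j=1}^{k}B_j$ for some $A$-projective $\theta$ and $B_1,\dots,B_k\in\Gamma$. The $A$-projectivity of $\theta$ gives $\tvdash A\to(\psi\lr\th(\psi))$ for every $\psi\in\lcalb$ by a routine induction (using that $\th$ is the identity on parameters and on boxed subformulas), so in particular $\tvdash A\to\th(A)$; hence $\tvdash A\to\bigvee_jB_j$ and $A\equiv_\sft\bigvee_{j=1}^{k}(A\wedge B_j)$. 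Now fix $j$. Plainly $\theta$ is $(A\wedge B_j)$-projective, and since each $B_i\in\lcalz(\parb)$ we have $\th(B_i)=B_i$; therefore, by distributivity and the absorption law $B_j\vee(B_i\wedge B_j)\equiv B_j$,
$$\th(A\wedge B_j)=\th(A)\wedge B_j\equiv_\sft\Big(\bigvee_iB_i\Big)\wedge B_j\equiv_\sft\bigvee_i(B_i\wedge B_j)\equiv_\sft B_j\in\Gamma.$$
Thus $A\wedge B_j\xrat{\theta}{\sft}B_j$, so $A\wedge B_j\in{\downarrow}\Gamma$ and $A\equiv_\sft\bigvee_j(A\wedge B_j)\in({\downarrow}\Gamma)^\vee$. (Only this one-sided inclusion is available in general: in $\IPC$ both $p$ and $\neg p$ are projective while $p\vee\neg p$ is not; the $\vee$-closures on both sides of the lemma are what save us.)

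Granting the claim, the proof concludes quickly. By \cref{prime-fact} --- the step where the hypothesis $\typez$ is used --- the sets $\nnilb$ and $\sfP\nnilb^\vee=\pnnilb^\vee$ coincide modulo $\sft$-provable equivalence, and relative projectivity depends on its target set only up to $\sft$-equivalence, so ${\downarrow}\nnilb={\downarrow}(\pnnilb^\vee)$. Applying the claim with $\Gamma:=\pnnilb$ gives ${\downarrow}\nnilb\subseteq({\downarrow}\pnnilb)^\vee=\dpnbv$ modulo $\sft$-equivalence; taking $\vee$-closures and using $(X^\vee)^\vee=X^\vee$ we obtain $\dnbv\subseteq\dpnbv$, which together with the easy inclusion yields $\dnbv=\dpnbv$. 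The only genuinely non-routine ingredient is the claim, and within it the observation that $A\wedge B_j$ projects all the way down to the single disjunct $B_j$ rather than merely to $\bigvee_jB_j$ --- an effect of $\th$ restricting to the identity on $\lcalz(\parb)$ combined with absorption; everything else is bookkeeping with the definitions of ${\downarrow}(\cdot)$ and $(\cdot)^\vee$.
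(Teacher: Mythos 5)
Your proof is correct and follows essentially the same route as the paper: the nontrivial inclusion is obtained by decomposing the $\nnilb$-projection into $\sft$-prime disjuncts via \cref{prime-fact}, conjoining the given proposition with each disjunct, and observing that the very same substitution $\theta$ projects the conjunction onto that single prime disjunct (the paper's $B_i:=B\wedge B^\dagger_i$ with $B_i\xrat{\theta}{\sft}B^\dagger_i$). Packaging this as the general inclusion ${\downarrow}(\Gamma^\vee)\subseteq({\downarrow}\Gamma)^\vee$ for $\Gamma\subseteq\lcalz(\parb)$ is only a cosmetic abstraction of that construction, with your absorption computation spelling out the step the paper leaves as ``one may easily observe''.
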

\begin{proof}
	Since $\dpnb\subseteq\dnb$, we have $\dpnbv\subseteq\dnbv$. For the other direction, let $B\in\dnb$. Hence there exists some substitution $\theta$ such that $B\xrat{\theta}{\sft} B^\dagger\in \nnilb$, and by \Cref{prime-fact}, there exist propositions $B^\dagger_i\in\pnnilb$ such that $\ikfourcp\vdash B^\dagger\lr \bigvee_iB^\dagger_i$. Then let $B_i:=B\wedge B^\dagger_i$. Since $\tvdash B\to B^\dagger$, we have $\tvdash B\lr \bigvee_iB_i$. Then one may easily observe that $B_i\xrat{\theta}{\sft} B^\dagger_i\in\pnnilb$, and hence $B_i\in\dpnb$, as desired.
\end{proof}

In the following corollary, we consider $\ARNBTM$ as the binary relation that it axiomatizes.
\begin{corollary}\label{summery-ikfour-cdsnb3}
\uparan{\typea}
The following equalities hold. Moreover, if $\sft$ is decidable, then all the above relations are decidable.
\emli $${\ARNBTM}={\prtdnb}={\prtdnbv}={\prtdpnb}= {\prtdpnbv}= {\artnb}={\artnbv}={\artpnb}={\artpnbv}$$
\end{corollary}
\begin{proof}
\Cref{Theorem-NNILB-Pres3} implies ${\ARNBTM}={\prtdnb}$. On the other hand, \Cref{vee-pres} implies ${\prtdnb}={\prtdnbv}$, ${\prtdpnb}={\prtdpnbv}$, ${\artnb}={\artnbv}$, and ${\artpnb}={\artpnbv}$. \Cref{cdsnb=cdspnbv} implies ${\prtdnbv}={\prtdpnbv}$, and also \Cref{prime-fact} implies ${\artnb}={\artpnbv}$.

Next, we show the decidability of $A\prtdnbv B$. \Cref{Theorem-T-nnilb-Finitary0} implies that $\ap\dnbv\sft A$ exists and is computable. Then, by \Cref{Cor-Gamma-approx-preserv}, it suffices to decide $\tvdash \ap\dnbv\sft A\to B$, which is provided by the decidability of $\sft$.
\end{proof}

\subsection{$\dsnb$-Preservativity and $\snnilb$-Admissibility}
\label{Pres-logic-ARNBT3}

In this section, we show that $\AARNBT$ axiomatizes $\prtdsnb$ and $\artsnb$ whenever $\sft$ is \typea. Moreover, we show that $(\dsnbv,\sft)$ is recursively strong downward compact whenever $\sft$ is \typea.

\begin{lemma}\label{Lem-Box-translation-brtb}
	For every $A\in\NNILb$, we have $\AARNBK\vdash A\rhd \sggt A$.
\end{lemma}
\begin{proof}
	Use induction on the complexity of $A$. All cases are similar to the proof of \Cref{Lem-Box-translation-brts} and are left to the reader.
\end{proof}

\begin{definition}\label{def-glb-cdsnbv3}\em
	For every $A\in\lcalb$, define $\trth A\in\lcalb$ and $\Trth A \subseteq\lcalb$ as follows.
\emli
$$\Trth A:=\{D\wedge\sggt{(D^\dagger)}: D\in \Tro A \} \quad\quad\text{and} \quad\quad \trth A:=\bigvee\Trth A.
$$
Note that by \Cref{def-glb-cdsnbv}, one can effectively compute $\Trth A$ and $\trth A$.
\end{definition}

\begin{lemma}\label{cdlsnb-dlnb3}
If $\sft\supseteq\ikfourcp$ and $B\in\dnb$, then $B\wedge \sggt{(B^\dagger)}\in\dsnb$ and $\AARNBT\vdash B\rhd B\wedge \sggt{(B^\dagger)}$.
\end{lemma}
\begin{proof}
First, observe that $B\xrat{\theta}{\sft} B^\dagger$ implies $B\wedge\sggt{(B^\dagger)}\xrat\theta{\sft}\sggt{(B^\dagger)}$. Hence, if $B^\dagger\in\nnilb$, then $\sggt{(B^\dagger)} \in\snnilb$. Then \Cref{Remark-modal-proj} implies $\tvdash B\to B^\dagger$, and hence $\ARNBTM\vdash B\rhd B^\dagger$. Thus, by \Cref{Lem-Box-translation-brtb,ASC}, we have $\AARNBT\vdash B\rhd \sggt{(B^\dagger)}$.
\end{proof}
\begin{lemma}\label{lem-dsnbv-glb}
	Let $\sft\supseteq\ikfourcp$ and $A\in\lcalb$. Then
		\begin{enumerate}
		\item $\Trth A\subseteq\dsnb$, and hence $\trth A \in\dsnbv$.
		\item $\AARNBT\vdash A\rhd  \trth A$.
	\end{enumerate}
\end{lemma}
\begin{proof}
\Cref{lem-dnbv-glb,cdlsnb-dlnb3}.
\end{proof}
\begin{lemma}\label{glb-cdlsnb-dlnb3}
Let $\sft\supseteq\ikfourcp$, and let $B$ be a glb for $A$ with respect to $(\dnbv,\sft)$. Also assume that $B=\bigvee\Pi$ with $\Pi\subseteq\dnb$. Then $\bigvee\check{\Pi}$ is a glb for $A$ with respect to $(\dsnbv,\sft)$, where $\check{\Pi}:=\{D\wedge\sggt{(D^\dagger)}: D\in\Pi\}$.
\end{lemma}
\begin{proof}
	By \Cref{Gamma-approx-preserv}, it suffices to show the following items for arbitrary $D\in\Pi$:
	\begin{itemize}
		\item $D\wedge\sggt{(D^\dagger)}\in\dsnb$: \Cref{cdlsnb-dlnb3}.
		\item $\tvdash [D\wedge\sggt{(D^\dagger)}]\to A$: By assumption, we have $\tvdash D\to A$. Then, since $\vdash[D\wedge\sggt{(D^\dagger)}]\to D$, we get $\tvdash[D\wedge\sggt{(D^\dagger)}]\to A$.
		\item $A\prtdsnbv \bigvee\check\Pi$: By assumption, we have $A\prtdnbv \bigvee\Pi$. Also, \Cref{cdlsnb-dlnb3} implies $\AARNBT\vdash \bigvee\Pi\rhd \bigvee\check\Pi$. Thus, \Cref{Lem-con12-AR} implies $\bigvee\Pi\prtdsnb \bigvee\check\Pi$, and hence $A\prtdsnb \bigvee\check\Pi$. Then, by \Cref{vee-pres}, we have $A\prtdsnbv \bigvee\check\Pi$. \qedhere
	\end{itemize}
\end{proof}
\begin{theorem}\label{Theorem-T-nnilb-Finitary-AR}
\uparan{\typea}
$(\dsnbv,\sft)$ is recursively strong downward compact, and $\trth A=\ap\dsnbv\sft A$.
\end{theorem}
\begin{proof}
\Cref{Theorem-T-nnilb-Finitary0,glb-cdlsnb-dlnb3}.
\end{proof}
\begin{corollary}\label{Coro-Pres-HNF2-AR}
\uparan{\typea}
	$\sft$ is closed under $\trth{(.)}$; i.e., if $\tvdash A$, then $\tvdash \trth A$.
\end{corollary}
\begin{proof}
	Let $\tvdash A$, and hence $\tvdash \top\to A$. \Cref{Theorem-T-nnilb-Finitary-AR} implies $A\prtdsnb \trth A$, and since $\top\in\dsnb$, we have $\tvdash \top\to \trth A$.
\end{proof}

\begin{theorem}\label{Theorem-NNILB-Pres2-AR}
\uparan{\typea}
For every $A,B\in\lcalb$:
	\emli $$ \AARNBT\vdash A\rhd B\quad \text{ iff }\quad A\artsnb B \quad \text{ iff }\quad A\prtdsnb B.$$
\end{theorem}
\begin{proof}
	 $\AARNBT\vdash A\rhd B$ implies $A\artsnb B$: \Cref{Lem-con12-AR}.\\
	 $A\artsnb B$ implies $A\prtdsnb B$: Since $\sft$ is closed under outer substitutions, \Cref{adsm-pres} implies the desired result.
	 \\
	 $A\prtdsnb B$ implies $\AARNBT\vdash A\rhd B$: Let $A\prtdsnb B$. \Cref{Cor-Gamma-approx-preserv,Theorem-T-nnilb-Finitary-AR} implies $\tvdash \trth A\to B$, and hence $\AARNBT\vdash \trth A\rhd B$. On the other hand, \Cref{lem-dsnbv-glb} implies $\AARNBT\vdash A\rhd \trth A$. Thus $\AARNBT\vdash A\rhd B$.
\end{proof}

\begin{theorem}[\textbf{Soundness}]\label{Lem-con12-AR}
\uparan{\typea}
$\AARNBT\vdash A\rhd B$ implies $A\artsnb B$.
\end{theorem}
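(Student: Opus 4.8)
The plan is to imitate the proof of the companion soundness statement \cref{Lem-con12-dlnb}, routing the argument through the \emph{prime} self-complete $\NNIL$ formulas. Note first that, by definition, $\AARNBT=\ARDM\sft\parb=\ARD\sft\parb+\Lem$, so that over the trivial core $\BART$ the only nontrivial schemata and rules of $\AARNBT$ are Disj, $\mont(\parb)$, $\viss\parb$ and $\Lem$. One should \emph{not} try to work with $\snnilb$ directly: its members are disjunctions of primes, so Disj would fail to be sound for $\adsm{\sft}{\snnilb}$. Instead I would first prove the statement with $\artsnb$ replaced by $\artspnb$, and then transfer it back via \cref{prspn=prsn}.

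The heart of the argument is the claim that $\AARNBT\vdash A\rhd B$ implies $A\artspnb B$, which I would obtain from the general soundness theorem \cref{gen-admis-sound} applied with $\Gamma:=\spnnilb$ and $\Delta:=\parb$; it then remains to verify its hypotheses. The global hypothesis, closure of $\sft$ under outer substitutions, is part of $\typea$. For the optional items: $\spnnilb\subseteq\sfP$ by \cref{prime-fact}, so Disj is sound (item~2); $\spnnilb\subseteq\NNIL$ and $\Delta=\parb$ while $\sft$ has the intuitionistic submodel property (again by $\typea$), so $\viss\parb$ is sound (item~4); and every member of $\spnnilb$ is self-complete, hence $\sft$-complete since $\sfs\subseteq\tcom$, so $\Lem$ is sound (item~1). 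The last item~3 requires $\spnnilb$ to be closed under outer substitutions of $\parb$-conjunctions; since an outer substitution fixes $\parb$ pointwise this amounts to $A\wedge B\in\spnnilb$ for $A\in\spnnilb$ and $B\in\parb$, which splits into $A\wedge B\in\pnnilb$ — the same component-closure property of \cref{modal-prime} that underlies the corresponding step of \cref{Lem-con12-dlnb} — and $A\wedge B\in\sfs$, which is immediate because $\ggt{B}=B$ for $B\in\parb$, so $A\wedge B=\ggt{C}\wedge\ggt{B}=\ggt{(C\wedge B)}$ whenever $A=\ggt{C}$.

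Once $A\artspnb B$ is in hand, I would finish with \cref{prspn=prsn} — whose hypothesis $\typez$ follows from $\typea$ — which gives ${\artsnb}={\artspnb}$ (this equality itself resting on \cref{prime-fact} and \cref{vee-pres}), hence $A\artsnb B$, as required.

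I do not anticipate a genuine obstacle: the proof is essentially bookkeeping, matching each axiom and rule of $\AARNBT$ against its clause in \cref{gen-admis-sound}. The one mildly delicate point is the primality half of the $\parb$-conjunction closure of $\spnnilb$, but this is a routine consequence of the $\sft$-component / extension-property machinery of \cref{modal-prime}: a $\sft$-prime $\NNIL$ formula is $\sft$-equivalent to a $\sft$-component, adjoining an atomic-or-boxed conjunct to such a component — after re-normalising as in \cref{Lem2-01} — again yields a $\sft$-component, and $\sft$-components are $\sft$-prime by \cref{Prime-ext-comp} (the side condition $\sft\vdash a\to\Box a$ on outer atoms holding because every logic considered here contains $\cpp$). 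Should that property prove awkward to cite cleanly, the fallback is a direct induction on the length of the derivation $\AARNBT\vdash A\rhd B$, handling each axiom and rule exactly as in the proofs of \cref{Lem-con1} and \cref{Lem-con12-dlnb}.
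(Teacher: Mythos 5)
Your route is the same as the paper's: the official proof is exactly the two-line citation you propose, namely \cref{gen-admis-sound} applied with $\Gamma=\spnnilb$, $\Delta=\parb$ to get $A\artspnb B$, followed by \cref{prspn=prsn}. The gap is in your verification of item~3 of \cref{gen-admis-sound}: $\spnnilb$ is \emph{not} closed under $\parb$-conjunctions, and the component-closure you invoke is false. Adjoining an atomic-or-boxed conjunct to a $\sft$-component and re-normalising via \cref{Lem2-01} yields in general a \emph{disjunction} of components, and primality is lost. Concretely, take parameters $p,q,r$ and $A:=\Boxdot(p\to(q\vee r))=(p\to(q\vee r))^\Box$. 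Then $A\in\spnnilb$: it is literally in the image of $(.)^\Box$, lies in $\NNIL(\parb)$, and is a single $\sft$-component (the antecedent $p$ is not derivable from $\Box(p\to(q\vee r))$), hence $\sft$-prime by \cref{Prime-ext-comp}. But $A\wedge p$ proves $q\vee r$ while proving neither $q$ nor $r$ (one-node models with $p,q$ true, $r$ false, resp.\ $p,r$ true, $q$ false, already for $\sft=\igl$), so $A\wedge p$ is not $\sft$-prime and therefore not $\sft$-equivalent to any member of $\spnnilb$. So item~3 cannot be applied with $\Gamma=\spnnilb$, and your justification of the Montagna case breaks down exactly there.

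The theorem is of course still true and the repair is local, but it needs the prime/$\vee$-machinery rather than component closure. Since $\artspnb$ and $\artsnb$ coincide (\cref{prime-fact} plus \cref{vee-pres}, i.e.\ \cref{prspn=prsn}), one can check the soundness of $\mont(\parb)$ against $\snnilb$, which \emph{is} closed under $\parb$-conjunctions because the G\"odel translation is the identity on $\parb$, so $C^\Box\wedge B=(C\wedge B)^\Box$, while Disj is checked against $\spnnilb$. Equivalently, in the $\mont(\parb)$ step one decomposes $E\wedge C$ (with $E\in\spnnilb$, $C\in\parb$) as a $\sft$-provable disjunction $\bigvee_i E_i$ of $\spnnilb$-formulas via \cref{prime-fact}, pushes $\th$ through this equivalence using closure of $\sft$ under outer substitutions (note $\th(C)=C$ for $C\in\parb$), applies the hypothesis $A\artspnb B$ to each $E_i$, and recombines. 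The paper's one-line proof (and likewise that of \cref{Lem-con12-dlnb}) is silent on this point, so the ``bookkeeping'' you anticipate is really this prime-decomposition juggling, not the closure of $\spnnilb$ you asserted; your fallback of a direct induction on the $\AARNBT$-derivation works only if the Montagna case is handled in this way.
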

\begin{proof}
Let $\AARNBT\vdash A\rhd B$. \Cref{gen-admis-sound} implies $A\artspnb B$, and \Cref{prspn=prsn} implies $A\artsnb B$.
\end{proof}
\begin{lemma}\label{cdsnb=cdspnbv-2}
\uparan{\typea}
Up to $\sft$-provable equivalence, we have $\dsnbv=\dspnbv$ and $\cdsnbv=\cdspnbv$.
\end{lemma}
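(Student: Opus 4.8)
The plan is to follow the proof of \cref{cdsnb=cdspnbv} line by line, carrying along the self-completion superscript. One inclusion in each of the two claimed identities is free: since $\spnnilb\subseteq\snnilb$ (a self-complete prime $\NNIL(\Box)$-proposition is in particular a self-complete $\NNIL(\Box)$-proposition), the projective closure is monotone, so $\dspnb\subseteq\dsnb$ and hence $\dspnbv\subseteq\dsnbv$; likewise $\cdspnb=\sfc\cap\dspnb\subseteq\sfc\cap\dsnb=\cdsnb$, whence $\cdspnbv\subseteq\cdsnbv$. So the real work is in the reverse inclusions, for which I would isolate the single statement: \emph{every $B\in\dsnb$ is $\sft$-provably equivalent to a finite disjunction $\bigvee_i B_i$ with each $B_i\in\dspnb$, and with each $B_i\in\sfc$ as soon as $B\in\sfc$.}

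To establish this I would pick a substitution $\theta$ with $B\xrat{\theta}{\sft}B^\dagger\in\snnilb$, apply the prime factorization \cref{prime-fact} to write $\ikfour\vdash B^\dagger\lr\bigvee_i B^\dagger_i$ with all $B^\dagger_i\in\spnnilb$, and set $B_i:=B\wedge B^\dagger_i$. Unwinding $B\xrat{\theta}{\sft}B^\dagger$ gives $\tvdash\th(B)\lr B^\dagger$ and, from the $B$-projectivity of $\theta$, $\tvdash B\to(B\lr\th(B))$ (a trivial induction on formula structure, using that $\th$ commutes with the connectives and fixes $\lcalz(\parb)$); together these yield $\tvdash B\to B^\dagger$, so $\tvdash B\lr\bigvee_i B_i$. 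Because $\tvdash B_i\to B$, the substitution $\theta$ remains $B_i$-projective, and because $B^\dagger_i\in\lcalz(\parb)$ the outer substitution fixes it, so $\th(B_i)=\th(B)\wedge B^\dagger_i$; combining $\tvdash B^\dagger_i\to B^\dagger$ with $\tvdash\th(B)\lr B^\dagger$ one checks $\tvdash\th(B_i)\lr B^\dagger_i$. Hence $B_i\xrat{\theta}{\sft}B^\dagger_i\in\spnnilb$, i.e.\ $B_i\in\dspnb$. For the extra clause, note that $\sfc$ is closed under conjunction — from $\tvdash C\to\Box C$, $\tvdash D\to\Box D$ and the $\ikfour$-theorem $\Box C\wedge\Box D\to\Box(C\wedge D)$ one obtains $\tvdash(C\wedge D)\to\Box(C\wedge D)$ — and that $\spnnilb\subseteq\sfs\subseteq\sfc$ since $\sft\supseteq\ikfour$; therefore $B_i=B\wedge B^\dagger_i\in\sfc$ whenever $B\in\sfc$.

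Given this statement the two identities fall out by distributing disjunctions: an arbitrary member of $\dsnbv$ (resp.\ $\cdsnbv$) is a finite disjunction of elements of $\dsnb$ (resp.\ $\cdsnb$), each of which is $\sft$-equivalent to a finite disjunction of elements of $\dspnb$ (resp.\ $\cdspnb$), and a disjunction of disjunctions is again a member of $\dspnbv$ (resp.\ $\cdspnbv$). I do not anticipate any genuine obstacle; the only point that needs care is ensuring the prime factorization of $B^\dagger$ lands in $\spnnilb$ rather than merely in $\pnnilb$, which is exactly the content of the second half of \cref{prime-fact} ($\snnilb=\sfs\sfP\nnilb^\vee$), the remainder being routine $\IPC$/$\ikfour$ manipulations already performed in \cref{cdsnb=cdspnbv}.
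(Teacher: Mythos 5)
Your proof is correct and follows essentially the same route as the paper: the inclusion $\dspnbv\subseteq\dsnbv$ is immediate, and the converse is obtained by taking $B\xrat{\theta}{\sft}B^\dagger\in\snnilb$, prime-factorizing $B^\dagger$ via \cref{prime-fact} into $\spnnilb$-disjuncts, and passing to $B_i:=B\wedge B^\dagger_i$ with the same projective substitution. You additionally spell out the argument for $\cdsnbv=\cdspnbv$ (closure of $\sfc$ under conjunction together with $\spnnilb\subseteq\sfs\subseteq\sfc$), which the paper leaves to the reader, and this is exactly the intended completion.
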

\begin{proof}
We only reason for the first equality and leave the similar argument for the second to the reader. Since $\dspnb\subseteq\dsnb$, we have $\dspnbv\subseteq\dsnbv$. For the other direction, let $B\in\dsnb$. Hence there exists some substitution $\theta$ such that $B\xrat{\theta}{\sft} B^\dagger\in \snnilb$, and by \Cref{prime-fact}, there exist propositions $B^\dagger_i\in\spnnilb$ such that $\ikfourcp\vdash B^\dagger\lr \bigvee_iB^\dagger_i$. Then let $B_i:=B\wedge B^\dagger_i$. Since $\tvdash B\to B^\dagger$, we have $\tvdash B\lr \bigvee_iB_i$. Then one may easily observe that $B_i\xrat{\theta}{\sft} B^\dagger_i\in\spnnilb$, and hence $B_i\in\dspnb$.
\end{proof}
\noindent
In the following corollary, we consider $\AARNBT$ as the binary relation that it axiomatizes.
\begin{corollary}\label{summery-ikfour-cdsnb3}
\uparan\typea
The following equalities hold. Moreover, if $\sft$ is decidable, then all the above relations are decidable.
\emli
$$
{\AARNBT}=
{\prtdsnb}=
{\prtdsnbv}=
{\prtdspnb}=
{\prtdspnbv}=
{\artsnb}={\artsnbv}={\artspnb}={\artspnbv}$$
\end{corollary}
\begin{proof}
\Cref{Theorem-NNILB-Pres2-AR} implies ${\AARNBT}={\prtdsnb}={\artsnb}$. On the other hand, \Cref{vee-pres} implies ${\prtdsnb}={\prtdsnbv}$, ${\prtdspnb}={\prtdspnbv}$, ${\artsnb}={\artsnbv}$, and ${\artspnb}={\artspnbv}$. Moreover, \Cref{prime-fact} implies ${\artsnb}={\artspnbv}$. Finally, \Cref{cdsnb=cdspnbv-2} implies ${\prtdsnbv}= {\prtdspnbv}$.

Next, we show the decidability of $A\prtdsnbv B$. \Cref{Theorem-T-nnilb-Finitary-AR} implies that $\ap\dsnbv\sft A$ exists and is computable. Then, by \Cref{Cor-Gamma-approx-preserv}, it suffices to decide $\tvdash \ap\dsnbv\sft A\to B$, which is provided by the decidability of $\sft$.
\end{proof}

\subsection{$\cdsnb$-Preservativity}
\label{Pres-logic-ARNBT2}
In this section, we show that $\ARNBT$ axiomatizes $\prtcdsnb$ whenever $\sft$ is \typea. Moreover, we show that $(\cdsnbv,\sft)$ is recursively strong downward compact whenever $\sft$ is \typea.
\begin{definition}\label{def-glb-cdsnbv2}\em
	For every $A\in\lcalb$, we define $\trt A\in\lcalb$ and $\Trt A \subseteq\lcalb$ as follows.
\emli $$
\Trt A:=\{\Boxdot D\wedge \sggt{(D^\dagger)}: D\in \Tro A \}
\quad\quad\text{and} \quad\quad
\trt A:=\bigvee\Trt A.
$$
Note that by \Cref{def-glb-cdsnbv}, one can effectively compute $\Trt A$ and $\trt A$.
\end{definition}

\begin{lemma}\label{cdlsnb-dlnb2}
If $\sft\supseteq\ikfourcp$ and $B\in\dnb$, then $\Boxdot B\wedge \sggt{(B^\dagger)}\in\cdsnb$ and $\ARNBT\vdash B\rhd \Boxdot B\wedge \sggt{(B^\dagger)}$.
\end{lemma}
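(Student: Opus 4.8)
The plan is to follow the proof of \cref{cdlsnb-dlnb3}, replacing $B$ by $\Boxdot B$ throughout and exploiting that $\ARNBT$ contains the \emph{unrestricted} principle $\Le$ ($A\rhd\Box A$ for all $A\in\lcalb$), not merely its restriction $\Lem$. First I would fix, using the definition of $\dnb$ and \cref{Remark-modal-proj}, a substitution $\theta$ and the $\nnilb$-projection $B^\dagger\in\nnilb$ of $B$ with $B\xrat{\theta}{\sft}B^\dagger$ and $\tvdash B\to B^\dagger$. Since $B^\dagger\in\nnilb$, its G\"odel translation $(B^\dagger)^\Box$ lies in $\snnilb$: it is self-complete by definition of $\sfs$, and it stays in $\nnil\cap\lcalz(\parb)$ because $\ggt{(\cdot)}$ only ever wraps parameters and boxed subformulas in $\Box$/$\Boxdot$; in particular, as $B^\dagger$ has no atomic variables outside boxes, $(B^\dagger)^\Box$ has no free atomic variables at all, and moreover $\ikfour\vdash(B^\dagger)^\Box\to B^\dagger$ (peel off the $\Boxdot$'s that $\ggt{(\cdot)}$ puts on implications; the antecedents, being implication-free $\lcalz(\parb)$-formulas, are fixed by $\ggt{(\cdot)}$). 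These last two facts will be used below.

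For the membership claim $\Boxdot B\wedge(B^\dagger)^\Box\in\cdsnb$ (the set of $\sft$-complete, $\snnilb$-projective propositions) I would verify both halves. \textbf{Self-completeness:} $\Boxdot B=B\wedge\Box B$ gives $\ikfour\vdash\Boxdot B\to\Box\Boxdot B$ (from $\Box B$ one gets $\Box\Box B$ by $\underline{\sf 4}$), while $(B^\dagger)^\Box\in\sfs\subseteq\tcom$ since $\sft\supseteq\ikfour$; combining the two boxes by $\underline{\sf K}$ yields $\Boxdot B\wedge(B^\dagger)^\Box\in\tcom$. \textbf{Projectivity:} the same $\theta$ is $(\Boxdot B\wedge(B^\dagger)^\Box)$-projective, because $\Boxdot B\wedge(B^\dagger)^\Box$ implies $B$ and $\theta$ is $B$-projective. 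Furthermore $\th(\Boxdot B\wedge(B^\dagger)^\Box)=\th(B)\wedge\Box B\wedge(B^\dagger)^\Box$, as $\th$ fixes boxed subformulas and fixes $(B^\dagger)^\Box$ (a boolean combination of parameters and boxed formulas); using $\tvdash\th(B)\lr B^\dagger$ and $\ikfour\vdash(B^\dagger)^\Box\to B^\dagger$ this is $\sft$-equivalent to $\Box B\wedge(B^\dagger)^\Box$, which lies in $\snnilb$ since $\sfs$, $\nnil$ and $\lcalz(\parb)$ are closed under $\wedge$ and $\Box B=(\Box B)^\Box\in\sfs$. Hence $\Boxdot B\wedge(B^\dagger)^\Box\xrat{\theta}{\sft}\Box B\wedge(B^\dagger)^\Box\in\snnilb$, which finishes the membership claim.

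For the preservativity claim I would reason inside $\ARNBT$: by $\Le$ we have $B\rhd\Box B$, by $\ax$ we have $B\rhd B$, and Conj gives $B\rhd\Boxdot B$. Also $\tvdash B\to B^\dagger$ gives $B\rhd B^\dagger$ by $\ax$; \cref{Lem-Box-translation-brtb} gives $\AARNBK\vdash B^\dagger\rhd(B^\dagger)^\Box$, whence $\ARNBT\vdash B^\dagger\rhd(B^\dagger)^\Box$ by \cref{ASC} (the $\Lem$-instances are $\Le$-instances and $\ikfour\subseteq\sft$); Cut then yields $B\rhd(B^\dagger)^\Box$. A final Conj gives $\ARNBT\vdash B\rhd\Boxdot B\wedge(B^\dagger)^\Box$, as desired.

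The $\rhd$-calculus part is routine; the only genuinely delicate step is the G\"odel-translation bookkeeping — that $(B^\dagger)^\Box$ has no free atomic variables (so outer substitutions fix it), that $\ikfour\vdash(B^\dagger)^\Box\to B^\dagger$ for $B^\dagger\in\nnilb$, and that the $\sfs$, $\nnil$ and $\lcalz(\parb)$ classes are closed under $\wedge$ — all of which rest on the fact that $\nnilb$-formulas carry no atomic variables outside boxes.
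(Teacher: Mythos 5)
Your proof is correct and follows essentially the same route as the paper: the key observation that $\Boxdot B\wedge(B^\dagger)^\Box\xrat{\theta}{\sft}\Box B\wedge(B^\dagger)^\Box\in\snnilb$ for membership, and $\Le$ plus \cref{Lem-Box-translation-brtb}, Cut and Conj for the $\rhd$-claim. The only differences are cosmetic: the paper obtains $\ARNBT\vdash B\rhd(B^\dagger)^\Box$ by citing \cref{cdlsnb-dlnb3} (whose proof you have effectively inlined), and it leaves the $\sft$-completeness half of membership implicit, which you spell out.
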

\begin{proof}
First, observe that $B\xrat{\theta}{\sft} B^\dagger\in\nnilb$ implies $\Boxdot B\wedge \sggt{(B^\dagger)} \xrat\theta{\sft}\Box B\wedge \sggt{(B^\dagger)}$. Hence, if $B^\dagger\in\nnilb$, then $\Box B\wedge \sggt{(B^\dagger)}\in\snnilb$, and thus $\Boxdot B\wedge \sggt{(B^\dagger)}\in\cdsnb$. \Cref{cdlsnb-dlnb3} implies $\ARNBT\vdash B\rhd   \sggt{(B^\dagger)}$. Also, by Leivant's principle, we have $\ARNBT\vdash B\rhd \Boxdot B$, and thus $\ARNBT\vdash B\rhd \Boxdot B\wedge \sggt{(B^\dagger)}$.
\end{proof}
\begin{lemma}\label{lem-cdsnbv-glb}
	Let $\sft\supseteq\ikfourcp$ and $A\in\lcalb$. Then we have:
		\begin{enumerate}
		\item $\Trt A\subseteq\cdsnb$, and hence $\trt A\in\cdsnbv$.
		\item $\ARNBT\vdash A\rhd  \trt A$.
	\end{enumerate}
\end{lemma}
\begin{proof}
\Cref{lem-dnbv-glb,cdlsnb-dlnb2}.
\end{proof}

\begin{lemma}\label{glb-cdlsnb-dlnb2}
Let $\sft\supseteq\ikfourcp$, and let $B$ be a glb for $A$ with respect to $(\dnbv,\sft)$. Also assume that $B=\bigvee\Pi$ with $\Pi\subseteq\dnb$. Then $\bigvee\hat\Pi$ is a glb for $A$ with respect to $(\cdsnbv,\sft)$, where $\hat\Pi:=\{\Boxdot D\wedge \sggt{(D^\dagger)}: D\in\Pi\}$.
\end{lemma}
\begin{proof}
By \Cref{Gamma-approx-preserv}, it suffices to show the following items for arbitrary $D\in\Pi$:
	\begin{itemize}
		\item $\Boxdot D\wedge \sggt{(D^\dagger)}\in\cdsnb$: \Cref{cdlsnb-dlnb2}.
		\item $\tvdash [\Boxdot D\wedge \sggt{(D^\dagger)}]\to A$: By assumption, we have $\tvdash D\to A$. Then, since $ \vdash[\Boxdot D\wedge \sggt{(D^\dagger)}]\to D$, we get $\tvdash[\Boxdot D\wedge \sggt{(D^\dagger)}]\to A$.
		\item $A\prtcdsnbv \bigvee\hat\Pi$: By assumption, we have $A\prtdnbv \bigvee\Pi$. Also, \Cref{cdlsnb-dlnb2} implies $\ARNBT\vdash \bigvee\Pi\rhd \bigvee\hat\Pi$. Thus, \Cref{Lem-con12} implies $\bigvee\Pi\prtcdsnb  \bigvee\hat\Pi$, and hence $A\prtcdsnb \bigvee\hat\Pi$. Then, by \Cref{vee-pres}, we have $A\prtcdsnbv \bigvee\hat\Pi$. \qedhere
	\end{itemize}
\end{proof}
\begin{theorem}\label{Theorem-T-nnilb-Finitary}
\uparan\typea
$(\cdsnbv,\sft)$ is recursively strong downward compact, and $ \trt A=\ap\cdsnbv\sft A$.
\end{theorem}
\begin{proof}
\Cref{Theorem-T-nnilb-Finitary0,glb-cdlsnb-dlnb2}.
\end{proof}

\begin{corollary}\label{Coro-Pres-HNF2}
\uparan\typea
	$\sft$ is closed under $\trt{(.)}$; i.e., if $\tvdash A$, then $\tvdash \trt A$.
\end{corollary}
\begin{proof}
	Let $\tvdash A$, and hence $\tvdash \top\to A$. \Cref{Theorem-T-nnilb-Finitary} implies $A\prtcdsnb \trt A$, and since $\top\in\cdsnb$, we have $\tvdash \top\to \trt A$.
\end{proof}

\begin{theorem}\label{Theorem-NNILB-Pres2}
\uparan\typea
	For every $A,B\in\lcalb$:
\emli	$$ \ARNBT\vdash A\rhd B\quad \text{ iff }\quad A\prtcdsnb B.$$
\end{theorem}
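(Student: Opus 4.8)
The plan is to prove the biconditional one implication at a time, in the same pattern as \cref{Theorem-NNILB-Pres3} and \cref{Theorem-NNILB-Pres2-AR}. The left-to-right direction, $\ARNBT\vdash A\rhd B\Rightarrow A\prtcdsnb B$, is the soundness half; I would isolate it as a separate lemma (the one cited as \cref{Lem-con12} in the proof of \cref{glb-cdlsnb-dlnb2}) and prove it by induction on the derivation $\ARNBT\vdash A\rhd B$. The axiom ${\sf Ax}$ and the rules Conj and Cut are immediate; for Disj, $\Le$ and $\montd$ one uses that $\cdsnb$ consists of $\sft$-complete formulas together with the primality information supplied by \cref{cdspnb-prime} and \cref{prime-fact}, appealing to \cref{gen-pres-sound} wherever it applies. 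The one genuinely delicate case is the axiom $\viss\parb$: the submodel construction of \cref{itv-closure-pres} cannot be applied verbatim because $\cdsnb\not\subseteq\NNIL$, so for each witness $E\in\cdsnb$ one must first pass to its $\NNIL$-projection $E^\dagger$ and transport the failure of derivability back through the $E$-projecting substitution. I expect this to be the main obstacle in the whole argument.

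For the converse, $A\prtcdsnb B\Rightarrow\ARNBT\vdash A\rhd B$, essentially all the necessary machinery has already been assembled and only assembly remains. First, by \cref{vee-pres} the relation $\prtcdsnb$ coincides with $\prtcdsnbv$, so I may assume $A\prtcdsnbv B$. Since $\sft$ is \typea we have $\sft\supseteq\ikfour$, so \cref{Theorem-T-nnilb-Finitary} applies: $\trt A=\ap{\cdsnbv}{\sft}A$ is the $(\cdsnbv,\sft)$-greatest lower bound of $A$, and then \cref{Cor-Gamma-approx-preserv} gives $\tvdash \trt A\to B$. By the axiom ${\sf Ax}$ of $\BART\subseteq\ARNBT$ this yields $\ARNBT\vdash \trt A\rhd B$, while the second item of \cref{lem-cdsnbv-glb} gives $\ARNBT\vdash A\rhd \trt A$. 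Applying the Cut rule to these two statements produces $\ARNBT\vdash A\rhd B$, completing the proof.

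In summary, the only substantial new ingredient is the soundness lemma \cref{Lem-con12}; granting it, \cref{Theorem-NNILB-Pres2} follows from the short combination of \cref{vee-pres}, \cref{Theorem-T-nnilb-Finitary}, \cref{Cor-Gamma-approx-preserv} and \cref{lem-cdsnbv-glb} sketched above. I would therefore state and prove \cref{Lem-con12} first, and then present \cref{Theorem-NNILB-Pres2} with the three-line argument of the previous paragraph.
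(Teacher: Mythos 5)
Your proposal is correct, and for the theorem itself it is essentially the paper's proof: the completeness direction is exactly the paper's three-step argument (from $A\prtcdsnb B$ obtain $\tvdash \trt A\to B$ via \cref{Theorem-T-nnilb-Finitary,Cor-Gamma-approx-preserv}, get $\ARNBT\vdash A\rhd \trt A$ from \cref{lem-cdsnbv-glb}, and finish with Cut), with your explicit appeal to \cref{vee-pres} merely spelling out a step the paper leaves implicit, and the soundness direction is likewise delegated to \cref{Lem-con12}. The only divergence is inside that soundness lemma. You propose a direct induction on the derivation, treating $\viss\parb$ by passing from a witness $E\in\cdsnb$ to its projection $E^\dagger\in\snnilb\subseteq\NNIL$ and transporting derivability back through the $E$-projective substitution, and absorbing primality issues into the Disj case via \cref{cdspnb-prime,prime-fact}; the paper instead disposes of \cref{Lem-con12} in two lines, citing \cref{gen-pres-sound} to get $A\prtcdspnb B$ and then \cref{cdsnb=cdspnbv-2} to convert this to $A\prtcdsnb B$, i.e.\ it handles primality by working with $\cdspnb$ throughout. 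Your worry about the $\viss\parb$ case is well placed: the clause of \cref{gen-pres-sound} covering $\viss\Delta$ assumes $\Gamma\subseteq\NNIL$, which $\cdspnb$ need not satisfy (membership in $\dspnb$ constrains only the projection $E^\dagger$, not $E$ itself), so the paper's blanket citation is justified precisely by the projection-and-transport argument you describe, equivalently by routing through \cref{gen-admis-sound} and \cref{adsm-pres} as the paper does for the analogous lemmas \cref{Lem-con12-dlnb,Lem-con12-AR}. In short: same route for the theorem, with a more explicit, and arguably more careful, treatment of the soundness lemma on your side.
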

\begin{proof}
	 $\ARNBT\vdash A\rhd B$ implies $A\prtcdsnb B$: \Cref{Lem-con12}.\\
	 $A\prtcdsnb B$ implies $\ARNBT\vdash A\rhd B$: Let $A\prtcdsnb B$. \Cref{Cor-Gamma-approx-preserv,Theorem-T-nnilb-Finitary} imply $\tvdash \trt A\to B$, and hence $\ARNBT\vdash \trt A\rhd B$. On the other hand, \Cref{lem-cdsnbv-glb} implies $\ARNBT\vdash A\rhd \trt A$. Thus $\ARNBT\vdash A\rhd B$.
\end{proof}

\begin{theorem}[\textbf{Soundness}]\label{Lem-con12}
\uparan\typea
	$\ARNBT\vdash A\rhd B$ implies $A\prtcdsnb B$.
\end{theorem}
\begin{proof}
Let $\ARNBT\vdash A\rhd B$. \Cref{gen-pres-sound} implies $A\prtcdspnb B$, and thus \Cref{cdsnb=cdspnbv-2} implies $A\prtcdsnb B$.
\end{proof}
In the following corollary, we consider $\ARNBT$ as the binary relation that it axiomatizes.
\begin{corollary}\label{summery-ikfour-cdsnb2}
\uparan\typea
${\ARNBT}={\prtcdsnb}={\prtcdsnbv}={\prtcdspnb}= {\prtcdspnbv}$. Moreover, if $\sft$ is decidable, then all mentioned relations are decidable.
\end{corollary}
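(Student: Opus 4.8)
The plan is to obtain this corollary by pure bookkeeping, assembling the characterization theorem of this subsection together with the two generic tools \cref{vee-pres} and \cref{cdsnb=cdspnbv-2}, in exactly the same pattern used for \cref{summery-ikfourca-snnil} and for the analogous statement about $\AARNBT$; all the hypotheses needed by the cited results are subsumed under \typea, which we are assuming. First I would invoke \cref{Theorem-NNILB-Pres2} to get ${\ARNBT}={\prtcdsnb}$. Next, \cref{vee-pres} --- applied once with the parameter set taken to be $\cdsnb$ and once with it taken to be $\cdspnb$ --- yields ${\prtcdsnb}={\prtcdsnbv}$ and ${\prtcdspnb}={\prtcdspnbv}$, since $\cdsnbv$ and $\cdspnbv$ are by definition the $(.)^\vee$-closures of $\cdsnb$ and $\cdspnb$. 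Finally, \cref{cdsnb=cdspnbv-2} gives $\cdsnbv=\cdspnbv$ up to $\sft$-provable equivalence, and because $\prtg$ depends on its parameter set only up to $\sft$-provable equivalence of its members this yields ${\prtcdsnbv}={\prtcdspnbv}$. Chaining these four identities produces the displayed string of five coinciding relations.

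For the decidability clause, assume $\sft$ is decidable and fix $A,B\in\lcalb$; I would decide $A\prtcdsnbv B$ as follows. By \cref{Theorem-T-nnilb-Finitary} the glb $\ap\cdsnbv\sft A$ exists, equals $\trt A$, and is effectively computable from $A$. By \cref{Cor-Gamma-approx-preserv}, $A\prtcdsnbv B$ holds iff $\tvdash\ap\cdsnbv\sft A\to B$, that is, iff $\tvdash\trt A\to B$; and decidability of $\sft$ settles this last question. Since the five relations coincide, a decision procedure for $\prtcdsnbv$ is a decision procedure for all of them.

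I do not expect any genuine obstacle: the corollary is a consequence of machinery already in place, and the argument is essentially identical to that of \cref{summery-ikfourca-snnil}. The one point deserving a moment's care is making sure it is precisely the $\cdsnbv=\cdspnbv$ clause of \cref{cdsnb=cdspnbv-2} (not merely its $\dsnbv=\dspnbv$ clause) that bridges the boxed and prime-boxed versions, together with the trivial but necessary observation that passing from equality of parameter sets modulo $\sft$-provable equivalence to equality of the induced preservativity relations is immediate from the definition of $\prtg$.
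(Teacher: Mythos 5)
Your proposal is correct and follows essentially the same route as the paper: \cref{Theorem-NNILB-Pres2} for ${\ARNBT}={\prtcdsnb}$, \cref{vee-pres} for the two $(.)^\vee$-closures, the $\cdsnbv=\cdspnbv$ clause of \cref{cdsnb=cdspnbv-2} for the bridge, and \cref{Theorem-T-nnilb-Finitary} together with \cref{Cor-Gamma-approx-preserv} plus decidability of $\sft$ for the decidability claim. No gaps.
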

\begin{proof}
\Cref{Theorem-NNILB-Pres2} implies ${\ARNBT}={\prtcdsnb}$. On the other hand, \Cref{vee-pres} implies ${\prtcdsnb}={\prtcdsnbv}$ and ${\prtcdspnb}= {\prtcdspnbv}$. Finally, \Cref{cdsnb=cdspnbv-2} implies ${\prtcdsnbv}={\prtcdspnbv}$.

Next, we show the decidability of $A\prtcdsnbv B$. \Cref{Theorem-T-nnilb-Finitary} implies that $\ap\cdsnbv\sft A$ exists and is computable. Then, by \Cref{Cor-Gamma-approx-preserv}, it suffices to decide $\tvdash \ap\cdsnbv\sft A\to B$, which is provided by the decidability of $\sft$.
\end{proof}

\begin{lemma}\label{iglh-in-iph}
$\iglcph\vdash A$ implies $\iph\vdash A$.
\end{lemma}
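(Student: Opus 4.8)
The plan is to unfold $\iglh=\igl\Ha$ as $\igl$ together with the axiom schema $\Box A\to\Box B$ for all $A,B\in\lcalb$ with $A\priglcdsnb B$ (and its box), and to check that $\iph$ proves every such instance. Since $\iph=\LARD\igl\parb$ contains all theorems of $\igl$ as axioms, is closed under modus ponens, and is closed under necessitation — from $\iph\vdash C$ one gets $\iph\vdash\top\to C$ by the $\ipc$-axiom $C\to(\top\to C)$ and modus ponens, whence $\iph\vdash\top\rhd C$, i.e.\ $\iph\vdash\Box C$, by the rule PNec — a routine induction on an $\iglh$-derivation reduces the lemma to the single claim that $A\priglcdsnb B$ implies $\iph\vdash\Box A\to\Box B$, the boxed instances of $\Ha$ then following by necessitation in $\iph$.

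To prove this claim I would proceed in two steps. First, note that $A\priglcdsnb B$ is by definition the same as $A\pres{\igl}{\cdsnb}B$; since $\igl$ is \typea, \cref{Theorem-NNILB-Pres2} applied with $\sft:=\igl$ gives $\ARDP\igl\parb\vdash A\rhd B$. Second, transfer this derivation from the calculus $\ARDP\igl\parb$, which produces only statements of the form $C\rhd D$, into the logic $\iph=\LARD\igl\parb$, by induction on the $\ARDP\igl\parb$-proof: an axiom ``$C\rhd D$ for $\igl\vdash C\to D$'' is obtained in $\iph$ from the corresponding $\igl$-theorem followed by PNec; the rules Conj, Cut and Disj become the homonymous implicational axioms of $\iph$ followed by modus ponens; the Montagna rule becomes the $\iph$-axiom $A\rhd B\to(C\to A)\rhd(C\to B)$ (for $C\in\parb$) followed by modus ponens; and the axioms $\viss\parb$ and $\Le$ occur verbatim in $\iph$. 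Hence $\iph\vdash A\rhd B$, and then instantiating the Cut axiom of $\iph$ at $\top,A,B$ and applying modus ponens with $\iph\vdash A\rhd B$ yields $\iph\vdash(\top\rhd A)\to(\top\rhd B)$, that is, $\iph\vdash\Box A\to\Box B$.

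The only step that is not completely routine is the transfer from $\ARDP\igl\parb$ to $\iph$, but this is still purely mechanical, since $\LARD\sft\Delta$ is designed precisely to internalise the rules of the calculus $\ARD\sft\Delta$ as implicational axioms; the genuine content of the lemma is \cref{Theorem-NNILB-Pres2}, established earlier.
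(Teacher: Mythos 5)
Your proof is correct and follows essentially the same route as the paper: induction on the $\iglh$-derivation, with the only nontrivial case $\Box A\to\Box B$ for $A\priglcdsnb B$ handled by invoking \cref{Theorem-NNILB-Pres2} (equivalently \cref{summery-ikfour-cdsnb2}) to get $\ARDP\igl\parb\vdash A\rhd B$, transferring this into $\iph$, and then using Cut to obtain $(\top\rhd A)\to(\top\rhd B)$. The paper merely compresses the transfer from $\ARDP\igl\parb$ to $\iph$ as ``clear from the definition,'' which you spell out; that extra detail is harmless and accurate.
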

\begin{proof}
We prove by induction on the proof complexity of $\iglcph\vdash A$ that $\iph\vdash A$. All cases are trivial except for when $A$ is an axiom instance of $\vis\cdsnb\iglcp$; i.e., $A=\Box B\to \Box C$ with $B\priglcdsnb C$. \Cref{summery-ikfour-cdsnb2} implies that $\ARDP\iglcp\parb\vdash B\rhd C$. Also, from the definition of $\ARDP\iglcp\parb$ (see \Cref{pres-admis}), it is clear that $\iph\vdash  B\rhd C$. Then Cut implies $\iph\vdash \top\rhd B\to\top\rhd C$. Thus $\iph\vdash \Box B\to \Box C$.
\end{proof}

\subsection{$\snnilb$-Preservativity}
\label{Pres-logic-ARNBTP}
In this section, we show that $\ARNBTP$ axiomatizes $\prtsnb$ whenever $\sft$ is \typea. Moreover, we show that $(\snnilbv,\sft)$ is recursively strong downward compact whenever $\sft$ is \typea.
\begin{theorem}\label{PLHA0-2}
	For every $A\in\lcalz$, one can effectively compute $A^\star\in\NNILpar$ such that:
	\begin{enumerate}
		\item $\IPC\vdash A^\star\to A$,
		\item $\ARNN\vdash A\rhd A^\star$,
		\item $\suba{A^\star}\subseteq\suba{A}$.
	\end{enumerate}
\end{theorem}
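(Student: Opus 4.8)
Morally, $A^\star$ will be (a chosen $\NNILpar$-representative of) the greatest $\NNILpar$-lower bound of $A$ in $\IPC$: items (1) and (3) say $A^\star\in\NNILpar$ is a lower bound for $A$ involving only atoms of $A$, and item (2), via soundness of $\ARNN$ for $\pres{\IPC}{\NNILpar}$ (from \cref{gen-pres-sound}, \cref{vee-pres} and the prime decomposition of $\NNILpar$), says $A$ preservatively covers $A^\star$, so by \cref{Gamma-approx-preserv} it amounts to $A^\star=\ap{\NNILpar}{\IPC}{A}$. The plan is to derive this from \cref{PLHA0}. Apply \cref{PLHA0} to $A$, obtaining a finite $\Pi\subseteq\dnnil(\subp A)$ with $\IPC\vdash\bigvee\Pi\to A$, $\ARD\IPC\parr\vdash A\rhd\bigvee\Pi$, a computable $D$-projective substitution $\theta_D$ with $D\xrat{\theta_D}{\IPC}D^\dagger\in\nnil(\subp A)$ for each $D\in\Pi$, and $\suba D\subseteq\suba A$. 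Since $\NNILpar\subseteq\dNNILpar$ and, by \cref{Gamma-approx-preserv}, $\bigvee\Pi$ is the $(\dNNILpar,\IPC)$-glb of $A$, the $\NNILpar$-lower bounds of $A$ coincide with those of $\bigvee\Pi$; hence it suffices to produce the $\NNILpar$-glb of $\bigvee\Pi$ and to reach it preservatively from $\bigvee\Pi$ inside $\ARNN$.

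I would treat $\bigvee\Pi$ one disjunct at a time. First refine $\Pi$ using the prime decomposition of $\NNIL$-formulas (cf.\ \citep{PLHA0}) so that the disjuncts are $\IPC$-prime, which is meant to make the polarity of each variable unambiguous inside a single disjunct. For a prime disjunct $D$ the projection $\theta_D(D)=D^\dagger$ is of no use, being only an upper approximation ($\IPC\vdash D\to D^\dagger$, not conversely; e.g.\ $D=x$, $D^\dagger=\top$). Instead, using that $\theta_D$ fixes every variable under $D$ ($\IPC\vdash D\to(x\lr\theta_D(x))$), substitute in $D$ each variable by $\top$ or $\bot$ according to its polarity in $D$, obtaining $\theta'_D(D)\in\lcalz(\subp A)$, and set $\hat D:=(\theta'_D(D))^*$, the best $\NNIL$-approximation from below (\cref{Theorem-Vis-NNIL-pres}), so $\hat D\in\NNILpar$; put $A^\star:=\bigvee_D\hat D$. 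Then item (1) holds since each $\hat D$ implies $\theta'_D(D)$, which implies $D$, which implies $A$ because $\IPC\vdash\bigvee\Pi\to A$; item (3) holds because $\theta'_D$ erases all variables and adds no atoms, $(.)^*$ adds none, and $\suba D\subseteq\suba A$; and item (2) is obtained by noting $\ARNN\vdash A\rhd\bigvee\Pi$ (from \cref{PLHA0} and $\ARD\IPC\parr\subseteq\ARNN$), then deriving $\ARNN\vdash D\rhd\hat D$ for each disjunct via the axiom $\VAB$ applied to $\theta'_D$ followed by the $\NNIL$-approximation step (which, having a $\lcalz(\parr)$-premise, is already available in $\ARD\IPC\parr$), then $\ax$ and Cut to get $D\rhd A^\star$, Disj to assemble $\bigvee\Pi\rhd A^\star$, and a final Cut with $A\rhd\bigvee\Pi$. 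Effective computability of $A^\star$ is inherited from \cref{PLHA0} and Visser's algorithm.

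The main obstacle is the variable-elimination step, specifically the claim $\IPC\vdash\theta'_D(D)\to D$. A variable may occur in $D$ with both polarities, as in a formula $\IPC$-equivalent to $(x\to p)\vee x$ or to $p\wedge(x\to(x\vee q))$, and then no uniform substitution turns $D$ into a genuine $\NNILpar$ \emph{lower} bound while a naive occurrence-wise rewrite overshoots (it would replace $(x\to p)\vee x$ by $p$, which $A$ no longer preservatively covers); so one must arrange the prime decomposition finely enough — and combine it with $\IPC$-simplification of the locally tautological parts — that the single substitution $\theta'_D$ really lands below $D$, and then check that $A$ still $\rhd$-covers the outcome in $\ARNN$. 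This last point is exactly why the axiom $\VAB$ of $\ARNN$ — unavailable in the weaker logic $\ARD\IPC\parr$ of \cref{PLHA0}, the reason that theorem only reaches $\dNNILpar$ and not $\NNILpar$ — is indispensable; without $\VAB$ the statement already fails for $A=x$, where $A^\star$ must be $\bot$ and $x\rhd\bot$ is derivable only with $\VAB$.
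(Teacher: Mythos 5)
The paper itself does not prove this statement; its ``proof'' is a pointer to Lemma 4.23 of \citep{PLHA0}, so there is no internal argument to compare with, and your proposal has to stand on its own. It does not: the step you yourself flag as the main obstacle is not merely delicate but false as a method. Take $A=D:=(x\to p)\wedge(p\to x)$. This $D$ is a conjunction of implications with atomic, underivable antecedents, hence has the extension property (glue any two rooted countermodels under a fresh root with empty valuation) and is $\IPC$-prime; it contains nothing ``locally tautological'' to simplify; and it is $\NNILpar$-projective via $\theta(x):=p$, since $\theta(D)\equiv\top\in\nnil(\subp A)$ and $D\vdash x\lr\theta(x)$. So it is exactly the kind of prime projective disjunct your refined $\Pi$ must handle (indeed $\Pi=\{A\}$ satisfies the conclusion of \cref{PLHA0} here). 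Now any parameter-only $E$ with $E\vdash x\lr p$ satisfies $E\vdash p$ (substitute $x:=\top$) and $E\vdash\neg p$ (substitute $x:=\bot$), hence $E\equiv\bot$; in particular the only admissible value of your $\hat D$ is $\bot$. But your $\theta'_D$ sends $x$ to $\top$ or $\bot$, giving $\hat D\in\{p,\neg p\}$ up to equivalence, neither of which implies $D$, so item 1 fails for the output of your construction; and no single substitution can repair this, because $\theta'(D)=(\theta'(x)\lr p)$ is never equivalent to $\bot$ (there is no $\varphi$ with $\IPC\vdash\neg(\varphi\lr p)$). Since $D$ is already prime and reduced, no finer prime decomposition or $\IPC$-simplification rescues the scheme.

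The example also exhibits the mechanism your proposal is missing. One does have $\ARNN\vdash(x\lr p)\rhd\bot$, but only by combining \emph{several} instances of $\VAB$ with Conj: $x\mapsto\top$ gives $(x\lr p)\rhd p$, $x\mapsto\bot$ gives $(x\lr p)\rhd\neg p$, and Conj yields $(x\lr p)\rhd\bot$. So passing from a projective disjunct $D$ to an $\NNILpar$ lower bound intrinsically requires conjoining the images of $D$ under a suitable finite family of substitutions (followed by an $\NNIL$-approximation), not a single polarity substitution; supplying and controlling that family is precisely the content that the cited Lemma 4.23 of \citep{PLHA0} must provide and that your argument leaves open. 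The surrounding skeleton is fine and partly even dispensable: items 1 and 3 together with effectiveness can always be arranged, e.g.\ by letting $A^\star$ be the disjunction of all $E\in\NNIL(\suba A\cap\parr)$ with $E\vdash A$, which is finite and computable by \cref{Remark-NNIL-finiteness}; the entire difficulty of the theorem is item 2, the derivability of $A\rhd A^\star$ inside $\ARNN$, and that is exactly where the gap sits. Your closing observation that $\VAB$ is indispensable (already for $A=x$) is correct.
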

\begin{proof}
See Lemma 4.23 in \citep{PLHA0}.
\end{proof}
Recall that $\parb$ is the set of parameters or boxed propositions.
\begin{lemma}\label{ARNN-ARNBIP}
Let $\alpha$ be a substitution such that for every $a\in\atom$, if $\alpha(a)\neq a$, then $a\in\parr$ and $\alpha(a)\in \parb$. Then $\ARNN\vdash A\rhd B$ implies $\ARD\IPC\parb\VAB\vdash \alpha(A)\rhd\alpha(B)$, for every $A,B\in\lcalz$.
\end{lemma}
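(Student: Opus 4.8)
The plan is to prove this by induction on the complexity of the derivation $\ARNN \vdash A \rhd B$, showing that each axiom and rule of $\ARNN := \ARD\IPC\parr\VAB$ transforms, under application of the substitution $\alpha$, into something derivable in $\ARD\IPC\parb\VAB$. Recall that $\ARNN = \BART + \text{Disj} + \mont(\parr) + \viss\parr + \VAB$ with $\sft = \IPC$, so the cases to check are: the base axiom $\ax$ ($A \rhd B$ for $\vdash A \to B$), the rules Conj and Cut, the rule Disj, the Montagna rule $\mont(\parr)$, the axiom scheme $\viss\parr$, and the axiom scheme $\VAB$.

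Most cases are routine and parallel the proof of \cref{ARN-ARNBT} (and \cref{ASC}). For $\ax$: if $\vdash A \to B$ then $\vdash \alpha(A) \to \alpha(B)$ since $\IPC$ is closed under substitution, so $\ARD\IPC\parb\VAB \vdash \alpha(A) \rhd \alpha(B)$. The rules Conj, Cut, Disj commute with $\alpha$ trivially because $\alpha$ commutes with $\vee$ and $\wedge$. For the Montagna rule $\mont(\parr)$: from $A \rhd B$ and $C \in \parr$ we derive $C \to A \rhd C \to B$; applying $\alpha$, by induction hypothesis $\ARD\IPC\parb\VAB \vdash \alpha(A) \rhd \alpha(B)$, and $\alpha(C) \in \parb$ by hypothesis on $\alpha$, so the $\mont(\parb)$ rule (available in $\ARD\IPC\parb$) yields $\alpha(C) \to \alpha(A) \rhd \alpha(C) \to \alpha(B)$, which is $\alpha(C \to A) \rhd \alpha(C \to B)$. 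For $\VAB$: the instance $A \rhd \th(A)$ maps under $\alpha$ to $\alpha(A) \rhd \alpha(\th(A))$; here one checks that $\alpha \circ \th = \widehat{(\alpha \circ \theta)} \circ \alpha$ on the relevant formula, or more directly that $\alpha(\th(A)) = \widehat{\th'}(\alpha(A))$ for a suitable substitution $\theta'$ (taking $\theta' := \alpha \circ \theta$ composed appropriately on atomics not in $\parr$), so that $\alpha(A) \rhd \alpha(\th(A))$ is again an instance of $\VAB$. Since $\alpha$ is identity on $\parr \cup \boxed$ except it may send $\parr$-atomics into $\parb$, and $\th$ is identity on boxed subformulas, the bookkeeping works out.

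The main obstacle is the $\viss\parr$ case. Here the axiom is $B \to C \rhd \bigvee_{i=1}^{n+m} \itv{B}{E_i}{\parr}$ with $B = \bigwedge_{i=1}^n (E_i \to F_i)$ and $C = \bigvee_{i=n+1}^{n+m} E_i$. Applying $\alpha$, the left side becomes $\alpha(B) \to \alpha(C) = \bigwedge (\alpha(E_i) \to \alpha(F_i)) \to \bigvee \alpha(E_i)$, and the $\viss\parb$ axiom gives $\alpha(B) \to \alpha(C) \rhd \bigvee \itv{\alpha(B)}{\alpha(E_i)}{\parb}$. The issue is that $\alpha(\itv{B}{E_i}{\parr})$ need not equal $\itv{\alpha(B)}{\alpha(E_i)}{\parb}$: the case split in $\itv{\cdot}{\cdot}{\parr}$ tests whether $E_i \in \parr$, whereas the target tests whether $\alpha(E_i) \in \parb$. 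But by the hypothesis on $\alpha$, if $E_i \in \parr$ then $\alpha(E_i) \in \parb$, so whenever the $\parr$-test fires the $\parb$-test fires too; hence $\vdash \itv{\alpha(B)}{\alpha(E_i)}{\parb} \to \alpha(\itv{B}{E_i}{\parr})$ in each disjunct (either both sides are $\alpha(E_i)$, or the left is $\alpha(E_i)$ and the right is $\alpha(B) \to \alpha(E_i)$, or both are $\alpha(B) \to \alpha(E_i)$). Therefore $\vdash \bigvee \itv{\alpha(B)}{\alpha(E_i)}{\parb} \to \bigvee \alpha(\itv{B}{E_i}{\parr})$, and combining with the $\viss\parb$ instance via $\ax$ and Cut gives $\alpha(B \to C) \rhd \alpha(\bigvee \itv{B}{E_i}{\parr})$, as required. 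This is exactly the kind of monotonicity argument used in the proof of \cref{ASC}, adapted to the change of the test set from $\parr$ to $\parb$.
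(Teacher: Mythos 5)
Your proof is correct and takes essentially the same route as the paper, which establishes this lemma by exactly such a straightforward induction on the derivation and leaves the details to the reader. Your handling of the only two non-routine cases is the intended bookkeeping: for $\viss{\parr}$, the $\IPC$-provable implications $\itv{\alpha(B)}{\alpha(E_i)}{\parb}\to\alpha(\itv{B}{E_i}{\parr})$ (using that $E_i\in\parr$ forces $\alpha(E_i)\in\parb$) combined with a $\viss{\parb}$ instance, Ax and Cut; for $\VAB$, the identity $\alpha(\hat{\theta}(A))=\hat{\theta'}(\alpha(A))$ with $\theta'$ acting as $\alpha\circ\theta$ on variables and as the identity on parameters, so the image is again an instance of $\VAB$.
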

\begin{proof}
Straightforward induction on the proof $\ARNN\vdash A\rhd B$, left to the reader.
\end{proof}
\begin{lemma}\label{APlus1}
	For every $A\in\lcalb$, one can effectively compute some $A^\sstar\in\NNILb$ such that:
	\begin{enumerate}
		\item $\IPC\vdash A^\sstar\to A$,
		\item $\ARNBIP\vdash A\rhd A^\sstar$,
		\item $\suba{A^\sstar}\subseteq\sub A$.
	\end{enumerate}
\end{lemma}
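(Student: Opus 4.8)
The plan is to derive \cref{APlus1} from its non-modal counterpart \cref{PLHA0-2} via the de-boxing substitution already used in \cref{def-glb-cdsnbv}. Given $A\in\lcalb$, let $\subob{A}=\{\Box C_1,\ldots,\Box C_k\}$ list the outer boxed subformulas of $A$, choose pairwise distinct fresh parameters $q_1,\ldots,q_k$, and let $A_0\in\lcalz$ be the non-modal formula obtained from $A$ by replacing each outer boxed subformula $\Box C_i$ by $q_i$. Let $\alpha$ be the substitution with $\alpha(q_i)=\Box C_i$ for $1\le i\le k$ and $\alpha(a)=a$ for every other atomic $a$. Then $A_0$ is computable from $A$, we have $\alpha(A_0)=A$, and $\alpha$ meets the hypothesis of \cref{ARNN-ARNBIP}: its only non-identity values occur on the parameters $q_i$, and $\alpha(q_i)=\Box C_i\in\boxed\subseteq\parb$.

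Next I would apply \cref{PLHA0-2} to $A_0$, obtaining an effectively computable $A_0^\star\in\NNILpar$ with $\IPC\vdash A_0^\star\to A_0$, with $\ARNN\vdash A_0\rhd A_0^\star$, and with $\suba{A_0^\star}\subseteq\suba{A_0}$. Put $A^\sstar:=\alpha(A_0^\star)$; this is again effectively computable. The three clauses then follow directly. For clause~(1), apply $\alpha$ to $\IPC\vdash A_0^\star\to A_0$ and use closure of $\IPC$ under substitution to get $\IPC\vdash A^\sstar\to A$. For clause~(2), \cref{ARNN-ARNBIP} turns $\ARNN\vdash A_0\rhd A_0^\star$ into $\ARD\IPC\parb\VAB\vdash\alpha(A_0)\rhd\alpha(A_0^\star)$, i.e.~into a proof of $A\rhd A^\sstar$; since $\ARNBIP$ is $\ARD\IPC\parb\VAB$ enlarged only by the axiom $\Le$, this is in particular a proof in $\ARNBIP$. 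For clause~(3), each atomic in $\suba{A_0^\star}\subseteq\suba{A_0}$ is either some $q_i$ or a parameter of $A$, so every atomic subformula of $A^\sstar=\alpha(A_0^\star)$ is either a parameter of $A$ or an atomic subformula of some $C_i$ with $\Box C_i\in\sub{A}$; in either case it belongs to $\sub{A}$.

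The one step needing genuine care — and the main, if mild, obstacle — is checking that $A^\sstar$ actually lies in $\NNILb=\NNIL(\parb)$. Since $A_0^\star\in\NNILpar\subseteq\NNIL$ has only parameters among its atomics, each being a $q_i$ or a parameter of $A$, the substitution $\alpha$ carries those atomics to elements of $\parb$, so $A^\sstar=\alpha(A_0^\star)\in\lcalz(\parb)$; and since $\alpha$ maps atomics to atomics (members of $\atomb$), a routine induction on the grammar of $\NNIL$ and $\NI$ shows that $\NNIL$ is closed under $\alpha$, whence $A^\sstar\in\NNIL\cap\lcalz(\parb)=\NNILb$. The remaining verifications run parallel to the proof of \cref{lem-dnbv-glb}.
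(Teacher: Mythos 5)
Your proposal is correct and follows essentially the same route as the paper's own proof: introduce fresh parameters for the outer boxed subformulas, apply \cref{PLHA0-2} to the resulting non-modal formula $A_0$, set $A^\sstar:=\alpha(A_0^\star)$, and transfer the $\rhd$-derivation via \cref{ARNN-ARNBIP}. Your extra checks (that $\ARNBIP$ extends $\ARD\IPC\parb\VAB$, that $\NNIL$ is closed under replacing parameters by boxed atoms so $A^\sstar\in\NNILb$, and the bookkeeping for clause~3) are details the paper leaves implicit, so nothing is missing.
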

\begin{proof}
Let $\Box B_1,\ldots,\Box B_n$ be the list of all outer occurrences of boxed formulas in $A$. Moreover, let $\pvec:=p_1,\ldots,p_n$ be a list of fresh atomic parameters; i.e., $p_i\nin\sub A$ and they are pairwise distinct. Also, assume that $\alpha$ is the substitution such that
\emli $$\alpha(a):=\begin{cases}
\Box B_i\quad &:\text{ if } a=p_i,\\
a &:\text{ otherwise.}
\end{cases}$$
Then there exists a unique $A_0\in\lcalz$ such that $\alpha(A_0)=A$. \Cref{PLHA0-2} gives us some $A_0^\star\in\NNILpar$ such that
	\begin{enumerate}
		\item $\IPC\vdash A_0^\star\to A_0$,
		\item $\ARNN\vdash A_0\rhd A_0^\star$,
		\item $\suba{A_0^\star}\subseteq\suba{A_0}$.
	\end{enumerate}
Define $A^\sstar:=\alpha(A_0^\star)$. Then we have:
\begin{enumerate}
	\item $\IPC\vdash A^\sstar\to A$,
	\item By \Cref{ARNN-ARNBIP}, we have $\ARNBIP\vdash A\rhd A^\sstar$,
	\item $\suba{A^\sstar} \subseteq\sub A$. \qedhere
\end{enumerate}
\end{proof}
\begin{theorem}\label{APlus}
	For every $A$, there exists some $A^\sfss\in\snnilb$ such that:
	\begin{enumerate}
		\item $\ikfourca\vdash A^\sfss\to A$,
		\item $\ARNBKP\vdash A\rhd A^\sfss$,
		\item $\Box B\in \sub{A^\sfss}$ implies either $\Box B\in  \sub{A}$ or $\Boxdot B\in \snnilb$.
		\item $\suba{A^\sfss}\subseteq\suba{A}$.
	\end{enumerate}
\end{theorem}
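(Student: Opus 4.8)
The plan is to obtain $A^\sfss$ by applying the G\"odel translation $(.)^\Box$ to the $\NNILb$-approximation furnished by \cref{APlus1}. First I would apply \cref{APlus1} to $A$, getting $A^\sstar\in\NNILb$ with $\IPC\vdash A^\sstar\to A$, $\ARNBIP\vdash A\rhd A^\sstar$ and $\suba{A^\sstar}\subseteq\suba A$, and set $A^\sfss:=(A^\sstar)^\Box$. Note that $A^\sstar\in\NNILb$ means $A^\sstar\in\NNIL\cap\lcalz(\parb)$, so $A^\sstar$ has no variables occurring outside boxes; in particular every antecedent of an implication occurring outside a box in $A^\sstar$ lies in $\NI\cap\lcalz(\parb)$, i.e.\ is a $\wedge,\vee$-combination of parameters and boxed formulas, and is therefore fixed by $(.)^\Box$. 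Using this, I would prove the following by a routine induction on $C\in\NNILb$: (a) $C^\Box\in\snnilb$; (b) $\ikfour\vdash C^\Box\to C$ (using $\ikfour\vdash\Boxdot X\to X$ in the implication case, the antecedent being unchanged); and (c) every boxed subformula of $C^\Box$ is either a boxed subformula of $C$, or equals $\Box(D^\Box\to E^\Box)$ for some outer implication $D\to E$ of $C$, in which case $\Boxdot(D^\Box\to E^\Box)=(D\to E)^\Box\in\snnilb$ by (a) applied to the outer subformula $D\to E\in\NNILb$.

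Granting this inductive lemma, items~1, 3 and~4 of \cref{APlus} follow quickly with $C:=A^\sstar$. Item~1 is (b) combined with $\IPC\vdash A^\sstar\to A$ and $\IPC\subseteq\ikfour$. For item~4 I would note that $(.)^\Box$ creates no new atoms (it only wraps existing subformulas in $\Box$), so $\suba{A^\sfss}\subseteq\suba{A^\sstar}\subseteq\suba A$. For item~3, by (c) any boxed subformula of $A^\sfss$ is either of the ``new'' form, covered by the alternative $\Boxdot B\in\snnilb$, or is already a boxed subformula of $A^\sstar$; in the latter case I would invoke the construction of $A^\sstar$ in \cref{APlus1} — it is $\alpha(A_0^\star)$ for a non-modal $A_0^\star$ and the substitution $\alpha$ sending fresh parameters to the outer boxed subformulas of $A$ — to conclude that the boxed subformulas of $A^\sstar$ are among those of $A$, giving $\Box B\in\sub A$.

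Item~2 is where the preservativity machinery enters. From \cref{APlus1} we have $\ARNBIP\vdash A\rhd A^\sstar$; since $\ARNBIP$ and $\ARNBKP$ differ only in that the latter uses the base logic $\ikfour\supseteq\IPC$, \cref{ASC} together with the obvious monotonicity in the adjoined axioms $\Le,\VAB$ gives $\ARNBKP\vdash A\rhd A^\sstar$. On the other hand $A^\sstar\in\NNILb$, so \cref{Lem-Box-translation-brtb} gives $\AARNBK\vdash A^\sstar\rhd(A^\sstar)^\Box$, and $\AARNBK\subseteq\ARNBKP$ because $\Lem$ is a special case of $\Le$; hence $\ARNBKP\vdash A^\sstar\rhd A^\sfss$, and Cut yields $\ARNBKP\vdash A\rhd A^\sfss$. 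The main obstacle is purely the bookkeeping inside the inductive lemma: checking that both the $\NNIL$-shape and the $\lcalz(\parb)$-restriction are genuinely preserved by $(.)^\Box$ — this hinges on the antecedents being $\parb$-formulas so that $D^\Box=D$, the one place where working in $\lcalz(\parb)$ rather than in all of $\lcalb$ is essential, and also why only $\ikfour$ (and not $\ikfourca$) is needed — and tracking precisely which boxed subformulas of $C^\Box$ are inherited from $C$ versus newly produced by the implication clause, so that alternative~(c) is exhaustive.
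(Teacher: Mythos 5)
Your proposal is correct and follows essentially the same route as the paper: apply \cref{APlus1} to get $A^\sstar\in\NNILb$, set $A^\sfss:=(A^\sstar)^\Box$, and combine $\ikfour\vdash (A^\sstar)^\Box\to A^\sstar$ with \cref{Lem-Box-translation-brtb} and Cut for item~2. The paper's proof is just terser, leaving implicit the bookkeeping you spell out (that $(.)^\Box$ preserves $\NNIL$ and $\lcalz(\parb)$ on $\NNILb$-inputs, and the provenance of boxed subformulas via the construction in \cref{APlus1}).
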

\begin{proof}
	By \Cref{APlus1}, there exists some $A^\sstar$ with the mentioned properties. Since $A^\sstar\in\nnilb$, we have $\ikfourcp\vdash  \sggt{(A^\sstar)}\to A^\sstar$. Also, \Cref{Lem-Box-translation-brtb} implies $\ARNBK\vdash A^\sstar\rhd \sggt{(A^\sstar)}$. Hence, if we let $A^\sfss:= \sggt{(A^\sstar)}$, by \Cref{APlus1} we have all required properties.
\end{proof}

\begin{theorem}\label{SNB-approx-prop}
\uparan\typea
$(\snnilb,\sft)$ is recursively strong downward compact, and $A^\sfss:=\sggt{(A^\sstar)}=\ap\snnil\sft A$.
\end{theorem}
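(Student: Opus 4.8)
The plan is to follow the template of \cref{SN-approx-prop} and \cref{Theorem-T-nnilb-Finitary0}: take the formula $A^\sfss$ produced by \cref{APlus}, show it is the $(\snnilb,\sft)$-greatest lower bound of $A$, and then read recursiveness and strongness off the remaining clauses of \cref{APlus}. (Incidentally, $\snnilb$ is closed under disjunction — $\NNIL$, $\sfs$ and $\lcalz(\parb)$ all are, and $\parb\subseteq\snnilb$ — so $\snnilbv=\snnilb$ and $\ap\snnilbv\sft A=\ap\snnilb\sft A$; the ``$\ap\snnil\sft A$'' in the statement should read $\ap\snnilb\sft A$.)

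Concretely, I would apply \cref{Gamma-approx-preserv} with $B:=A^\sfss$ and check its three clauses. First, $A^\sfss\in\snnilb$ is clause~1 of \cref{APlus}. Second, $\tvdash A^\sfss\to A$ follows from $\ikfour\vdash A^\sfss\to A$ (again clause~1) together with $\sft\supseteq\ikfour$. Third, the crux, $A\prtsnb A^\sfss$: clause~2 of \cref{APlus} gives $\ARNBKP\vdash A\rhd A^\sfss$, and the evident monotonicity of these proof systems in the logic parameter (an immediate extension of \cref{ASC} to the $\Le$- and $\VAB$-augmented systems) upgrades this to $\ARNBTP\vdash A\rhd A^\sfss$ since $\ikfour\subseteq\sft$. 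Now I would invoke the soundness part of \cref{gen-pres-sound} for the class $\spnnilb$ and $\Delta:=\parb$: all five conditions hold, since $\spnnilb$ is $\sft$-complete ($\spnnilb\subseteq\sfs\subseteq\tcom$), $\sft$-prime by definition, closed under $\parb$-conjunctions (straightforward from \cref{prime-fact}, a self-complete $\sft$-component being a conjunction of parameters and boxed formulas), contained in $\NNIL$ and in $\lcalz(\parb)$, while $\sft$ is closed under outer substitutions and has the intuitionistic submodel property because it is \typea. This yields $A\prtspnb A^\sfss$, and then \cref{prspn=prsn} (whose hypotheses hold, as \typea implies \typez) gives $A\prtsnb A^\sfss$. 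Hence $A^\sfss=\ap\snnilb\sft A$ by \cref{Gamma-approx-preserv}, so $(\snnilb,\sft)$ is downward compact.

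It then remains to note recursiveness and strongness. For recursiveness, by \cref{APlus1} the formula $A^\sstar$, hence $A^\sfss=(A^\sstar)^\Box$, is effectively computable from $A$, so $\ap\snnilb\sft A$ is too. For strongness, clause~3 of \cref{APlus} says that every $\Box B\in\sub{A^\sfss}$ satisfies $\Box B\in\sub A$ or $\Boxdot B\in\snnilb$, and $\Boxdot B\in\snnilb$ is precisely $B\in\nfz\snnilb$; this is exactly the condition in the definition of (recursively) strong downward compactness.

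The step I expect to be the obstacle is the third clause above, i.e.~translating the syntactic derivability $\ARNBKP\vdash A\rhd A^\sfss$ into the semantic relation $A\prtsnb A^\sfss$. The delicate point is that $\snnilb$ is not $\sft$-prime, so \cref{gen-pres-sound} cannot be applied to $\snnilb$ directly (the $\mathrm{Disj}$ rule of $\ARNBTP$ would not be covered); the argument must pass through the prime class $\spnnilb$ and use \cref{prime-fact} (giving $\snnilb\equiv\spnnilb^\vee$) together with \cref{prspn=prsn} to transfer back to $\prtsnb$. Everything else is routine bookkeeping of the clauses of \cref{APlus} against those of \cref{Gamma-approx-preserv}, exactly as in \cref{Theorem-T-nnilb-Finitary0}.
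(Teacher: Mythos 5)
Your proposal follows the paper's own argument: the paper likewise feeds the clauses of \cref{APlus} into \cref{Gamma-approx-preserv}, obtaining the key fact $A\prtsnb A^\sfss$ by citing \cref{Lem-con1-Box}, whose proof is exactly your detour through \cref{gen-pres-sound} applied to $\spnnilb$ followed by \cref{prspn=prsn}. The only differences are presentational — you inline that soundness lemma, make the monotonicity upgrade from $\ARNBKP$ to $\ARNBTP$ explicit, and spell out the recursiveness/strongness bookkeeping from \cref{APlus1,APlus} that the paper leaves implicit — and your reading of $\ap\snnil\sft A$ as $\ap\snnilb\sft A$ is indeed the intended one.
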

\begin{proof}
	We show that $A^\sfss$ is the $(\snnilb,\sft)$-glb for $A$, where $A^\sfss$ is as provided by \Cref{APlus}. By \Cref{APlus}, $A^\sfss\in\snnilb$, $\ikfourcp\vdash A^\sfss\to A$, and $\ARNBTP\vdash A\rhd A^\sfss$. Then \Cref{Lem-con1-Box} implies $A\prtsnb A^\sfss$. Hence, \Cref{Gamma-approx-preserv} implies the desired result.
\end{proof}

\begin{theorem}\label{Theorem-NNIL-Pres2}
\uparan\typea
	$\ARNBTP$ is sound and complete for $\snnilb$-preservativity in $\sft$; i.e., for every $A,B\in\lcalb$
\emli	$$ \ARNBTP\vdash A\rhd B \quad \text{ iff }\quad  A\prtsnb B.$$
\end{theorem}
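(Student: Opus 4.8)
The plan is to reproduce the two–step scheme that proves the companion characterizations \uparan{\cref{Theorem-NNIL-Pres}, \cref{Theorem-NNILB-Pres2}, \cref{Theorem-NNILB-Pres2-AR}, and the like}: prove soundness and completeness separately, the latter being a single \emph{Cut} between the ``upper'' preservativity $A\rhd A^{\sfss}$ and the ``lower'' preservativity $A^{\sfss}\rhd B$, where $A^{\sfss}=\ap{\snnilb}{\sft}{A}$ is the greatest lower bound furnished by the strong downward compactness of $(\snnilb,\sft)$ in \cref{SNB-approx-prop}.

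For soundness \uparan{$\ARNBTP\vdash A\rhd B\Rightarrow A\prtsnb B$} I would argue exactly as in \cref{Lem-con1} and \cref{Lem-con12}. Writing $\ARNBTP=\ARD\sft\parb+\Le+\VAB$, the point is that all side conditions of \cref{gen-pres-sound} are met with $\Gamma:=\spnnilb$ and $\Delta:=\parb$: $\spnnilb$ is a set of $\sft$-prime, self-complete \uparan{hence $\spnnilb\subseteq\sfs\subseteq\tcom$} $\NNIL$-propositions lying in $\lcalz(\parb)$ that is closed under conjunction with members of $\parb$, while a \typea logic $\sft$ has the intuitionistic submodel property and is closed under outer substitutions. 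This yields $A\prtspnb B$, and then \cref{prspn=prsn} \uparan{a \typea logic is in particular \typez} upgrades it to $A\prtsnb B$.

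For completeness \uparan{$A\prtsnb B\Rightarrow \ARNBTP\vdash A\rhd B$} I would first invoke \cref{SNB-approx-prop}: $(\snnilb,\sft)$ is recursively strong downward compact, so $\ap{\snnilb}{\sft}{A}$ exists and equals $A^{\sfss}=(A^{\sstar})^{\Box}$. Then \cref{Cor-Gamma-approx-preserv} with $\Gamma:=\snnilb$ turns the hypothesis $A\prtsnb B$ into $\tvdash A^{\sfss}\to B$, whence $\BART\vdash A^{\sfss}\rhd B$ by the axiom ${\sf Ax}$ and therefore $\ARNBTP\vdash A^{\sfss}\rhd B$. On the other hand \cref{APlus} gives $\ARNBKP\vdash A\rhd A^{\sfss}$, and since $\ikfour\subseteq\sft$ for \typea logics, \cref{ASC} \uparan{the extra schemata $\Le,\VAB$ being present on both sides} yields $\ARNBKP\subseteq\ARNBTP$, so $\ARNBTP\vdash A\rhd A^{\sfss}$. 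A final Cut then delivers $\ARNBTP\vdash A\rhd B$.

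I do not expect a genuine obstacle inside this particular argument; once the cited lemmas are in place it is pure bookkeeping. The real content, which I would establish beforehand, is concentrated in \cref{APlus} \uparan{the $\snnilb$-approximant $A^{\sfss}$ together with $\ARNBKP\vdash A\rhd A^{\sfss}$, built from the non-modal \cref{PLHA0-2} via \cref{APlus1} and the $(\cdot)^{\Box}$-translation lemma \cref{Lem-Box-translation-brtb}} and in the soundness of the schema $\viss\parb$ for $\spnnilb$-preservativity, which runs through \cref{itv-closure-pres} and the intuitionistic submodel property. A small but genuinely necessary verification along the way is that $\spnnilb$ is closed under conjunction with elements of $\parb$ modulo $\sft$-provable equivalence, which is exactly what makes the $\mont(\parb)$ clause of \cref{gen-pres-sound} applicable in the soundness step.
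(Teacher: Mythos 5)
Your proposal follows the paper's own argument almost line by line: soundness is exactly \cref{Lem-con1-Box} (an appeal to \cref{gen-pres-sound} to get $A\prtspnb B$, upgraded to $A\prtsnb B$ via \cref{prspn=prsn}), and completeness is the paper's single Cut between $\ARNBTP\vdash A\rhd A^\sfss$ (from \cref{APlus}, transported by \cref{ASC}) and $\ARNBTP\vdash A^\sfss\rhd B$ (from \cref{SNB-approx-prop} and \cref{Cor-Gamma-approx-preserv}). So the architecture is the intended one, and your handling of the $\ARNBKP\subseteq\ARNBTP$ step is if anything slightly more careful than the paper's.

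The one place where you go beyond the paper is the claim you single out as ``genuinely necessary'': that $\spnnilb$ is closed, up to $\sft$-provable equivalence, under conjunction with elements of $\parb$. That claim is false. Take $\sft=\igl$ and $E:=\Boxdot(\Box p\to(q\vee r))=(\Box p\to(q\vee r))^\Box$ with $p,q,r\in\parr$. Then $E\in\snnilb$, and $E$ is an $\igl$-component (its only outer antecedent is $\Box p$, and $\igl\nvdash\Box(\Box p\to(q\vee r))\to\Box p$), hence $\igl$-prime by \cref{Prime-ext-comp}, so $E\in\spnnilb$; but $\Box p\in\parb$ and $E\wedge\Box p$ is $\igl$-equivalent to $\Box(\Box p\to(q\vee r))\wedge\Box p\wedge(q\vee r)$, which proves $q\vee r$ while proving neither disjunct, so it is not $\igl$-prime and is not equivalent to any member of $\spnnilb$. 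Consequently clause 3 of \cref{gen-pres-sound} is not applicable with $\Gamma=\spnnilb$, $\Delta=\parb$ in the way you (and, tacitly, the paper's one-line citation) use it. The theorem is unharmed, because the soundness of $\mont(\parb)$ for $\prtspnb$ does hold; the correct justification is that $\snnilb$, not $\spnnilb$, is closed under $\parb$-conjunctions (since $C^\Box=C$ for $C\in\parb$, so $B^\Box\wedge C=(B\wedge C)^\Box$), and ${\prtspnb}={\prtsnb}$ by \cref{prspn=prsn}; hence in the induction one verifies the Mont step using the $\snnilb$-presentation of the relation (or directly: given $\tvdash E\to(C\to A)$ with $E\in\spnnilb$, $C\in\parb$, decompose $E\wedge C\in\snnilb$ into members of $\spnnilb$ by \cref{prime-fact} and apply the hypothesis to each disjunct), while the Disj step uses the $\spnnilb$-presentation, where primality is available. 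With that local repair your soundness step is complete, and the rest of your proposal checks out.
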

\begin{proof}
	The left-to-right direction (soundness) holds by \Cref{Lem-con1-Box}.
	\\
	For the other direction (completeness), let $A\prtnb B$. Then \Cref{SNB-approx-prop,Cor-Gamma-approx-preserv} implies $\tvdash A^\sfss\to B$, and hence $\ARNBT\vdash A^\sfss\rhd B$. Also, by \Cref{APlus}, we have $\ARNBT\vdash A\rhd A^\sfss$, and Cut implies the desired result.
\end{proof}

\begin{theorem}[\textbf{Soundness}]\label{Lem-con1-Box}
\uparan\typea
$\ARNBTP\vdash A\rhd B$ implies $A\prtsnb B$.
\end{theorem}
\begin{proof}
Let $\ARNBTP\vdash A\rhd B$. \Cref{gen-pres-sound} implies $A\prtspnb B$, and thus \Cref{prspn=prsn} implies $A\prtsnb B$.
\end{proof}

For uniformity of notation, in the following corollary we consider $\ARNBTP$ as the binary relation that it axiomatizes.
\begin{corollary}\label{summery-ikfour-snnilb}
\uparan\typea
${\ARNBTP}={\prtsnb}= {\prtspnb}={\prtspnbv}$. Moreover, if $\sft$ is decidable, then all mentioned relations are decidable.
\end{corollary}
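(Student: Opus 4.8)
The plan is to mirror the structure of the analogous corollaries already proven in this section (\cref{summery-ikfourca-snnil}, \cref{summery-ikfour-cdsnb3}, \cref{summery-ikfour-cdsnb2}), using the earlier equivalence theorem plus the $\vee$-invariance of preservativity/admissibility plus the prime factorization lemma, and closing with a decidability argument via the greatest lower bound. Concretely, the first step is to invoke \cref{Theorem-NNIL-Pres2}, which gives ${\ARNBTP}={\prtsnb}$. Next, \cref{vee-pres} applied with $\Gamma:=\spnnilb$ yields ${\prtspnb}={\prtspnbv}$; and since $\snnilb$ is closed under disjunction up to provable equivalence in the relevant sense, \cref{vee-pres} also gives that passing to $(\cdot)^\vee$ changes nothing, so it suffices to connect $\prtsnb$ with $\prtspnb$.

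For that connection I would appeal to \cref{prime-fact} (the $\typez$ hypothesis is satisfied since $\typea$ implies $\typez$): up to $\ikfour$-provable equivalence every $A\in\snnilb$ is a disjunction $\bigvee_i A_i$ of propositions $A_i\in\spnnilb=\sfs\sfP\nnilb$. Hence $\snnilb$ and $(\spnnilb)^\vee$ coincide modulo $\ikfour$-provable equivalence, and since $\sft\supseteq\ikfour$, they coincide modulo $\sft$-provable equivalence as well. Because preservativity $\prtg$ depends only on the $\sft$-provable-equivalence class of the members of $\Gamma$, we get ${\prtsnb}={\pres{\sft}{(\spnnilb)^\vee}}={\prtspnbv}$, and then \cref{vee-pres} again gives ${\prtspnbv}={\prtspnb}$. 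Chaining these equalities produces ${\ARNBTP}={\prtsnb}={\prtspnb}={\prtspnbv}$. This is essentially the same bookkeeping as in \cref{prspn=prsn}, so it should be routine; I would simply cite \cref{Theorem-NNIL-Pres2,prspn=prsn,vee-pres} and assemble.

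For the moreover clause, assume $\sft$ is decidable. By \cref{SNB-approx-prop}, $(\snnilb,\sft)$ is recursively strong downward compact, so $\ap\snnilb\sft A$ exists and is effectively computable from $A$. By \cref{Cor-Gamma-approx-preserv}, $A\prtsnb B$ holds iff $\tvdash \ap\snnilb\sft A\to B$, and the latter is decidable since $\sft$ is. Hence $\prtsnb$ is decidable, and by the equalities above so are $\ARNBTP$, $\prtspnb$ and $\prtspnbv$.

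\begin{proof}
\Cref{Theorem-NNIL-Pres2} gives ${\ARNBTP}={\prtsnb}$. By \cref{prime-fact} \uparan{note $\typea$ implies $\typez$}, up to $\ikfour$-provable equivalence $\snnilb=\spnnilb^\vee$, and since $\sft\supseteq\ikfour$ this equality also holds up to $\sft$-provable equivalence. As $\prtg$ depends only on the $\sft$-provable-equivalence classes of members of $\Gamma$, we obtain ${\prtsnb}={\prtspnbv}$. Finally \cref{vee-pres} gives ${\prtspnbv}={\prtspnb}$. Chaining these, ${\ARNBTP}={\prtsnb}={\prtspnb}={\prtspnbv}$.

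For decidability, assume $\sft$ is decidable. By \cref{SNB-approx-prop}, $\ap\snnilb\sft A$ exists and is computable from $A$. By \cref{Cor-Gamma-approx-preserv}, $A\prtsnb B$ iff $\tvdash \ap\snnilb\sft A\to B$, which is decidable. Hence all the relations above are decidable.
\end{proof}
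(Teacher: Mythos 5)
Your proof is correct and follows essentially the same route as the paper: the equalities come from \cref{Theorem-NNIL-Pres2} together with \cref{prspn=prsn,vee-pres} (you merely unfold \cref{prspn=prsn} into its own proof via \cref{prime-fact}), and the decidability claim is handled exactly as in the paper via \cref{SNB-approx-prop} and \cref{Cor-Gamma-approx-preserv} plus decidability of $\sft$. No gaps.
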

\begin{proof}
All equalities are derived from \Cref{Theorem-NNIL-Pres2,prspn=prsn,vee-pres}. For the decidability of $\prtsnb$, we have the following argument. \Cref{SNB-approx-prop} implies that $\ap\snnilb\sft A$ exists and is computable. Then, by \Cref{Cor-Gamma-approx-preserv}, it suffices to decide $\tvdash \ap\snnilb\sft A\to B$, which is provided by the decidability of $\sft$.
\end{proof}

\section{Provability Models}\label{Prov-semant}
\citep{IemhoffT,Iemhoff,Iemhoff.Preservativity} consider Kripke semantics for several intuitionistic modal logics and prove soundness/completeness theorems for them. Although such Kripke semantics are useful tools, a major obstacle to their application is that they are typically infinite. Here we introduce an alternative semantics for provability logics. This variant, as we will see, enjoys the finite model property, which is a key point for proving the arithmetical completeness and decidability of the provability logic of $\HA$.

The idea behind provability models is that each possible world is assigned a theory, and the validity of $\Box A$ is defined as follows: for every accessible node, $A$ must be \textit{provable} in the assigned theory. 

In \citep{PModels}, we considered the provability models for classical modal logics, like 
${\sf GL}$, the interpretability logic ${\sf ILM}$ and the poly-modal provability logic ${\sf GLP}$.

\begin{definition}\label{Mixed-sem}
A \textit{provability model} is a tuple $\pcal=(W,\pce,\R,\{\lgc w\}_{w\in W},\V)$ with the following properties:
\begin{itemize}
\item $\kcal_\pcal:=(W,\pce,\R,\V)$ is a Kripke model for intuitionistic modal logic (as defined in \cref{sec-Kripke}).
\item For any $\R$-accessible node $w\in W$,
$\lgc w$ is a theory\footnote{Recall that a theory is a set of formulas that is closed under modus ponens and includes all axioms of $\IPC$.}.
\end{itemize}
\textit{Notation:} We use $W^\R$ to denote the set of all $\R$-accessible nodes in $W$.
We define $\pcal,w\pmodels A$ by induction on the complexity of $A$, with $\pcal,w\npmodels \bot$, and commuting with $\vee$ and $\wedge$, and
\emli
$$\pcal,w\pmodels A\to B  \quad\text{iff}\quad \forall u\sce w \ (\pcal,u\pmodels A\Rightarrow \pcal,u\pmodels B).$$
\emli
$$\pcal,w\pmodels \Box A \quad\text{iff}\quad \forall u\sqsupset w \ \lgc u\vdash A.$$
We define $\pcal\pmodels A$ if for every $w\in W$, we have $\pcal,w\pmodels A$. Also, for a class $\pfrak$ of provability models, we define $\pfrak \pmodels A$ if for every $\pcal\in\pfrak$, we have $\pcal\pmodels A$.
We define $\pcal,w\pmodelsp A$ if there exists some $w_0\sqsubset w$ such that for every $u$ accessible from $w_0$ via the transitive closure of $\R$, we have $\pcal,u\pmodels A$. Also, define $\pcal,w\pmodelss A$ if for every $u$ accessible from $w$ via the transitive reflexive closure of $\R$, we have $\pcal,u\pmodels A$.
Given a set $\Gamma$ of formulas, $\pcal$ is called \textit{$\Gamma$-full} if
\begin{itemize}
\item For every $A\in\Gamma$ and every $\R$-accessible $w\in W$, if $\pcal,w\pmodelsp A$, then $\lgc w\vdash A$.
\end{itemize}
Also, $\pcal$ is called \textit{strongly $\Gamma$-full} if
\begin{itemize}
\item For every $A\in\Gamma$ and every $\R$-accessible $w\in W$, if $\pcal,w\pmodelss A$, then $\lgc w\vdash A$.
\end{itemize}
Moreover, $\pcal$ is called (strongly) \textit{$A$-full} if it is (strongly) $\Gamma$-full for $\Gamma:=\{\Box B: \Box B\in \sub A\}$.
Note that strong $\Gamma$-fullness implies $\Gamma$-fullness.
For later reference, define
\emli
	$$\Gamma^\pcal_w:=\{A\in\Gamma: \pcal,w\pmodelsp A\}.$$
	If no confusion is likely, we may omit $\pcal$ and simply write $\Gamma_w$.
	We also say that $\lgc w$ is \textit{locally sound} if $\lgc w\vdash A$ implies $\pcal,w\pmodels A$. Furthermore, we say $\pcal$ is locally sound if for every $\R$-accessible $w\in W$, $\lgc w$ is locally sound.

A \textit{frame property} for a provability model is a property that applies to $(W,\R)$. For example, frame properties such as reflexivity, transitivity, or converse well-foundedness mean that $(W,\R)$ has that property.

On the other hand, we also have \textit{logical} properties of the provability model, which are properties that correspond to all $\lgc w$ for $w\in W^\R$. For example, a provability model with necessitation means that all theories $\lgc w$ are closed under necessitation.
Also, we say that a provability model is \textit{intuitionistic} if all $\lgc w$ are intuitionistic\footnote{Recall that this means $\lgc w$ includes modus ponens as its rule of inference and includes all Hilbert axioms of intuitionistic logic among its theorems.}.

We say that a theory $\sft$ is \textit{sound} for $\pcal$ if $\pcal,w\pmodels A$ for every $w\in W$ and every $A$ with $\sft\vdash A$. We say that $\sft$ is \textit{strongly sound} for $\pcal$ if it is sound for $\sft$ and also $\lgc w$ includes $\sft$ for every $w\in W$. Then $\sft$ is called (strongly) sound for a class $\pfrak$ of provability models if it is (strongly) sound for every $\pcal\in\pfrak$.

We say that $\sft$ is \textit{complete} for a class $\pfrak$ of provability models if $\pfrak\pmodels A$ implies $\sft\vdash A$.
\\[3mm]
\textbf{Convention:} \textit{Throughout this paper, we assume that provability models are intuitionistic; i.e., the assigned theories include all axioms of intuitionistic logic and the modus ponens rule of inference. Moreover, all provability models are parameter-persistent; i.e., $\pcal,w\pmodels p$ and $u\sqsupset w$ implies $\pcal,u\pmodels p$, for any parameter $p$.}
\end{definition}

\begin{lemma}\label{prov-kripke-normal-forms}
Let $\pcal$ be an $A$-full, transitive, conversely well-founded, and locally sound provability model. Then for any world $w$ in $\pcal$, we have $\kcal_\pcal,w\Vdash A$ iff $\pcal,w\pmodels A$.
\end{lemma}
\begin{proof}
We prove this by induction on $A$. Observe that if $\pcal$ is $A$-full, then it is also $B$-full for every $B\in \sub A$.
All cases are obvious except for $A=\Box B$.
If $\kcal_\pcal,w\Vdash \Box B$, then for any $u\sqsupset w$, we have $\kcal_\pcal,u\Vdash B$. Thus, by the induction hypothesis, for every $u\sqsupset w$, we have $\pcal,u\pmodelsp B$. Then, by $A$-fullness, we get $\lgc u\vdash B$, and hence $\pcal,w\pmodels \Box B$, as desired.

For the other direction, let $\pcal,w\pmodels \Box B$. Then for every $u\sqsupset w$, we have $\lgc u\vdash B$. By local soundness, we get $\pcal,u\pmodels B$ for every $u\sqsupset w$. Thus, the induction hypothesis implies $\kcal_\pcal,w\Vdash\Box B$, as desired.
\end{proof}

\begin{lemma}\label{prov-kripke-normal-forms2}
Let $\pcal$ be a strongly $\Gamma$-full, transitive, conversely well-founded, and locally sound provability model. Then for any world $w$ in $\pcal$ and any $A\in\nf\Gamma$, we have $\kcal_\pcal,w\Vdash A$ iff $\pcal,w\pmodels A$.
\end{lemma}
\begin{proof}
We prove this by induction on $A$.
All cases are obvious except for $A=\Box B$.
Given that $\Box B\in\nf\Gamma$, either we have $B\in\Gamma$ or $\Boxdot B\in\Gamma$.
Let us assume that $\Boxdot B\in\Gamma$. The other case is even easier and left to the reader.
If $\kcal_\pcal,w\Vdash \Box B$, then for any $u\sqsupset w$, we have $\kcal_\pcal,u\Vdash B$. Thus, by the induction hypothesis, for every $u\sqsupset w$, we have $\pcal,u\pmodelss B$. Now we use a second induction on $u$, ordered by $\sqsupset$, and show that $\pcal,u\pmodelss \Boxdot B$. This means that, as the second induction hypothesis, we may assume that for every $v\sqsupset u$, we have $\pcal,v\pmodelss \Boxdot B$.
To show that $\pcal,u\pmodelss \Boxdot B$, first note that we already have $\pcal,u\pmodelss B$. Also, by the second induction hypothesis and strong $\Gamma$-fullness, we get $\lgc v\vdash \Boxdot B$ for every $v\sqsupset u$. Thus, by transitivity, $\pcal,u\pmodelss \Box B$, which implies $\pcal,u\pmodelss \Boxdot B$, as desired.

For the other direction, let $\pcal,w\pmodels \Box B$. Then for every $u\sqsupset w$, we have $\lgc u\vdash B$. By local soundness, we get $\pcal,u\pmodels B$ for every $u\sqsupset w$. Thus, the induction hypothesis implies $\kcal_\pcal,w\Vdash\Box B$, as desired.
\end{proof}

\subsection{Soundness theorems}
In this subsection, we show that provability models with specific closure conditions are sound for the intuitionistic modal logics of interest (see \Cref{provability-models-gen-soundness-3}).

A set of formulas $\Delta$ is called \textit{$\iglcp$-adequate} if it includes all instances of the following schemes:
\begin{itemize}[leftmargin=1.5cm]
\item[\ul{\sf K}:] $\Box(A\to B)\to (\Box A \to\Box B)$.
\item[\ul{\sf 4}:] $\Box A\to\Box\Box A$.
\item[\ul{\sf L}:] $\Box(\Box A\to A)\to\Box A$ (the L\"ob axiom).
\item[$\underline{\cpp}$:] $p\to\Box p$ for every $p\in\parr$.
\end{itemize}
Also, $\Delta$ is called \textit{$\iglca$-adequate} if it is $\iglcp$-adequate and also includes all instances of the axiom scheme
\begin{itemize}
\item[$\underline{\cpa}$:] $a\to\Box a$ for every $a\in\atom$.
\end{itemize}
Moreover, $\Delta$ is called \textit{$\Box$-adequate} if $\Box E\in\Delta$ for every $E\in\lcalb$. Also, $\Delta$ is called \textit{rule-adequate} if it is $\Box$-adequate and $\Box E\to\Box F\in \Delta$ for every $E,F\in\lcalb$.

\begin{remark}
$\snnilb$ and $\snnil$ are both rule-adequate. Also, $\snnilb$ is $\iglcp$-adequate and $\snnil$ is $\iglca$-adequate. We shall use this fact without further explanation.
\end{remark}

\begin{theorem}\label{provability-models-gen-soundness}
We have the following soundness results:
\begin{itemize}
\item If $\Delta$ is $\iglcp$-adequate, then it is strongly sound for strong $\Delta$-full provability models with necessitation and converse well-foundedness.
\item If $\Delta$ is $\iglca$-adequate, then it is strongly sound for atomic ascending\footnote{Recall that a provability model is atomic ascending if it satisfies: $w\V a$ and $w\R u$ imply $u\V a$ for every atomic $a$.} strong $\Delta$-full provability models with necessitation and converse well-foundedness.
\end{itemize}
\end{theorem}
\begin{proof}
Easy induction on the complexity of a proof.
\end{proof}

\begin{lemma}\label{provability-models-gen-soundness-1}
$\underline{\visgt}$ is sound for provability models that are closed under $\prtg$.
\end{lemma}
\begin{proof}
Obvious.
\end{proof}

\begin{corollary}\label{provability-models-gen-soundness-3}
Let $\Delta$ be $\iglcp$-adequate and rule-adequate. Then we have:
\begin{itemize}
\item $\iglcp\Ha$ is strongly sound for strong $\Delta$-full provability models with necessitation, converse well-foundedness, and closure under $\priglcdsnb$.
\item $\iglcp\Hb$ is strongly sound for strong $\Delta$-full provability models with necessitation, converse well-foundedness, and closure under $\priglsnb$.
\item If $\Delta$ is also $\iglca$-adequate, then $\lles$ is strongly sound for strong $\Delta$-full provability models with necessitation, converse well-foundedness, and closure under $\priglcasn$.
\end{itemize}
\end{corollary}
\begin{proof}
Immediate consequence of \Cref{provability-models-gen-soundness,provability-models-gen-soundness-1}.
\end{proof}

\subsection{Construction of provability models}\label{constrction-of-pro-models}
In this subsection, we address how to construct a provability model from given basic information.

Let $\kcal=(W,\pce,\R,\V)$ be a conversely well-founded Kripke model for intuitionistic modal logic, $\Delta$ a set of formulas, and $\Phi=\{\varphi_w\}_{w\in W}$ an indexed family of formulas. Then define $\pcal:=\cpmp\kcal\Delta\Phi:=(W,\pce,\R,\{\lgc w\}_{w\in W},\V)$ with
\emli
$$
\Delta^\pcal_w:=\{A\in \Delta: \cpmp\kcal\Delta\Phi ,w\pmodelss A \}
\quad \text{ and } \quad
\lgc w:=\IPC+\Delta^\pcal_w+\varphi_w.
$$
More precisely, the set of axioms of $\lgc w$ includes all axioms of $\IPC$, $\varphi_w$, and $\Delta^\pcal_w$. Also, $\lgc w$ includes modus ponens as its only inference rule.

We say that $\Phi$ is \textit{$\kcal$-persistent} if for every $w\R u$ in $\kcal$, we have $\vdash \varphi_u\to\varphi_w$.

A provability model $\pcal$ is called \textit{$(\Delta,\Gamma)$-based} if there exists a conversely well-founded Kripke model $\kcal$ and a $\kcal$-persistent $\Phi:=\{\varphi_w\}_{w\in W}$ such that $\pcal=\cpmp\kcal\Delta\Phi$ and, modulo $\IPC+\Delta^\pcal_w$-provable equivalence, $\varphi_w\in \Gamma$\footnote{This means that there exists some $E\in \Gamma$ such that $\Delta^\pcal_w\vdash \varphi_w\lr E$.}. It is called \textit{$\Delta$-based} if $\pcal=\cpm\kcal\Delta$.
Observe that $\cpmp\kcal\Delta\Phi$ is locally sound iff for every $w$, we have $\cpmp\kcal\Delta\Phi,w\pmodels \varphi_w$. In particular, any $\Delta$-based provability model is locally sound.

Note that in the above definition of $\lgc w$, we are using validity in the same provability model, leading to a circular definition. However, by converse well-foundedness, this definition can be presented via a legitimate recursion and is thus well-defined.
From the definition, it is obvious that $\cpmp\kcal\Delta\Phi$ is a $\Delta$-full provability model.

Whenever all formulas $\varphi_w$ in $\cpmp\kcal\Delta\Phi$ are equal to $\top$, we omit them in the notation and simply write $\cpm{\kcal}{\Delta}$ instead.

\begin{lemma}\label{sound-const}
Let $\Delta$ be $\Box$-adequate. Then any $(\Delta,\Gamma)$-based provability model is strongly $\Delta$-full and closed under necessitation.
\end{lemma}
\begin{proof}
Strong $\Delta$-fullness is obvious by definition. So it only remains to show closure under necessitation.
Let $\kcal=(W,\pce,\R,\V)$ and $\pcal=\cpmp\kcal\Delta\Phi$.
Assume that $\lgc w\vdash A$. Then there exists a finite set $X\subseteq \Delta$ such that $\pcal,w\pmodelss \bigwedge X$ and $\vdash (\bigwedge X\wedge\varphi_w)\to A$. Therefore, $\pcal,u\pmodelss \bigwedge X$ for every $u\sqsupset w$. By the definition of $\pcal$, this implies that $\lgc u\vdash \bigwedge X$ for every $u\sqsupset w$. Hence $\lgc u \vdash \varphi_w\to A$ for every $u\sqsupset w$. Thus, by $\kcal$-persistence of $\Phi$, we get $\lgc u\vdash A$. Therefore, $\pcal,w\pmodels \Box A$. Then, by strong $\Delta$-fullness and $\boxed\subseteq\Delta$, we have $\lgc w\vdash \Box A$, as desired.
\end{proof}


\begin{corollary}\label{sound-const1}
Let $\Delta$ be $\Box$-adequate. Then we have the following soundness results:
\begin{itemize}
\item If $\Delta$ is $\iglcp$-adequate, then $\iglcp$ is strongly sound for $(\Delta,\Gamma)$-based provability models.
\item If $\Delta$ is $\iglca$-adequate, then $\iglca$ is strongly sound for $(\Delta,\Gamma)$-based and atomic ascending provability models.
\end{itemize}
\end{corollary}
\begin{proof}
Direct consequence of \Cref{sound-const,provability-models-gen-soundness}.
\end{proof}

\begin{corollary}\label{cor-pro-models}
For every $(\snnilb,\cdsnb)$-based provability model, there exists some $(\snnilb,\sfc\dar\IPC\snnil)$-based provability model that is equal to it.
\end{corollary}
\begin{proof}
Use \Cref{Lem-red-proj-rela,sound-const1}.
\end{proof}

\begin{theorem}\label{sound-const2}
Let $\Delta$ be rule-adequate. Then we have the following soundness results:
\begin{itemize}
\item If $\Delta\subseteq \snnilb$ is $\iglcp$-adequate, then $\iglcp\Hb$ is strongly sound for ${\Delta}$-based provability models.
\item If $\Delta\subseteq \snnil$ is $\iglca$-adequate, then $\iglca\Hs$ is strongly sound for ${\Delta}$-based and atomic ascending provability models.
\end{itemize}
\end{theorem}
\begin{proof}
Using \Cref{sound-const1}, observe that $\cpm{\kcal}{\Delta}$ is closed under $\priglsnb$ and $\priglcasn$, respectively, in the first and second cases. Then, by \Cref{sound-const,provability-models-gen-soundness-3}, we obtain the desired result.
\end{proof}

\begin{theorem}\label{sound-const3}
Let $\Delta\subseteq\snnilb$ be rule-adequate and $\iglcp$-adequate. Then $\iglcph$ is strongly sound for $(\Delta,\cdsnb)$-based provability models.
\end{theorem}
\begin{proof}
First, note that by \Cref{sound-const1}, $\iglcp$ is strongly sound for $\cpmp\kcal\Delta\Phi$. This means that for every $w\in W$, the theory $\lgc w$ includes $\iglcp$. Then, by \Cref{provability-models-gen-soundness-3,sound-const}, it suffices to show that $\cpmp\kcal\Delta\Phi$ is closed under $\priglcdsnb$. So assume that $\lgc w\vdash A$ and $A\priglcdsnb B$. Hence there exists a formula $E\in\snnilb$ such that $\vdash (E\wedge\varphi_w)\to A$ and $\cpmp\kcal\Delta\Phi,w\pmodelss E$. Given that $\varphi_w\in\cdsnb$ and $\cdsnb$ is closed under $\snnilb$-conjunctions, we still have $(E\wedge\varphi_w)\in\cdsnb$. Thus, by $A\priglcdsnb B$, we get $\iglcp\vdash (E\wedge\varphi_w)\to  B$. Since $\lgc w$ includes $\iglcp$, this implies $\lgc w\vdash B$, as desired.
\end{proof}

\begin{theorem}\label{Decod-Pres-Sem}
The forcing relation for finite conversely well-founded $(\Delta,\Gamma)$-based provability models 
is decidable\footnote{In other words, one can decide $\pcal,w\pmodels A$ for every $(\Delta,\Gamma)$-based provability model $\pcal$ and any world $w$ in $\pcal$ whenever $(\Delta,\sft)$ is recursively downward compact and $\sft$ is strongly sound for $\pcal$ and $\sft\supseteq\IPC$.} whenever 
$(\Delta,\sft)$ is recursively downward compact, $\sft$ is strongly sound for $\pcal$, and $\sft\supseteq\IPC$.
\end{theorem}
\begin{proof}
Let $\kcal=(W,\pce,\R,\V)$ and $\cpmp\kcal\Delta\Phi$ be a $(\Delta,\Gamma)$-based provability model.
We show the decidability of $\pcal,w\pmodels A$ by double induction: first on $w$, ordered by $\sqsupset$, and second on $A$.
As the first induction hypothesis, assume that for every $u\sqsupset w$ and every $B\in\lcalb$, we have decidability of $\pcal,u\pmodels B$.
As the second induction hypothesis, assume that for every $u\sce w$, we have decidability of $\pcal,u\pmodels B$ for every $B$ that is a strict subformula of $A$.
In the following cases for $A$, we show decidability of $\pcal,w\pmodels A$:
\begin{itemize}
\item $A$ is atomic. Obvious.
\item $A$ is a conjunction, disjunction, or implication. Use the second induction hypothesis.
\item $A=\Box B$. It suffices to decide $\Delta_u\vdash \varphi_u\to B$ for every $u\sqsupset w$. Since $(\Delta,\sft)$ is recursively downward compact, one can effectively compute $\ap\Delta\sft{\varphi_u\to B}$. By the definition of $\ap\Gamma\sft{.}$, it is enough to decide $\Delta_u\vdash \ap\Delta\sft{\varphi_u\to B}$, which is equivalent to $\pcal,u\pmodels \ap\Delta\sft{\varphi_u\to B}$. Now the first induction hypothesis implies decidability of $\pcal,u\pmodels \ap\Delta\sft{\varphi_u\to B}$. \qedhere
\end{itemize}
\end{proof}

\begin{corollary}\label{Decod-Pres-Sem2}
Let $\pcal$ be a finite and conversely well-founded $(\Delta,\Gamma)$-based provability model. Then the forcing relation $\pcal,w\pmodels A$ is decidable for every $A\in\lcalb$ and every world $w$ in $\pcal$ in either of the following cases:
\begin{itemize}
\item $\Delta=\snnilb$.
\item $\Delta=\snnil$.
\end{itemize}
\end{corollary}
\begin{proof}
Direct consequence of \Cref{Decod-Pres-Sem,SNB-approx-prop,SN-approx-prop,sound-const1}.
\end{proof}

\subsection{Completeness theorems for $\lles$ and $\iglcphb$}
\begin{theorem}\label{provability-models-gen-completeness}
$\lles$ is complete for $\snnil$-based atomic ascending good provability models.
\end{theorem}
\begin{proof}
Let $\lles\nvdash A$. Then, by \Cref{sdc,SN-approx-prop}, we also have $\lles\nvdash \iap\snnil\iglca{A}$, and hence $\iglca\nvdash \iap\snnil\iglca{A}$. \Cref{Kripke-completeness-igl} implies that there exists a good atomic ascending Kripke model $\kcal:=(W,\pce,\R,\V)$ such that $\kcal,w\nVdash \iap\snnil\iglca{A}$. Let $\pcal:=\cpm\kcal\snnil$ be the $\snnil$-based provability model, as defined in \cref{constrction-of-pro-models}. Also, by definition, it is clear that $\pcal$ is an atomic ascending good provability model. Then \Cref{prov-kripke-normal-forms2} implies that $\pcal\npmodels \iap\snnil\iglca{A}$. On the other hand, \Cref{sound-const2} implies that $\pcal\pmodels\lles$. By \Cref{sdc,SN-approx-prop}, we have $\lles\vdash \iap\snnil\iglca{A}\lr A$. Thus $\pcal\npmodels A$, as desired.
\end{proof}

\begin{theorem}\label{provability-models-gen-completeness2}
 $\iglcphb$ is complete for $\snnilb$-based good provability models.
\end{theorem}
\begin{proof}
Let $\iglcphb\nvdash A$. Then, by \Cref{sdc,SNB-approx-prop}, we also have $\iglcphb\nvdash \iap\snnilb\iglcp{A}$, and hence $\iglcp\nvdash \iap\snnilb\iglcp{A}$. \Cref{Kripke-completeness-igl} implies that there exists a good Kripke model $\kcal:=(W,\pce,\R,\V)$ such that $\kcal,w\nVdash \iap\snnilb\iglcp{A}$. Let $\pcal:=\cpm\kcal\snnilb$ be the $\snnilb$-based provability model, as defined in \cref{constrction-of-pro-models}. Also, by definition, it is clear that $\pcal$ is a good provability model. Then \Cref{prov-kripke-normal-forms2} implies that $\pcal\npmodels \iap\snnilb\iglcp{A}$. On the other hand, \Cref{sound-const2} implies that $\pcal\pmodels\iglcphb$. By \Cref{sdc,SNB-approx-prop}, we have $\iglcphb\vdash \iap\snnilb\iglcp{A}\lr A$. Thus $\pcal\npmodels A$, as desired.
\end{proof}

\subsection{Completeness theorem for $\iglcph$}\label{prov-models-compl-iglh}

In this section, we prove the completeness of $\iglcph$ for good $(\snnilb,\cdsnb)$-based provability models. This completeness result will be helpful in proving ``$\iglcph\nvdash A$ implies $\lles\nvdash \gamma(A)$" (\Cref{PLHA-reduction}), which itself implies the arithmetical completeness of $\iglcph$. We will also use the completeness of $\iglcph$ for preservativity semantics in \Cref{decidability-iglh} to establish the decidability of $\iglcph$.
\\
We say that $Y\subseteq\lcalb$ is \textit{$(\Gamma,\sft)$-adequate} if
\begin{itemize}
	\item $\bot\in Y$ and $Y$ is closed under subformulas.
	\item If $A,B\in Y$, then $\ap{\sft}{\Gamma} {A\wedge B}\in Y$. More precisely, it means that for every $A,B\in Y$, there exists some $C\in Y\cap\Gamma$ such that $\tvdash C\to (A\wedge B)$ and for every $E\in\Gamma$ with $\tvdash E\to (A\wedge B)$, we have $\tvdash E\to C$.
\end{itemize}
Also, a set $\Delta$ is called \textit{$Y$-saturated w.r.t.~$\sft$} iff
\begin{itemize}
\item $\Delta\subseteq Y$,
\item if $\Delta\vdashsub\sft B$ and $B\in Y$, then $B\in \Delta$,
\item $\Delta\nvdasht\bot$,
\item $B\vee C\in \Delta$ implies either $B\in \Delta$ or $C\in \Delta$.
\end{itemize}
In the remainder of this subsection, we use the simplified notation $\ap{}{}A$ for $\ap{\iglcp}{\cdsnbv} A$.

Let us define the following notions, which we need for the completeness of $\iglcph$ with respect to provability models.

Let us fix a frame $\fcal=(W,\pce,\R)$ for intuitionistic modal logics.
We say that $u$ is an \textit{immediate predecessor} of $w$, denoted $u\ipred w$, if
\begin{itemize}
\item $u\R w$.
\item $v\R w$ implies $v\Rvar u$.\footnote{Recall that $\Rvar$ is the transitive closure of ${\sqsubseteq}\cup{\pce}$. Note that in a transitive frame for intuitionistic modal logic, $\Rvar$ is equal to ${\sqsubseteq}\circ{\pce}$.}
\end{itemize}
Note that in an irreflexive and transitive frame, an immediate predecessor is unique: let $u_1,u_2\ipred w$. Then, by the definition of $\ipred$, we have both $u_1\Rvar u_2$ and $u_2\Rvar u_1$. Since ${\Rvar}={{\sqsubseteq}\circ{\pce}}$, we have $u_1({{\sqsubseteq}\circ{\pce}})u_2$ and $u_2({{\sqsubseteq}\circ{\pce}})u_1$. Since $\pce$ is a partial order, at least one of the $\sqsubseteq$'s must be strict, i.e., $\R$. Without loss of generality, assume that $u_1({{\R}\circ{\pce}})u_2$ and $u_2({{\sqsubseteq}\circ{\pce}})u_1$. This implies that $u_1\R v\pce u_1$ for some $v$. This means that $v\R v$, contradicting irreflexivity.

On the other hand, if $\fcal$ is an $\R$-tree, transitive, and conversely well-founded, then any $\R$-accessible $w$ has an immediate predecessor: First, observe that in such a case, $\Rvar$ is also conversely well-founded. Then, since $\fcal$ is an $\R$-tree, the set $W_w:=\{u\in W: u\R w\}$ is linearly ordered by $\Rvar$. Thus, by the converse well-foundedness of $\Rvar$, there exists some $\Rvar$-maximum element $v\in W_w$, which, by definition, is also an immediate predecessor of $w$.

Then define $w\sim u$ iff either $w=u$ or there exists a joint immediate predecessor of $u$ and $w$; i.e., there exists some $v$ such that $v\ipred w$ and $v\ipred u$.
Finally, define $w\Rv u$ iff there exists some $v$ such that $w\sim v$ and $v\R u$.

\begin{lemma}\label{well-found-Rv}
For every conversely well-founded transitive $\R$-tree frame $\fcal=(W,\pce,\R)$, the relation $\Rv$ on $W$ is also conversely well-founded.
\end{lemma}
\begin{proof}
Let $w_0\Rv w_1\Rv w_2\Rv \ldots$ be an infinite sequence. It suffices to define an infinite sequence $v_1\Rvarr v_2\Rvarr\ldots$, where $\Rvarr$ is defined as ${\R}\circ{\pce}$.
We define $v_n$ as the immediate predecessor of $w_n$. Note that for every $n>0$, $w_n$ is $\R$-accessible and hence has a unique immediate predecessor $v_n\ipred w_n$.
We need to show $v_n\Rvarr v_{n+1}$ for every $n>0$. Since $w_n\Rv w_{n+1}$, there exists some $u$ such that $v_n\R u\R w_{n+1}$. Hence, by $v_{n+1}\ipred w_{n+1}$, we get $u\Rvar v_{n+1}$. Thus, by $v_n\R u\Rvar v_{n+1}$, we get $v_n\Rvarr v_{n+1}$, as desired.
%
\end{proof}

\begin{theorem}[\textbf{Completeness}]	
	\label{completeness-llea-pres-semant}
	$\iglcph$ is complete for  
	good $(\snnilb,\cdsnb)$-based locally sound provability models.
\end{theorem}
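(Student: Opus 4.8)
The goal is to prove completeness of $\iglh$ for good $(\snnilb,\cdsnb,\igl)$-semantics, i.e.\ that $\iglh\nvdash A$ implies there is some good $(\snnilb,\cdsnb,\igl)$-semantic $\kcal$ with $\kcal\nVdash A$. Unlike the situation in \cref{Completeness-gen-pres}, the obstruction is that $\cdsnb=\Gamma$ is not closed under conjunction, so one cannot simply feed a Kripke countermodel of $\igl$ through \cref{Preservativity-model-theorem} after setting $\varphi\equiv\top$. Instead I would build the semantic directly out of a canonical-model construction relative to a $(\cdsnb,\igl)$-adequate finite set $Y$ containing $A$ and closed under subformulas and under the glb operation $\ap{\igl}{\cdsnb}{(\cdot\wedge\cdot)}$ — such a finite $Y$ exists by \cref{Remark-Finiteness-c(A)}, \cref{Remark-NNIL-finiteness} and the recursive strong downward compactness of $(\cdsnbv,\igl)$ from \cref{Theorem-T-nnilb-Finitary}, together with \cref{lem-cdsnbv-glb-complexity} controlling the boxed subformulas that appear.

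\textbf{Construction.} First I would take $W$ to be the set of $Y$-saturated sets of $\iglh$-consistent formulas, with $\pce$ given by inclusion and $\R$ defined in the standard Gödel–Löb way: $\Delta\R\Delta'$ iff for every $\Box B\in Y$, $\Box B\in\Delta$ implies $B,\Box B\in\Delta'$, plus the usual conversely-well-founded twist ensuring $\Box B\in\Delta$ whenever $\Box B\in\Delta'$ forces $B$ everywhere above. Since $Y$ is finite and $\iglh\supseteq\igl$ proves Löb's axiom, $\R$ is transitive and conversely well-founded, and a truth lemma for the $\Box$-free part goes through as usual. The new ingredient is the assignment $\varphi$: for each $\R$-accessible $\Delta$, I would set $\varphi(\Delta):=\ap{\igl}{\cdsnb}{\bigwedge(\Gamma\text{-relevant content of }\Delta)}$, i.e.\ the $(\cdsnb,\igl)$-glb of the conjunction of all members of $\Delta\cap Y$ that are ``$\Box$-bearing'' — precisely the element witnessing that $\Delta$'s theory is approximated from below inside $\cdsnb$. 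The key points to check are: (i) $\varphi(\Delta)\in\cdsnb$ (by \cref{Theorem-T-nnilb-Finitary}, glbs land in $\cdsnbv$, and one arranges via the prime decomposition \cref{prime-fact} that the relevant disjunct is in $\cdsnb$ itself); (ii) compatibility, $\kcal,\Delta\Vdash\varphi(\Delta)$, which follows because $\varphi(\Delta)$ is a lower bound for a formula true at $\Delta$ and $\kcal\Vdash\igl$; and (iii) that $\snnilb\subseteq\cdsnb$ is $\snnilb$-conjunction-closed, so $(\snnilb,\cdsnb,\igl)$ is a legitimate triple for \cref{def-pres-sem}.

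\textbf{Truth lemma and closing.} The heart is the truth lemma for boxed formulas: $\kcal,\Delta\Vdash\Box B$ iff $\Box B\in\Delta$, for $\Box B\in Y$. For the forward direction I would use \cref{pres-semant-glb-prov}-style reasoning — $\kcal,\Delta\Vdash\Box B$ means $\Phi_{\Delta'}\vdashsub\igl B$ for all $\Delta'\sqsupset\Delta$, hence by conjunctive closure of $\Phi_{\Delta'}$ inside $\snnilb$ and the fact that $\varphi(\Delta')$ together with the $\snnilb$-content of $\Delta'$ approximates $\Delta'$, one gets $\ap{\igl}{\cdsnb}{B}$ forced everywhere above, so $\Box\ap{\igl}{\cdsnb}{B}\in\Delta$; then $B\priglcdsnb B$ gives $\igl\vdash \ap{\igl}{\cdsnb}{B}\to$ anything $B$ preserves, and the $\visgt$-axiom of $\iglh$ (with $\Gamma=\cdsnb$) plus \cref{Cor-Gamma-approx-preserv} lets one conclude $\Box B\in\Delta$. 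The converse uses compatibility and $A$-fullness, which one establishes for the canonical model by construction of $\varphi$ (each $\varphi(\Delta)$ is chosen as large as possible consistent with landing in $\cdsnb$). Finally, since $A\in Y$ and $\iglh\nvdash A$, there is a $Y$-saturated $\Delta_0\not\ni A$, so $\kcal,\Delta_0\nVdash A$; checking that $\tkcal$ is good (finite, transitive, conversely well-founded, tree after unravelling via \cref{unravelling}, and $\cpp$) completes the argument.

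\textbf{Main obstacle.} The delicate step is (iii)-style bookkeeping: choosing $\varphi(\Delta)$ so that it simultaneously lies in $\cdsnb$ (not merely $\cdsnbv$), is forced at $\Delta$, and is large enough that weak $A$-fullness holds — all while $\cdsnb$ fails to be conjunction-closed. I expect to handle this by pushing conjunctions through the prime/self-complete decomposition (\cref{prime-fact}, \cref{cdsnb=cdspnbv-2}) so that the relevant finite conjunction of $\cdsnb$-formulas, though not itself in $\cdsnb$, has its $(\cdsnb,\igl)$-glb realized by a single $\cdsnb$-formula, exactly as recorded in \cref{def-glb-cdsnbv2} and \cref{lem-cdsnbv-glb}. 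Verifying that the resulting $\varphi$ makes $\kcal$ an honest $A$-full semantic is the part that needs care rather than cleverness.
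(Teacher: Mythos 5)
There is a genuine gap, and it sits exactly at the point you yourself flag as delicate: your definition of $\varphi(\Delta)$ cannot satisfy the compatibility condition. You take $\varphi(\Delta)$ to be the $(\cdsnbv,\igl)$-glb of the conjunction of the box-bearing members of $\Delta\cap Y$, and you justify compatibility by saying that $\varphi(\Delta)$ ``is a lower bound for a formula true at $\Delta$''. But a $(\Gamma,\sft)$-lower bound $B$ of $C$ satisfies $\vdash B\to C$, i.e.\ it is in general strictly \emph{stronger} than $C$; from $\kcal,\Delta\Vdash C$ nothing follows about $\kcal,\Delta\Vdash B$. (For example, the $(\cdsnbv,\igl)$-glb of a variable $x$ is a self-complete strengthening of $x$ and is typically not forced at a node merely because $x$ is.) So the quintuple you build need not be an $(\snnilb,\cdsnb,\igl)$-semantic at all. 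Moreover your requirement that $\varphi(\Delta)$ be ``as large as possible'' (needed for weak fullness) pulls in the opposite direction from compatibility, and nothing in the sketch reconciles the two; the truth-lemma step for $\Box B$, which mixes membership in $\Delta$ with semantic forcing that itself depends on $\varphi$, is also left without a well-founded bookkeeping to break the circularity.

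The paper resolves this tension differently. It first builds the purely syntactic model $\tkcal$ from $Y'$-saturated sets and proves the ordinary truth lemma there; only afterwards does it define $\varphi'_w:=\bigwedge(\cdsnbv\cap\bar Y_w)$, where $\bar Y_w$ consists of those $B\in Y$ with $\kcal,w\vdashp B$. Every member of $\bar Y_w$ is actually forced at $w$, so compatibility is automatic once one selects a disjunct of $\varphi'_w$ forced at $w$ (here the disjunctive shape of $\cdsnbv$ is used, not a glb of the node's theory). That $\varphi'_w$ is $\igl$-equivalent to a single member of $\cdsnbv$, and that $\Phi_w$ proves all of $\bar Y_w$ (weak $Y$-fullness), are then proved by a simultaneous induction on the $\R$-height of $w$, using the $(\cdsnbv,\igl)$-adequacy of $Y$, the $\iglh$-theorems $\Box(E\wedge F)\to\Box\ape{E\wedge F}$ and $\Box B\to\Box\ape{B}$, and \cref{Preservativity-model-theorem} to transfer forcing between $\kcal$ and $\tkcal$ at nodes of greater height; this is precisely the machinery your outline needs but does not supply. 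If you replace your glb-based $\varphi$ by this $\vdashp$-based definition and add the height induction, your construction essentially becomes the paper's proof, finished off by \cref{unravelling-2} to obtain a good model.
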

\begin{proof}
	Assume $\iglcph\nvdash A$.  
	By \Cref{Lem-cdsnb-adequate}, there exists a finite 
	set $\zed\ni A$ which is $(\cdsnbv,\iglcp)$-adequate.  
	Note that this implies $A \in \nf\zed$.
	Define $\vay:=\zed\cup\Box\zed$ and 
\emli	$$\chi:= \bigwedge_{E,F\in \zed} \Box (E\wedge F)\to \Box \ap{}{} {E\wedge F}.$$
	Observe that $\iglcph\vdash \chi$. Since $\iglcph\nvdash A$, it follows that 
	$\iglcp\nvdash\chi\to A$. Then \Cref{Kripke-completeness-igl} implies the existence of a good 
	Kripke model $\kcal=(W,\pce,\R,\V)$ such that $\kcal,w_0\Vdash \chi$ and $\kcal,w_0\nVdash A$, where $w_0$ is the root of $\kcal$.
	
	Define $\pcal:=\cpmp\kcal\snnilb\Phi$\footnote{%
		Recall from \cref{constrction-of-pro-models} that 
		$\cpmp\kcal\snnilb\Phi:=(W,\pce,\R,\{\lgc w\}_{w\in W},\V)$ with
		$\lgc w:=\IPC+\varphi_w+\snnilb_w$, where $\snnilb_w:=\{E\in\snnilb: \pcal,w\pmodelsp E\}$. 
	} where $\Phi:=\{\varphi_w\}_{w\in W}$ is defined by recursion on $w$ ordered by $\vR$ (the inverse of $\Rv$, defined earlier in this subsection). 
	By \Cref{well-found-Rv}, $(W,\Rv)$ is converse well-founded, ensuring the recursion is well-defined.
	Assume that for any $u\vR w$, $\varphi_u$ has already been defined. 
	Consequently, $\pcal,w\pmodelsp B$ and $\pcal,w\pmodels B$ are already defined for every formula $B$.
	Define
	\emli
	$$\varphi'_w:=\bigwedge (\cdsnbv\cap \zeds w)
	\quad\text{where}\quad 
	\zeds w:=\{B\in \zed: \pcal,w\pmodelsp B\}.
	$$
	We shall later verify that $\varphi'_w$, as defined, belongs to $\cdsnbv$. 
	Hence (modulo $\iglcp$-provable equivalence) $\varphi'_w=\bigvee \Gamma_w$ for some finite $\Gamma_w\subseteq\cdsnb$. Choose  
	$\varphi_w\in\Gamma_w$ such that $\pcal,w\pmodels\varphi_w$. From this definition, it is evident that $\pcal$ is locally sound.
	
	Clearly, $\pcal$ as defined is converse well-founded, finite, irreflexive, and transitive (inherited from $\kcal$), and constitutes a locally sound $\snnilb$-full provability model. Furthermore, by \Cref{sound-const1}, $\iglcp$ is strongly sound for $\pcal$, meaning that for every $w\in W$, the theory $\lgc w$ includes $\iglcp$.
	
	We will show that $\pcal$ is $\zed$-full; then by \Cref{prov-kripke-normal-forms} we obtain $\pcal\npmodels A$.  
	Thus it remains to prove by induction on $w$ (ordered by $\vR$) that: 
	(1) $\varphi'_w\in\cdsnbv$, and 
	(2) $\lgc w\vdash \bigwedge\zeds w$.
	
	Assume as the induction hypothesis that for every $v\in W$ with $w\Rv v$ (and hence also for every $v\sqsupset w$), we have 
	(1) $\varphi'_v\in\cdsnbv$, and 
	(2) $\lgc v\vdash \bigwedge\zeds v$.
	Consider the Kripke model $\pcal_{v}$, defined as the restriction of $\pcal$ to nodes accessible (via the transitive closure of ${\pce }\cup{\R }$) from $v$. 
	Clearly, $\pcal, u\Vdash B$ iff $\pcal_{v}, u\Vdash B$ for every node $u$ accessible from $v$ and every formula $B$. 
	It is straightforward to observe that for every $v\in W$ with $w\Rv v$, the induction hypothesis implies that $\pcal_{v}$ is a $\zed$-full $(\snnilb,\cdsnb)$-based model. Hence, by \Cref{prov-kripke-normal-forms}, for every $B\in \vay $ and every $u$ in $\pcal_v$, we have $\kkcal , u\Vdash B$ iff $\pcal, u\Vdash B$. Thus,
	\begin{center}
		(*) \quad 
		For every $B\in \vay $ and every $v$ with $w\Rv v$, we have $\kkcal , v\Vdash B$ iff $\pcal, v\Vdash B$.
	\end{center}
	We now prove the two statements:
	\begin{itemize}[leftmargin=*]
		\item $\varphi'_w\in\cdsnbv$. We first demonstrate that if $E,F\in \zeds w\cap\cdsnbv$, then there exists some $G\in \zeds w\cap\cdsnbv$ such that $\iglcp\vdash G\to (E\wedge F)$. Since $\zed$ is finite, there must be a single $G\in \zeds w\cap\cdsnbv$ such that $G\vdashsub\iglcp \zeds w\cap\cdsnbv$; consequently, $G$ is $\iglcp$-equivalent to $\varphi'_w$. Thus $\varphi'_w\in\cdsnbv$, as required.  
		
		Assume $E,F\in \zeds w\cap\cdsnbv$. 
		Then (*) implies $\kkcal,w\vdashp E\wedge F$.
		By the $(\cdsnbv,\iglcp)$-adequacy of $\zed$, we have $\ape{E\wedge F}\in \zed$. Let $w'\in W$ be the unique immediate predecessor of $w$. Hence $\kkcal,w'\Vdash \Box E\wedge \Box F$. 
		Since ${\iglcph}\vdash \Box(E\wedge F)\to \Box\ape{E\wedge F}$, we obtain $\kkcal,w\vdashp \ape{E\wedge F}$. 
		Thus (*) implies $\pcal,w\pmodelsp \ape{E\wedge F}$, whence $\ape{E\wedge F}\in \zeds w\cap\cdsnbv$ and $\iglcp\vdash \ape{E\wedge F}\to (E\wedge F)$.
		
		\item $\zed$-fullness.  
		Let $B\in \zed $ such that $\pcal, w\pmodelsp B$; we aim to show $ \lgc w\vdash B$. Since $\vdash \varphi_w\to\varphi'_w$, it suffices to show $\varphi'_w\vdashsub\iglcp B$. 
		Given $\pcal,w\pmodelsp B$, statement (*) implies $\kkcal,w\vdashp B$. 
		By definition of $\zed$, we have $\ape B\in \zed$. 
		Since $\iglcph\vdash \Box B\to\Box\ape B$, we obtain $\kkcal,w\vdashp\ape B$. 
		Since $\ape B\in\cdsnbv$, it follows that $\vdash \varphi'_w\to \ape B$ and hence $\iglcp\vdash \varphi'_w\to B$.
		\qedhere
	\end{itemize}
\end{proof}

\begin{lemma}\label{Lem-cdsnb-adequate}
	For every formula $A\in\lcalb$, there exists a finite set $Y \ni A$ (modulo $\ikfourcp$-provable equivalence) which is $(\cdsnbv ,\iglcp)$-adequate.  
	Moreover, such a finite set can be effectively computed.
\end{lemma}
\begin{proof}
	We only establish existence; effectiveness is evident from the construction.
	First define $Y_0$ and $Y_1$, then set
	\emli
	$$\displaystyle Y':=Y_0\cup Y_1\cup \{B\wedge C: B\in Y_0, C\in Y_1\}
	\quad \text{ and } \quad 
	Y:=\{\bigvee X: X\fsubeq Y'\}.$$
	\begin{itemize}
		\item[$Y_0$:] Let $X:=\subab A$ and define
		\emli
		$$Y_0:=\{B\in\lcalz(X): \cto(B)\leq\ctob(A)\}.$$ 
		Recall that $\cto$ counts the number of nested implications outside boxes, and $\ctob(A)$ is defined as the maximum of $|\subab A|$ and $\max\{\cto(B):\Box B\in\sub{A}\}$.
		\Cref{Remark-Finiteness-c(A)} implies that $Y_0$ is finite up to $\IPC$-provable equivalence.
		
		\item[$Y_1$:] Let $Z:=\subpb{A}\cup \{\Box B: B\in Y_0\}$. 
		By \Cref{Remark-NNIL-finiteness}, the set $Y'_1:= {\NNIL(Z)}$ is finite up to $\ikfourcp$-provable equivalence. 
		Define $Y_1:=\{\sggt B: B\in Y'_1\}$.
	\end{itemize}
	We show that $Y$ is $(\cdsnbv,\iglcp)$-adequate. 
	Since $Y_0$ and $Y_1$ are closed under subformulas, $Y$ is also closed under subformulas. Given $B,C\in Y'$, we demonstrate that a greatest lower bound for $B\wedge C$ with respect to $(\cdsnbv,\Gamma)$ belongs to $Y$. Assume $B=E\wedge E'$ and $C=F\wedge F'$ with $E,F\in Y_0$ and $E',F'\in Y_1$. 
	Since $E',F'\in \snnilb$, we have $\ape{B\wedge C}=\ape{E\wedge F}\wedge (E'\wedge F')$. 
	It suffices to show $\ape{E\wedge F}\in Y$. 
	Since $E\wedge F\in Y_0$, \Cref{tech-lem} yields the desired result. 
\end{proof}

\begin{lemma}\label{tech-lem}
	If $B\in Y_0$, then $\ap{}{}{B}\in Y$.
\end{lemma}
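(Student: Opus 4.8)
The plan is to make the $(\cdsnbv,\igl)$-greatest lower bound of $B$ completely explicit and then read off that each of its disjuncts already lies in the set $Y'$ of the proof of \cref{Lem-cdsnb-adequate}. Since $\igl$ is \typea, \cref{Theorem-T-nnilb-Finitary} gives $\ap{\cdsnbv}{\igl}{B}=\trt B$, and by \cref{def-glb-cdsnbv2} this expands to
\[\ap{\cdsnbv}{\igl}{B}=\bigvee_{D\in\Tro B}\bigl(\Boxdot D\wedge (D^\dagger)^\Box\bigr),\]
where, by \cref{def-glb-cdsnbv}, $D=\alpha(D_0)$ and $D^\dagger=\alpha(D_0^\dagger)$ for $D_0\in\Trz B$, with $\alpha$ sending each fresh parameter $q_i$ to the boxed subformula $\Box B_i\in\subb B$ and fixing all other atoms, and with $D_0\xrat{\theta_0}{\ipc}D_0^\dagger\in\nnilpq$. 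Since $Y$ is, by definition, the set of finite disjunctions of members of $Y'$, it suffices to show that each single disjunct $\Boxdot D\wedge(D^\dagger)^\Box$ is ($\igl$-equivalent to) an element of $Y'$.

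Because boxed formulas are fixed by the G\"odel translation and $(.)^\Box$ commutes with $\wedge$, this disjunct equals $(\Box D\wedge D^\dagger)^\Box$. Recalling that $Y_1=\{G^\Box:G\in Y'_1\}$ with $Y'_1={\NNIL}(Z)$ and $Z=\subpb A\cup\{\Box G:G\in Y_0\}$, it is therefore enough to prove $\Box D\wedge D^\dagger\in{\NNIL}(Z)$, for then the disjunct lies in $Y_1\subseteq Y'$. As $\NNIL(Z)$ is closed under conjunction, this splits into $\Box D\in{\NNIL}(Z)$ and $D^\dagger\in{\NNIL}(Z)$.

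For the first, $\Box D$ is a boxed formula, hence an atom of the $\NNIL$-hierarchy, so $\Box D\in{\NNIL}(Z)$ reduces to $\Box D\in Z$, i.e.~to $D$ being ($\igl$-equivalent to) a member of $Y_0$. Here I would argue: (a) by \cref{lem-cdlsnbv-glb-cto} applied to $B$ (item P4 of \cref{def-glb-cdsnbv}), $\cto(D)=\cto(D_0)\le\ctob(B)$, and $\ctob(B)\le\ctob(A)$ since $B\in Y_0\subseteq\lcalz(\subab A)$; (b) by item P5 of \cref{def-glb-cdsnbv} the atoms of $D_0$ lie among $\pvec$, the parameters $q_i$, and the outer variables of $B$, and applying $\alpha$ replaces each $q_i$ by $\Box B_i$, so — using again that $B\in\lcalz(\subab A)$, which forces $\pvec$, the boxed subformulas of $B$, and the outer atoms of $B$ all into $X=\subab A$ — we get $D\in\lcalz(\subab A)$. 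Together (a) and (b) give $D\in Y_0$, whence $\Box D\in Z$. For the second claim, $D^\dagger=\alpha(D_0^\dagger)$ with $D_0^\dagger\in\nnilpq\subseteq\NNIL$; the substitution $\alpha$ preserves membership in $\NNIL$ (replacing atoms by $\atomb$-formulas preserves both $\NNIL$ and $\NI$) and sends the atoms of $D_0^\dagger$ into $\subpb A\subseteq Z$, so $D^\dagger\in{\NNIL}(Z)$.

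I expect the main obstacle to be exactly the bookkeeping compressed into the previous paragraph: one must verify carefully that deboxing via $\alpha$ and then applying $(.)^\Box$ keep $D$ inside the finite vocabulary $X$ and below the complexity bound $\ctob(A)$ (so that $D\in Y_0$), and keep $D^\dagger$ inside $Z$ — which forces one to unwind precisely what P4 and P5 of \cref{def-glb-cdsnbv} guarantee about $\Trz B$ and how $\alpha$ acts on $\pvec$, $\qvec$ and the variables of $B_0$, as well as the monotonicity $\ctob(B)\le\ctob(A)$ for $B\in Y_0$. The degenerate case $\Tro B=\emptyset$ (so $\ap{\cdsnbv}{\igl}{B}=\bot$) is harmless, since $\bot\in Y_0\subseteq Y$. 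Throughout, equalities and memberships are understood modulo $\igl$-provable equivalence, matching the convention of \cref{Lem-cdsnb-adequate}.
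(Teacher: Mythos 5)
Your overall route is the paper's: make the glb explicit as $\ap{\cdsnbv}{\igl}{B}=\bigvee\{\Boxdot D\wedge (D^\dagger)^\Box : D\in\Tro B\}$ (the paper gets this by combining \cref{Theorem-T-nnilb-Finitary0} with \cref{glb-cdlsnb-dlnb2} rather than quoting \cref{Theorem-T-nnilb-Finitary}, which is the same thing), and then check that each disjunct lies in $Y'$; your items (a) and (b) are exactly the paper's appeal to \cref{lem-cdlsnbv-glb-cto,lem-cdsnbv-glb-complexity} to get $\Tro B\subseteq Y_0$, and your observation that $D^\dagger\in\NNIL(Z)$, hence $\Box D\wedge(D^\dagger)^\Box=(\Box D\wedge D^\dagger)^\Box\in Y_1$, is the detail the paper leaves implicit.

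There is, however, one step that is wrong as written: the claim that the disjunct \emph{equals} $(\Box D\wedge D^\dagger)^\Box$. Here $\Boxdot D$ abbreviates $D\wedge\Box D$ (see the paper's proof of this lemma, where $\hTro B=\{D\wedge\Box D\wedge(D^\dagger)^\Box: D\in\Tro B\}$), so the disjunct is $D\wedge\Box D\wedge(D^\dagger)^\Box$, which carries the extra un-boxed conjunct $D$. This conjunct cannot be dropped modulo $\igl$-equivalence: one only has $\vdash D\to D^\dagger$, not the converse, so $\Box D\wedge(D^\dagger)^\Box$ need not imply $D$, and consequently the disjunct need not belong to $Y_1$. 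The repair is immediate from what you already prove: since $D\in Y_0$ by your (a)--(b) and $\Box D\wedge(D^\dagger)^\Box\in Y_1$ by your second claim, the disjunct lies in $\{E\wedge F: E\in Y_0,\ F\in Y_1\}\subseteq Y'$ --- indeed this mixed-conjunction clause in the definition of $Y'$ exists precisely to absorb this conjunct $D$. With that one correction your argument is complete and matches the paper's.
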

\begin{proof}
	Let $\Tro B$ be defined as in \Cref{def-glb-cdsnbv}. 
	\Cref{lem-cdlsnbv-glb-cto,lem-cdsnbv-glb-complexity} imply $\Tro B\subseteq Y_0$.
	Moreover, \Cref{Theorem-T-nnilb-Finitary0} implies that $\bigvee\Tro B$ is a greatest lower bound for $B$ with respect to $(\dnbv,\iglcp)$.
	\Cref{glb-cdlsnb-dlnb2} implies that $\bigvee\hTro B$ is a greatest lower bound for $B$ with respect to $(\cdsnbv,\iglcp)$, i.e., $\ap{\cdsnbv}{\iglcp}{B}=\bigvee\hTro B$.
	As defined in \Cref{glb-cdlsnb-dlnb2}, we have 
	$\hTro B:=\{D\wedge\Box D\wedge \sggt{(D^\dagger)}: D\in \Tro B\}$; thus $\ap{}{}{B}\in Y$.
\end{proof}

\subsection{Decidability of $\iglcph$}\label{decidability-iglh}
This section establishes the decidability of $\iglcph$. 
The proof employs the finite model property obtained from \Cref{completeness-llea-pres-semant}.

Given a formula $A\in\lcalb$, we must decide whether $\iglcph\vdash A$.  
First compute $n_A$ and $\Gamma_A$ as provided by \Cref{rem-completeness-llea-pres-semant}.  
Then check the validity of $A$ in the root of every good $(\snnilb,\cdsnb)$-based provability model $\pcal= \cpmp\kcal\snnilb\Phi$ with $\Phi=\{\varphi_w\}_{w\in W}$ and $\kcal=(W,\pce,\R,\V)$, where $|W|\leq n_A$ and $\varphi_w\in \Gamma_A$ for every $\R$-accessible $w\in W$. 
\Cref{Decod-Pres-Sem2} implies that validity in $\pcal$ is decidable. If all such models satisfy $A$, return \textit{yes}; otherwise, return \textit{no}.
Thus we obtain the following decidability result:

\begin{theorem}\label{iglh-decidable}
	$\iglcph$ is decidable.
\end{theorem}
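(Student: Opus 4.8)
The plan is to turn the completeness theorem for preservativity semantics (\cref{completeness-llea-pres-semant}) into an effective decision procedure by bounding the size of the countermodels one has to search. First I would establish a finite-model-property statement with explicit bounds: there should be a computable function $A\mapsto (n_A,\Gamma_A)$, where $n_A\in\nat$ and $\Gamma_A$ is a finite subset of $\cdsnb$, such that $\iglh\nvdash A$ iff there is a good $(\snnilb,\cdsnb,\igl)$-semantic $\kcal=(W,\pce,\R,\V,\varphi)$ with $|W|\leq n_A$ and $\varphi(w)\in\Gamma_A$ for every $\R$-accessible $w\in W$ which refutes $A$ at its root. This is exactly the auxiliary result referred to in the text as \cref{rem-completeness-llea-pres-semant}, and it is the crux of the argument.

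To get such bounds I would trace through the construction in the proof of \cref{completeness-llea-pres-semant}. The set $W$ there consists of $Y'$-saturated subsets of a finite adequate set $Y'$, where $Y$ is the $(\cdsnbv,\igl)$-adequate set produced by \cref{Lem-cdsnb-adequate}; since that lemma yields $Y$ effectively and finitely (modulo provable equivalence), $|W|$ is bounded by a computable function of $A$, giving $n_A$. For the parameters $\varphi_w$: in the construction each $\varphi_w$ is chosen inside $\Gamma_w\subseteq\cdsnb$ with $\varphi'_w=\bigvee\Gamma_w$, and the proof shows $\varphi'_w\in\cdsnbv$ is (equivalent to) a $\bigwedge$ of elements of $\bar Y_w\cap\cdsnbv\subseteq Y$; hence every $\varphi_w$ may be taken from a fixed finite set $\Gamma_A$ of $\cdsnb$-propositions computable from $Y$. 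After unravelling via \cref{unravelling-2} the model becomes a good semantic whose size is still bounded (unravelling a finite transitive conversely well-founded model gives a finite tree whose size is bounded by a function of the original size). So the search space — good $(\snnilb,\cdsnb,\igl)$-semantics with at most $n_A$ nodes and $\varphi$-values in $\Gamma_A$ — is finite and effectively enumerable.

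It then remains to check, for each such $\kcal$ in the search space, whether $\kcal,w_0\Vdash A$, and this is where I invoke \cref{Decod-Pres-Sem}: the forcing relation of a finite $(\snnilb,\cdsnb,\igl)$-semantic is decidable provided $(\snnilb,\igl)$ is recursively downward compact, which is exactly \cref{SNB-approx-prop}, together with the decidability of $\igl$ (its finite model property). By soundness (\cref{llea-sound}) and completeness (\cref{completeness-llea-pres-semant}), $\iglh\vdash A$ holds iff every model in this finite search space validates $A$; the procedure returns \emph{yes} in that case and \emph{no} otherwise, and it terminates because the search space is finite and each membership test terminates. This gives \cref{iglh-decidable}.

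The main obstacle is the explicit bounding step — extracting $n_A$ and $\Gamma_A$ from the completeness proof. One must be careful that the unravelling in \cref{unravelling-2} does not blow the size up uncontrollably (it does not, since the original frame is already conversely well-founded with bounded height and finite branching, so the tree is finite with a computable size bound), and that the $\varphi_w$'s really can be confined to a finite set independent of the particular saturated sets chosen — this uses the adequacy of $Y$ and the finiteness of $\cdsnbv\cap Y$ up to provable equivalence. Once those points are pinned down, the rest is an assembly of \cref{Decod-Pres-Sem}, \cref{SNB-approx-prop}, decidability of $\igl$, and the soundness/completeness pair.
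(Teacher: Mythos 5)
Your proposal is correct and follows essentially the same route as the paper: extract computable bounds $n_A$ and $\Gamma_A$ from the completeness proof for $(\snnilb,\cdsnb,\igl)$-semantics (this is exactly \cref{rem-completeness-llea-pres-semant}, with $\Gamma_A$ taken from the adequate set of \cref{Lem-cdsnb-adequate}), then search the finite space of small good semantics and decide forcing in each via \cref{Decod-Pres-Sem}, \cref{SNB-approx-prop} and the decidability of $\igl$, concluding by soundness and completeness. The points you flag as needing care (size of the unravelled model, confining the $\varphi_w$'s to a finite computable set) are handled the same way in the paper's construction, so no further changes are needed.
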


\begin{lemma}\label{rem-completeness-llea-pres-semant}
	Given $A\in\lcalb$ with $\iglcph\nvdash A$, there exist $n_A\in\nat$ and a finite set $\Gamma_A\subseteq\cdsnb$ such that:
	\begin{itemize}
		\item $n_A$ and $\Gamma_A$ are effectively computable,
		\item there exists a $(\snnilb,\cdsnb)$-based provability model $\pcal= \cpmp\kcal\snnilb\Phi$ with $\Phi=\{\varphi_w\}_{w\in W}$, $\kcal=(W,\pce,\R,\V)$, $\pcal\npmodels A$, $|W|\leq n_A$, and $\varphi_w\in \Gamma_A$ for every $\R$-accessible $w\in W$.
	\end{itemize} 
\end{lemma}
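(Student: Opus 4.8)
The plan is to extract the required finite bound $n_A$ and the finite candidate set $\Gamma_A$ directly from the completeness proof of $\iglh$ for preservativity semantics (\cref{completeness-llea-pres-semant}), by tracking the finiteness of all the sets constructed there. First I would invoke \cref{completeness-llea-pres-semant}: since $\iglh\nvdash A$, there is a good $(\snnilb,\cdsnb,\igl)$-semantic $\kcal=(W,\pce,\R,\V,\varphi)$ with $\kcal\nVdash A$. The point is to bound $|W|$ and to locate the values of $\varphi$ inside a fixed finite set computable from $A$. Going back into the construction: the underlying model $\tkcal$ has nodes that are $Y'$-saturated subsets of $Y'$, where $Y$ is the $(\cdsnbv,\igl)$-adequate set produced by \cref{Lem-cdsnb-adequate} and $Y':=Y\cup\{\Box B:B\in Y\}$. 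By \cref{Lem-cdsnb-adequate} the set $Y$ is finite modulo $\ikfour$-provable equivalence and is effectively computable, hence so is $Y'$; therefore the number of $Y'$-saturated sets is bounded by (an effectively computable) $2^{|Y'|}$, and the unravelling in \cref{unravelling-2} only adds a tree of bounded depth and branching over these finitely many nodes — but for the statement we may just take $\tkcal$ itself (before unravelling) since the lemma only asks for an $(\snnilb,\cdsnb,\igl)$-semantic, not a rooted one. So set $n_A$ to be this effectively computable bound on the number of $Y'$-saturated sets.

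Next I would pin down $\Gamma_A$. In the proof of \cref{completeness-llea-pres-semant}, $\varphi_w$ is chosen inside $\Gamma_w\subseteq\cdsnb$, where $\bigvee\Gamma_w$ is $\igl$-equivalent to $\varphi'_w:=\bigwedge(\cdsnbv\cap\bar Y_w)$. Thus each $\varphi_w$ is (up to $\igl$-equivalence) a $\cdsnb$-disjunct of the glb of a finite subconjunction of $Y$; concretely, by \cref{tech-lem} and \cref{glb-cdlsnb-dlnb2}, the relevant glbs are of the form $\bigvee\hTro{B}$ for $B$ ranging over (finite conjunctions from) $Y_0$, and $\hTro B=\{\Boxdot D\wedge(D^\dagger)^\Box:D\in\Tro B\}$. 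Since $Y_0$ is finite and $\Tro B$ is effectively computable and finite for each $B$ (\cref{def-glb-cdsnbv}, P3), the collection of all such disjuncts is a finite, effectively computable subset of $\cdsnb$; I would take $\Gamma_A$ to be this collection (closed, if one wishes, under the finitely many $\igl$-inequivalent possibilities). Then $\varphi(w)\in\Gamma_A$ for every $\R$-accessible $w$, as required.

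The main obstacle is purely bookkeeping: making sure that ``effectively computable and finite up to provable equivalence'' propagates cleanly through the chain \cref{Remark-Finiteness-c(A)} $\to$ \cref{Remark-NNIL-finiteness} $\to$ \cref{Lem-cdsnb-adequate} $\to$ \cref{tech-lem}, and that the choice of representative $\varphi_w\in\Gamma_w$ with $\kcal,w\Vdash\varphi_w$ can be made among the finitely many computable candidates. None of these steps is hard — each is already contained in the cited lemmas — but one must be careful that $\Gamma_A$ is defined before the model is built (it depends only on $A$, via $Y_0$, $Y_1$ and the $\Tro{(\cdot)}$ operation), so that the later decidability argument in \cref{decidability-iglh} can quantify over all semantics with $\varphi$-values in the fixed set $\Gamma_A$. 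I would therefore phrase the proof as: set $Y$, $Y_0$, $Y_1$, $n_A:=2^{|Y'|}$ and $\Gamma_A:=\{\bigvee\hTro B : B\text{ a finite conjunction of elements of }Y_0\}$ (all effectively computable by the cited results), apply \cref{completeness-llea-pres-semant} to get $\kcal\nVdash A$, and observe from its proof that $|W|\le n_A$ and $\varphi(w)\in\Gamma_A$ for every $\R$-accessible $w$.

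\begin{proof}
Let $Y\ni A$ be the $(\cdsnbv,\igl)$-adequate finite set furnished by \cref{Lem-cdsnb-adequate}, together with its constituents $Y_0$ and $Y_1$ as in the proof of that lemma, and put $Y':=Y\cup\{\Box B: B\in Y\}$. All of $Y_0,Y_1,Y,Y'$ are finite modulo $\ikfour$-provable equivalence and effectively computable. Define the effectively computable number $n_A:=2^{|Y'|}$ and the finite set
$$\Gamma_A:=\Big\{\bigvee\hTro{B}\ :\ B\text{ is a finite conjunction of members of }Y_0\Big\}\subseteq\cdsnb,$$
where for $B\in Y_0$ the set $\Tro B$ is as in \cref{def-glb-cdsnbv} and $\hTro B:=\{\Boxdot D\wedge(D^\dagger)^\Box: D\in\Tro B\}$ as in \cref{glb-cdlsnb-dlnb2}. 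By P3 of \cref{def-glb-cdsnbv} each $\Tro B$ is finite and computable, so $\Gamma_A$ is finite and effectively computable, and by \cref{lem-cdsnbv-glb} its members lie in $\cdsnb$.

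Since $\iglh\nvdash A$, \cref{completeness-llea-pres-semant} yields a good $(\snnilb,\cdsnb,\igl)$-semantic with $\kcal\nVdash A$; inspecting its construction, the underlying Kripke model $\tkcal$ has as nodes the $Y'$-saturated sets w.r.t.\ $\iglh$, so $|W|\le 2^{|Y'|}=n_A$. Moreover, in that proof each value $\varphi(w)$ (for $\R$-accessible $w$) is chosen inside a finite set $\Gamma_w\subseteq\cdsnb$ with $\bigvee\Gamma_w$ $\igl$-equivalent to $\varphi'_w=\bigwedge(\cdsnbv\cap\bar Y_w)$; by \cref{tech-lem}, \cref{glb-cdlsnb-dlnb2} and the finiteness of $Y_0$, modulo $\igl$-provable equivalence each disjunct of such a $\varphi'_w$ is of the form $\bigvee\hTro B$ for a finite conjunction $B$ of members of $Y_0$, hence belongs to $\Gamma_A$. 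Therefore $\varphi(w)\in\Gamma_A$ for every $\R$-accessible $w\in W$, which is what was claimed.
\end{proof}
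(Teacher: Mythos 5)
Your overall strategy is the paper's own: the paper also reads $n_A$ and the model off the proof of \cref{completeness-llea-pres-semant} (your bound $2^{|Y'|}$ via the $Y'$-saturated nodes, and your use of the pre-unravelling model, are fine since the lemma does not demand a good semantic), and it also builds $\Gamma_A$ from the adequate set of \cref{Lem-cdsnb-adequate} --- the paper simply puts $\Gamma_A:=Y\cap\cdsnb$. The genuine gap is in your definition of $\Gamma_A$ and in the verification of the second bullet. You take $\Gamma_A$ to consist of the whole disjunctions $\bigvee\hTro{B}$, but these lie in $\cdsnbv$, not in general in $\cdsnb$: \cref{lem-cdsnbv-glb}, which you cite for membership, says precisely that the \emph{members} $\Boxdot D\wedge(D^\dagger)^\Box$ of $\hTro{B}$ are in $\cdsnb$ while the disjunction is only in $\cdsnbv$ (a disjunction of projective formulas need not be projective, since each disjunct comes with its own projecting substitution). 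So the required condition $\Gamma_A\subseteq\cdsnb$ fails, and such a $\Gamma_A$ could not even serve as a set of $\varphi$-values for an $(\snnilb,\cdsnb,\igl)$-semantic, whose definition forces $\varphi(w)\in\cdsnb$.

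Correspondingly, the sentence ``each disjunct of such a $\varphi'_w$ is of the form $\bigvee\hTro{B}$'' is backwards: in the proof of \cref{completeness-llea-pres-semant}, $\varphi_w$ is \emph{not} $\varphi'_w$ (which is only shown to be in $\cdsnbv$) but a single disjunct $\varphi_w\in\Gamma_w\subseteq\cdsnb$ chosen so that $\kcal,w\Vdash\varphi_w$; the disjuncts are elements of sets like $\hTro{B}$, not the disjunctions $\bigvee\hTro{B}$ themselves, so the actual values $\varphi_w$ do not belong to your $\Gamma_A$. The repair is to let $\Gamma_A$ collect the candidate \emph{disjuncts} rather than the disjunctions --- most economically $\Gamma_A:=Y\cap\cdsnb$, as the paper does --- and to observe that the glbs arising in \cref{Lem-cdsnb-adequate} and \cref{tech-lem} are disjunctions of elements of $Y'\cap\cdsnb$ of the shape $\Boxdot D\wedge(D^\dagger)^\Box$, possibly conjoined with an element of $Y_1\subseteq\snnilb$ (a conjunct your purely $Y_0$-based description also omits, since $\varphi'_w$ is a glb of elements of $Y$, not merely of conjunctions from $Y_0$), so that in the construction one may choose $\Gamma_w\subseteq Y\cap\cdsnb$ and hence $\varphi_w\in\Gamma_A$. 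With that change your bookkeeping of computability and of $n_A$ goes through.
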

\begin{proof}
	Let $\Gamma_A:=Y_A\cap\cdsnb$, where $Y_A$ is as provided by \Cref{Lem-cdsnb-adequate}. The number $n_A$ and the model $\kcal$ can be directly inferred from the proof of \Cref{completeness-llea-pres-semant}.
\end{proof}

\section{Provability logic of $\HA$: arithmetical completeness}
\label{sec-completeness-reduction}
This section proves that $\iglcph$ (see \Cref{propo-logics}) is the provability logic of $\HA$. 
Soundness has already been established by \Cref{iglh-in-iph,iph-sound}; it remains to prove arithmetical completeness. This is achieved by propositional reduction to the completeness of $\lles$ for $\Sigma_1$-substitutions.

Historically, the provability logic of Peano Arithmetic, $\PA$, was discovered \citep{Solovay} before the $\Sigma_1$-provability logic of $\PA$ \citep{VisserThes}. The method in \citep{VisserThes} essentially employs Solovay's technique. Subsequently, \citep{reduction} demonstrated that, in a sense, the $\Sigma_1$-provability logic of $\PA$ is more complex than the standard provability logic of $\PA$. Later, \citep{mojtahedi2021hard} studied reductions between provability logics and characterized several others. Most notably, \citep{mojtahedi2021hard} showed that the $\Sigma_1$-provability logic of $\HA$ relative to the standard model is the most complex known provability logic. 
Here, in \Cref{PLHA-reduction}, we show that the $\Sigma_1$-provability logic of $\HA$ \citep{Sigma.Prov.HA,Jetze-Visser} is more complex than the standard provability logic of $\HA$; in other words, we reduce ``completeness of $\iglcph$ for arithmetical interpretations in $\HA$'' to ``completeness of $\lles$ for arithmetical $\Sigma_1$ interpretations in $\HA$''.
\begin{theorem}[\textbf{Reduction}]
	\label{PLHA-reduction}
	If $\iglcph\nvdash A$, then $\lles\nvdash\theta(A)$ for some substitution $\theta$.
\end{theorem}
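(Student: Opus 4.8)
The plan is to take a failed derivation $\iglh\nvdash A$, pass through the preservativity‑semantics machinery established above, and translate a countermodel for $\iglh$ into a countermodel for $\lles$. First I would invoke \cref{completeness-llea-pres-semant}: since $\iglh\nvdash A$, there is a good $(\snnilb,\cdsnb,\igl)$‑semantic $\kcal=(W,\pce,\R,\V,\varphi)$ with $\kcal,w_0\nVdash A$. The key observation is that $\kcal$ carries, on each $\R$‑accessible node $u$, a $\cdsnb$‑proposition $\varphi_u$, and the forcing clause for $\Box$ reads off provability of these propositions \emph{in $\igl$} together with the local $\snnilb$‑data $\Delta^\kcal_u$; this is exactly the shape of a $(\snnil,\iglca)$‑semantic once we reinterpret parameters. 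So the natural route is to build an explicit substitution $\gamma$ on atoms and a new semantic $\kcal'$ over the same underlying Kripke frame $\tkcal$, this time a $(\snnil,\iglca)$‑semantic (by \cref{lles-sound-comp}, the right kind of model for $\lles$), such that $\kcal',w_0\nVdash \gamma(A)$.

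The construction of $\gamma$ is where the real work lies. Each $\varphi_u\in\cdsnb$ is, by the definition of $\cdsnb$ and by \cref{Theorem-T-nnilb-Finitary,Theorem-T-nnilb-Finitary-AR}, of the form $\Boxdot D\wedge(D^\dagger)^\Box$ with $D\in\dnb$ and $D\xrat{\theta}{\igl}D^\dagger\in\nnilb$; in particular it is essentially determined by a $\Gamma$‑projection into $\nnilb$, i.e.\ by $\nnil$‑data once the boxed subformulas are abstracted away. The idea is to replace each boxed subformula $\Box B_i$ occurring (outside boxes) in $A$ and in the $\varphi_u$'s by a fresh parameter $p_i$, using the abstraction $\alpha$ of \cref{def-glb-cdsnbv} run in reverse, and then to let $\gamma$ be a substitution sending atomic variables of $A$ to (boxed) formulas so that the Gödel‑style translation is absorbed — this is where the G\"odel translation $(.)^\Box$ of \cref{sec-Box-trans} and \cref{Theorem-Vis-NNIL-pres} (the $\NNIL$‑approximation $A^*$) come in, and where the simultaneous fixed‑point \cref{Theorem-simultan-fixed-point} is used to convert the $A$‑projective substitutions $\theta$ attached to the $\varphi_u$ into \emph{outer} substitutions, exactly as advertised in the paragraph preceding that theorem. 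Concretely: define $\gamma$ on the variables of $A$ by iterating the projection data up the (conversely well‑founded) frame, taking fixed points of the $\theta$'s at each $\R$‑layer, so that $\gamma(A)$ becomes a proposition whose boxed parts match $(\cdot)^\Box$‑images of $\nnil$‑propositions, i.e.\ self‑complete in the sense that $\iglca$‑provability of $\gamma(\Box B)$ reduces to $\igl$‑provability of $(B^*)^\Box$.

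Having set up $\gamma$, I would then define $\kcal':=(\tkcal,\varphi')$ with $\varphi'_u:=$ the $\nnil$‑proposition underlying $\varphi_u$ under the abstraction $\alpha$ (so that $\alpha(\varphi'_u)$ is $\igl$‑equivalent to the part of $\varphi_u$ that matters), check that $\kcal'$ is a good $(\snnil,\iglca)$‑semantic (compatibility $\kcal',u\Vdash\varphi'_u$ is inherited from compatibility of $\kcal$, and the $\cpa$‑condition holds because on $\R$‑accessible nodes the relevant atoms are parameters, for which $\igl$ already proves $a\to\Box a$), and finally prove the matching lemma: for every $B\in\sub A$ and every $w\in W$, $\kcal,w\Vdash B$ iff $\kcal',w\Vdash\gamma(B)$. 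This is a straightforward induction on $B$ using \cref{pres-semant-glb-prov} to rewrite $\Box$‑forcing in terms of glb's, plus the equality $\ap{\snnil}{\iglca}{\gamma(C)}= (\ap{\cdsnbv}{\igl}{C})^{\text{abstracted}}$ that the construction of $\gamma$ was designed to guarantee (here I would lean on \cref{SN-approx-prop} for the $\snnil$‑glb and on \cref{Theorem-T-nnilb-Finitary} for the $\cdsnb$‑glb). At $w_0$ this yields $\kcal',w_0\nVdash\gamma(A)$, and then soundness of $\lles$ for good $\cpa$ $(\snnil,\iglca)$‑semantics (\cref{lles-sound-comp}, which is the ``only if'' half) gives $\lles\nvdash\gamma(A)$, completing the reduction with $\theta:=\gamma$.

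\textbf{Main obstacle.} The hard part will be the precise definition of $\gamma$ and the verification that the $\snnil$‑glb in $\iglca$ of $\gamma(C)$ coincides (modulo the parameter abstraction) with the $\cdsnbv$‑glb in $\igl$ of $C$ — in other words, that the Gödel translation $(.)^\Box$ together with the fixed‑point substitution exactly account for the gap between ``self‑complete and $\NNIL$‑projective in $\igl$'' ($\cdsnb$) and ``$\snnil$ in $\iglca$'' ($\snnil$). Getting the bookkeeping right for the fixed points across the (finitely many, by conversely well‑founded finiteness of $\tkcal$) $\R$‑layers, and making sure $\gamma$ is a single well‑defined substitution rather than a layer‑dependent family, is the delicate point; once that is in place, the rest is the routine induction sketched above.
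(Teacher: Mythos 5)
Your overall strategy --- extract a good $(\snnilb,\cdsnb,\igl)$-semantic countermodel via \cref{completeness-llea-pres-semant}, transform it by a substitution, and conclude by soundness of $\lles$ for $(\snnil,\iglca)$-semantics (\cref{lles-sound-comp}) --- has the right general shape, but the paper factors the reduction through $\iglhb$ in two genuinely separate stages (\cref{Reduction-llea-lleb} followed by \cref{Reduction-lleb-lles}), and your proposal collapses both into one vaguely specified step precisely where the content lies. First, eliminating the $\cdsnb$-data $\varphi_u$ cannot be done by one substitution acting ``simultaneously on all $\R$-layers'': each $\Ha$-node carries its own projective substitution, and these need not cohere. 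The paper removes a single $\R$-minimal $\Ha$-node $\wfix$ at a time, using a fresh parameter $q$ marking the cone above $\wfix$, the simultaneous fixed point of \cref{Theorem-simultan-fixed-point} only for the boxed subformulas relevant to that node, and then runs an induction on the number $\dfrak{A}$ of $\Ha$-nodes; since the substitution changes $A$, the iteration is an induction, not a one-shot definition, and the verification that the transformed model still refutes $\gamma(A)$ and has fewer $\Ha$-nodes occupies a long series of technical lemmas. Your ``main obstacle'' paragraph correctly identifies this as the delicate point but offers no mechanism to resolve it.

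Second, and more seriously, you have no device for crossing from $\igl$-based forcing to $\iglca$-based forcing. In a $(\snnil,\iglca)$-semantic the clause for $\Box$ quantifies over all $\snnil$-propositions true at a node --- including variable-containing ones such as $x\wedge\Box x$ --- and provability is taken in $\iglca$; your claim that the $\cpa$-condition holds ``because the relevant atoms are parameters'' is unsupported, since $\gamma(A)$ still contains variables (or you owe an argument that it does not) and the original valuation does not persist along $\R$ for variables. The paper needs a second, independent reduction (\cref{Reduction-lleb-lles}): variables $x$ are replaced by disjunctions of formulas $Q_w:=q_w\wedge\bigwedge_{w\widehat\R u}\neg q_u$ built from fresh parameters indexed by the nodes of the model enlarged with auxiliary countermodels $\kcal^w_E$, one for each failure $\Phi_w\nvdash E$, and those auxiliary models are exactly what makes the hard direction work, namely that $\beta(X^A_w)\vdash_{\iglca}\beta(E)$ forces $X^A_w\vdash_{\igl}E$ (\cref{beta4}). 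The G\"odel translation together with \cref{SN-approx-prop} does not give this: \cref{igl-closure-box} only states closure of $\igl$ and $\iglca$ under $(.)^\Box$, not any conservativity of $\iglca$ over $\igl$ on translated formulas. Without an analogue of this parameter trick, the right-to-left direction of your ``matching lemma'' fails at boxed formulas, so the proposal as it stands does not go through.
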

\begin{proof}
	Assume $\iglcph\nvdash A$. 
	\Cref{Reduction-llea-lleb} yields a substitution $\gamma$ such that $\iglcphb\nvdash \gamma(A)$. Then by \Cref{Reduction-lleb-lles}, $\lles\nvdash\beta(\gamma(A))$ for some substitution $\beta$. Setting $\theta:=\beta\circ\gamma$ completes the proof.
\end{proof}

\begin{corollary}[\textbf{Arithmetical Completeness}]
\label{arith-comp}
$\iglcph$ is complete for arithmetical interpretations in $\HA$; that is, if $\HA\vdash\aha(A)$ for every arithmetical substitution $\alpha$, then $\iglcph\vdash A$.
\end{corollary}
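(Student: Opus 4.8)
The plan is to prove the contrapositive of the completeness direction: assuming $\iglh\nvdash A$, we construct an arithmetical substitution $\alpha$ with $\HA\nvdash\aha(A)$. All of the substantial work has already been done, so what remains is to chain three facts together and carry out a small bookkeeping argument.

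First I would apply the Reduction theorem (\cref{PLHA-reduction}): from $\iglh\nvdash A$ it produces a propositional substitution $\theta$ with $\lles\nvdash\theta(A)$, and by our standing convention $\theta$ is the identity on $\parr$. Next I would invoke \cref{Theorem-Sigma-Provability-HA}, which identifies $\lles$ with the $\Sigma_1$-provability logic of $\HA$: since $\lles\nvdash\theta(A)$, there is a $\Sigma_1$-substitution $\sigma$ with $\HA\nvdash\sha(\theta(A))$. Finally I would repackage $\sigma$ and $\theta$ into a single arithmetical substitution $\alpha$ defined on atomics by $\alpha(a):=\sha(\theta(a))$. Since $\theta$ fixes parameters and $\sigma$ is a $\Sigma_1$-substitution, $\alpha(p)=\sigma(p)\in\Sigma_1$ for every $p\in\parr$, so $\alpha$ is a legitimate arithmetical substitution. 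A routine induction on $B\in\lcalp$ — using that $\aha$ and $\sha$ both commute with $\vee,\wedge,\to$, that $\aha(\bot)=\sha(\bot)=\bot$, and that both interpret $B'\rhd C'$ by literally the same ``for every $\Sigma_1$ sentence $E$'' clause applied to the (inductively identical) arithmetizations of $B'$ and $C'$ — shows $\aha(B)=\sha(\theta(B))$ for all $B$. In particular $\aha(A)=\sha(\theta(A))$, so $\HA\nvdash\aha(A)$, contradicting the hypothesis; hence $\iglh\vdash A$.

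Combined with the soundness direction, which follows from $\iglh\subseteq\iph$ (\cref{iglh-in-iph}) together with the soundness of $\iph$ for arithmetical interpretations in $\HA$ (\cref{iph-sound}), this establishes that $\iglh$ is precisely the provability logic of $\HA$. The only point requiring care, once \cref{PLHA-reduction} is granted, is the compositionality induction above, and in particular the observation that the composite substitution $\alpha$ respects the $\Sigma_1$ restriction on parameters; this is exactly where the convention that propositional substitutions fix $\parr$ is used, and it is the reason the Reduction theorem is phrased with a substitution that leaves parameters untouched. No genuinely new difficulty arises here — the theorem is a direct corollary of the reduction and the $\Sigma_1$-completeness input.
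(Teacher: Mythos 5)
Your proposal is correct and follows essentially the same route as the paper: argue contrapositively, apply the Reduction theorem (\cref{PLHA-reduction}) to get $\theta$ with $\lles\nvdash\theta(A)$, invoke \cref{Theorem-Sigma-Provability-HA} to get a $\Sigma_1$-substitution $\sigma$ with $\HA\nvdash\sha(\theta(A))$, and take $\alpha:=\sigma\circ\theta$. The paper states the last step without comment, whereas you spell out the compositionality induction and the check that $\alpha$ respects the $\Sigma_1$ constraint on parameters; this is just an elaboration of the same argument, not a different one.
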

\begin{proof}
	We argue contrapositively. Assume $\iglcph\nvdash A$ and seek an arithmetical substitution $\alpha$ such that $\HA\nvdash\aha(A)$.
	By $\iglcph\nvdash A$ and \Cref{PLHA-reduction}, there exists a propositional substitution $\theta$ such that $\lles\nvdash \theta(A)$. Then \Cref{Theorem-Sigma-Provability-HA} yields an arithmetical substitution $\sigma$ such that $\HA\nvdash\sigma_{_{\sf HA}}(\theta(A))$. Thus $\alpha:=\sigma\circ \theta$ completes the proof.
\end{proof}

The above argument imposed an extra restriction on arithmetical substitutions, namely that atomic parameters $p_i$ are replaced by $\Sigma_1$-sentences. However, by setting $\parr=\emptyset$ in the following theorem, we restate the previous result in a more familiar setting. With $\parr=\emptyset$, the additional requirement on arithmetical interpretations of parameters disappears.
\begin{theorem}\label{jhgd}
$\iglcph$ is the provability logic of $\HA$.  
\end{theorem}
\begin{proof}
	Soundness again follows from \Cref{iglh-in-iph,iph-sound}.
	For completeness, assume $\iglcph\nvdash A$ for some $A\in\lcalb$. Then  
	\Cref{arith-comp} yields the required arithmetical substitution.  
\end{proof}

Recall the definitions of $\iph$ and $\iphp$ from \Cref{sec-preser}, 
and define $\iphb$ ($\iphpb$) as the fragment of $\iph$ ($\iphp$) in  the language 
$\lcalb$. As a corollary to arithmetical completeness, we have:
\begin{corollary}\label{iph-iglh}
$\iglcph=\iphb=\iphpb$.
\end{corollary}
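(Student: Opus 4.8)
The final statement is $\iglh=\iphb=\iphpb$, which should follow by combining soundness and completeness with a few elementary inclusions.

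The plan is to establish a cycle of inclusions among the three logics viewed as sets of $\lcalb$-provable formulas. First I would record the easy inclusions: $\iglh\vdash A$ implies $\iph\vdash A$ by \cref{iglh-in-iph}, and restricting to the language $\lcalb$ this gives $\iglh\subseteq\iphb$; similarly, since the only difference between $\iph$ and $\iphp$ is the extra axiom ${\sf 4p}$, we trivially have $\iphb\subseteq\iphpb$ (every theorem of $\iph$ is a theorem of $\iphp$). So it remains to close the loop with $\iphpb\subseteq\iglh$. Here I would invoke arithmetical soundness and completeness: by \cref{iph-sound} (applied to $\iphp$, hence to its $\lcalb$-fragment $\iphpb$), if $\iphpb\vdash A$ then $\HA\vdash\aha^+(A)$ for every arithmetical substitution $\alpha$, and in particular $\HA\vdash\aha(A)$ for every $\alpha$ (taking the non-strong interpretation, or observing $\aha^+$ refines $\aha$). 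Then \cref{arith-comp}, the arithmetical completeness of $\iglh$, yields $\iglh\vdash A$. Chaining $\iglh\subseteq\iphb\subseteq\iphpb\subseteq\iglh$ gives the three equalities.

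The one point requiring a little care is the passage from $\iphp$-provability to $\HA\vdash\aha(A)$ rather than $\HA\vdash\aha^+(A)$: strictly, \cref{iph-sound} gives the strong interpretation. Since $\aha^+(A\rhd B):=\varphi\wedge\Box_{_{\sf HA}}\varphi$ while $\aha(A\rhd B):=\varphi$, provability of $\aha^+(A)$ in $\HA$ does not literally give provability of $\aha(A)$ for an arbitrary formula $A$ unless one argues by induction on $A$ that $\HA\vdash\aha^+(A)\to\aha(A)$. That induction is routine: $\aha^+$ and $\aha$ agree on atoms and commute with connectives, and at the modal step $\HA\vdash(\varphi\wedge\Box_{_{\sf HA}}\varphi)\to\varphi$ is immediate. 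Alternatively, one observes that \cref{iph-sound} already states $\iph\vdash A$ implies $\HA\vdash\aha(A)$ in its proof (citing \citep{IemhoffT,Iemhoff.Preservativity}), so applying that directly to $\iph$ and then using $\iphb\subseteq\iph$ suffices, avoiding the strong variant altogether. Either route closes the argument.

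I do not expect a genuine obstacle here — the real work has been done in \cref{iglh-in-iph,iph-sound,arith-comp}. The only thing to be vigilant about is keeping straight which logic is being fed into which soundness/completeness statement (strong vs.\ non-strong interpretation, full language $\lcalp$ vs.\ fragment $\lcalb$), so that the inclusions genuinely compose into equalities of the $\lcalb$-fragments.
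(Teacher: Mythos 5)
Your main route is exactly the paper's: the same chain $\iglh\subseteq\iphb\subseteq\iphpb\subseteq\iglh$, with the first two inclusions from \cref{iglh-in-iph} and the definition of $\iphp$, and the third from arithmetical soundness of $\iphp$ (\cref{iph-sound}) combined with arithmetical completeness of $\iglh$ (\cref{arith-comp}); arguing directly rather than contrapositively is immaterial. One caution about your closing aside, though: the ``alternative route'' that uses only the $\aha$-soundness of $\iph$ does \emph{not} close the argument, since it gives $\iphb\subseteq\iglh$ but nothing about $\iphpb$ --- the axiom ${\sf 4p}$ is sound only for the strong interpretation, so the strong variant cannot be avoided if the equality with $\iphpb$ is wanted. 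Also, in the induction showing $\HA\vdash\ahap(A)\to\aha(A)$, the modal step is not merely dropping the conjunct $\Box_{_{\sf HA}}\varphi$: the $\varphi$ occurring in $\ahap(\top\rhd B)$ is built from $\ahap(B)$, not $\aha(B)$, so one also needs $\HA\vdash\Box_{_{\sf HA}}(\ahap(B)\to\aha(B))$ (available, since the implication is $\HA$-provable and that fact is a true $\Sigma_1$-sentence); this works for $A\in\lcalb$ because the antecedent of every $\rhd$ is $\top$, but it is not ``immediate'' as stated.
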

\begin{proof}
	We show $\iglcph\subseteq\iphb\subseteq\iphpb\subseteq\iglcph$.
	\begin{itemize}[leftmargin=*]
		\item $\iglcph\subseteq\iphb$: \Cref{iglh-in-iph}.
		\item $\iphb\subseteq\iphpb$: Trivial.
		\item $\iphpb\subseteq\iglcph$: 
		We argue contrapositively. Assume $\iglcph\nvdash A$. Then \Cref{jhgd} implies $\HA\nvdash \aha(A)$. Hence \Cref{iph-sound} yields $\iphp\nvdash A$. \qedhere
	\end{itemize}
\end{proof}
The proof of the above corollary employs arithmetical soundness and completeness theorems. However, the equality $\iglcph=\iphb=\iphpb$ invites a propositional proof without recourse to arithmetical interpretations. Thus we pose:
\begin{question}\label{q-3}
	Does there exist a translation $(.)^{\sf t}:\lcalp\to\lcalb$ with the following properties?
	\begin{itemize}
		\item If $\iphp\vdash A$, then $\iglcph\vdash A^{\sf t}$ for every $A\in\lcalp$.
		\item $\iglcph\vdash A\lr A^{\sf t}$ for every $A\in\lcalb$.
	\end{itemize}
\end{question}
If such a translation exists, \Cref{iph-iglh} can be proved without invoking arithmetical soundness-completeness theorems:
\begin{proof}[Proof of \Cref{iph-iglh}]
	The inclusions $\iglcph\subseteq\iphb\subseteq\iphpb$ hold as before.
	For $\iphpb\subseteq\iglcph$, assume $\iphp\vdash A$ for $A\in\lcalb$. Then $\iglcph\vdash A^{\sf t}$, and since $\iglcph\vdash A\lr A^{\sf t}$, we obtain $\iglcph\vdash A$.
\end{proof}

\subsection{First reduction step: $\iglcph\nvdash A$ implies $\iglcphb\nvdash\gamma(A)$}\label{first-red}
This subsection proves \Cref{Reduction-llea-lleb}: if $\iglcph\nvdash A$, then $\iglcphb\nvdash\gamma(A)$ for some substitution $\gamma$. All subsequent technical lemmas are used solely in the proof of \Cref{Reduction-llea-lleb}. For notational simplicity, throughout this section we denote derivability in $\iglcp$ by $\vdash$, unless otherwise stated.

This reduction transforms $(\snnilb,\cdsnb)$-based provability models into $\snnilb$-based models. Before delving into the detailed construction, we outline the underlying ideas. Suppose $\pcal\npmodels A$ for some $(\snnilb,\cdsnb)$-based provability model $\pcal=\cpmp\kcal{\snnilb}{\Phi}$ with $\kcal=(W,\pce,\R,\V)$ and $\Phi=\{\varphi_w\}_{w\in W}$. Since each $\varphi_w\in\cdsnb$, there exists a projective substitution $\theta_w$ such that $\varphi_w\xrat{\theta_w}{\iglcp} \snnilb$. We use these substitutions systematically to replace each $\varphi_w$ by its $\snnilb$-projection. One obstacle is that, by the definition of projectivity, we work with the outer substitution $\th_w$, not $\theta_w$ itself, meaning $\th_w$ is the identity on boxed formulas. We employ the simultaneous fixed-point theorem in $\iglcp$ (\Cref{sec-fix}) to simulate this condition.

\textbf{Important notation:} Unless stated otherwise, throughout \Cref{first-red,second-red}, $\vdash$ denotes derivability in $\iglcp$ (which lacks the necessitation rule $A\nvdash \Box A$).

\begin{theorem}\label{Reduction-llea-lleb}
	If $\iglcph\nvdash A$, then there exists a substitution $\gamma$ such that $\iglcphb\nvdash \gamma(A)$.
\end{theorem}
\begin{proof}
	We first define several notions. Given a provability model $\pcal=\cpmp{\kcal}{\Delta}{\Phi}$ with $\Phi=\{\varphi_w\}_{w\in W}$ and $\kcal=(W,\pce,\R,\V)$, we say $w\in W$ is an $\Ha$-node if $\varphi_u\neq\top$ for some $\R$-accessible $u$ with $u\sqsubseteq w$. Define a complexity measure $\dfrak A$ for any $A$ with $\iglcph\nvdash A$ as follows: $\dfrak A$ is the minimum number $d$ such that there exists a good $(\snnilb,\cdsnb)$-based locally sound provability model $\pcal$ with exactly $d$ $\Ha$-nodes. By \Cref{completeness-llea-pres-semant}, such a provability model indeed exists.
	
	We proceed by induction on $\dfrak A$. If $\dfrak A=0$, then the provability model $\pcal$ is also $\snnilb$-based, and \Cref{sound-const2} implies $\iglcphb\nvdash A$. Thus setting $\gamma$ as the identity substitution suffices.
	
	As induction hypothesis, assume that for every $A'$ with $\dfrak{A'} < \dfrak A$, there exists a substitution $\gamma$ such that $\iglcphb\nvdash\gamma(A')$. Then it suffices to find a substitution $\gamma$ with $\dfrak{\gamma(A)} < \dfrak A$.
	
	Let $\pcal^0=\cpmp{\kcal^0}{\snnilb}{\Phi^0}$ with $\kcal^0=(W,\pce,\R,\V^0)$ and $\Phi^0:=\{\varphi^0_w\}_{w\in W}$ be a good $(\snnilb,\cdsnb)$-based provability model with root $w_0$ such that $\pcal^0,w_0\npmodels A$ and the number of $\Ha$-nodes in $\pcal^0$ is $\dfrak A$. Note that whenever $w\R u$, we have $\vdash \varphi^0_u\to\varphi^0_w$.
	
	Fix an $\R$-minimal $\Ha$-node $\wfix\in W$; that is, every $w'\R \wfix$ is not an $\Ha$-node. Observe that if $w$ is not an $\Ha$-node, then $\varphi^0_w=\top$.\footnote{If $w$ is not $\R$-accessible, it is not an $\Ha$-node even if $\varphi_w\neq \top$. However, since $\lgc w$ is irrelevant for non-$\R$-accessible $w$, we may assume from the outset that $\varphi_w=\top$ for every such $w$, without affecting formula evaluation.}
	
	Define $W_0:=\{w_0\}$ and $W_{i+1}:=\{w\in W: \text{the immediate predecessor of } w \text{ is in } W_i\}$.\footnote{For the definition of immediate predecessor, see \Cref{prov-models-compl-iglh}.} Since $\kcal$ is conversely well-founded, there exists a maximum $n\in\nat$ such that $W_n\neq\emptyset$. Set $W':=\bigcup_{i=0}^n W_i$ and $W'':=\bigcup_{i=1}^n W_i$. Let $\chi:=\varphi^0_\wfix$. By \Cref{cor-pro-models}, we may assume $\chi\in\sfc\dar\IPC\snnil$; thus there exists a $\chi$-projective (in $\IPC$) substitution $\theta$ such that $\IPC\vdash \th (\chi)\lr \chi^\dagger \in\snnilb$, $\IPC\vdash\chi \to(x\lr \theta(x))$ for every variable $x$, and $\theta(x)=x$ for every $x\nin\sub A$ (by \Cref{local-proj}).  
	
	Let  
	$$\BA:=\Box B_1,\ldots,\Box B_m$$ 
	enumerate all boxed subformulas of $\chi$. Further, let  
	$$\pvec:= p_1,\ldots,p_m$$ 
	and $q$ be fresh atomic parameters (pairwise distinct and not appearing in $A$, $\theta(x)$, or $\{\varphi^0_w,\varphi^\dagger_w\}_{w\in W''}$ for any $x\in\varr$).
	
	Let $\eta$ be a substitution with $\eta(p_i):=\Box B_i$ for $1\leq i\leq m$ and identity elsewhere. Let $\alpha$ be the parametric dual of $\theta$ in the language $\lcalz(\varr,\pvec)$; that is, $\alpha$ is a substitution\footnote{Recall that substitutions are identity on parameters: $\alpha(p)=p$ for any parameter $p$.} such that $\eta(\alpha(B))=\th(\eta(B))$ for every $B\in\lcalz(\varr,\pvec)$. Define $\chi'\in\lcalz(\varr,\pvec)$ and $\chi^\ddagger\in\lcalz(\pvec)$ such that $\eta(\chi^\ddagger)=\chi^\dagger$ and $\eta(\chi')=\chi$. Then clearly $\IPC\vdash \alpha(\chi')\lr \chi^\ddagger$ and $\IPC\vdash \chi'\to(x\lr \alpha(x))$ for every $x\in\varr$.
	
	Define the substitution:
	\emli
	$$
	\beta(a):=\begin{cases}
	(q \to \alpha (a))\wedge(\neg q \to a) & : a\in\varr\\
	a & :a\in\parr\cup\{\bot\}
	\end{cases}
	$$
	Let $\tau$ be the simultaneous fixed point of $\beta(\Box B_1),\ldots,  \beta(\Box B_m)$ with respect to $\pvec$, as provided by \Cref{Theorem-simultan-fixed-point}. That is, $\iglcp\vdash \tau(p_i)\lr \tau\beta(\Box B_i)$. Finally, set $\gamma:=\tau\circ\beta$.
	
	Define $\pcal^1:=\cpmp{\kcal^1}\snnilb{\Phi^1}$ with $\kcal^1:=(W,\pce,\R,\V^1)$, where $W,\pce,\R$ and $\Phi^1:=\Phi^0$ are as in $\pcal^0$, and $\V^1$ is defined by:
	\emli
	\begin{align*}
		w\V^1 p_i &\quad\text{iff}\quad \pcal^1,w\pmodels \Box B_i,\\
		w\V^1 q &\quad\text{iff}\quad \wfix\Rvar w \quad\text{(i.e., } \exists v\; \wfix\sqsubseteq v\pce w\text{)},
	\end{align*}
	and $w\V^1 a$ iff $w\V^0 a$ for every other atomic $a$. Note that this definition is recursive but valid because $(W,\R)$ is conversely well-founded. By induction on $w$ (ordered by $\sqsupset$), one easily observes that $\pcal^1,w\pmodels B$ iff $\pcal^0,w\pmodels B$ for every $B$ not containing $p_i$ or $q$. Hence $\pcal^1$ is locally sound and $\pcal^1,w_0\npmodels A$. Moreover, $\pcal^1$ is a good $(\snnilb,\cdsnb)$-based locally sound provability model sharing the same set of $\Ha$-nodes as $\pcal^0$.
	
	Finally, define $\pcal^2:=\cpmp{\kcal^2}\snnilb{\Phi^2}$ with $\kcal^2:=(W,\pce,\R,\V^2)$ and $\V^2$ and $\Phi^2:=\{\varphi^2_w\}_{w\in W}$ as follows:
	\emli
	\begin{align*}
		\varphi^2_w &:=
		\begin{cases}
			\varphi^0_w & : w\neq \wfix \\
			\top & : w=\wfix
		\end{cases}\\
		w\V^2 p_i &\text{ iff } \pcal^2,w\pmodels \Box \gamma(B_i),\\
		w\V^2 a &\text{ iff } w\V^1 a \text{ for every other atomic } a.
	\end{align*}
	Define $\snnilb^i_w:=\{E\in\snnilb: \pcal^i,w\pmodels E\}$ for $i\in\{0,1,2\}$. For later use, also define:
	\emli
	\begin{align*}
		\lgci i w &:=\IPC+\snnilb^i_w+\varphi^i_w \quad\text{for } i=0,1,2,\\
		\psi &:=\bigwedge\{\Boxdot(p_i\lr \Box B_i): 1\leq i\leq m\},\\
		\psi' &:=\bigwedge\{\Boxdot(p_i\lr \Box \gamma(B_i)): 1\leq i\leq m\}.
	\end{align*}
	The valuations in $\pcal^1$ and $\pcal^2$ are defined so that $\pcal^1\pmodels \psi$ and $\pcal^2\pmodels \psi'$.
	\Cref{local-kcal-Kripke} implies that $\pcal^2$ is also a good $(\snnilb,\cdsnb)$-based provability model, and \Cref{local-lem-4} yields $\pcal^2,w_0\npmodels \gamma(A)$. Consequently, $\dfrak{\gamma(A)} < \dfrak A$.
\end{proof}

The remainder of this subsection proves the technical lemmas required for the above theorem, namely \Cref{Reduction-llea-lleb}.  

Let $\lcalbp$ denote the set of formulas $B\in\lcalb$ such that $p_i\notin\sub B$ for every $p_i\in\pvec$. Define $\Delta_1\equiv_\Gamma\Delta_2$ by:
$$\forall\,E\in\Gamma\ (\Delta_1\vdash E \Leftrightarrow \Delta_2\vdash E).$$

Before proceeding, we note several observations that will be used implicitly in the proofs:
\begin{itemize}
	\item $\lgci 1 w\vdash p_i\lr \Box B_i$ and $\lgci 1 w\vdash p_i\lr \Box \gamma(B_i)$ for every $1\leq i\leq m$.
	\item $\wfix$ is not $\pce$-accessible because $\wfix$ is $\R$-accessible and $(W,\pce,\R)$ is a good frame.
	\item Every $\snnilb^i_w$ (and $\lgci i w$) includes $\iglcp$ for $i=0,1,2$ and $w\in W$, by \Cref{sound-const1}.
	\item $\pcal^1$ is a good $(\snnilb,\cdsnb)$-based provability model. Local soundness follows by showing that $\pcal^1$ and $\pcal^0$ are equivalent for the restricted language $\lcalbp$.
	\item $\lgci i w\vdash \chi$ for $i=0,1,2$ and $\wfix\sqsubseteq w$.
\end{itemize}

\begin{lemma}\label{local-lem-1.49}
	For every $w\not\sqsubset \wfix$ and $E\in\lcalbp$, we have $\pcal^2,w\pmodels E$ iff $\pcal^1,w\pmodels E$. 
	Moreover, for every $\R$-accessible $w$ with $w\not\sqsubseteq \wfix$, we have $\lgci 2 w \equiv_{\lcalb'}  \lgci 1 w$, and $\lgci 2 \wfix\cup\{\chi\} \equiv_{\lcalb'} \lgci 1 \wfix$.
\end{lemma}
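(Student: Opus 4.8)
The plan is to prove \cref{local-lem-1.49} by simultaneous induction on the nodes $w$ ordered by $\sqsupset$, using the fact that $\kcal^1$ and $\kcal^2$ differ only in the valuation of the parameters $\pvec$ (by definition $\varphi^2$ equals $\varphi^0$ except at $\wfix$, where it is $\top$), and in the valuation of $q$ and the other atomics they agree. Concretely, the substitution $\gamma=\tau\circ\beta$ was designed so that, via the simultaneous fixed-point property $\igl\vdash \tau(p_i)\lr \tau\beta(\Box B_i)$, at nodes $w$ with $w\not\sqsubset\wfix$ one has $\neg q$ forced (by the definition $w\V^1 q$ iff $\wfix\Rvar w$, so $w$ not below $\wfix$ forbids $q$), and then $\beta(a)$ collapses to $a$ on such nodes; so $\gamma(B_i)$ behaves like $B_i$ there. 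First I would establish the atomic base case: for a variable $a$, since $w\not\sqsubset\wfix$ gives $\kcal^j,w\nVdash q$, we get $\kcal^2,w\Vdash\beta(a)\lr a$; combined with the fixed-point identity this yields $\kcal^2,w\Vdash p_i$ iff $\kcal^2,w\Vdash\Box\gamma(B_i)$ iff (by IH applied to the strictly smaller boxed formula, which lies in $\lcalb'$ after the substitution bookkeeping) $\kcal^1,w\Vdash\Box B_i$ iff $w\V^1 p_i$. For $a\in\parr\cup\{\bot\}$ other than the $p_i$ the two valuations literally coincide.

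Next I would run the induction step over the connectives, which is routine for $\wedge$, $\vee$, $\to$ and $\Box$: the key point is that the condition ``$w\not\sqsubset\wfix$'' is preserved when passing to $\sqsupset$-successors and to $\pce$-successors that are still not below $\wfix$, and that $E\in\lcalb'$ guarantees no $p_i$ occurs, so the only possible discrepancy between the two models — the valuation of $\pvec$ — never arises; for $\to$ and $\Box$ one must also observe that the relevant successors $u\sqsupset w$ or $u\sce w$ either again satisfy $u\not\sqsubset\wfix$ or else the forcing of $E\in\lcalb'$ there can still be compared because $E$ has no parameters from $\pvec$ and the two models agree on everything else — here I would be careful to phrase the statement so that the clause ``$w\not\sqsubset\wfix$'' together with $E\in\lcalb'$ makes the comparison go through at all the needed witnesses.

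For the ``moreover'' part about $\Phi^i_w$, recall $\Phi^i_w=\{\varphi^i_w\}\cup\snnilb^i_w$. When $w\not\sqsubseteq\wfix$ is $\R$-accessible we have $\varphi^2_w=\varphi^0_w=\varphi^1_w$, and by the first part $\snnilb^2_w$ and $\snnilb^1_w$ agree on all members of $\snnilb\cap\lcalb'$; since $\equiv_{\lcalb'}$ only tests derivability of $\lcalb'$-propositions, this gives $\Phi^2_w\equiv_{\lcalb'}\Phi^1_w$. For $w=\wfix$ we have $\varphi^2_\wfix=\top$ while $\varphi^1_\wfix=\varphi^0_\wfix=\chi$ (the $\Ha$-minimal node), so $\Phi^1_\wfix=\{\chi\}\cup\snnilb^1_\wfix$ whereas $\Phi^2_\wfix\cup\{\chi\}=\{\top,\chi\}\cup\snnilb^2_\wfix$, and again the first part (which does apply at $\wfix$ since $\wfix\not\sqsubset\wfix$ by irreflexivity) identifies the two $\snnilb$-sets on $\lcalb'$, giving the claimed $\lcalb'$-equivalence.

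The main obstacle I expect is the bookkeeping needed to apply the inductive hypothesis to the boxed subformulas $B_i$: strictly, $B_i$ need not lie in $\lcalb'$, since after forming $\gamma(B_i)$ the parameters $\pvec$ reappear, so one has to phrase the induction carefully — either by first observing that the $B_i$ are built from the original language (where $\pvec$ are fresh and hence absent), so $B_i\in\lcalb'$ automatically, and that $\gamma(B_i)$ is handled not by the inductive hypothesis on $E$ but by the fixed-point equation reducing $\Box\gamma(B_i)$ to $p_i$ at the relevant nodes — or by strengthening the statement being inducted on to carry the needed claim about the $p_i$'s simultaneously. Getting this dependency order right (atomics $p_i$ $\leftrightarrow$ $\Box\gamma(B_i)$ via fixed point, then everything $\lcalb'$ by ordinary induction) is the delicate part; the rest is mechanical.
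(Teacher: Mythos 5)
Your proposal rests on a claim that is false: that $w\not\sqsubset \wfix$ forces $\neg q$ at $w$, so that $\beta$ collapses to the identity and $\gamma(B_i)$ behaves like $B_i$ throughout the region covered by the lemma. The situation is the opposite. Since $w\V^1 q$ iff $\wfix\Rvar w$, the parameter $q$ is forced exactly at the nodes lying weakly above $\wfix$, and all of those satisfy $w\not\sqsubset\wfix$ (for instance $\wfix$ itself, by irreflexivity of $\R$); the nodes where $q$ fails include precisely the nodes $w\sqsubset\wfix$, which are the ones \emph{excluded} from the lemma. So your atomic base case breaks down. Moreover the step ``$\kcal^2,w\Vdash\Box\gamma(B_i)$ iff (by IH) $\kcal^1,w\Vdash\Box B_i$'' is not an instance of the induction hypothesis: the IH only transfers one and the same $\lcalb'$-formula between the two models and says nothing about the relation between $\gamma(B_i)$ and $B_i$ inside either model. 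That relation is the substantive content of the later lemmas (\cref{handy,2-1,2-1-2,4-2}) and is not available at this point; also note that by \cref{Theorem-simultan-fixed-point} the formulas $\gamma(B_i)$ contain no $p_j$ at all, so your ``bookkeeping'' fix via the fixed-point equation addresses a non-issue while the real gap (converting $\gamma(B_i)$ into $B_i$) remains.

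The lemma needs none of this, and the intended proof is much more elementary. Formulas $E\in\lcalb'$ contain no $p_i$, and $\V^1,\V^2$ agree on every atomic other than the $p_i$'s; the only other difference between $\kcal^1$ and $\kcal^2$ is $\varphi^2_\wfix=\top$ versus $\varphi^1_\wfix=\chi$. The set $\{w: w\not\sqsubset\wfix\}$ is closed under $\pce$ (using $({\pce};{\R})\subseteq{\R}$) and under $\R$ (transitivity), and no $u\sqsupset w$ with $w$ in this set can equal $\wfix$, so $\varphi^1_u=\varphi^2_u$ at every node whose $\Phi$ is ever consulted. Hence a plain double induction (on $w$ ordered by $\sqsupset$, then on the build-up of $E$) goes through: atomics and the propositional connectives are immediate, and for $E=\Box F$ one argues, as the paper does, that for each $u\sqsupset w$ the sets $\Phi^1_u$ and $\Phi^2_u$ have the same $\varphi$-component and, by the first induction hypothesis, force the same members of $\snnilb\cap\lcalb'$, whence $\Phi^1_u\equiv_{\lcalb'}\Phi^2_u$ and the $\Box$-clause transfers. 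Your treatment of the ``moreover'' part is essentially in this spirit and is fine, but importing $q$, $\beta$, $\tau$ and the fixed point into this lemma both overcomplicates it and, as it stands, makes the argument incorrect.
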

\begin{proof}
	We first show that the first statement implies the others. Let $w\not\sqsubseteq \wfix$ and $\lgci 1 w\vdash E$ for some $E\in\lcalbp$. Then $\snnilb^1_w\vdash E$, so there exists $F\in\snnilb^1_w$ with $\vdash F\to E$. Since $E$ contains no parameters $p_i$, we also have $\vdash F'\to E$, where $F':=F[p_1:\Box B_1,\ldots,p_m:\Box B_m]$. Clearly $F'\in\lcalbp$. Because $\lgci 1 w\vdash p_i\lr \Box B_i$, we have $F'\in\snnilb^1_w$; by the first statement, $F'\in\snnilb^2_w$. Hence $\lgci 2 w\vdash E$.
	
	Conversely, let $w\not\sqsubseteq \wfix$ and $\lgci 2 w\vdash E$ for some $E\in\lcalbp$. Then $\snnilb^2_w\vdash E$, so there exists $F\in\snnilb^2_w$ with $\vdash F\to E$. Since $E$ contains no $p_i$, we also have $\vdash F'\to E$, where $F':=F[p_1:\Box \gamma(B_1),\ldots,p_m:\Box \gamma(B_m)]$. Clearly $F'\in\lcalbp$. Because $\lgci 2 w\vdash p_i\lr \Box \gamma(B_i)$, we have $F'\in\snnilb^2_w$; by the first statement, $F'\in\snnilb^1_w$. Thus $\lgci 1 w\vdash E$.
	
	The proof of $\lgci 2 \wfix\cup\{\chi\} \equiv_{\lcalb'} \lgci 1 \wfix$ is similar and left to the reader.
	
	We now prove the first statement by a double induction: first on $W$ ordered by $\sqsupset$, second on the complexity of $E\in\lcalb'$. As the first induction hypothesis, assume that for every $u\sqsupset w$ and every $E\in\lcalb'$, we have $\pcal^1,u\pmodels E$ iff $\pcal^2,u\pmodels E$ (hence $\lgci 2 u \equiv_{\lcalb'} \lgci 1 u$). As the second induction hypothesis, assume that for every strict subformula $F$ of $E$ and every $u\not\sqsubset w_1$, we have $\pcal^1,u\pmodels F$ iff $\pcal^2,u\pmodels F$.
	We consider cases for $E$:
	\begin{itemize}[leftmargin=*]
		\item $E\in\atom\setminus\pvec$ or $E=\bot$: Trivial.
		\item $E$ is a conjunction, disjunction, or implication: Follows directly from the second induction hypothesis.
		\item $E=\Box F$: By the first induction hypothesis and the definitions of $\lgci 1 u$ and $\lgci 2 u$, we have $\lgci 1 u\equiv_{\lcalb'}\lgci 2 u$ for every $u\sqsupset w$. The definition of validity for $\Box F$ in provability models then yields the desired result. \qedhere
	\end{itemize}
\end{proof}

\begin{lemma}\label{local-kcal-Kripke}
$\pcal^2$ is a good $(\snnilb,\cdsnb)$-based provability model.
\end{lemma}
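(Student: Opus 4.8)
The plan is to verify, clause by clause, the requirements in \cref{def-pres-sem} that make $\kcal^2=(W,\pce,\R,\V^2,\varphi^2)$ a good $(\snnilb,\cdsnb,\igl)$-semantic: (i) the underlying Kripke model $(W,\pce,\R,\V^2)$ is a good Kripke model; (ii) $\varphi^2$ has domain $W{\R}$ and $\varphi^2_w\in\cdsnb$; (iii) the compatibility condition $\kcal^2,u\Vdash\varphi^2_u$ for all $u\in W{\R}$. Clause (ii) is immediate: for $w\ne\wfix$ one has $\varphi^2_w=\varphi^0_w\in\cdsnb$ because $\kcal^0$ is a $(\snnilb,\cdsnb,\igl)$-semantic, while $\varphi^2_\wfix=\top\in\cdsnb$. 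For clause (i), the frame $(W,\pce,\R)$ is the frame of $\kcal^0$, which is good, so finiteness, transitivity, converse well-foundedness, rootedness, treeness and the transcendental property are inherited unchanged, and the well-definedness of $\V^2$ as a recursion over the conversely well-founded $\R$ has already been noted; so only $\cpp$ needs attention.

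For $\cpp$ one checks $\kcal^2\Vdash p\to\Box p$ for every $p\in\parr$. The only parameters whose $\kcal^2$-valuation differs from their $\kcal^1$-valuation are the $p_i$. For $p=p_i$: if $u\Vdash p_i$ then by definition $u\Vdash\Box\gamma(B_i)$; a boxed formula is $\R$-persistent (if $w\Vdash\Box C$ and $w\R v$ then $v\Vdash\Box C$, by transitivity of $\R$), so every $v\sqsupset u$ has $v\Vdash\Box\gamma(B_i)$, i.e.~$v\V^2 p_i$; since $p_i\in\snnilb$ (parameters are self-complete and $\NNIL$) this gives $p_i\in\snnilb^2_v\subseteq\Phi^2_v$, hence $\Phi^2_v\vdash p_i$ for all $v\sqsupset u$, i.e.~$u\Vdash\Box p_i$. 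The same pattern handles $q$, using $q\in\snnilb$ and that $\wfix\Rvar w$ together with $w\R v$ imply $\wfix\Rvar v$ (via $({\pce};{\R})\subseteq{\R}$ and transitivity of $\R$), and it handles every other parameter, whose valuation — hence the relevant forcing — agrees with $\kcal^1$, where $\cpp$ already holds.

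For compatibility, take $u\in W{\R}$ and split into cases. If $u=\wfix$, then $\varphi^2_\wfix=\top$ and there is nothing to prove. If $\varphi^0_u=\top$ — which in particular happens when $u\sqsubset\wfix$, since then $u$ is not an $\Ha$-node by the $\R$-minimality of $\wfix$ among $\Ha$-nodes, and as $u\in W{\R}$ this forces $\varphi^0_v=\top$ for all $v\sqsubseteq u$ — then $\varphi^2_u=\top$ and again there is nothing to prove. Otherwise $u\ne\wfix$ and $u\not\sqsubseteq\wfix$, so $u\not\sqsubset\wfix$; here $\varphi^2_u=\varphi^0_u$, which by freshness of the new parameters lies in $\lcalb'$, so \cref{local-lem-1.49} gives $\kcal^2,u\Vdash\varphi^0_u$ iff $\kcal^1,u\Vdash\varphi^0_u$, and the right-hand side holds because $\kcal^1$ is a $(\snnilb,\cdsnb,\igl)$-semantic and therefore compatible.

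The bookkeeping here is light; the only point that genuinely uses the machinery set up beforehand is the control of forcing at nodes that are not $\R$-below $\wfix$, which is exactly what \cref{local-lem-1.49} provides, and the $\R$-persistence remark that turns the new clause ``$w\V^2 p_i$ iff $w\Vdash\Box\gamma(B_i)$'' into the instance $p_i\to\Box p_i$ of $\cpp$. The decisive structural observation — the reason $\wfix$ was chosen $\R$-minimal among $\Ha$-nodes — is that $\varphi^0$, and hence $\varphi^2$, equals $\top$ at every $\R$-accessible node strictly $\R$-below $\wfix$, so compatibility is automatic on the lower part of the model and the comparison with $\kcal^1$ is only ever needed on the part where $\lcalb'$-forcing is preserved.
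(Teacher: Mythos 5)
Your proof is correct and takes essentially the same route as the paper: compatibility of $\kcal^2$ is reduced to \cref{local-lem-1.49} after observing that $\varphi^2_u=\top$ at $\wfix$ and at nodes $\R$-below it (by the $\R$-minimality of $\wfix$ among $\Ha$-nodes), while at the remaining $\R$-accessible nodes $\varphi^2_u=\varphi^0_u\in\lcalb'$ and compatibility transfers from $\kcal^1$; the other requirements are inherited from the unchanged frame and from $\kcal^0/\kcal^1$. Your explicit verification of $\cpp$ for the re-defined valuation of the $p_i$ and $q$ (and, implicitly, the equally routine $\pce$-persistence of $\V^2$) only spells out what the paper subsumes under ``other required properties inherited''.
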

\begin{proof}
	\Cref{local-lem-1.49} implies that $\pcal^2$ is locally sound. All other required properties are inherited from $\pcal^1$.
\end{proof}

\begin{lemma}\label{local-lem-3}
	For every $B\in\lcalb$, $\psi'\vdash B\lr \tau(B)$.
\end{lemma}
\begin{proof}
	Straightforward induction on the complexity of $B$, left to the reader.
\end{proof}

\begin{lemma}\label{handy}
	For every $B\in\lcalb$, we have $\psi,\psi',\chi,q \vdash \gamma(B)\lr B$ and $\psi',\neg q\vdash \gamma(B)\lr B$.
\end{lemma}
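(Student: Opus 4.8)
The plan is to prove both halves of \cref{handy} by induction on the build‑up of $B$, using throughout the preliminary observation that since $\gamma=\tau\circ\beta$ and \cref{local-lem-3} gives $\psi'\vdash\tau(Z)\lr Z$ for every $Z\in\lcalb$, we have $\psi'\vdash\gamma(B)\lr\beta(B)$; in particular the second half, when $B$ is box‑free, follows once we know $\neg q\vdash\beta(B)\lr B$. For the atomic cases: if $x\in\varr$ then $\beta(x)=(q\to\alpha(x))\wedge(\neg q\to x)$, so under $\neg q$ the first conjunct is vacuous and the second is equivalent to $x$, giving $\neg q\vdash\beta(x)\lr x$ directly; and under $q$ we get $q\vdash\beta(x)\lr\alpha(x)$, after which an auxiliary induction shows $\psi\vdash Z\lr\eta(Z)$ for all $Z$ (the $\Box$‑subcase using the boxed conjuncts of $\psi$ with necessitation and normality, plus $\psi\vdash\Box\psi$), whence $\psi\vdash\alpha(x)\lr\eta(\alpha(x))=\th(\eta(x))=\theta(x)$, and $\chi$‑projectivity of $\theta$ gives $\vdash\chi\to(x\lr\theta(x))$; chaining these yields $\psi,\chi,q\vdash\beta(x)\lr x$. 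For a parameter $p_i$ we have $\beta(p_i)=p_i$ and $\gamma(p_i)=\tau(p_i)$, and the fixed‑point equation of \cref{Theorem-simultan-fixed-point} gives $\vdash\tau(p_i)\lr\tau\beta(\Box B_i)=\Box\gamma(B_i)$, while the conjunct $\Boxdot(p_i\lr\Box\gamma(B_i))$ of $\psi'$ gives $\psi'\vdash p_i\lr\Box\gamma(B_i)$, so $\psi'\vdash\gamma(p_i)\lr p_i$; for $\bot$ and for parameters other than the $p_i$ (including $q$) the substitution $\gamma$ is the identity. The cases $B=C\circ D$ follow from the induction hypothesis by intuitionistic reasoning.

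For $B=\Box C$ we have $\gamma(\Box C)=\Box\gamma(C)$. For the first half, set $\Theta:=\psi\wedge\psi'\wedge\chi\wedge q$; the induction hypothesis is $\Theta\vdash\gamma(C)\lr C$, and each conjunct of $\Theta$ is $\igl$‑self‑complete — $\psi,\psi'$ are conjunctions of $\Boxdot$‑formulas, which satisfy $\Boxdot X\vdash\Box\Boxdot X$ via the transitivity axiom $\Box X\to\Box\Box X$; $\chi\in\cdsnb$ yields $\vdash\chi\to\Box\chi$; and $\vdash q\to\Box q$ is the axiom $\cpp$ — so $\Theta\vdash\Box\Theta$, and then $\Box\Theta\vdash\Box(\gamma(C)\lr C)\vdash\Box\gamma(C)\lr\Box C$ by necessitation and normality. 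This gives the first half in full, and the second half for every box‑free $B$.

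The delicate point, and the one I expect to cost the most effort, is the case $B=\Box C$ of the second half: the boxing argument breaks down because $\neg q$ is \emph{not} $\igl$‑self‑complete (indeed $\igl\nvdash\neg q\to\Box\neg q$), so $\psi',\neg q\vdash\gamma(C)\lr C$ does not box up by itself. I would attack this by playing the two halves off one another: reduce, via $\psi'\vdash\Box\psi'$ and \cref{local-lem-3}, to $\psi',\neg q\vdash\Box\beta(C)\lr\Box C$, and then analyse the $\R$‑successors $v$ at which $\Box$ is evaluated — either $v$ still forces $\neg q$, where the induction hypothesis for the second half applies, or $v$ forces $q$, where $\psi,\psi'$ still hold and $\chi=\varphi^0_{\wfix}$ is forced, so the induction hypothesis for the first half applies; these two possibilities are exhaustive because transcendentality of good Kripke models forces every $\R$‑accessible node to decide $q$. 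Making this interplay yield the stated $\igl$‑derivation — or, failing that, verifying that the second half of \cref{handy} is ever invoked in the proof of \cref{Reduction-llea-lleb} only for box‑free $B$, so that this case does not arise — is the main obstacle I anticipate.
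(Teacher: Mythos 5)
Your reconstruction of the first statement is essentially the intended one (the paper only writes out the atomic case and leaves the rest to the reader): the atomic case via $q\vdash\beta(x)\lr\alpha(x)$, elimination of $\eta$ by $\psi$, $\chi$-projectivity of $\theta$, and \cref{local-lem-3}; the case of $p_i$ via the fixed-point equation and $\psi'$; and the $\Box$-case by observing that each of $\psi,\psi',\chi,q$ is $\igl$-self-complete, so the induction hypothesis can be boxed up. All of that is correct and matches the paper's route.

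The gap is exactly where you located it, and your proposed repair does not close it. The second statement is a claim about derivability in $\igl$, so it cannot be settled by inspecting $\R$-successors in the particular models $\kcal^1,\kcal^2$: in an arbitrary good model of $\psi'\wedge\neg q$ the $\R$-successors that force $q$ have no reason to force $\chi$ or $\psi$, so the ``interplay of the two halves'' has no purchase. In fact, with bare $\neg q$ the boxed case is simply false in general. Take an instantiation with $\theta(x)=\top$ (e.g.\ $\chi=\Boxdot x$, $\chi^\dagger=\Box x$); then $\alpha(x)=\top$ and $\gamma(x)$ is $\igl$-equivalent to $\neg q\to x$, so the second statement for $B=\Box x$ would demand $\psi',\neg q\vdash\Box(\neg q\to x)\to\Box x$. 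A two-node good model $w_0\R u$ with $q$ true only at $u$, $x$ false everywhere, and each $p_i$ evaluated at a node as the truth value of $\Box\gamma(B_i)$ there, forces $\psi'$, $\neg q$ and $\Box(\neg q\to x)$ at $w_0$ (the implication is vacuous at $u$ because $q$ persists upwards) but refutes $\Box x$; soundness then refutes derivability. So no proof strategy can succeed for the statement as literally written.

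Your fallback instinct (check how the second half is used in \cref{Reduction-llea-lleb}) is the right move, but the outcome is not that it is only invoked for box-free $B$: in \cref{local-lem-2} it is applied to $B=\varphi^0_w$, which contains boxes. What saves the argument there is that the hypothesis actually available is $\Boxdot\neg q\in\Phi^2_w$, not merely $\neg q$ (indeed $\neg q\nin\snnilb$ while $\Boxdot\neg q\in\snnilb$). The correct statement to prove is therefore $\psi',\Boxdot\neg q\vdash\gamma(B)\lr B$: since $\Boxdot\neg q\vdash\Box\Boxdot\neg q$ in $\ikfour$, the conjunction $\psi'\wedge\Boxdot\neg q$ is self-complete and your own boxing argument from the first half closes the $\Box$-case verbatim, while the atomic, parameter and connective cases go through unchanged. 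The only other use, in \cref{local-lem-4}, is for an atomic variable, where bare $\neg q$ suffices exactly as you showed.
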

\begin{proof}
	We only treat the first statement for $B=a\in\atom$; the remaining cases are similar. By definition of $\beta$, $q \vdash \beta(a)\lr\alpha (a)$. Since $\chi' \vdash \alpha(a)\lr a$, we have $q ,\chi' \vdash\beta(a)\lr a$. Then \Cref{local-lem-3} gives $\psi',q ,\chi' \vdash\tau\beta(a)\lr a$. Because $\psi$ implies that $\eta$ is the identity, we obtain $\psi,\psi',q ,\chi \vdash\gamma(a)\lr a$.
\end{proof}

\begin{lemma}\label{2-1}
	For every $\R$-accessible $w$ with $w\not\R \wfix$, we have $\lgci 2 w\vdash \psi\wedge \psi'$. Furthermore, for every $\R$-accessible $w\not\sqsubseteq \wfix$ and every $B\in\lcalb$, $\lgci 2 w\vdash \gamma(B)\lr B$.
\end{lemma}
\begin{proof}
	Note that \Cref{handy} and $\lgci 2 w\vdash \psi\wedge \psi'$ imply $\lgci 2 w\vdash \gamma(B)\lr B$ whenever $w\not\sqsubseteq \wfix$. Thus we only prove the first statement by induction on $w$ ordered by $\sqsupset$. Assume inductively that for every $u\sqsupset w$, we have $\lgci 2 u\vdash \psi\wedge\psi'$ (hence $\lgci 2 u\vdash \gamma(B)\lr B$ for every $B\in\lcalb$). We show $\lgci 2 w\vdash \psi\wedge \psi'$. Since $\psi\wedge\psi'\in\snnilb$, it suffices to show $\pcal^2,w \pmodels \psi\wedge\psi'$. That is, we must show $\pcal^2,w\pmodels \Boxdot (p_i\lr \Box B_i)\wedge \Boxdot (p_i\lr \Box \gamma(B_i))$. By the induction hypothesis, $\pcal^2,w\pmodels \Box (p_i\lr \Box B_i)\wedge \Box (p_i\lr \Box \gamma(B_i))$. By definition of $\V'$, we have $\pcal^2,w\pmodels p_i\lr \Box\gamma(B_i)$. The induction hypothesis also yields $\pcal^2,w\pmodels \Box(B_i\lr\gamma(B_i))$, hence $\pcal^2,w\pmodels \Box B_i\lr \Box\gamma(B_i)$. Thus $\pcal^2,w\pmodels p_i\lr\Box B_i$.
\end{proof}

\begin{lemma}\label{2-1-2}
	For every $\R$-accessible $w\not\R \wfix$, we have $\lgci 1 w\vdash \psi\wedge \psi'$. Moreover, for every $\R$-accessible $w\not\sqsubset \wfix$ and every $B\in\lcalb$, $\lgci 1 w\vdash \gamma(B)\lr B$.
\end{lemma}
\begin{proof}
	Similar to the proof of \Cref{2-1} and left to the reader.
\end{proof}

\begin{lemma}\label{local-lem-1.5}
	For every $B\in\lcalb$, $\vdash\gamma(B)\lr \gamma(\eta(B))$.
\end{lemma}
\begin{proof}
	Induction on the complexity of $B$. We only treat $B=p_i$; other cases are trivial. Since $\gamma:=\tau\circ\beta$ and $\beta$ is the identity on $p_i$, we have $\gamma(p_i)=\tau(p_i)$. Because $\tau$ is the simultaneous fixed point of $\pvec$, $\vdash \tau(p_i)\lr \tau\beta(\Box B_i)$, i.e., $\vdash \gamma(p_i)\lr \gamma(\eta(p_i))$.
\end{proof}

\begin{lemma}\label{local-lem-2}
	For every $\R$-accessible $w\in W$, $\lgci 2 w\vdash \gamma(\varphi^0_w)$.
\end{lemma}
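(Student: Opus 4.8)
The plan is to argue by cases on the position of the $\R$-accessible node $w$ relative to the distinguished $\R$-minimal $\Ha$-node $\wfix$. Recall from the proof of \cref{Reduction-llea-lleb} the objects $\chi:=\varphi^0_\wfix$, $\psi$, $\psi'$, $\alpha$, $\beta$, $\tau$, $\gamma=\tau\circ\beta$, the set $Y'$ (with $\eta(Y')=Y$ a set of $\igl$-theorems and every member of $Y'$ box-free), and that throughout this subsection $\vdash$ abbreviates $\igl$-derivability.

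If $w\not\sqsubseteq\wfix$, the ``moreover'' clause of \cref{2-1} gives $\Phi^2_w\vdash\gamma(\varphi^0_w)\lr\varphi^0_w$; since $w\ne\wfix$ we have $\varphi^2_w=\varphi^0_w$, and compatibility of $\kcal^2$ (\cref{local-kcal-Kripke}) gives $\varphi^0_w=\varphi^2_w\in\Phi^2_w$, hence $\Phi^2_w\vdash\gamma(\varphi^0_w)$. If $w\R\wfix$, then $w$ is not an $\Ha$-node by the $\R$-minimality of $\wfix$; since $w$ is $\R$-accessible, the $\Ha$-node condition must fail, forcing $\varphi^0_u=\top$ for all $u\sqsubseteq w$, in particular $\varphi^0_w=\top$, and $\Phi^2_w\vdash\gamma(\top)=\top$ is trivial. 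The only substantive case is $w=\wfix$.

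For $w=\wfix$ I would first record several facts about $\Phi^2_\wfix$: that $\Phi^2_\wfix\vdash\psi\wedge\psi'$ (first part of \cref{2-1}, applicable since $\wfix\not\R\wfix$); that $q\in\Phi^2_\wfix$ (since $q\in\snnilb$ and $\kcal^2,\wfix\Vdash q$, because $\wfix\Rvar\wfix$); that $\Phi^2_\wfix\vdash Y'$ (since $\psi$ yields $p_i\lr\Box B_i$, the members of $Y'$ are box-free, and $\eta(Y')=Y$ are $\igl$-theorems); and that $\kcal^2,\wfix\Vdash\chi$ --- obtained from \cref{local-lem-1.49} at $\wfix$ (here $\chi\in\lcalb'$ by freshness of $\pvec$), transferring the compatibility fact $\kcal^1,\wfix\Vdash\chi$. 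From $\kcal^2,\wfix\Vdash\chi$, together with $\vdash\chi\to\chi^\dagger$ (\cref{Remark-modal-proj}) and $\kcal^2\Vdash\igl$ (\cref{llea-sound}), we get that $\chi^\dagger\in\snnilb$ is forced at $\wfix$, so $\Phi^2_\wfix\vdash\chi^\dagger$.

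It then remains to establish $\Phi^2_\wfix\vdash\gamma(\chi)\lr\chi^\dagger$, which together with $\Phi^2_\wfix\vdash\chi^\dagger$ completes the proof. I would chain the following $\igl$-equivalences: $\vdash\gamma(\chi)\lr\gamma(\chi')$ by \cref{local-lem-1.5} (as $\eta(\chi')=\chi$); $q\vdash\gamma(\chi')\lr\tau(\alpha(\chi'))$, obtained by applying $\tau$ to $q\to(\beta(\chi')\lr\alpha(\chi'))$ (under $q$ the variable-clause of $\beta$ reduces to $\alpha$, and $\alpha,\beta$ fix $\pvec$); $\psi'\vdash\tau(\alpha(\chi'))\lr\alpha(\chi')$, since $\alpha(\chi')$ is box-free, $\igl\vdash\tau(p_i)\lr\tau\beta(\Box B_i)=\Box\gamma(B_i)$ (the fixed point, \cref{Theorem-simultan-fixed-point}), and $\psi'$ yields $p_i\lr\Box\gamma(B_i)$; $Y'\vdash_{\IPC}\alpha(\chi')\lr\chi^\ddagger$; and $\psi\vdash\chi^\ddagger\lr\chi^\dagger$, since $\eta(\chi^\ddagger)=\chi^\dagger$ and $\chi^\ddagger$ is box-free. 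Composing these (legitimate because $q,\psi,\psi',Y'$ are all derivable from $\Phi^2_\wfix$) yields $\Phi^2_\wfix\vdash\gamma(\chi)\lr\chi^\dagger$, hence $\Phi^2_\wfix\vdash\gamma(\varphi^0_\wfix)$. The main obstacle I anticipate is exactly this chain of equivalences: one must keep straight which $\igl$-equivalences hold unconditionally and which require $q$, $\psi$, or $\psi'$, and control the interplay of the simultaneous fixed point $\tau$ with $\beta$, $\eta$, and $\alpha$ --- along with the somewhat delicate transfer of $\kcal^1,\wfix\Vdash\chi$ to $\kcal^2$ via \cref{local-lem-1.49}.
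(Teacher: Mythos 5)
Your proposal is correct and follows essentially the paper's own route: the $w\R\wfix$ case is dismissed via $\R$-minimality exactly as in the paper, your single case $w\not\sqsubseteq\wfix$ (handled by the ``moreover'' clause of \cref{2-1} plus $\varphi^2_w=\varphi^0_w\in\Phi^2_w$) simply merges the paper's two cases ``$\wfix\R w$'' and ``otherwise'' (the latter done there directly via \cref{handy} and $\Boxdot\neg q$, which is just \cref{2-1} unfolded), and for $w=\wfix$ you use the same ingredients in the same way ($q\vdash\beta(x)\lr\alpha(x)$, $Y'\vdash\alpha(\chi')\lr\chi^\ddagger$, \cref{local-lem-3}, $\psi$ trivializing $\eta$, \cref{local-lem-1.5}, $\Phi^2_\wfix\vdash\psi\wedge\psi'$ and $q$, and $\chi^\dagger\in\Phi^2_\wfix$ via \cref{local-lem-1.49}). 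The only cosmetic deviations are that you establish the biconditional $\Phi^2_\wfix\vdash\gamma(\chi)\lr\chi^\dagger$ instead of the one implication the paper needs, and you transport $\chi$ rather than $\chi^\dagger$ across \cref{local-lem-1.49}.
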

\begin{proof}
	We consider cases:
	\begin{itemize}[leftmargin=*]
		\item $w\R \wfix$: Since $\wfix$ is $\R$-minimal among $\Ha$-nodes, we have $\varphi^0_w=\top$; thus the claim holds trivially.
		\item $\wfix\R w$: Then $\lgci 2 w\vdash \varphi^0_w$, and \Cref{2-1} yields $\lgci 2 w\vdash \gamma(\varphi^0_w)$.
		\item $w=\wfix$: By definition of $\beta$, $q \vdash \beta(x)\lr\alpha (x)$. Since $\chi^\ddagger \vdash \alpha(\chi')$, we have $q ,\chi^\ddagger \vdash \beta(\chi')$. Then \Cref{local-lem-3} gives $\psi', q ,\chi^\ddagger \vdash \tau\beta(\chi')$. Because $\psi$ implies $\eta$ is the identity, we obtain $\psi,\psi', q ,\chi^\dagger \vdash\gamma(\chi')$. \Cref{local-lem-1.5} then yields $\psi,\psi', q ,\chi^\dagger \vdash\gamma(\chi)$. Since $\lgci 2 \wfix\vdash q$, we have $\psi,\psi',\chi^\dagger \vdash \gamma(\chi)$. By \Cref{2-1}, $\lgci 2 \wfix\vdash \psi,\psi'$. Moreover, $\pcal^1,\wfix\pmodels\chi^\dagger$ (because $\pcal^1,\wfix\pmodels\chi$ and $\iglcp\vdash \chi \to\chi^\dagger$), so \Cref{local-lem-1.49} gives $\pcal^2,\wfix\pmodels \chi^\dagger$. Since $\chi^\dagger \in\snnilb$, it follows that $\chi^\dagger \in \lgci 2 \wfix$. Hence $\lgci 2 \wfix \vdash \gamma(\chi)$.
		\item Otherwise: By \Cref{2-1}, $\lgci 2 w\vdash \psi'$. Since $\Boxdot \neg q \in \lgci 2 w$ and $\lgci 2 w\vdash \varphi^0_w$, \Cref{handy} yields the result. \qedhere
	\end{itemize}
\end{proof}

Define $\lbqw$ as the set of formulas $A\in\lcalb$ such that for every $\Box B\in\sub A$, either $B\in\lcalz(\parb)$ or $q \notin\subo B$. In other words, $\lbqw$ consists of formulas in which no variable besides $q$ occurs inside a box, unless it also occurs outside all boxes.

A set $Z$ of formulas is \textit{$\snnilb$-closed} if $Z\vdash E$ and $E\in\snnilb$ imply $E\in Z$.
\begin{lemma}\label{4-1}
	Let $Z\subseteq\snnilb$ be $\snnilb$-closed. If $C\in\lbqw$ and $Z\vdash C$, then $\vdash E\to C$ for some $E\in Z\cap\lbqw$.   
	Also, if $C\in\lbqw$ and $Z\vdash \gamma(C)$, then $\vdash \gamma(E)\to \gamma(C)$ for some $E\in \snnilb\cap \lbqw$ with $\gamma(E)\in Z$.
\end{lemma}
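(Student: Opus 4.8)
\textbf{Proof plan for \cref{4-1}.}
The statement has two halves, and the plan is to prove both by a single induction on the complexity of $C$, keeping in mind that the hypothesis on $Z$ (closure under $\snnilb$-consequences) is exactly what lets us pass from ``$Z\vdash E$ for some $\snnilb$ formula $E$'' to ``$E\in Z$''. First I would set up the induction: since every $C\in\lbqw$ is (modulo $\igl$-provable equivalence, working inside $\snnilb$) a disjunction of conjunctions of atomic, boxed, or implicational components, and since $Z$ is closed under $\snnilb$-consequence while $\igl$ is prime-ish on the relevant fragment, it suffices to treat the case where $C$ is a single such component; the general case follows by collecting the witnesses $E$ for the conjuncts via closure of $Z$ under conjunction (here one uses that $\snnilb$ and $\lbqw$ are both closed under $\wedge$, and that $Z\vdash E_1$, $Z\vdash E_2$ with $E_1,E_2\in\snnilb$ gives $E_1\wedge E_2\in Z$). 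So the core is: $C$ atomic, $C$ boxed, or $C=D\to C'$ with $D$ atomic or boxed (using that $C\in\snnilb$, hence $C$ is, up to equivalence, in $\NNILP$-like form).

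For the atomic and boxed base cases the witness is essentially $C$ itself: if $C\in\lbqw$ is atomic or a box and $Z\vdash C$, then (since such $C$ is in $\snnilb$ when it is a box of the right shape, and parameters are already in $\snnilb$) $C\in Z$ by the closure hypothesis on $Z$, and $\vdash C\to C$ trivially; the only subtlety is when $C$ is a variable, where I would need $C$ to actually be in $\snnilb$ — but $\snnilb\subseteq\lcalz(\parb)$ means variables do not occur outside boxes in $\snnilb$ formulas, so this case is either vacuous or handled by the definition of $\lbqw$ which permits $q$ outside boxes; I would check that $q\in\snnilb$ fails and hence this subcase does not arise, or route it through the implicational case. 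For $C=D\to C'$ with $D\in\atomb$: from $Z\vdash D\to C'$ I would work in $Z+D$, note $Z+D$ still has the right closure properties relative to the smaller formula $C'$ (and $C'\in\lbqw$ since $\lbqw$ is closed under subformulas of the relevant kind), apply the induction hypothesis to get $E'\in (Z\cup\{D\})\cap\lbqw$ (or rather decompose $Z\vdash D\to C'$ appropriately) with $\vdash E'\to C'$, and then reassemble $E:=D\to E'$ or $E:=E'$ depending on whether $D$ was consumed, using \cref{Lem01}-style manipulations and that $D\in\atomb\cap\lbqw$.

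For the second half — the $\gamma$-decorated version — the plan is to run the \emph{same} induction but apply $\gamma$ throughout, using the key fact that $\gamma=\tau\circ\beta$ and \cref{local-lem-1.5} ($\vdash\gamma(B)\lr\gamma(\eta(B))$) together with \cref{local-lem-3} ($\psi'\vdash B\lr\tau(B)$) to commute $\gamma$ past the connectives up to $\igl$-provable equivalence; the witness $E$ one extracts will automatically lie in $\snnilb\cap\lbqw$ because $\gamma$ introduces only the fresh parameters $\pvec$ and $q$, and $q$ is precisely the variable that $\lbqw$ tolerates outside boxes while $\pvec\subseteq\parr$ are parameters (hence harmless for both $\snnilb$ and $\lbqw$ membership). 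The main obstacle I anticipate is bookkeeping the interaction between the $\gamma$-translation and the boxed components: when $C=\Box F\to C'$ one must argue that the witness built from the induction hypothesis applied to $C'$, when prefixed by $\Box\gamma(F)$, still produces something $\gamma$ of a $\snnilb\cap\lbqw$ formula and that $\gamma(E)\in Z$ — this requires knowing that $\kcal^2$ validates $\psi'$ (so that $p_i\lr\Box\gamma(B_i)$ and the fixed-point equations are available inside $Z$ for $\R$-accessible nodes, via \cref{2-1}), and threading that through the inductive construction carefully. Everything else is routine propositional reasoning in $\igl$ plus the closure hypothesis on $Z$.
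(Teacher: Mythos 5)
You have misread what the lemma is about, and this breaks the whole plan. The only hypothesis on $C$ in the first half is $C\in\lbqw$, which is an arbitrary modal formula subject to a syntactic restriction on how $q$ and the variables occur inside boxes; $C$ is \emph{not} assumed to be in $\snnilb$ or even $\NNIL$ (in \cref{local-lem-4-0} the lemma is applied to formulas such as $\varphi^0_u\to C$ with $C$ arbitrary). So the decomposition of $C$ into components with atomic or boxed antecedents, on which your induction rests, is simply unavailable. The inductive step is also broken on its own terms: passing to $Z\cup\{D\}$ destroys the standing hypotheses ($Z\subseteq\snnilb$ and closure of $Z$ under $\snnilb$-consequence fail once $D$ is, say, a variable), the reassembled witness $D\to E'$ need not lie in $\snnilb$ (no variable may occur outside a box in $\lcalz(\parb)$), and $Z\vdash C_1\vee C_2$ does not split since $Z$ is just a set of formulas, not a saturated theory. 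The paper's argument is not an induction at all: from $Z\vdash C$ one takes a single $F\in Z$ with $\vdash F\to C$ (conjunctive closure inside $\snnilb$ plus your closure hypothesis), and then invokes the $(\snnilb,\igl)$-greatest lower bound $\ape{C}$ of \cref{APlus}/\cref{SNB-approx-prop}: $\vdash F\to\ape{C}$ forces $\ape{C}\in Z$, $\vdash\ape{C}\to C$ holds by definition, and item 3 of \cref{APlus} (every $\Box D\in\sub{\ape{C}}$ either lies in $\sub{C}$ or satisfies $\Boxdot D\in\snnilb$) is exactly what puts $\ape{C}$ in $\lbqw$. This approximation machinery is the entire content of the first half and does not appear in your proposal.

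The same gap is more severe in the second half, where moreover no assumption $C\in\lbqw$ is made, so an induction piggybacking on the first half's case analysis is not even well-posed. The real difficulty there is that the witness must have the form $\gamma(E)$ with $E\in\snnilb\cap\lbqw$ and $\gamma(E)\in Z$, while $\gamma(C)$ contains, outside boxes, the boxed formulas hidden inside $\gamma(x)=\tau\beta(x)$ for variables $x$, which are not $\gamma$-images of boxes on the nose. The paper approximates $\gamma(C)$ itself (not $C$): it takes $\gamma(C)^\star\in\nnilb$ via \cref{APlus1}/\cref{Theorem-NNIL-Pres2} with $\subob{\gamma(C)^\star}\subseteq\subob{\gamma(C)}$, then uses precisely the simultaneous fixed-point equations $\vdash\tau(p_i)\lr\gamma(\Box B_i)$ to rewrite every outer box of $\gamma(C)^\star$ stemming from some $\gamma(x)$ as a $\gamma$-image of a box, so that $\gamma(C)^\star$ is provably equivalent to $\gamma(E_0)$ for some $E_0\in\nnilb\cap\lbqw$; finally closure of $\igl$ under the G\"odel translation (\cref{igl-closure-box}) upgrades this to $E:=E_0^\Box\in\snnilb$ with $\vdash F\to\gamma(E)$, whence $\gamma(E)\in Z$. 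You gesture at $\psi'$, \cref{local-lem-3} and the fixed point, but your plan never performs this rewriting step, and without it neither $E\in\snnilb\cap\lbqw$ nor $\gamma(E)\in Z$ is obtained.
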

\begin{proof}
	First assume $Z\vdash C$. By $\snnilb$-closure of $Z$, there exists $F\in Z$ with $\vdash F\to C$. Since $F\in\snnilb$, the definition of $\ap\iglcp\snnilb C$ (denoted $\ape{C}$) gives $\ape {C}\in Z$. \Cref{APlus}.3 implies that for every $\Box D\in\sub {\ape C}$, either $\Box D\in\sub C$ or $\Boxdot D\in\snnilb$. Hence $C\in\lbqw$ implies $\ape C\in \lbqw$. Thus $E:=\ape C$ satisfies the requirements.
	
	Next assume $Z\vdash\gamma(C)$. Then there exists $F\in Z$ such that $\vdash F\to \gamma(C)$. Since $F\in\snnilb$, \Cref{APlus1,Theorem-NNIL-Pres2} imply $\vdash F\to \gamma(C)^\star$, $\vdash \gamma(C)^\star\to \gamma(C)$, and $\subob{\gamma(C)^\star}\subseteq  \subob{\gamma(C)}$. Consider $\Box D_0\in\subob{\gamma(C)^\star}$. Either $\Box D_0=\gamma(\Box D)$ for some $\Box D\in\subob{C}$, or $\Box D_0\in\subob{\gamma(x)}$ for some $x\in \subo{C}$. In the latter case, there exists $p_i\in\pvec$ such that $\Box D_0=\tau(p_i)$. Since $\tau$ is the simultaneous fixed point, $\vdash\Box D_0\lr \gamma(\Box B_i)$. Consequently, there exists $E_0\in \nnilb\cap\lbqw$ such that $\vdash \gamma(C)^\star \lr \gamma(E_0)$ and $\vdash \gamma(C)^\star \to \gamma(C)$, hence $\vdash \gamma(E_0)\to \gamma(C)$. Moreover, from $\vdash F\to \gamma(E_0)$ and $F\in\snnilb$, \Cref{igl-closure-box} yields $\vdash F\to \sggt{\gamma(E_0)}$. Let $E:=\sggt E_0$. Then $\gamma(E)\in Z$ and $E\in\snnilb\cap\lbqw$. Since $\vdash \gamma(E_0)\to\gamma(C)$ and $\vdash \sggt{\gamma(E_0)}\to\gamma(E_0)$, we obtain $\vdash \sggt{\gamma(E_0)}\to \gamma(C)$. Because $\gamma(E)=\sggt{\gamma(E_0)}$, we have $\vdash \gamma(E)\to\gamma(C)$.
\end{proof}

\begin{lemma}\label{4-2-2}
	Let $B\in \lcalz(\parb)$ such that for every $\Box E\in\subo B\cup\{\Box B_i: i\leq m\}$ and every $u\sqsupset w$, we have $\lgci 1 u\vdash E$ iff $\lgci 1 u\vdash \gamma(E)$. Then $\lgci 1 w\vdash B\lr\gamma(B)$.
\end{lemma}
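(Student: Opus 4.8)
The statement \cref{4-2-2} asserts that if $B\in\lcalz(\parb)$ and, for every outer boxed subformula $\Box E$ of $B$ (together with the fixed $\Box B_i$'s) and every $u\sqsupset w$, the theory $\Phi^1_u$ treats $E$ and $\gamma(E)$ the same way ($\Phi^1_u\vdash E$ iff $\Phi^1_u\vdash\gamma(E)$), then $\Phi^1_w$ cannot distinguish $B$ from $\gamma(B)$. Since $B\in\lcalz(\parb)$, i.e.\ $B$ is a boolean combination of parameters, $\bot$ and boxed formulas, the proof is a structural induction on the boolean build-up of $B$. The only substantive point is the base case $B=\Box C$; the boolean cases are routine because $\gamma$ commutes with $\vee,\wedge,\to$ and $\bot$, and because over parameters $\gamma$ acts as the identity on $\parr$ (recall $\beta$, hence $\gamma=\tau\circ\beta$, fixes parameters up to the fixed-point identities — but in fact for $p\notin\pvec$ it is literally the identity, and for $p_i\in\pvec$ we have $B\in\lcalz(\parb)$ does not force $p_i$ to occur).

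\textbf{Key steps.} First I would set up the induction on the complexity of $B\in\lcalz(\parb)$, with the inductive hypothesis quantified over all nodes $\sqsupseteq w$ so that it can be applied at $u\sqsupset w$ as well as at $w$. For $B$ an atom in $\parr$ or $B=\bot$, we have $\gamma(B)=B$ and there is nothing to prove. For $B=B_1\circ B_2$ with $\circ\in\{\vee,\wedge,\to\}$, note every outer boxed subformula of $B_j$ is an outer boxed subformula of $B$, so the hypothesis of the lemma is inherited; the induction hypothesis gives $\Phi^1_w\vdash B_j\lr\gamma(B_j)$ for $j=1,2$, and since $\gamma(B_1\circ B_2)=\gamma(B_1)\circ\gamma(B_2)$ we conclude $\Phi^1_w\vdash B\lr\gamma(B)$ by propositional reasoning. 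The crucial case is $B=\Box C$. Here I would argue: $\Phi^1_w\vdash\Box C$ iff $\kcal^1,w\Vdash\Box C$ (since $\Box C\in\snnilb$ when $C\in\lcalz(\parb)$, so $\Box C\in\Phi^1_w$ exactly when it is forced), iff for every $u\sqsupset w$ we have $\Phi^1_u\vdash C$. Now $C$ is a boolean combination of parameters, $\bot$, and boxed formulas each of which is an \emph{outer} boxed subformula $\Box E$ of $B$; applying the induction hypothesis at each such $u$ (which is legitimate since $u\sqsupset w$) together with the hypothesis ``$\Phi^1_u\vdash E$ iff $\Phi^1_u\vdash\gamma(E)$'' for those $\Box E$, one gets $\Phi^1_u\vdash C$ iff $\Phi^1_u\vdash\gamma(C)$ for every $u\sqsupset w$. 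The same $\snnilb$-membership argument then yields $\Phi^1_w\vdash\Box C$ iff $\Phi^1_w\vdash\Box\gamma(C)=\gamma(\Box C)$, which is the desired equivalence (packaged as $\Phi^1_w\vdash B\lr\gamma(B)$ by deduction).

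\textbf{Main obstacle.} The delicate bookkeeping is making sure the hypothesis of the lemma propagates correctly: in the $\Box C$ case one needs, for each boxed subformula $\Box E$ occurring in $C$ and each $u\sqsupset w$, the equivalence $\Phi^1_u\vdash E\Leftrightarrow\Phi^1_u\vdash\gamma(E)$ — but $\Box E$ is then an \emph{outer} boxed subformula of $B=\Box C$, so this is exactly what the lemma assumes, and one also must check the clause ``$\Box E\in\subo B\cup\{\Box B_i:i\le m\}$'' is satisfied for these $\Box E$ at the relevant nodes; the $\Box B_i$ part of the assumption is needed only through the earlier lemmas feeding into this one (it guarantees $\psi$ and $\psi'$-type equivalences are available), not in the present induction itself. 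The other mild subtlety is the step $\Phi^1_u\vdash C\Leftrightarrow\Phi^1_u\vdash\gamma(C)$: since $C$ need not lie in $\snnilb$ one must pass through the boolean decomposition of $C$ and use the induction hypothesis on the \emph{proper} subformulas of $B=\Box C$, which is fine because the induction is on complexity and $C$ is strictly simpler than $\Box C$. I would write this out carefully but it is entirely routine once the quantifier structure of the induction hypothesis (over all nodes $\sqsupseteq w$, over all $B'\in\lcalz(\parb)$ of smaller complexity satisfying the analogous hypothesis) is fixed at the outset.
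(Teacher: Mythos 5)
There are genuine gaps, and they sit exactly in the two cases that carry the content of this lemma. First, your base case for parameters is wrong: you treat $\gamma$ as the identity on all parameters, but for the distinguished parameters $p_i\in\pvec$ we have $\gamma(p_i)=\tau(p_i)$ with $\tau$ the simultaneous fixed point, so $\gamma(p_i)$ is a nontrivial formula ($\igl$-equivalent to $\Box\gamma(B_i)$), and formulas $B\in\lcalz(\parb)$ certainly may contain the $p_i$. This case is precisely why the hypothesis includes the formulas $\Box B_i$, the very part you declare ``not needed in the present induction''. The paper handles $B=p_i$ by using that part of the hypothesis to show that $\Boxdot(\Box B_i\lr\Box\gamma(B_i))$ is forced at $w$ in $\kcal^1$; being an $\snnilb$-formula it then belongs to $\snnilb^1_w\subseteq\Phi^1_w$, and combining it with $\psi$ (i.e.\ $\kcal^1\Vdash\Boxdot(p_i\lr\Box B_i)$) and the fixed-point equation $\vdash\gamma(p_i)\lr\Box\gamma(B_i)$ gives $\Phi^1_w\vdash p_i\lr\gamma(p_i)$.

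Second, in the case $B=\Box C$ your argument only establishes the external equivalence ``$\Phi^1_w\vdash\Box C$ iff $\Phi^1_w\vdash\Box\gamma(C)$'', and the closing move ``packaged as $\Phi^1_w\vdash B\lr\gamma(B)$ by deduction'' is not a valid inference: from ``$T\vdash X$ iff $T\vdash Y$'' one cannot conclude $T\vdash X\lr Y$ (consider the case where $T$ proves neither), and it is the internal biconditional that \cref{4-2} actually uses. The paper closes this by a membership argument: since the hypothesis holds at every $u\sqsupset w$, and every $\R$-successor of every $\pce$-extension of $w$ is again $\sqsupset w$, the formula $\Boxdot(\Box C\lr\Box\gamma(C))$ is forced at $w$; it lies in $\snnilb$ (its antecedents are boxed, and it is of the form $D^\Box$), so it is literally an element of $\Phi^1_w$, which yields $\Phi^1_w\vdash\Box C\lr\Box\gamma(C)=\Box C\lr\gamma(\Box C)$. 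Note also that your detour through the boxed subformulas of $C$ rests on a misreading of $\subo{}$: for $B=\Box C$ we have $\subo B=\{\Box C\}$, so the hypothesis directly supplies ``$\Phi^1_u\vdash C$ iff $\Phi^1_u\vdash\gamma(C)$'' for all $u\sqsupset w$; the boxed formulas inside $C$ are not outer subformulas of $B$ and are not covered by the hypothesis. Your reduction of the boolean cases and of parameters outside $\pvec$ is fine, but without the two repairs above the proof does not go through.
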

\begin{proof}
	Induction on the complexity of $B$. All cases are straightforward except:
	\begin{itemize}[leftmargin=*]
		\item $B=p_i$ for some $i\leq m$: By assumption, for every $u\sqsupseteq w$, $\pcal^1,u\pmodels \Boxdot(\Box B_i\lr \Box\gamma(B_i))$. By definition of $\pcal^1$, $\pcal^1\pmodels \Boxdot(p_i\lr \Box B_i)$. Hence $\pcal^1,w\pmodels \Boxdot(p_i\lr \Box\gamma(B_i))$. Since $\Boxdot(p_i\lr \Box\gamma(B_i))\in\snnilb$, it belongs to $\snnilb^1_w$. Also, by definition of $\gamma$, $\gamma(p_i)=\tau(p_i)$, and the fixed-point property gives $\vdash\tau(p_i)\lr \Box\gamma(B_i)$, i.e., $\vdash \gamma(p_i)\lr \Box\gamma(B_i)$. Thus $\lgci 1 w\vdash p_i\lr\gamma(p_i)$.
		\item $B$ is a parameter other than $p_i$ or $B=\bot$: Then $\gamma(B)=B$, trivial.
		\item $B$ is a conjunction, disjunction, or implication: Follows from the induction hypothesis.
		\item $B=\Box C$: By assumption, for every $u\sqsupseteq w$, $\pcal^1,u\pmodels\Boxdot(\Box C\lr\Box \gamma(C))$. Since $\Boxdot(\Box C\lr\Box \gamma(C))\in\snnilb$, it belongs to $\snnilb^1_w$, so $\lgci 1 w\vdash \Box C\lr\Box \gamma(C)$. \qedhere
	\end{itemize}
\end{proof}

\Cref{2-1-2} states that for every $\R$-accessible $w\not\sqsubset\wfix$ and $B\in\lcalb$, we have $\lgci 1 w\vdash \gamma(B)\lr B$. The next lemma extends a weaker form of this result to $w\sqsubset \wfix$.
\begin{lemma}\label{4-2}
	For every $\R$-accessible $w\in W$ and $\Box B\in\lbqw$, we have $\lgci 1 w\vdash B $ iff $\lgci 1 w\vdash \gamma(B)$. Moreover, for every $B\in \lbqw\cap\lcalz(\parb)$, we have $\lgci 1 w\vdash B\lr \gamma(B)$.
\end{lemma}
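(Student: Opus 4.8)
The plan is to prove \cref{4-2} by a double induction: an outer induction on $w\in W$ ordered by $\sqsupset$, and (where needed) an inner induction on the complexity of the boxed formula. The key observation is that \cref{4-2} is essentially the hypothesis needed to invoke \cref{4-2-2}: if for every $u\sqsupset w$ and every relevant boxed subformula $\Box E\in\subo B\cup\{\Box B_i:i\le m\}$ we already know $\Phi^1_u\vdash E$ iff $\Phi^1_u\vdash\gamma(E)$, then \cref{4-2-2} gives $\Phi^1_w\vdash B\lr\gamma(B)$ for $B\in\lcalz(\parb)$, which is exactly the ``moreover'' clause, and from it the first clause follows as well. So the outer induction hypothesis feeds \cref{4-2-2} at the current node.

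First I would treat the case $w\not\sqsubset\wfix$ separately: here \cref{2-1-2} already yields $\Phi^1_w\vdash\gamma(B)\lr B$ for \emph{all} $B\in\lcalb$, hence both clauses hold immediately with no induction needed. So assume $w\sqsubset\wfix$ (in particular $w$ is $\R$-accessible with $\varphi^0_w=\top$, and since $\wfix$ is $\R$-minimal among $\Ha$-nodes, every node $\R$-below $\wfix$ has trivial $\varphi^0$). For such $w$, apply the outer induction hypothesis: for every $u\sqsupset w$ and every $\Box E$ occurring as required, $\Phi^1_u\vdash E$ iff $\Phi^1_u\vdash\gamma(E)$. One must check that the boxed subformulas of $B$ and the $\Box B_i$ all lie in $\lbqw$ — for $\Box B_i$ this is because $B_i\in\lcalz(\parb)$ (the $B_i$ are boxed subformulas of elements of $Y\cup\{\chi\}$, which are parameter-only), so $\Box B_i\in\lbqw$ trivially; for $\Box B$ with $B\in\lbqw$ this is immediate from the definition of $\lbqw$ (subformulas of $\lbqw$-formulas stay in $\lbqw$ once we are inside a box whose body is already parameter-only or $q$-free). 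With the hypothesis of \cref{4-2-2} verified at node $w$, \cref{4-2-2} gives $\Phi^1_w\vdash B\lr\gamma(B)$ for $B\in\lbqw\cap\lcalz(\parb)$. For the first clause with $\Box B\in\lbqw$ but $B\notin\lcalz(\parb)$: by definition of $\lbqw$, $\Box B\in\lbqw$ forces $q\notin\subo B$, and then every boxed subformula of $B$ is in $\lbqw$; one peels off the outer Boolean structure of $B$ using the second-clause result on its $\lcalz(\parb)$-parts and the induction hypothesis on its boxed subformulas, concluding $\Phi^1_w\vdash B\lr\gamma(B)$ and in particular $\Phi^1_w\vdash B$ iff $\Phi^1_w\vdash\gamma(B)$.

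The main obstacle I expect is the bookkeeping at the node $w=\wfix$ itself, or rather the boundary behaviour where the variable $q$ changes truth value: $q$ is forced precisely at nodes with $\wfix\Rvar w$, so across the $\R$-step into $\wfix$ the substitution $\beta$ switches from the $\alpha$-branch to the identity branch. At and above $\wfix$ one has \cref{2-1-2}; strictly below $\wfix$ one has $\neg q$ and \cref{handy}'s second clause $\psi',\neg q\vdash\gamma(B)\lr B$, but to use it one needs $\Phi^1_w\vdash\psi'$ for $w\sqsubset\wfix$, which again comes from the $\sqsupset$-induction (propagating $\psi'\in\snnilb$ upward exactly as in the proof of \cref{2-1}/\cref{2-1-2}). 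So the real content is: show $\Phi^1_w\vdash\psi'$ and $\Boxdot\neg q\in\Phi^1_w$ for $w\sqsubset\wfix$, then \cref{handy} closes the $\lcalz(\parb)$ case directly, and \cref{4-2-2} is only needed to handle the genuinely boxed formulas whose bodies may contain $q$ outside further boxes. I would organize the write-up so that the $\sqsupset$-induction simultaneously carries ``$\Phi^1_w\vdash\psi\wedge\psi'$ for $w\not\R\wfix$'' (as in \cref{2-1-2}) and the two clauses of \cref{4-2}, keeping the case split ($w\not\sqsubset\wfix$ via \cref{2-1-2}; $w\sqsubset\wfix$ via $\neg q$ and \cref{handy}, with \cref{4-2-2} for the boxed cases) as the skeleton.
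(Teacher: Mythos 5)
Your plan breaks down exactly at the point you identify as ``the real content''. For $w\sqsubset\wfix$ you propose to derive $\Phi^1_w\vdash\psi'$ and $\Boxdot\neg q\in\Phi^1_w$ and then close the $\lcalz(\parb)$ case with \cref{handy} \emph{inside the theory} $\Phi^1_w$. But $\Boxdot\neg q$ is simply not in $\Phi^1_w$ for such $w$, and $\Phi^1_w\nvdash\neg q$: the node $\wfix$ itself satisfies $\wfix\sqsupset w$, $q$ is forced at $\wfix$ (since $\wfix\Rvar\wfix$) and $q\in\snnilb^1_{\wfix}\subseteq\Phi^1_{\wfix}$, so $\kcal^1,w\nVdash\Box\neg q$; and $\Phi^1_w$, being a set of $\snnilb$-formulas merely \emph{forced} at $w$, gives no handle on $\neg q$ at all. (Note that the paper invokes $\Boxdot\neg q\in\Phi^2_w$ only at nodes \emph{incomparable} with $\wfix$, never below it.) For the same reason your stronger claim for the case $q\nin\subo B$ with $B\nin\lcalz(\parb)$ --- that one can ``peel off the outer Boolean structure'' and conclude the internal equivalence $\Phi^1_w\vdash B\lr\gamma(B)$ --- is not obtainable: at $w\sqsubset\wfix$ the outer variables $x$ of $B$ satisfy no provable equivalence $x\lr\gamma(x)$ over $\Phi^1_w$, which is precisely why the lemma is stated only as an external ``iff'' of derivability rather than as an equivalence inside $\Phi^1_w$.

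What is actually needed for the remaining case ($w\sqsubset\wfix$, $q\nin\subo B$) is different on both sides, and your proposal has no substitute for either. Left-to-right, the paper uses \cref{4-1} to interpolate: from $\Phi^1_w\vdash B$ it extracts some $E\in\Phi^1_w\cap\lbqw$ with $\vdash E\to B$ (this rests on the $\snnilb$-approximation $\ape{\,\cdot\,}$ and its subformula property), applies $\gamma$ to $E\to B$, and uses the already-established equivalence $\Phi^1_w\vdash E\lr\gamma(E)$ for $E\in\lbqw\cap\lcalz(\parb)$. Right-to-left, the paper replaces your ``$\neg q$ inside the theory'' move by an \emph{outer substitution} trick: from $\Phi^1_w\vdash\hat\gamma(B)$ (obtained from the induction hypothesis, which lets one trade $\Box\gamma(C)$ for $\Box C$), apply $\hat\lambda$ with $\lambda(q):=\bot$, using closure of $\igl$ under outer substitutions, so that $\hat\lambda(\hat\gamma(B))$ collapses $\IPC$-provably to $B$; then the semantic fact $\kcal^1,w\Vdash\neg q$ is used only to see that each $\hat\lambda(E)$, $E\in\Phi^1_w$, is again an $\snnilb$-formula forced at $w$, hence $\Phi^1_w\vdash\hat\lambda(\Phi^1_w)$ and so $\Phi^1_w\vdash B$. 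Without these two ingredients (the \cref{4-1} interpolant and the $\lambda$-substitution argument) your induction does not close. A further small inaccuracy: the $B_i$ need not lie in $\lcalz(\parb)$ (they come from $\chi\in\cdsnb$, which may contain variables); $\Box B_i\in\lbqw$ holds only because $q$ is fresh.
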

\begin{proof}
	The second statement follows from the first and \Cref{4-2-2}. We prove the first by induction on $w$ ordered by $\sqsupset$. Assume inductively that for every $\Box B\in \lbqw$ and $u\sqsupset w$, we have $\lgci 1 u\vdash B$ iff $\lgci 1 u\vdash \gamma(B)$. The induction hypothesis together with \Cref{4-2-2} implies $\lgci 1 w\vdash E\lr\gamma(E)$ for every $E\in\lbqw\cap\lcalz(\parb)$.
	
	If $w\not\sqsubset \wfix$, then \Cref{2-1-2} yields the result. Since $\Box B\in\lbqw$, either $B\in\lcalz(\parb)$ or $q \notin\subo B$. If $B\in\lcalz(\parb)$, the induction hypothesis gives $\lgci 1 w\vdash B\lr\gamma(B)$, which suffices. Thus assume $w\R \wfix$ and $q \notin\subo B$.
	\begin{itemize}[leftmargin=*]
		\item Assume $\lgci 1 w\vdash B$. By \Cref{4-1}, there exists $E\in \lgci 1 w\cap\lbqw$ with $\vdash E\to B$. Since $\lgci 1 w\subseteq\snnilb\subseteq\lcalz(\parb)$, the induction hypothesis yields $\lgci 1 w\vdash \gamma(E)\lr E$. From $\vdash E\to B$, we get $\vdash \gamma(E)\to\gamma(B)$, hence $\vdash E\to \gamma(B)$. Because $E\in \lgci 1 w$, we conclude $\lgci 1 w\vdash\gamma(B)$.
		\item Assume $\lgci 1 w\vdash\gamma(B)$. The induction hypothesis gives $\lgci 1 w\vdash \gamma(B)\lr \ov{\gamma}(B)$, so $\lgci 1 w\vdash\ov{\gamma}(B)$. Define $\lambda$ by $\lambda(q ):=\bot$ and identity elsewhere. Since $\iglcp$ is closed under outer substitutions, $\ov{\lambda} (\lgci 1 w)\vdash \ov{\lambda} (\ov\gamma(B))$. Observe that by definition of $\gamma$, $\vdash \ov{\lambda} (\ov\gamma(B))\lr \ov{\lambda} (B)$, and because $q \notin\subo B$, $\vdash \ov{\lambda} (\ov\gamma(B))\lr B$. Hence $\ov{\lambda}(\lgci 1 w)\vdash B$. For every $E\in\snnilb$, we have $\ov{\lambda} (E)\in\snnilb$. Since $\pcal^1,w\pmodels \neg q$ and $\neg q \vdash E\lr \ov{\lambda}(E)$, we get $\lgci 1 w\vdash \ov{\lambda}(\lgci 1 w)$ and thus $\lgci 1 w\vdash B$. \qedhere
	\end{itemize}
\end{proof}

\begin{lemma}\label{local-lem-4-0}
	For every $D\in\lcalz(\parb)\cap\lbqw$ and $w\in W$, $\pcal^1,w\pmodels  D$ iff $\pcal^2,w\pmodels  \gamma(D)$.
\end{lemma}
\begin{proof}
	Since $\pcal^1\pmodels \psi$, we have $\pcal^1\pmodels \eta(D)\lr D$. By \Cref{local-lem-1.5}, $\pcal^2\pmodels \gamma(\eta(D))\lr \gamma(D)$. Because $p_i\notin\sub{\eta(D)}$ for every $1\leq i\leq m$ and $D\in\lbqw$ implies $\eta(D)\in\lbqw$, it suffices to prove the statement with the additional assumption $D\in\lcalb'$.
	
	We proceed by a double induction: first on $w\in W'$ ordered by $\sqsupset$, second on the complexity of $D\in \lcalz(\parb)\cap\lcalb'\cap\lbqw$. As the first induction hypothesis, assume that for every $C\in \lcalz(\parb)\cap\lcalb'\cap\lbqw$ and every $u\sqsupset w$, we have $\pcal^1,u\pmodels  C$ iff $\pcal^2,u\pmodels \gamma(C)$. As the second induction hypothesis, assume that for every strict subformula $C$ of $D$ and every $w'\in W$, we have $\pcal^1,w'\pmodels  C$ iff $\pcal^2,w'\pmodels  \gamma(C)$. We consider cases for $D$:
	\begin{itemize}[leftmargin=*]
		\item $D=\bot$ or $D$ is a parameter (other than $p_i$): Then $\gamma(D)=D$ by definition. The definitions of $\pcal^1$ and $\pcal^2$ yield the desired equivalence.
		\item $D$ is a conjunction, disjunction, or implication: Straightforward from the second induction hypothesis.
		\item $D=\Box C$:
		\begin{itemize}[leftmargin=*]
			\item Suppose $\pcal^1,w\pmodels  \Box C$ and let $u\sqsupset w$. We must show $\lgci 2 u\vdash \gamma(C)$. From $\pcal^1,w\pmodels  \Box C$, we have $\lgci 1 u\vdash C$. We consider three subcases:
			\begin{enumerate}
				\item $u\not\sqsubseteq \wfix$: Then \Cref{4-2} gives $\lgci 1 u\vdash \gamma(C)$, and \Cref{local-lem-1.49} implies $\lgci 2 u\vdash\gamma(C)$.
				\item $u=\wfix$: Then $\snnilb_u\vdash \varphi^0_u\to C$. Since $q \notin\sub{\varphi^0_u}$, we have $\varphi^0_u\in\lbqw$. Hence $\varphi^0_u\to C\in\lbqw$, and \Cref{4-1} yields $E\in\snnilb_u\cap\lbqw$ with $\vdash E\to (\varphi^0_u\to C)$. Then $\gamma(E),\gamma(\varphi^0_u)\vdash \gamma(C)$. Because $E\in\snnilb\cap\lbqw$, we have $\pcal^1,u\pmodels E$, so the first induction hypothesis gives $\pcal^2,u\pmodels \gamma(E)$, whence $\gamma(E)\in \lgci 2 u$. By \Cref{local-lem-2}, $\lgci 2 u\vdash \gamma(\varphi^0_u)$. Thus $\lgci 2 u\vdash \gamma(C)$.
				\item $u\R \wfix$: Then $\lgci 1 u\subseteq\snnilb$ and $\lgci 1 u\vdash C$. By \Cref{4-1}, there exists $E\in \lgci 1 u\cap\lbqw$ with $\vdash E\to C$. Hence $\vdash \gamma(E)\to \gamma(C)$. Since $E\in \lgci 1 u$, we have $\pcal^1,u\pmodels E$; the first induction hypothesis yields $\pcal^2,u\pmodels \gamma(E)$. Because $\gamma(E)\in\snnilb$, we get $\gamma(E)\in \lgci 2 u$, so $\lgci 2 u\vdash \gamma(C)$.
			\end{enumerate}
			\item Conversely, suppose $\pcal^2,w\pmodels \gamma(\Box C)$ and let $u\sqsupset w$. We must show $\lgci 1 u\vdash C$. From $\pcal^2,w\pmodels \Box\gamma(C)$, we have $\lgci 2 u\vdash \gamma(C)$. Consider two subcases:
			\begin{enumerate}
				\item $u\not\sqsubset \wfix$: Then $\lgci 2 u,\varphi^0_u\vdash \gamma(C)$. By \Cref{local-lem-1.49}, $\lgci 1 u\vdash \gamma(C)$. Then \Cref{4-2} gives $\lgci 1 u\vdash C$.
				\item $u\R \wfix$: From $\lgci 2 u\vdash \gamma(C)$ and \Cref{4-1}, there exists $E\in \snnilb\cap\lbqw$ such that $\gamma(E)\in \lgci 2 u$ and $\vdash \gamma(E\to C)$. Hence $\pcal^2,u\pmodels \gamma(E)$, and the first induction hypothesis yields $\pcal^1,u\pmodels E$. Since $E\in\snnilb$, we have $E\in \lgci 1 u$. Moreover, $\vdash \gamma(E\to C)$ implies $\lgci 1 u\vdash \gamma(E\to C)$, so $\lgci 1 u\vdash \gamma(E)\to\gamma(C)$. Because $E\in\lcalz(\parb)\cap\lbqw$, \Cref{4-2} gives $\lgci 1 u\vdash E\lr\gamma(E)$. Hence $\lgci 1 u\vdash\gamma(C)$, and \Cref{4-2} yields $\lgci 1 u\vdash C$. \qedhere
			\end{enumerate}
		\end{itemize}
	\end{itemize}
\end{proof}

\begin{corollary}\label{local-lem-4}
	For every $D\in\lbqw$ and $w\sce w_0$, $\pcal^1,w\pmodels  D$ iff $\pcal^2,w\pmodels  \gamma(D)$.
\end{corollary}
\begin{proof}
	Induction on the complexity of $D$:
	\begin{itemize}[leftmargin=*]
		\item $D\in\parb$: \Cref{local-lem-4-0}.
		\item $D$ is an atomic variable: Since $w\sce w_0$, we have $w\neq \wfix$, so $\pcal^2,w\pmodels\neg q$. By \Cref{handy}, $\pcal^2,w\pmodels \gamma(x)\lr x$. Because $\pcal^2,w\pmodels x$ iff $\pcal^1,w\pmodels x$, the result follows.
		\item $D$ is a conjunction, disjunction, or implication: Follows from the induction hypothesis. \qedhere
	\end{itemize}
\end{proof}

\subsection{Second reduction step: $\iglcphb\nvdash A$ implies $\lles\nvdash\beta(A)$}\label{second-red}
This section shows that if $\iglcphb\nvdash A$, then $\lles\nvdash\beta(A)$ for some substitution $\beta$.

\begin{theorem}\label{Reduction-lleb-lles}
	If $\iglcphb\nvdash A$, then there exists a substitution $\beta$ such that $\lles\nvdash \beta(A)$.
\end{theorem}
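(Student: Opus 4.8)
The plan is to mimic, in the modal setting $\igl\rightsquigarrow\iglca$, the kind of reduction that \cref{Reduction-llea-lleb} performed for $\iglh\rightsquigarrow\iglhb$, but now the obstacle to be removed is the use of $\igl$ (as opposed to $\iglca$) rather than the use of $\cdsnb$-preservativity (as opposed to $\snnilb$-preservativity). First I would invoke \cref{lleb-sound-comp}: from $\iglhb\nvdash A$ we obtain a good $(\snnilb,\igl)$-semantic $\kcal=(W,\pce,\R,\V,\varphi)$ with $\kcal,w_0\nVdash A$, and since $\Gamma=\Delta=\snnilb$ here we may take $\varphi_w=\top$ throughout, so $\Phi_w=\snnilb_w$. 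By \cref{unravelling-2} (or rather its analogue) I may assume $\kcal$ is a rooted transcendental tree. The aim is to construct from $\kcal$ a $(\snnil,\iglca)$-semantic $\kcal'$ together with a substitution $\beta$ such that $\kcal',w_0\nVdash\beta(A)$; then \cref{lles-sound-comp} gives $\lles\nvdash\beta(A)$, as desired.

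The key device is the same as in \citep{reduction} and \cref{Reduction-llea-lleb}: each box $\Box B_i$ occurring (outside boxes) in $A$, together with the propositions $\ape{E}$ needed to witness $\snnilb$-preservativity, must be "protected" so that the difference between $\igl$-provability and $\iglca$-provability of the relevant antecedents washes out. Concretely I would list all outer boxed subformulas $\Box B_1,\dots,\Box B_m$ of $A$, take fresh parameters $\pvec=p_1,\dots,p_m$ and a fresh parameter $q$, let $\eta(p_i):=\Box B_i$, and define a substitution $\beta$ that sends each variable $x$ to a guarded pair conditioned on $q$ — the guarding by $q$ is what lets a single uniform substitution simultaneously handle the finitely many nodes of $\kcal$ where the passage from $\igl$ to $\iglca$ must be simulated. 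Then I would apply the simultaneous fixed-point theorem for $\igl$ (\cref{Theorem-simultan-fixed-point}) to convert the outer substitution $\hat\beta$ into an honest substitution $\tau\circ\beta$ that is identity on boxed formulas, exactly as in the proof of \cref{Reduction-llea-lleb}. The new valuation $\V'$ is defined recursively down the conversely well-founded relation $\R$, setting $w\V' p_i$ iff $\kcal',w\Vdash\Box\beta(B_i)$ and $w\V' q$ according to a designated cut in the tree; $\cpa$ must be built into $\kcal'$ by hand, which forces the choice of where the $q$-cut sits.

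The technical heart, as in \cref{first-red}, will be a sequence of lemmas establishing that forcing is preserved under this translation: on the "outer" part of the model $\kcal',w\Vdash D$ iff $\kcal,w\Vdash D$ for $D$ not mentioning $\pvec,q$; that $\Phi'_w\vdash\psi\wedge\psi'$ for the relevant fixed-point-bookkeeping formulas $\psi,\psi'$; and finally a "$D$ iff $\gamma(D)$" transfer lemma for $D\in\lcalz(\parb)\cap\lbqw$, pushed up to all of $\lbqw$ and then to $\sub A$ by the usual inductions, the crucial closure fact being \cref{APlus1}/\cref{Theorem-NNIL-Pres2} that $\snnilb$-glbs of elements of $Y$ stay inside the finite adequate set $Y''$. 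I expect the main obstacle to be verifying that the constructed $\kcal'$ genuinely is a \emph{good} $(\snnil,\iglca)$-semantic — in particular that it satisfies $\cpa$ and that $\kcal'\Vdash\iglca$, i.e. that the compatibility and soundness requirements survive the valuation surgery — since unlike the $\igl$ case one is no longer free to let atomic valuations be arbitrary at the fresh root and must reconcile $\cpa$ with the $q$-guarding; everything else is a (lengthy but routine) replay of the inductions in \cref{first-red}.
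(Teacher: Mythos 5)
Your proposal transplants the wrong template. The first reduction (\cref{Reduction-llea-lleb}) works because the obstruction there is a single $\Ha$-node carrying some $\varphi_w\in\cdsnb$, and elements of $\cdsnb$ come equipped with a projective substitution whose outer character can be repaired by the simultaneous fixed-point theorem; the number of such nodes is a measure that decreases. In the present reduction nothing of that sort is available: the obstruction is not a local $\varphi$-assignment but the logic itself --- in a $(\snnil,\iglca)$-semantic every atomic must satisfy $\cpa$ and the box clause demands derivability in $\iglca$ rather than $\igl$. Your construction leaves the original variables essentially in place (only $\pvec$ and $q$ receive new valuations, and your $\beta(x)$ still mentions $x$ under the $\neg q$-guard), so the resulting model will in general violate $\cpa$; no choice of a ``$q$-cut'' or fixed point repairs this, because $x\to\Box x$ is a global constraint on every variable at every node, not something concentrated at finitely many nodes the way the $\Ha$-nodes were.

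The deeper missing ingredient is the reflection step: to transfer $\Box$-formulas from the new model back to $\kcal$ you must show that $\beta(X^A_w)\vdash_{\iglca}\beta(E)$ implies $X^A_w\vdash_{\igl}E$ (the paper's \cref{beta4}). The inductions of \cref{first-red} you plan to replay (the $\lbqw$-lemmas, the substitution $\lambda(q):=\bot$, etc.) work only because both models there are semantics over the \emph{same} logic $\igl$; they provide no way of converting $\iglca$-derivability into $\igl$-derivability. The paper's proof handles this with a Solovay-style substitution: for every $w$ and every $\Box E\in\sub A$ with $\Phi_w\nvdash E$ it fixes a Kripke countermodel $\kcal^w_E$ (via \cref{Kripke-completeness-igl}), glues all of these onto $W$, introduces a fresh parameter $q_w$ for each node of the enlarged frame, and sets $\beta(x):=\bigvee\{Q_w:\widehat\kcal,w\Vdash x\}$ with $Q_w:=q_w\wedge\bigwedge_{w\widehat\R u}\neg q_u$. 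Thus $\beta(x)$ is purely parametric, so $\cpa$ can be arranged by re-valuating the $q_w$'s along $\Rvar$, and the glued countermodels, re-valuated the same way, witness exactly the failure of $\beta(X^A_w)\vdash_{\iglca}\beta(E)$ needed for the reflection (\cref{beta4-2,beta4}); the glb machinery of \cref{APlus1,Theorem-NNIL-Pres2} then enters only through the factorization lemma \cref{beta2}. Without some mechanism of this kind --- parametric node-descriptors plus auxiliary countermodels inside the domain of $\beta$ --- your plan stalls precisely at the two points you yourself flag as expected obstacles.
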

\begin{proof}
	Assume $\iglcphb\nvdash A$. By \Cref{provability-models-gen-completeness2}, there exists a good $\snnilb$-based provability model $\pcal_0=\cpm{\kcal_0}{\snnilb}$ with $\kcal_0=(W,\pce,\R,\V_0)$ and $w_0\in W$ such that $\pcal_0,w_0\npmodels A$. Moreover, we may assume that for every atomic $a\notin\sub A$ and every $w\in W$, we have $\pcal_0,w\npmodels a$. For each $w\in W$, let $p_w$ be a fresh parameter (not appearing in $A$). Define the substitution $\beta$ on atomic variables $x$ by (and $\beta(p)=p$ for parameters):
	\emli
	$$
	\beta(x):=\bigvee_{ \kcal_0,w\Vdash x}\varphi_w
	\quad \text{where}\quad 
	\varphi_w:=p_w\wedge \bigwedge_{w\R u}\neg p_u.
	$$
	Conventions: disjunction over an empty set is $\bot$, conjunction over an empty set is $\top$.
	
	Define $\kcal_1:=(W,\pce,\R,\V_1)$ and $\pcal_1$ as follows:
	\emli
	\begin{align*}
		w\V_1 a &\quad\text{iff}\quad \exists u\Rvar w\ (a=p_u) \text{ or } (a\in\parr\ \&\ w\V_0 a);\\
		X &:=\snnilb\cup\cpv \quad \text{ with } \quad \cpv:=\{ x\to\Box x : x\in\varr\};\\
		\pcal_1 &:=\cpm{\kcal_1}X.
	\end{align*}
	Recall that $u\Rvar w$ iff $w$ is in the model generated by $u$ (see \Cref{sec-Kripke}). By construction, $\pcal_1$ is atomic ascending.
	
	Then \Cref{bet-lem} implies $\pcal_1,w_0\npmodels \beta(A)$. Hence \Cref{sound-const2} yields $\lles\nvdash \beta(A)$.
\end{proof}

Let $\lcalbp$ denote the set of formulas in $\lcalb$ that do not contain parameters $\{p_w: w\in W\}$.
\begin{lemma}\label{bet-lem}
	For every $A\in\lcalbp$ and $w\in W$, $\pcal_0,w\pmodels A$ iff $\pcal_1,w\pmodels \beta(A)$.
\end{lemma}
\begin{proof}
	We introduce some notation and observations. Let $\lgci i w$ denote the theory assigned to world $w$ in $\pcal_i$. Up to $\IPC$-provable equivalence:
	\emli
	\begin{align*}
		\lgci 0 w &:=\snnilb_w:=\{E\in\snnilb: \pcal_0,w\pmodelss E\},\\
		\lgci 1 w &:= X_w:=\{E\in X: \pcal_1,w\pmodelss E\}.
	\end{align*}
	We write $\ape{ \ }$ for $\ap\snnilb\iglcp{\ }$. By \Cref{APlus1,SNB-approx-prop}, $\ape{ A }=\sggt E$ where $B$ contains no atomic or boxed formulas beyond those appearing in $A$. Also, because $\beta(x)$ is a Boolean combination of parameters (hence contains no boxed subformulas), for any $B\in\lcalbp$ we have $\ape{\beta(B)}=\beta(B')$ for some $B'\in\snnilb\cap\lcalbp$ with $\vdash B'\to B$.
	
	We proceed by double induction: first on $w\in W$ ordered by $\sqsupset$, second on the complexity of $A\in\lcalb$. Assume inductively that for every $u\sqsupset w$ and every $B\in\lcalb$ without parameters $p_v$, we have $\pcal_0,u\pmodelss B$ iff $\pcal_1,u\pmodelss \beta(B)$. Also assume that for every strict subformula $B$ of $A$ and every $u\sce w$, we have $\pcal_0,u\pmodels B$ iff $\pcal_1,u\pmodels \beta(B)$. Cases for $A$:
	\begin{itemize}
		\item $A$ is a parameter: Then $A\neq p_w$ for all $w$. By definition of $V_1$, $w\V_0 A$ iff $w\V_1 A$. Hence $\pcal_0,w\pmodels A$ iff $\pcal_1,w\pmodels A$. Since $\beta(A)=A$, the result follows.
		\item $A$ is a variable: Direct from \Cref{bet-lem0}.
		\item $A$ is a conjunction, disjunction, or implication: Follows from the second induction hypothesis.
		\item $A=\Box B$: 
		\begin{itemize}
			\item Suppose $\pcal_0,w\pmodels \Box B$ and let $u\sqsupset w$. We must show $\lgci 1 u\vdash \beta(B)$. From $\pcal_0,w\pmodels \Box B$, we have $\lgci 0 u\vdash B$. Hence there exists $E\in \snnilb$ with $\pcal_0,u\pmodelss E$ and $\vdash E\to B$. Consequently, $\vdash E\to \ape B$, so $\pcal_0,u\pmodelss \ape B$ and $\vdash \ape B\to B$. Therefore, $\vdash \beta(\ape B) \to \beta(B)$. The first induction hypothesis gives $\pcal_1,u\pmodelss \beta(\ape B)$. Thus $\lgci 1 u\vdash \beta(\ape B)$. Since $\vdash \beta(\ape B) \to \beta(B)$, we obtain $\lgci 1 u\vdash \beta(B)$.
			\item Conversely, suppose $\pcal_1,w\pmodels \Box \beta(B)$ and let $u\sqsupset w$. Then $\bigwedge_{i=1}^n (x_i\to\Box x_i)\vdash E\to \beta(B)$ for some $E\in \snnilb$ and atomic variables $x_i$ such that $\pcal_1,u\pmodelss E$. Let $\tau$ substitute $\bot$ for each $x_i$ and be identity elsewhere. Then $\ov\tau(\bigwedge_{i=1}^n x_i\to\Box x_i)\vdash \ov\tau(E\to \beta(B))$. Since $\ov\tau(E)=E$ and $\ov\tau\beta(B)=\beta(B)$, we get $\vdash E\to\beta(B)$. Hence $\vdash E\to \ape{\beta(B)}$ and $\pcal_1,w\pmodelss\ape{\beta(B)}$. As noted earlier, $\ape{\beta(B)}=\beta(B')$ for some $B'\in\snnilb\cap\lcalbp$ with $\vdash B'\to B$. The first induction hypothesis yields $\pcal_0,u\pmodelss B'$. Thus $\lgci 0 u\vdash B$. \qedhere
		\end{itemize}
	\end{itemize}
\end{proof}

\begin{lemma}\label{bet-lem00}
	For every $w\in W$, $\kcal_1,w\Vdash  \varphi_u$ iff $u\pce w$.
\end{lemma}
\begin{proof}
	First assume $u\pce w$. Then $u\Rvar w$, so by definition $\kcal_1\Vdash p_u$. Now let $v\sqsupset u$ and $w'\sce w$. If $v\Rvar w'$, then both $u\Rvarr w'$ and $u\pce w'$, which by transcendence (see \cref{sec-Kripke}) imply $w'=w=u$. Hence $u\Rvarr u$, meaning there exists $u'$ with $u\R u'\pce u$. Then $u'\R u'$, contradicting converse well-foundedness. This contradiction shows $v\not\Rvar w'$ for every $w'\sce w$. Thus for every $v\sqsupset u$, $\kcal_1,w\Vdash \neg p_v$. Consequently, $\kcal_1,w\Vdash \varphi_u$.
	
	Conversely, assume $\kcal_1,w\Vdash \varphi_u$. 
	Then $w\V_1 p_u$ and not $w\V_1 p_v$ for every $v\sqsupset u$. 
	By definition of $\V_1$, $u\Rvar w$ and $v\not\Rvar w$ for every $v\sqsupset u$. 
	From $u\Rvar w$, there exists $v$ such that $u\sqsubseteq v\pce w$. 
	If $v\neq u$, then $u\R v$. Then $v\not\Rvar w$, contradicting $v\pce w$. 
	Hence $v=u$, so $u\pce w$.
\end{proof}

\begin{lemma}\label{bet-lem0}
	For every $x\in\varr$ and $w\in W$, $\kcal_0,w\Vdash x$ iff $\kcal_1,w\Vdash \beta(x)$.
\end{lemma}
\begin{proof}
	If $\kcal_0,w\Vdash x$, then $\varphi_w$ is a disjunct of $\beta(x)$. By \Cref{bet-lem00}, $\kcal_1,w\Vdash \beta(x)$.
	
	If $\kcal_1,w\Vdash \beta(x)$, then $\kcal_1,w\Vdash \varphi_u$ for some $u$ with $\kcal_0,u\Vdash x$. By \Cref{bet-lem00}, $u\pce w$. Since $\kcal_0,u\Vdash x$ and $u\pce w$, we have $\kcal_0,w\Vdash x$.
\end{proof}

For subsequent lemmas, let $\pcal_i=(W,\pce,\R,\{\lgci i w\}_{w\in W},\V_i)$ for $i=0,1,2$. Recall that modulo $\IPC$-provable equivalence:
\emli
\begin{align*}
	\lgci 0 w &= \snnilb^0_w:=\{E\in\snnilb: \pcal_0,w\pmodelss E\},\\
	\lgci 2 w &= X_w:=\{E\in X: \pcal_2,w\pmodelss E\},\\
	\lgci 1 w &= \snnil^1_w:=\{E\in\snnil: \pcal_1,w\pmodelss E\}.
\end{align*}
By \Cref{sound-const,sound-const1}, for every $\R$-accessible $w$ and $i=0,1,2$, $\lgci i w$ includes $\iglcp$ and is closed under necessitation. These facts will be used implicitly. Also, by definition, $\pcal_0\pmodels \Boxdot\neg p_u$ for every $u\in W$; i.e., $\lgci 0 w \vdash \neg p_u$ for every $\R$-accessible $w$. Similarly, $\pcal_1\pmodels \Boxdot(\neg x)$ for every atomic variable $x$, so $\lgci 1 w\vdash \neg x$ for every $x\in\varr$ and $\R$-accessible $w$.

\section*{Acknowledgements}
We warmly acknowledge Amirhossein Akbar Tabatabai, Mohammad Ardeshir, Lev Beklemishev, Rosalie Iemhoff, Fedor Pakhomov, Borja Sierra Miranda, and Albert Visser for stimulating discussions and correspondence on provability logic. We are especially grateful to Albert Visser, Rosalie Iemhoff, and Lev Beklemishev for their invaluable support and guidance; their comments, suggestions, and remarks were crucial to this work. In particular, we benefited greatly from fruitful discussions with Borja Sierra Miranda on the classical case of provability models.

\section*{Funding}
This work was partially funded by FWO grant G0F8421N and BOF grant BOF.STG.2022.0042.01.

\mojref

\newpage
\begin{appendices}

\section{Table of Symbols: Formulas and Translations}
\renewcommand{\figurename}{Table}
\begin{table}[H]
\bgroup
\def\arraystretch{1.6}
\begin{center}
\begin{tabular}{|c|c|c|}
    \hline
    \multicolumn{3}{|c|}{$X$, $Y$, and $\Gamma$ denote sets of propositions (with optional subscripts)} 
    \\ \hline\hline
    \textbf{Symbol} 
    & \textbf{Definition}
    & \textbf{Section} 
    \\ \hline
    $X_1X_2\ldots X_n$ & $X_1\cap X_2\cap\ldots\cap X_n$ & \ref{notation-set}
    \\ \hline
    $\varr$ & Countably infinite set of atomic variables & \ref{sec-languages}
    \\ \hline
    $\parr$ & Countably infinite set of atomic parameters & \ref{sec-languages}
    \\ \hline
    $\lcalz$ & Boolean combinations of $\parr$, $\varr$, and $\bot$ & \ref{sec-languages}
    \\ \hline 
    $\lcalb$ & The language $\lcalz$ extended with the unary modal operator $\Box$ & \ref{sec-languages}
    \\ \hline
    $\lcalp$ & The language $\lcalz$ extended with the binary modal operator $\rhd$ & \ref{sec-languages}
    \\ \hline
    $\boxed$ & The set of all boxed formulas in $\lcalb$ & \ref{sec-languages}
    \\ \hline
    $\parb$ & $\parr\cup\boxed\cup\{\bot\}$ & \ref{sec-languages}
    \\ \hline
    $\atomb$ & $\parb\cup\varr$ & \ref{sec-languages}
    \\ \hline
    $\NNIL$ or $\nnil$ &  
    \def\arraystretch{1}
    \begin{tabular}{c}
    All propositions with no nested implications on the left, \\
    except those nested within $\Box$ operators
    \end{tabular}
    & \ref{sec-NNIL-def}
    \\ \hline
    $\nnilb$ or $\NNILb$ & $\NNIL$-propositions that are Boolean combinations of $\parb$ & \ref{notation-set}
    \\ \hline
    $\dar\sft\Gamma$ or $\darl\Gamma$ &  
    \def\arraystretch{1}
    \begin{tabular}{c}
    Operator with arguments $\Gamma$ and $\sft$: \\
    The set of all $\Gamma$-projective propositions within the logic $\sft$
    \end{tabular}
    & \ref{Gam-projec-modal}
    \\ \hline
    $\sub A$ & All subformulas of $A$, including $A$ & \ref{notation-set}
    \\ \hline
    $\subo A$ & All subformulas of $A$ appearing outside $\Box$ operators & \ref{notation-set}
    \\ \hline
    $\subob A$ & $\subo A\cap \boxed$ & \ref{notation-set}
    \\ \hline 
    $\Gamma(X)$ & $\Gamma\cap\lcalz(X)$ & \ref{notation-set}
    \\ \hline
    $\sfs$ & $\{\sggt B: B\in\lcalb\}$, where $\sggt{(.)}$ denotes G\"odel's translation & \ref{notation-set}
    \\ \hline
    $\tcom$ or $\sfc$ & $\{A\in\lcalb: \tvdash A\to\Box A\}$ & \ref{notation-set}
    \\ \hline
    $\prim\sft$ or $\sfP$ &  
    \def\arraystretch{1}
    \begin{tabular}{c}
    Set of all propositions $A$ such that $\sft+A$ \\
    has the disjunction property.
    \end{tabular}
    & \ref{notation-set}
    \\ \hline
    $\nfz\Gamma$ & All propositions $A$ such that either $A\in\Gamma$ or $\boxdot A\in\Gamma$ & \ref{notation-set}
    \\ \hline
    $\nf \Gamma$ & $\{A\in\lcalb: \forall\, \Box B\in \sub A (B\in\nfz\Gamma)\}$ & \ref{notation-set}
    \\ \hline
    $ \Gamma^\vee$ & Disjunctive closure of $\Gamma$ (excluding the empty disjunction) & \ref{notation-set}
    \\ \hline
    \multicolumn{3}{|c|}{$\dar{\,}(.)$ has the lowest precedence after $(.)^\vee$: $\dsnbv:=(\dar{\,}(\snnilb))^\vee$ and $\cdsnbv:=(\sfc(\dar{\,}(\snnilb)))^\vee.$} 
    \\ \hline
\end{tabular}
\caption[Symbols: sets of propositions]{Sets of propositions\label{Table-sets}}
\end{center}
\egroup
\end{table}

\newpage

\begin{table}[H]
\bgroup
\def\arraystretch{1.5}
\begin{center}
\begin{tabular}{|c|c|c|}
\hline
\multicolumn{3}{|c|}{$\sft$ denotes a logic; ${\sf X}$ with possible subscripts denotes an axiom scheme} 
\\ \hline
\multicolumn{3}{|c|}{$\sft{\sf X}_1{\sf X}_2\ldots{\sf X}_n$ denotes $\sft$ extended with axiom schemas ${\sf X}_1, \ldots, {\sf X}_n$} 
\\ \hline
\multicolumn{3}{|c|}{The \textit{only} inference rule in all logics is \textit{modus ponens}. \textit{Necessitation} is admissible.} 
\\ \hline
\multicolumn{3}{|c|}{ 
\def\arraystretch{1}
\begin{tabular}{c}
\textbf{Important convention:} All axiom schemas are considered with their $\Box$ prefixes, \\
but for notational simplicity the $\Box$ symbols are omitted.
\end{tabular}
} 
\\ \hline    
\hline
\textbf{Symbol} 
& \textbf{\hspace{4.5cm}Definition\hspace{4.5cm}}
& \textbf{Section} 
\\ \hline 
${\sf K}$ & $\Box (A\to B)\to (\Box A\to \Box B)$ & \ref{propo-logics}
\\ \hline 
{\sf 4} & $\Box A\to\Box\Box A$ & \ref{propo-logics}
\\ \hline
{\sf L} & The {\sf L\"ob} axiom: $\Box(\Box A\to A)\to\Box A$ & \ref{propo-logics}
\\ \hline
${\cpp}$ (${\cpa}$) & $a\to\Box a$ for every $a\in\parr$ ($a\in\atom$) & \ref{propo-logics}
\\ \hline
${\visgt}$ & $\Box A\to \Box B$ for every $A\prtg B$ & \ref{propo-logics}
\\ \hline
$\Ha$ & $\visp{\cdsnb}{\iglcp}$ & \ref{propo-logics}
\\ \hline
$\Hs$ & $\vis{\snnil}{\iglca}$ & \ref{propo-logics}
\\ \hline
$\Hb$ & $\vis{\snnilb}{\iglcp}$ & \ref{propo-logics}
\\ \hline
$\IPC$ & Intuitionistic propositional logic & \ref{propo-logics}
\\ \hline  
{\sf i} & $\IPC$ plus $\cpp$ & \ref{propo-logics}
\\ \hline 
$\ikfourcp$ & ${\sf i}+ {\sf K}+ {\sf 4}$ & \ref{propo-logics}
\\ \hline 
$\iglcp$ & $\ikfourcp{\sf L}$ & \ref{propo-logics}
\\ \hline 
\end{tabular}
\caption[Symbols: axioms, rules and logics for $\Box$]{Axioms, rules and logics for $\Box$\label{Table-logics}}
\end{center}
\egroup
\end{table}

\newpage 

\begin{table}[H]
\bgroup
\def\arraystretch{1.5}
\begin{center}
\begin{tabular}{|c|c|c|}
\hline
\textbf{Symbol} 
& \textbf{Definition}
& \textbf{Section} 
\\ \hline \hline 
Conj & $A\rhd B, A\rhd C / A\rhd(B\wedge C)$ & \ref{pres-admis}
\\ \hline  
Cut & $A\rhd B, B\rhd C / A\rhd C$ & \ref{pres-admis}
\\ \hline 
$\BART$ & 
\def\arraystretch{1}
\begin{tabular}{c}
A system proving statements $A\rhd B$ for $A,B$ in the language of $\sft$, \\
including $\{A\rhd B : \tvdash A\to B\}$ and closed under Conj and Cut rules
\end{tabular}
& \ref{pres-admis}
\\ \hline 
$\Les$ ($\Lem$) & $A\rhd \Box A$ for every $A\in\lcalb$ ($A\in\lcalz(\parb)$) & \ref{pres-admis}
\\ \hline 
$\VAB$ & $A\rhd \th (A)$ for every $A\in\lcalb$ and substitution $\theta$ & \ref{pres-admis}
\\ \hline 
$\viss\Delta$ &
\def\arraystretch{1}
\begin{tabular}{c}
$B\to C \rhd \bigvee_{i=1}^{m+n} \itv B {E_i}\Delta$ for $B=\bigwedge_{i=1}^n (E_i\to F_i)$ and $C=\bigvee_{i=n+1}^{n+m} E_i$, \\
where $\itv E F \Delta$ is defined as $F$ if $F\in\Delta$, and $E\to F$ otherwise
\end{tabular}
& \ref{pres-admis}
\\ \hline 
Disj & $B\rhd A, C\rhd A / (B\vee C)\rhd A$ & \ref{pres-admis}
\\ \hline  
$\montd$ & $A\rhd B / (E\to A)\rhd(E\to B)$ for every $E\in\Delta$ & \ref{pres-admis}
\\ \hline  
$\ARD\sft\Delta$ & $\BART +\text{Disj} + \montd + \viss\Delta$ & \ref{pres-admis}
\\ \hline  
$\amontd$ & $(A\rhd B)\to ((C\to A)\rhd (C\to B))$ & \ref{sec-preser}
\\ \hline  
ADisj & $(B\rhd A \wedge C\rhd A)\to ((B\vee C)\rhd A)$ & \ref{sec-preser}
\\ \hline  
AConj & $((A\rhd B)\wedge(A\rhd C))\to(A\rhd (B\wedge C))$ & \ref{sec-preser}
\\ \hline  
ACut & $[(A\rhd B)\wedge(B\rhd C)]\to(A\rhd C)$ & \ref{sec-preser}
\\ \hline  
$\LARD\sft\Delta$ &  
\def\arraystretch{1}
\begin{tabular}{c}
$\sft+\viss\Delta+\amontd+\Le+\text{ADisj}+\text{AConj}+\text{ACut}$ \\
together with the inference rule $A\to B / A\rhd B$
\end{tabular}
& \ref{sec-preser}
\\ \hline  
${\sf 4p}$ & $(B\rhd C)\to\Box (B\rhd C)$ & \ref{sec-preser}
\\ \hline  
$\LARDP\sft\Delta$ & $\LARD\sft\Delta+{\sf 4p}$ & \ref{sec-preser}
\\ \hline  
$\iph$ ($\iphp$) & $\LARD\iglcp\parb$ ($\LARDP\iglcp\parb$) & \ref{sec-preser}
\\ \hline  
$\iphs$ & $\LARD\iglca\parb$ & \ref{sec-preser}
\\ \hline  
\end{tabular}
\caption[Symbols: axioms, rules and logics for $\rhd$]{Axioms, rules and logics for $\rhd$\label{Table-logics-rhd}}
\end{center}
\egroup
\end{table}

\newpage 

\begin{table}[H]
\bgroup
\def\arraystretch{1.5}
\begin{center}
\begin{tabular}{|c|c|c|}
\hline
\textbf{Name} 
& \textbf{Definition}
& \textbf{Section} 
\\ \hline \hline 
$\ov{\theta}$ & 
\def\arraystretch{1}
\begin{tabular}{c}
Function that commutes with Boolean connectives, \\
is the identity on boxed formulas, \\
and satisfies $\th=\theta$ on atomic variables
\end{tabular}
& \ref{sec-sub} 
\\ \hline  
$A\prtg B$ & $\forall\,E\in\Gamma(\tvdash E\to A \Rightarrow \tvdash E\to B)$ & \ref{pres-admis}
\\ \hline 
$A\argt B$ & $\forall\,\theta\ \forall\,E\in\Gamma(\tvdash \th(E\to A) \Rightarrow \tvdash \th(E\to B))$ & \ref{pres-admis}
\\ \hline   
$\ap\Gamma\sft A$ & Greatest lower bound for $A$ in $\Gamma$ within $\sft$ & \ref{glb}
\\ \hline 
$\aha$ & 
\def\arraystretch{1}
\begin{tabular}{c}
Given a function $\alpha$ from $\atom$ to first-order sentences, 
\\ $\aha$ extends $\alpha$ 
to commute with Boolean connectives, \\ interpret $\Box$ as 
provability in $\HA$, 
and interpret $\rhd$ as \\
$\Sigma_1$-preservativity in $\HA$
\end{tabular}
& \ref{sec-sub-1} 
\\ \hline  
$\sggt{A}$ & 
\def\arraystretch{1}
\begin{tabular}{c}
G\"odel's translation: \\
places $\boxdot$ before (almost) every subformula 
\end{tabular}
& \ref{sec-Box-trans}
\\ \hline 
$A\xrat{\theta}{\sft} \Gamma$ & $\theta$ is $A$-projective in $\sft$ and satisfies $\sft\vdash \th(A)\in\Gamma$ & \ref{Gam-projec-modal}
\\ \hline 
$\cto(A)$ & Number of nested $\to$ operators not in the scope of $\Box$ & \ref{sec-cto}
\\ \hline
$\ctob(A)$ & Maximum of $\cto(B)$ over all $\Box B\in\sub{A}$ & \ref{sec-cto}
\\ \hline  
\def\arraystretch{1}
\begin{tabular}{c}
Transcendental \\
Kripke model
\end{tabular}
 & $u\pce w$ and $v\R w$ implies $u=v$ & \ref{sec-Kripke}
\\ \hline
\def\arraystretch{1}
\begin{tabular}{c}
Good\\
Kripke model
\end{tabular}
& 
\def\arraystretch{1}
\begin{tabular}{c}
 Finite, rooted, transitive, 
\\ conversely well-founded tree forcing $\cpp$
\end{tabular} 
& \ref{sec-Kripke}
\\ \hline
\end{tabular}
\caption[Other symbols]{Other symbols\label{Other-symb}}
\end{center}
\egroup
\end{table}

\end{appendices}

\end{document}